\documentclass[10pt,reqno]{amsart}
\usepackage{amsmath,amssymb,latexsym,esint,cite,mathrsfs,graphicx}
\usepackage{verbatim,wasysym}
\usepackage[left=1.9cm,right=1.9cm,top=2.3cm,bottom=2.1cm]{geometry}
\usepackage{tikz,enumitem,graphicx, subfig, microtype, color}
\usepackage{epic,eepic}
\usepackage[colorinlistoftodos]{todonotes}

\usepackage[colorlinks=true,urlcolor=blue, citecolor=red,linkcolor=blue,
linktocpage,pdfpagelabels, bookmarksnumbered,bookmarksopen]{hyperref}
\usepackage[hyperpageref]{backref}
\usepackage[english]{babel}

\numberwithin{equation}{section}

\newtheorem{thm}{Theorem}[section]
\newtheorem{lem}[thm]{Lemma}
\newtheorem{cor}[thm]{Corollary}
\newtheorem{Prop}[thm]{Proposition}
\newtheorem{Def}[thm]{Definition}

\newtheorem{assume}[thm]{Assumptions}

\newcommand{\R}{\mathbb{R}}

\begin{document}
	\baselineskip=14pt
	
	\title[critical Hartree equation]{Blow-up asymptotics for a critical Hartree-type Br\'{e}zis--Nirenberg problem in dimension three}
	
     \author[Y. Wu]{Yue Wu}
	\author[M. Yang]{Minbo Yang$^*$}

	\address{Yue Wu
		\newline\indent Department of Mathematics, Zhejiang Normal University, 
		\newline\indent
		Jinhua 321004, People's Republic of China}
	    \email{\href{mailto:yuewu1@zjnu.edu.cn}{yuewu1@zjnu.edu.cn}}
	
	\address{Minbo Yang 
		\newline\indent Department of Mathematics, Zhejiang Normal University, 
		\newline\indent
		Jinhua 321004, People's Republic of China}
	    \email{\href{mailto:mbyang@zjnu.edu.cn}{mbyang@zjnu.edu.cn}}
	
		\subjclass[2010]{35J20, 35J60, 35A15}
	\keywords{Energy asymptotics; Critical Hartree equations; Blow-up; Robin function.}

	\thanks{$^*$Minbo Yang is the corresponding author who was partially supported by National Natural Science Foundation of China (12471114) and Natural Science Foundation of Zhejiang Province (LZ26A010002).}

\begin{abstract}
In this paper,  we study the  following critical Hartree problem
\begin{equation}\label{equationabstract}
	\begin{cases} \displaystyle-\Delta u+(Q+\varepsilon V)u= A_{3,\mu} \left(\int_{\Omega}\frac{u^{6-\mu}(y)}{|x-y|^{\mu}}dy\right) u^{5-\mu}
		&\mathrm{~in~}\Omega,\\ \displaystyle u>0&\mathrm{~in~}\Omega,\\u=0&\mathrm{~on~}\partial\Omega,\end{cases}
\end{equation}
where $\Omega\subset\mathbb{R}^3$ is a bounded open set, $0<\mu<2$ and $A_{3,\mu}>0$ is a normalization constant.
Assume that function $Q$ is critical in the sense of Hebey and Vaugon and the solutions $u_{\varepsilon}$ of \eqref{equationabstract} is a minimizing sequence for a nonlocal Sobolev inequality, Under suitable assumptions on $Q$ and $V$, we derive a precise asymptotic expansion of $u_{\varepsilon}$, determine the exact blow-up rate, and identify the concentration point. We also obtain the pointwise blow-up behavior both near and away from the concentration point. Our theorems generalize the main results in [Frank et al., Anal. PDE, 2024] where the authors characterized the blow-up rate of the solutions to the classical Br\'{e}zis--Nirenberg problem and gave a positve answer to Br\'{e}zis and Peletier's third conjecture in dimension three.
	\end{abstract}

\maketitle

\medskip

\section{Introduction and main results}

In this paper, we are going to consider the    Br\'{e}zis--Nirenberg problem for the critical Hartree equation with the Hardy--Littlewood--Sobolev upper critical exponent in dimension three. We are interested in the asymptotic behavior of the solutions for the following critical Hartree equation 
\begin{equation}\label{equation}
	\begin{cases} \displaystyle-\Delta u+(Q+\varepsilon V)u= A_{3,\mu} \left(\int_{\Omega}\frac{u^{6-\mu}(y)}{|x-y|^{\mu}}dy\right) u^{5-\mu}
		&\mathrm{~in~}\Omega,\\ \displaystyle u>0&\mathrm{~in~}\Omega,\\u=0&\mathrm{~on~}\partial\Omega,\end{cases}
\end{equation}
where $\Omega\subset\mathbb{R}^3$ is a bounded open set, $0<\mu<2$, $
	A_{3,\mu}=
	\frac{3\Gamma\left(\frac{6-\mu}{2}\right)}
	{\pi^{3/2}\Gamma\left(\frac{3-\mu}{2}\right)}
$, the
exponent $6-\mu$ is the upper critical exponent in the sense of the
Hardy--Littlewood--Sobolev inequality. In fact,  recall the famous Hardy--Littlewood--Sobolev (HLS for short) inequality:
\begin{Prop}\label{HLS}
 (Hardy--Littlewood--Sobolev inequality). (See \cite{LL}.) Let $t,r>1$ and $0<\mu<N$ with $1/t+\mu/N+1/r=2$, $f\in L^{t}(\mathbb{R}^N)$ and $h\in L^{r}(\mathbb{R}^N)$. There exists a sharp constant $C(t,N,\mu,r)$, independent of $f,h$, such that
\begin{equation}\label{HLS}
\int_{\mathbb{R}^{N}}\int_{\mathbb{R}^{N}}\frac{f(x)h(y)}{|x-y|^{\mu}}dxdy\leq C(t,N,\mu,r) \left\|f\right\|_{t}\left\|h\right\|_{r}.
\end{equation}
If $t=r=2N/(2N-\mu)$, then
$$
 C(t,N,\mu,r)=C(N,\mu)=\pi^{\frac{\mu}{2}}\frac{\Gamma(\frac{N}{2}-\frac{\mu}{2})}{\Gamma(N-\frac{\mu}{2})}\left\{\frac{\Gamma(\frac{N}{2})}{\Gamma(N)}\right\}^{-1+\frac{\mu}{N}}.
$$
\end{Prop}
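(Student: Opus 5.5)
The proof splits into two parts: existence of a finite constant for general exponents, and identification of the sharp constant in the diagonal case $t=r=2N/(2N-\mu)$.

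\emph{General exponents.} Since $|f|,|h|$ only increase the left-hand side, I assume $f,h\ge 0$. I write the kernel via the layer-cake formula,
\[
|x-y|^{-\mu}=\mu\int_0^\infty c^{-\mu-1}\,\chi_{\{|x-y|<c\}}\,dc ,
\]
and similarly decompose $f=\int_0^\infty \chi_{\{f>a\}}\,da$ and $h=\int_0^\infty \chi_{\{h>b\}}\,db$. The double integral then becomes a triple integral over $(a,b,c)$ of
\[
I(a,b,c)=\iint \chi_{\{f>a\}}(x)\,\chi_{\{|x-y|<c\}}\,\chi_{\{h>b\}}(y)\,dx\,dy .
\]
I bound $I$ by the elementary estimate $I\le \min\{u v,\ u w,\ v w\}$, where $u=|\{f>a\}|$, $v=|B_c|\sim c^N$ and $w=|\{h>b\}|$. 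I normalize $\|f\|_t=\|h\|_r=1$. Then I integrate in $c$ first, splitting at the threshold where $c^N$ crosses $\max(u,w)$; this uses $\mu<N$. The result is a bound of the form $\min(u,w)\max(u,w)^{1-\mu/N}$ up to a constant. The remaining double integral over $(a,b)$ is split according to whether $w\le u$ or $w>u$, equivalently $b\lessgtr a^{t/r}$, since $|\{f>a\}|\le$ stuff from $\int a^{t-1}|\{f>a\}|\,da=\|f\|_t^t/t$. Using $1/t+\mu/N+1/r=2$, the exponents match exactly. Each piece is then bounded by a constant times $\|f\|_t^t$ or $\|h\|_r^r$, both equal to $1$. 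This gives finiteness of $C(t,N,\mu,r)$.

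\emph{Sharp constant.} When $t=r=p:=2N/(2N-\mu)$, the bilinear form is positive definite, since $|x|^{-\mu}$ has positive Fourier transform. By Cauchy--Schwarz for this form, it suffices to take $h=f$. By Riesz rearrangement, it further suffices to take $f$ nonnegative, radial and decreasing. I then use conformal invariance: the functional
\[
\frac{\iint f(x)f(y)|x-y|^{-\mu}\,dx\,dy}{\|f\|_p^2}
\]
is invariant under translations, dilations and inversion $x\mapsto x/|x|^2$ with weight $f\mapsto |x|^{-2N/p}f(x/|x|^2)$. This follows because $|x-y|$ transforms by $|x|^{-1}|y|^{-1}$ and $2N/p=2N-\mu$. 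Existence of a maximizer comes from a compactness argument on the symmetrized class: Helly selection for monotone functions, plus the uniform bound from the first step, which rules out vanishing and dichotomy after fixing the dilation. I then apply Lieb's competing-symmetries argument, alternating Steiner/Riesz rearrangement with the stereographic/inversion map. This shows any maximizer equals, up to symmetries, $h_0(x)=(1+|x|^2)^{-N/p}$, the pullback of a constant on $S^N$. Finally I evaluate the quotient at $h_0$ via stereographic projection: $\int_{S^N}\int_{S^N}|\xi-\eta|^{-\mu}$ is computed by Beta/Gamma integrals. This yields the stated value $\pi^{\mu/2}\Gamma(\frac N2-\frac\mu2)/\Gamma(N-\frac\mu2)\,\{\Gamma(\frac N2)/\Gamma(N)\}^{-1+\mu/N}$.

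\emph{Main obstacle.} The hard part is identifying the maximizer, that is, the competing-symmetries convergence. I would follow Carlen--Loss: iterate $f\mapsto (Jf)^*$, where $J$ is the conformal rotation. The iterates converge in $L^p$ to $\|f\|_p\,h_0/\|h_0\|_p$, and the functional is nondecreasing along the iteration while being continuous in $L^p$ by the first step. Since the paper cites \cite{LL} for this classical result, in context it is simply quoted rather than reproved.
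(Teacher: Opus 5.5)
Your proposal is correct and is essentially the proof in \cite{LL}, which the paper quotes without proof. For general exponents it uses the layer-cake reduction with $I\le\min\{uv,uw,vw\}$, and for the sharp diagonal constant it uses the Carlen--Loss competing-symmetries iteration $f\mapsto(Jf)^*$ followed by evaluation at $h_0$ via stereographic projection.

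Two minor corrections. First, splitting at $b=a^{t/r}$ is not literally equivalent to splitting by $w\le u$. The split is nonetheless legitimate because $\min(u,w)\max(u,w)^{1-\mu/N}\le\min\{u\,w^{1-\mu/N},\,w\,u^{1-\mu/N}\}$ holds everywhere; use the first bound on $\{b<a^{t/r}\}$ and the second on $\{b>a^{t/r}\}$, which is what makes the H\"older integrals converge. Second, your separate compactness step for existence of a maximizer is unnecessary. The quotient is nondecreasing along the iteration and the iterates converge in $L^p$ to $\|f\|_p h_0/\|h_0\|_p$, so the supremum already equals the value at $h_0$.
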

\noindent 
By \eqref{HLS} and the Sobolev inequality, we are lead to the following nonlocal Sobolev inequality
\begin{equation}\label{Prm}
\int_{\mathbb{R}^N}|\nabla u|^2 dx
\geq S_{HL}\left(\int_{\mathbb{R}^N}(|x|^{-\mu} \ast|u|^{2^*_\mu})|u|^{2^*_\mu} dx\right)^{\frac{1}{2^*_\mu}}, \quad u\in \mathcal{D}^{1,2}(\mathbb{R}^N),
\end{equation}
for some positive constant $S_{HL}$ depending only on $N$ and $\mu$, where $N\geq 3$, $0<\mu<N$. In particular, for $N=3$, $2^*_\mu=6-\mu$.
The optimal constant in \eqref{Prm} is given by
   \begin{equation}\label{a9}
	S_{H,L}
	:=
	\inf_{u\in D^{1,2}(\mathbb{R}^{N})\setminus\{0\}}
	\frac{\displaystyle\int_{\mathbb{R}^{N}}|\nabla u|^{2}dx}
	{\left(
	\displaystyle\int_{\mathbb{R}^{N}}\int_{\mathbb{R}^{N}}
	\frac{|u(x)|^{2_{\mu}^{\ast}}|u(y)|^{2_{\mu}^{\ast}}}
	{|x-y|^{\mu}}dxdy
	\right)^{\frac{1}{2_{\mu}^{\ast}}}}.
\end{equation}
The motivation for the present work comes from the observation that nonlocal inequality \eqref{Prm} has the same invariance as the best constant Sobolev $S$ for the classical Sobolev inequality. As observed in \cite{GY, DY19}, we know
\begin{equation*}
	S_{H,L}
	=
	\frac{S}{C(N,\mu)^{\frac{1}{2_{\mu}^{\ast}}}}.
\end{equation*}
Denote the Aubin--Talenti functions given by
\begin{equation}\label{atb}
	U_{z,\lambda}(x)
	:=
	\left(
	\frac{\lambda}{1+\lambda^{2}|x-z|^{2}}
	\right)^{\frac{N-2}{2}},
	\qquad
	z\in\mathbb{R}^{N},\quad \lambda>0,
\end{equation} and set
\begin{equation*}
	A_{N,\mu}
	:=
	\frac{
	\left[N(N-2)\right]^{\frac{N-\mu+2}{2}}
	S^{\frac{\mu-N}{2}}
}{
	C(N,\mu)
},
\end{equation*}
we know that $	U_{z,\lambda}$ is the unique family of solutions of the following critical Hartree equation 
\begin{equation}\label{nonlocal critical equation}
	-\Delta U_{z,\lambda}
	=
	A_{N,\mu}
	\left(
	\int_{\mathbb{R}^{N}}
	\frac{U_{z,\lambda}^{2_{\mu}^{\ast}}(y)}
	{|x-y|^{\mu}}dy
	\right)
	U_{z,\lambda}^{2_{\mu}^{\ast}-1}
	\qquad\text{in }\mathbb{R}^{N},
\end{equation}
which coulded be regarded as a nonlcoal counterpart of the so called Yamabe equation
\begin{equation}\label{bec}
-\Delta u=|u|^{2^*-2}u\quad \mbox{in}\ \ \mathbb{R}^N,
\end{equation}
	where $2^{\ast}=2N/(N-2)$. In fact, up to constant mulitiple, problem \eqref{bec} is formally obtained from problem \eqref{nonlocal critical equation} by letting $\mu\to 0^{+}$ or $\mu\to N^{-}$, which means the critical Hartree equation is a nonlocal generalization of the Yamabe equation. 
	
	It is well known that equation \eqref{bec} has a rich history due in part to the fact that its solutions serve as building blocks in geometrical variational problems and critical elliptic equations. In the celebrated paper \cite{BN} Br\'{e}zis and Nirenberg studied the following problem:
\begin{equation}\label{local.S1}
\left\{\begin{array}{l}
\displaystyle-\Delta u=|u|^{2^{\ast}-2}u+\lambda u\ \ \mbox{in}\ \ \Omega,\\
\displaystyle u\in H_{0}^{1}(\Omega),
\end{array}
\right.
\end{equation}
where $\Omega \subset\mathbb{R}^N$ is a bounded domain, $2^{\ast}=\frac{2N}{N-2}$, $ \lambda\in (0,\lambda_{1})$, $\lambda_{1}$ is the first Dirichlet eigenvalue of $-\Delta$ on $\Omega$. They proved that: if $N\geq4$ and $\lambda\in(0,\lambda_{1})$, then problem \eqref{local.S1} has a nontrivial solution and if $N=3$ then there exists a constant $\lambda_{\ast}\in(0, \lambda_{1})$ such that for any $\lambda\in(\lambda_{\ast},\lambda_{1})$ problem \eqref{local.S1} has a positive solution. As a special case, if $\Omega$ is the unit ball in $\R^3$, then $\lambda_{\ast}=\frac{\pi^2}{4}$.
For $N\geq4$, Capozzi, Fortunato and Palmieri \cite{CFP} established the existence of a nontrivial solution to \eqref{local.S1} for every $\lambda>0$. Bahri and Coron \cite{BC} further proved the existence of a positive solution when the domain $\Omega$ has nontrivial topology.
Since the famous result of Br\'{e}zis and Nirenberg, blow-up analysis for solutions of critical equations has been widely studied for more than three decades. This type of problem was raised in a pioneering paper by Br\'{e}zis and Peletier\cite{Brezis-Peletier} about the critical problem
\begin{equation}\label{local.equation}
	\left\{
	\begin{array}{ll}
		\displaystyle
		-\Delta u+(Q+\varepsilon V)u
		=N(N-2)u^{\frac{N+2}{N-2}}
		& \mbox{in } \Omega,\\
		u>0
		& \mbox{in } \Omega,\\
		u=0
		& \mbox{on } \partial\Omega,
	\end{array}
	\right.
\end{equation}
where $\Omega\subset\mathbb{R}^3$ is a bounded open set. They presented a detailed study of the
case where  the domain is a ball and raised three conjectures about the blow-up analysis of the sequence of solutions. The first two of which concern the blow-up behavior of solutions to \eqref{local.equation} with $Q=0$. Concerning the case of general open domain, 
Rey \cite{Re} (independently and using different arguments, by Han \cite{Han}) proved that, as $\lambda\rightarrow0$, the solution $u_{\lambda}$ of \eqref{local.S1} which concentrate around a point of the Robin function that cannot be on the boundary of $\Omega$. Musso and Pistoia \cite{MP1, MP2} constructed families of positive solutions for problem \eqref{local.S1}, which blow-up and concentrate at multiple points. Later, Cao, Luo and Peng \cite{CLP} proved that if $\lambda$ is small, problem \eqref{local.S1} has a unique solution provided the domain $\Omega$ is convex and $N\geq 6$ which depends also on Green's function by using various local Poho\v{z}aev identities and blow-up analysis.
For $N\geq4$ and $N=3$, Frank, K\"{o}nig and Kova\v{r}\'{\i}k \cite{FKK2, FKK1} worked directly
with the variational quotients of the Br\'{e}zis-Nirenberg problem rather than with the Euler-Lagrange equation.  They obtained the asymptotics of the energy functional and gave a precise description of the blow-up profile of minimizing sequences. Moreover, they characterized the concentration points as being extrema of a quotient involving the Robin function. For Br\'{e}zis and Peletier's third conjecture, it concerns the blow-up behavior of solutions of \eqref{local.equation} for
certain nonzero $Q$ in the three-dimensional case.
Frank, K\"{o}nig and Kova\v{r}\'{\i}k \cite{FKK3} considered the equation with nonzero $Q$, they gave a positive answer to the third
Br\'{e}zis--Peletier conjecture and obtained precise blow-up asymptotics
for solutions to \eqref{local.equation} under a natural nondegeneracy assumption.
There is a huge literature on blow-up results for problem \eqref{local.S1} and relevant issues, for instance, \cite{BLR, DDM, DFM1, DFM2,DHR, Esp,FKK3, Gro1, MiPi, PS, Re2, Re3, Wei} and references therein.

Due to the above observation, the following  Hartree-type Br\'{e}zis--Nirenberg problem has attracted considerable attention in recent years, 
\begin{equation}\label{CCE}
	\left\{
	\begin{array}{l}
		\displaystyle-\Delta u
		=\left(\int_{\Omega}
		\frac{|u(y)|^{2_{\mu}^{\ast}}}{|x-y|^{\mu}}dy\right)
		|u|^{2_{\mu}^{\ast}-2}u+\varepsilon u
		\quad\mbox{in }\Omega,\\
		\displaystyle u\in H_{0}^{1}(\Omega),
	\end{array}
	\right.
\end{equation}
where $\Omega$ is a bounded domain in $\mathbb{R}^{N}$, $N\geq3$, $0<\mu<N$.
 Gao and Yang \cite{GY} proved that: when $N\geq4$, problem \eqref{CCE} admits a nontrivial solution for $\varepsilon>0$, provided $\varepsilon$ is not a Dirichlet eigenvalue of $-\Delta$; when $N=3$, there exists $\lambda_{\ast}>0$ such that a nontrivial solution exists for every $\varepsilon>\lambda_{\ast}$, where $\varepsilon$ is not a Dirichlet eigenvalue of $-\Delta$. They also proved that \eqref{CCE} has no nontrivial solution when $N\geq3$, $\varepsilon\leq0$, and $\Omega$ is star-shaped. Involving the existence of blowing-up solutions as $\varepsilon\rightarrow0$. For $N\geq 4$,  $Q\equiv 0$ and $V\equiv -1$, Yang and Zhao \cite{YZ} proved that the solution $u_{\varepsilon}$ of  equation \eqref{CCE} blows up exactly at a critical point of the Robin function that cannot be on the boundary of $\Omega$. While $N\geq5$, under suitable assumptions on $\mu$, Yang, Ye and Zhao \cite{YYZ} constructed a family of solutions for \eqref{CCE} that blow up and concentrate at a critical point of the Robin function. Chen and Wang \cite{CW4D} obtained an analogous result for $0<\mu\leq2$ and $N=4$,. 
Squassina, Yang and Zhao \cite{SYZ} investigated the existence and local uniqueness of the blow-up solutions concentrating at the nondegenerate critical point of the Robin function. Consider the perturbation of the domian, let $\Omega$ be a bounded smooth domain in $\R^N$ that contains the origin and define $\Omega_{\varepsilon}:= \Omega\backslash B(0,\varepsilon)$, Ghimenti, Huang and Pistoia \cite{GHP} constructed bubble solution blowing up at the origin to the following  equation
$$
\left\{\begin{array}{l}
	\displaystyle-\Delta u =\Big(\int_{\Omega_{\varepsilon}}\frac{u(y)^{2_{\mu}^{\ast}}}{|x-y|^{\mu}}dy\Big)u^{2_{\mu}^{\ast}-1}\hspace{4.14mm}\mbox{in}\hspace{1.14mm} \Omega_{\varepsilon},\\
	\displaystyle u\in H_{0}^{1}(\Omega_{\varepsilon}).
\end{array}
\right.
$$
where $N\geq 5$, $0 <\mu < 4$ with $\mu$ sufficiently close to 0. (See also \cite{GHP-1}). Recently, Cingolani, Yang and Zhao \cite{CYZ} studied the asymptotic behavior of least energy solutions for the nearly critical nonlocal problem
$$
\left\{\begin{array}{l}
	\displaystyle-\Delta u=\Big(\int_{\Omega}\frac{u^{p-\varepsilon}}{|x-y|^{N-2}}dy\Big)u^{p-1-\varepsilon} \quad \mbox{in}\quad \Omega,\\
	\displaystyle u\in H_{0}^{1}(\Omega),
\end{array}
\right.
$$
where $\Omega$ is a smooth bounded domain in $\mathbb{R}^N$ for $N=3,4,5$, $\varepsilon>0$ is a small parameter and $p=\frac{N+2}{N-2}$ is energy-critical exponent. They studied the exact rates of blow-up and the location of blow-up points of the solution $u_{\epsilon}$ for the above problem. Especially, the location of blowing-up points are a maximum point of the Robin's function of $\Omega$.

However, the study of blow-up solutions in dimension three remains much less developed. Recently, Chen and Wang \cite{CW3D,CW3DM} constructed both single and multiple blowing-up solutions for \eqref{CCE} as $\varepsilon$ approaches a special value $\varepsilon_{0}>0$, which is characterized by the Green function of $-\Delta-\varepsilon$. Gao, Ghimenti, Yang and Zhao \cite{GGYZ} derived precise energy asymptotics for a three-dimensional nonlocal Br\'{e}zis--Nirenberg problem involving the critical Hartree equation. However, the blow-up tate of the solutions of the solutions remains open. it is important to point out that a feature of the three--dimensional setting is the concept of criticality for the function $Q$. We recall $S$ is the best Sobolev constant and define
\begin{equation*}
	S(Q):=
	\inf_{0\neq u\in H_{0}^{1}(\Omega)}
	\frac{\displaystyle\int_{\Omega}\left(|\nabla u|^{2}+Q u^{2}\right)dx}
	{\left(\displaystyle\int_{\Omega}|u(x)|^{6}dx\right)^{\frac{1}{3}}}. 
\end{equation*}
One consequence of the result of Br\'{e}zis and Nirenberg \cite{BN} is that,
in the three-dimensional case, $S(Q)=S$ if $Q$ is sufficiently small, for
instance in $L^{\infty}(\Omega)$, even when $Q\not\equiv0$. The situation is
quite different in dimensions $N\geq4$, where the corresponding analogue
of $S(Q)$ (with the exponent $6$ replaced by $\frac{2N}{N-2}$)  is strictly
below $S$ whenever $Q$ is negative somewhere in $\Omega$. This phenomenon
leads naturally to the following definition due to Hebey and Vaugon
\cite{HV}. 
\begin{Def}\label{sstar}
Let $Q \in C(\overline{\Omega})$. We say that $Q$ is critical in $\Omega$ if $S(Q)=S$ and if for any continuous function $\widetilde{Q}$ on $\overline{\Omega}$ with $\widetilde{Q}\leq Q$ and $\widetilde{Q}\not\equiv Q$ one has $S(\widetilde{Q})<S(Q)$.
\end{Def}
Throughout the paper, we always assume that $Q$ is critical in $\Omega$.
We next introduce the Green function of $-\Delta+Q$ and the associated Robin function, which will play a central role in our analysis. 
 We use the sign and
normalization convention of Rey \cite{Re}. Since $Q$ is critical, the operator
$-\Delta+Q$ with Dirichlet boundary conditions is coercive. Hence, for each
$x\in\Omega$, it admits a
Green function $G_Q(z,x)$ satisfying
\begin{equation}\label{a41}
	\begin{cases}
		-\Delta_zG_Q(z,x)+Q(z)G_Q(z,x)=4\pi\delta_x,
		& \text{in }\Omega,\\
		G_Q(z,x)=0,
		& \text{on }\partial\Omega.
	\end{cases}
\end{equation}
We define the regular part of $G_Q$ by
\begin{equation}\label{a51}
	H_Q(z,x)=\frac{1}{|z-x|}-G_Q(z,x).
\end{equation}
For each $z\in\Omega$, the function $H_Q(z,\cdot)$, initially defined on
$\Omega\setminus\{z\}$, extends continuously to $\Omega$, and we abbreviate
\begin{equation*}
	\phi_Q(z):=H_Q(z,z).
\end{equation*}
Br\'{e}zis  \cite{B} proved that $\inf_{z\in\Omega}\phi_Q(z)<0$  implies $S(Q) < S$. The reverse implication, $S(Q) < S$ implies $\inf_{z\in\Omega}\phi_Q(z)<0$, was proved by Druet  \cite{Druet1}. Therefore, as a consequence of
criticality we have
\begin{equation}
	\inf_{z\in\Omega}\phi_Q(z)=0,  
\end{equation}
which  implies  that each point $z$  with $\phi_Q(z)=0 $ is a critical point of $\phi_Q$. 

Let us summarize the setting in this paper. In the sequel we set
\begin{equation*}
	\mathcal{N}_Q:=\{x\in\Omega:\phi_Q(x)=0\}.
\end{equation*}
The following assumptions are from \cite{FKK3}.
\begin{assume}
(a) $\Omega\subset\mathbb{R}^3$  is a bounded, open set with $C^2$ boundary. 
\\ (b) $V\in C^{0,1}(\overline{\Omega})$ and  $Q\in C^{0,1}(\overline{\Omega})\cap C_{\mathrm{loc}}^{2,\sigma}(\Omega)$  for some $\sigma >0$.   
\\ (c)  $Q$ is critical in $\Omega$. 
\\ (d) Any point in $\mathcal{N}_Q$  is a nondegenerate critical point of $\phi_Q$, that is,  for any $z_0 \in \mathcal{N}_Q$,  the Hessian    $D^2\phi_Q(z_0)$ does not have a zero eigenvalue. 
\\ (e) $Q<0$ on $\mathcal{N}_Q$.
\end{assume}
%\begin{Rem}
%	The regularity conditions in (a) and (b) are quite modest, which can probably be weakened with further work.  Since Frank, K\"{o}nig and  Kova\v{r}\'{\i}k \cite{FKK3} proved  that $\phi_Q \in C^{2}(\Omega)$,     assumption (d) implies that every  point in $\mathcal{N}_Q$ is a critical point of $\phi_Q$.  Assumption (e) is not severe, since  Corollary 2.2 in  \cite{FKK1}   tells us that    any critical $Q$ satisfies $Q \leq 0$ in $\mathcal{N}_Q$.  In particular, it is fulfilled if $Q$ is a negative constant. 
%\end{Rem}

In this paper, we are interested in the behavior of the  solutions $u=u_\varepsilon$, parametrized
by $\varepsilon>0$, of the critical Hartree equation \eqref{equation} under the assumption that the family
$\{u_\varepsilon\}$ forms a minimizing sequence for the 
Hardy--Littlewood--Sobolev inequality. More precisely, we assume
\begin{equation}\label{assume}
	\frac{\displaystyle\int_{\Omega}|\nabla u_\varepsilon|^2dx}
	{\left(
	\displaystyle\int_{\Omega}\int_{\Omega}
	\dfrac{
	u_\varepsilon^{6-\mu}(x)u_\varepsilon^{6-\mu}(y)}
		{|x-y|^\mu}dxdy
	\right)^{\frac{1}{6-\mu}}}
	\longrightarrow S_{H,L}
	\qquad\text{as }\varepsilon\to0.
\end{equation}
Under suitable assumptions on $Q$ and $V$, the existence of solutions to
\eqref{equation} satisfying \eqref{assume} can be established by a
minimization argument; see \cite{GGYZ}. 
Moreover, the characterization of the optimizers for the sharp
Hardy--Littlewood--Sobolev inequality implies that
$u_{\varepsilon}\rightharpoonup0$ weakly in $H_0^1(\Omega)$ and that
$u_{\varepsilon}^6$ converges weakly in the sense of measures to a
multiple of the Dirac measure concentrated at the blow-up point; see
Proposition \ref{prop22}. In this sense, the family
$\{u_{\varepsilon}\}$ blows up.

The leading-order blow-up profile of solutions to \eqref{equation} is
described by the projection onto $H_0^1(\Omega)$ of a solution to the
whole-space equation \eqref{nonlocal critical equation}. For
$z\in  \Omega$ and $\lambda>0$, let $PU_{z,\lambda}\in H_0^1(\Omega)$
be the unique function satisfying
\begin{equation}\label{a12}
	\Delta PU_{z,\lambda}=\Delta U_{z,\lambda}
	\quad\text{in }\Omega,
	\qquad
	PU_{z,\lambda}=0
	\quad\text{on }\partial\Omega.
\end{equation}
Set
\begin{equation*}
	T_{z,\lambda}
	:=\operatorname{span}\left\{
	PU_{z,\lambda},
	\partial_{\lambda}PU_{z,\lambda},
	\partial_{z_1}PU_{z,\lambda},
	\partial_{z_2}PU_{z,\lambda},
	\partial_{z_3}PU_{z,\lambda}
	\right\}, 
\end{equation*}
and let  $T_{z,\lambda}^{\perp}$ be the orthogonal complement of
$T_{z,\lambda}$ in $H_0^1(\Omega)$ with respect to the inner product
\begin{equation*}
	\langle u,v\rangle
		:=\int_{\Omega}\nabla u\cdot\nabla vdx.
\end{equation*}
The orthogonal projections of $H_0^1(\Omega)$ onto $T_{z,\lambda}$ and
$T_{z,\lambda}^{\perp}$ are denoted by $\Pi_{z,\lambda}$ and
$\Pi_{z,\lambda}^{\perp}$, respectively. To state the asymptotic expansion
below, we introduce
\begin{equation}\label{o117}
	\Theta_{V}(z):=\int_{\Omega}V(x)G_{Q}(z,x)^{2}dx,
	\qquad z\in\Omega.
\end{equation}

We are now ready to state our main results. The first result gives an
asymptotic expansion of $u_{\varepsilon}$ with a remainder in
$H_0^1(\Omega)$.
\begin{thm}[Asymptotic expansion of $u_{\varepsilon}$]\label{main}
	Let $\{u_{\varepsilon}\}$ be a family of solutions to \eqref{equation}
	satisfying \eqref{assume}. Then there exist sequences
	$\{z_{\varepsilon}\}\subset\Omega$,
		$\{\lambda_{\varepsilon}\}\subset(0,\infty)$ and
		$\{\alpha_{\varepsilon}\}\subset\mathbb R$, together with functions
		$r_{\varepsilon}\in T_{z_{\varepsilon},\lambda_{\varepsilon}}^{\perp}$,
		such that
	\begin{equation}
		\label{o118}
		u_{\varepsilon}=\alpha_{\varepsilon}\left(
		PU_{z_{\varepsilon},\lambda_{\varepsilon}}
		-\lambda_{\varepsilon}^{-\frac12}
		\Pi_{z_{\varepsilon},\lambda_{\varepsilon}}^{\perp}
		\left(H_Q(z_{\varepsilon},\cdot)-H_0(z_{\varepsilon},\cdot)\right)
		+r_{\varepsilon}\right).
	\end{equation}
	Moreover, there exists a point $z_0\in\mathcal N_Q$ with
	$\Theta_V(z_0)\leq0$ such that, along a subsequence,
	\begin{equation}
		\label{o119}
		|z_{\varepsilon}-z_0|=o\left(\varepsilon^{\frac12}\right),
	\end{equation}
	\begin{equation}
		\label{o120}
		\phi_Q(z_{\varepsilon})=o(\varepsilon),
	\end{equation}
	\begin{equation}
		\label{o121}
		\lim_{\varepsilon\to0}\varepsilon\lambda_{\varepsilon}
		=\frac{4\pi^2|Q(z_0)|}{|\Theta_V(z_0)|},
	\end{equation}
	\begin{equation}
		\label{o122}
		\alpha_{\varepsilon}
		=1+\frac{4\phi_0(z_0)|\Theta_V(z_0)|}
		{3\pi^3|Q(z_0)|}\varepsilon+o(\varepsilon),
	\end{equation}
	and
	\begin{equation}
		\label{o123}
		\|\nabla r_{\varepsilon}\|_2
		=O\left(\varepsilon^{\frac32}\right).
	\end{equation}
	If $\Theta_V(z_0)=0$, the right-hand side of \eqref{o121} is
	understood as $\infty$.
\end{thm}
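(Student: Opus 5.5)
The plan follows the scheme of Frank, K\"onig and Kova\v{r}\'{\i}k \cite{FKK3}, adapted to the nonlocal nonlinearity. The main idea is to expand the energy of $u_\varepsilon$ to order $\varepsilon$ and compare it with an upper bound from test functions.

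\textbf{Step 1: qualitative decomposition.} From \eqref{assume} and the classification of optimizers of \eqref{Prm} (Proposition \ref{prop22}), $u_\varepsilon$ is close in $H_0^1(\Omega)$ to the manifold $\{\alpha PU_{z,\lambda}\}$. By the standard minimization over $(\alpha,z,\lambda)$, or by the implicit function theorem in a neighborhood of that manifold, I would write
\[
u_\varepsilon=\alpha_\varepsilon(PU_{z_\varepsilon,\lambda_\varepsilon}+w_\varepsilon),\qquad w_\varepsilon\in T^\perp_{z_\varepsilon,\lambda_\varepsilon},
\]
with $\|\nabla w_\varepsilon\|_2\to0$, $\lambda_\varepsilon\to\infty$ and $d(z_\varepsilon)\lambda_\varepsilon\to\infty$. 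The last fact comes from excluding boundary concentration in the usual way.

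\textbf{Step 2: coercivity and a first energy expansion.} The key ingredient is a nondegeneracy/coercivity inequality for the linearization of the Hartree functional at $U_{0,1}$ on $T^\perp$:
\[
\int|\nabla w|^2+\int Qw^2-(5-\mu)A_{3,\mu}\int(|x|^{-\mu}*U^{6-\mu})U^{4-\mu}w^2-(6-\mu)A_{3,\mu}\int\!\!\int\frac{U^{5-\mu}(x)w(x)U^{5-\mu}(y)w(y)}{|x-y|^\mu}\ \ge\ c\|\nabla w\|^2 .
\]
The nonlocal kernel nondegeneracy is known from \cite{GGYZ,DY19}. This inequality leads to a bound $\|\nabla w_\varepsilon\|=O(\lambda^{-1/2})$. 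Expanding the Hartree term around $PU=U-\lambda^{-1/2}H_0(z,\cdot)+\dots$ then gives the coarse expansion
\[
S_{H,L}+c_1\phi_Q(z_\varepsilon)\lambda_\varepsilon^{-1}+o(\lambda^{-1}).
\]
Since $\phi_Q\ge0$, comparison with the upper bound of \cite{GGYZ} gives $\phi_Q(z_\varepsilon)\to0$, so $z_\varepsilon\to z_0\in\mathcal N_Q$.

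\textbf{Step 3: refined profile.} Because $\phi_Q(z_0)=0$, the leading $\lambda^{-1}$ term degenerates and a second-order analysis is needed. I would set
\[
w_\varepsilon=-\lambda^{-1/2}\Pi^\perp(H_Q(z,\cdot)-H_0(z,\cdot))+r_\varepsilon .
\]
The function $H_Q-H_0$ solves $-\Delta(H_Q-H_0)+Q(H_Q-H_0)=QH_0-Q|x-z|^{-1}$-type corrections, so the $Q$-interaction of $PU$ is cancelled to leading order. Testing the Euler--Lagrange equation \eqref{equation} against $r_\varepsilon$ and using the coercivity from Step 2 yields $\|\nabla r_\varepsilon\|=O(\lambda^{-1}+\varepsilon\lambda^{-1/2}+\phi_Q(z)\lambda^{-1/2})$.

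The resulting expansion of the quotient is
\[
S_{H,L}\Big(1+a_1\phi_Q(z)\lambda^{-1}-a_2Q(z)\lambda^{-2}+a_3\varepsilon\Theta_V(z)\lambda^{-1}+o(\lambda^{-2}+\varepsilon\lambda^{-1})\Big).
\]
The constants $a_i$ come from integrals of $U^{6-\mu}$ convolved terms. These should reduce, via \eqref{nonlocal critical equation}, to the local ones, giving the factor $4\pi^2$. I expect this step to be the main obstacle. The nonlocal term $\int\!\!\int$ must be expanded to second order in $\lambda^{-1/2}$, with cross terms of the form $(|x|^{-\mu}*U^{5-\mu}h)U^{5-\mu}h$. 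My first attempt would be to use \eqref{nonlocal critical equation} to rewrite every linear-in-perturbation term as $\int\nabla U\cdot\nabla h$, so that only the local computation remains. The genuinely quadratic nonlocal remainders would then be bounded by HLS (Proposition \ref{HLS}).

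\textbf{Step 4: optimizing the parameters.} Comparing the expansion with the upper bound obtained from test functions $PU_{z_0,\lambda}-\lambda^{-1/2}(H_Q-H_0)$ with $\lambda\sim\varepsilon^{-1}$ forces $\Theta_V(z_0)\le0$ and $\lambda\sim\varepsilon^{-1}$. Minimizing $\lambda\mapsto -a_2Q\lambda^{-2}+a_3\varepsilon\Theta_V\lambda^{-1}$ then gives \eqref{o121}, and it also gives $\phi_Q(z_\varepsilon)=o(\varepsilon)$. Nondegeneracy of $D^2\phi_Q(z_0)$ gives $\phi_Q(z)\gtrsim|z-z_0|^2$, hence \eqref{o119}. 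With $\lambda\sim\varepsilon^{-1}$, the bound on $r_\varepsilon$ becomes $O(\varepsilon^{3/2})$ once the $\phi_Q$ term is absorbed; this is \eqref{o123}. Finally, testing \eqref{equation} against $PU$ determines $\alpha_\varepsilon^{2(5-\mu)}$ up to $O(\varepsilon)$. The $\varepsilon$-correction arises from the $\phi_0(z_0)\lambda^{-1}$ boundary term, which together with \eqref{o121} gives \eqref{o122}.
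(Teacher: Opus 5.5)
\paragraph{Gap: energy comparison needs minimizers.} Your Steps 2 and 4 compare energies, and that argument is only available for \emph{energy minimizers}, which is not the setting of the theorem. Hypothesis \eqref{assume} says only that the unperturbed nonlocal Sobolev quotient of $u_\varepsilon$ tends to $S_{H,L}$, with no rate. It gives no upper bound on $\int(|\nabla u_\varepsilon|^2+(Q+\varepsilon V)u_\varepsilon^2)$ relative to the test functions $PU_{z_0,\lambda}-\lambda^{-1/2}(H_Q-H_0)(z_0,\cdot)$, even at order $\lambda_\varepsilon^{-1}$. It certainly gives none at order $\varepsilon\lambda_\varepsilon^{-1}\sim\lambda_\varepsilon^{-2}$, which is where $Q(z)$ and $\Theta_V(z)$ enter. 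So you cannot conclude $\phi_Q(z_\varepsilon)\to0$ in Step 2. In Step 4, ``minimizing'' $\lambda\mapsto -a_2Q\lambda^{-2}+a_3\varepsilon\Theta_V\lambda^{-1}$ and reading off $\phi_Q(z_\varepsilon)=o(\varepsilon)$ has no justification either. This is not a technicality. For minimizers one gets in addition that $z_0$ maximizes $\Theta_V^2/|Q|$ on $\{z\in\mathcal N_Q:\Theta_V(z)<0\}$. The theorem instead concerns arbitrary solutions satisfying \eqref{assume} and explicitly allows $\Theta_V(z_0)=0$, where minimizing over $\lambda$ makes no sense.

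\paragraph{What the equation must supply instead.} Testing \eqref{equation} against $\partial_\lambda\psi_{z_\varepsilon,\lambda_\varepsilon}$ encodes criticality in $\lambda$ rather than minimality. It gives the balance law $\phi_Q(z_\varepsilon)=\pi Q(z_\varepsilon)\lambda_\varepsilon^{-1}-\frac{\varepsilon}{4\pi}\Theta_V(z_\varepsilon)+o(\lambda_\varepsilon^{-1})+o(\varepsilon)$. Combined with $\phi_Q\ge0$ and assumption (e), this yields $z_0\in\mathcal N_Q$, $\Theta_V(z_0)\le0$ and $\lambda_\varepsilon^{-1}=O(\varepsilon)$. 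It does not yet give \eqref{o121}, because $\phi_Q(z_\varepsilon)$ could still be of order $\varepsilon$.

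To get \eqref{o120} you need the translational Pohozaev identity, obtained by testing against $\nabla u_\varepsilon$; its nonlocal term drops out by antisymmetry. Together with a boundary estimate $\|\partial_n q_\varepsilon\|_{L^2(\partial\Omega)}\lesssim\varepsilon\lambda_\varepsilon^{-1/2}$, it gives $|\nabla\phi_Q(z_\varepsilon)|\lesssim\varepsilon^\nu$ for every $\nu<1$. The nondegeneracy assumption (d) is then used in the form $\phi_Q(z)\lesssim|\nabla\phi_Q(z)|^2$ near $z_0$, giving $\phi_Q(z_\varepsilon)\lesssim\varepsilon^{2\nu}=o(\varepsilon)$. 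Only then does the balance law yield \eqref{o121}. In your plan (d) enters only to pass from \eqref{o120} to \eqref{o119}, so this essential step is missing.

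\paragraph{Secondary gap: the bound on $r_\varepsilon$ is too weak.} Your bound $\|\nabla r_\varepsilon\|_2=O(\lambda^{-1}+\varepsilon\lambda^{-1/2}+\phi_Q\lambda^{-1/2})$ does not give \eqref{o123}. With $\lambda_\varepsilon^{-1}\sim\varepsilon$, its first term is $O(\varepsilon)$, not $O(\varepsilon^{3/2})$. You need $O(\lambda^{-3/2}+\lambda^{-1}\phi_Q(z_\varepsilon)+\varepsilon\lambda^{-1/2})$. This requires showing that the $T_{z_\varepsilon,\lambda_\varepsilon}$-component $s_\varepsilon$ of the correction, which has size $\lambda^{-1}$ in $\dot H^1$, couples to $r_\varepsilon$ only at order $\lambda^{-3/2}\|r_\varepsilon\|$. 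To do so:
\begin{enumerate}
\item write $s_\varepsilon=\beta\lambda_\varepsilon^{-1}U_{z_\varepsilon,\lambda_\varepsilon}+\gamma\partial_\lambda U_{z_\varepsilon,\lambda_\varepsilon}+\tilde s_\varepsilon$ with $\beta,\gamma=O(1)$ and $\|\tilde s_\varepsilon\|_6\lesssim\lambda_\varepsilon^{-3/2}$;
\item use the equations satisfied by $U_{z_\varepsilon,\lambda_\varepsilon}$ and $\partial_\lambda U_{z_\varepsilon,\lambda_\varepsilon}$, together with $r_\varepsilon\in T^\perp_{z_\varepsilon,\lambda_\varepsilon}$, to cancel the main parts.
\end{enumerate}
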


The second result describes the pointwise blow-up behavior both at the
blow-up point and away from it.
\begin{thm}[Pointwise blow-up behavior]
	\label{main2}
	Let $\{u_{\varepsilon}\}$ be a family of solutions to \eqref{equation}
	satisfying \eqref{assume}, and let $z_0$ be the concentration point given
	by Theorem \ref{main}.
	\begin{itemize}
		\item[(a)] The asymptotic behavior near $z_0$ is given by
		\begin{equation*}
			\lim_{\varepsilon\to0}\varepsilon
			\|u_{\varepsilon}\|_{\infty}^{2}
			=\lim_{\varepsilon\to0}\varepsilon
			|u_{\varepsilon}(z_{\varepsilon})|^{2}
			=\frac{4\pi^{2}|Q(z_0)|}{|\Theta_V(z_0)|}.
		\end{equation*}
		If $\Theta_V(z_0)=0$, the right-hand side is to be understood
		as $\infty$.
		\item[(b)] The asymptotic behavior away from $z_0$ is given by
		\begin{equation*}
			u_{\varepsilon}(z)
			=\lambda_{\varepsilon}^{-\frac12}G_Q(z,z_0)
			+o\left(\lambda_{\varepsilon}^{-\frac12}\right)
		\end{equation*}
		for every fixed $z\in\Omega\setminus\{z_0\}$. The convergence is uniform
		for $z$ away from $z_0$.
	\end{itemize}
\end{thm}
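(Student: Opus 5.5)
Theorem \ref{main2} follows from the expansion in Theorem \ref{main} together with elliptic estimates for the nonlocal equation. The plan follows the strategy of \cite{FKK3}, with the local nonlinearity replaced by the Hartree term.

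\textbf{Part (a).} We first show that $\|u_\varepsilon\|_\infty$ is comparable to $\lambda_\varepsilon^{1/2}$ and that $u_\varepsilon$ is uniformly close to the bubble at scale $\lambda_\varepsilon^{-1}$. Consider the rescaled functions
$$\tilde u_\varepsilon(x):=\lambda_\varepsilon^{-1/2}u_\varepsilon\big(z_\varepsilon+\lambda_\varepsilon^{-1}x\big)$$
on $\tilde\Omega_\varepsilon=\lambda_\varepsilon(\Omega-z_\varepsilon)$. By scaling invariance, $\tilde u_\varepsilon$ solves
$$-\Delta\tilde u+\lambda_\varepsilon^{-2}(Q+\varepsilon V)(z_\varepsilon+\lambda_\varepsilon^{-1}\cdot)\,\tilde u=A_{3,\mu}\big(|x|^{-\mu}*\tilde u^{6-\mu}\big)\tilde u^{5-\mu}.$$
From \eqref{o118}, \eqref{o122} and \eqref{o123}, $\tilde u_\varepsilon\to U_{0,1}$ in $\mathcal D^{1,2}$. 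The $\lambda_\varepsilon^{-1/2}$ correction term and $r_\varepsilon$ vanish after rescaling, and $PU-U=O(\lambda^{-1/2})$ in the rescaled sup norm.

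Next we need uniform $L^\infty$ bounds on $\tilde u_\varepsilon$. The Riesz potential $|x|^{-\mu}*\tilde u^{6-\mu}$ is bounded in $L^\infty$ uniformly, since $\tilde u^{6-\mu}\in L^{6/(6-\mu)}\cap$ (tails controlled) and $\mu<2<3$; this can be bootstrapped after a Moser/Brezis--Kato iteration. With that bound, the equation becomes $-\Delta\tilde u+c\tilde u=a(x)\tilde u^{5-\mu}$ with $a$ bounded and $\tilde u^{4-\mu}$ in $L^{3/2}$ with small tails (from concentration compactness / $\mathcal D^{1,2}$ convergence). Brezis--Kato then gives uniform $L^p$ bounds for all $p$, and hence $L^\infty$ and $C^{1,\alpha}_{\rm loc}$ bounds, also near the boundary after flattening, using that $\partial\Omega\in C^2$ and $\operatorname{dist}(z_\varepsilon,\partial\Omega)\lambda_\varepsilon\to\infty$.

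We must also rule out large values of $u_\varepsilon$ far from $z_\varepsilon$ in rescaled variables. For this we use the decay estimate in the next step: away from $B(z_\varepsilon,R\lambda_\varepsilon^{-1})$ we will show $u_\varepsilon\lesssim\lambda_\varepsilon^{-1/2}|x-z_\varepsilon|^{-1}$. Combined with the local uniform convergence $\tilde u_\varepsilon\to U_{0,1}$, whose maximum is $1$ at $0$, this gives
$$\|u_\varepsilon\|_\infty=\lambda_\varepsilon^{1/2}(1+o(1)),\qquad u_\varepsilon(z_\varepsilon)=\lambda_\varepsilon^{1/2}(1+o(1)).$$
Part (a) then follows from \eqref{o121}.

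\textbf{Part (b).} We write Green's representation
$$u_\varepsilon(z)=\frac1{4\pi}\int_\Omega G_{Q}(z,x)\big[A_{3,\mu}(|\cdot|^{-\mu}*u_\varepsilon^{6-\mu})u_\varepsilon^{5-\mu}-\varepsilon Vu_\varepsilon\big](x)\,dx.$$
The nonlinear term $f_\varepsilon$ is concentrated at scale $\lambda_\varepsilon^{-1}$ around $z_\varepsilon$. Its total mass is
$$\int_\Omega f_\varepsilon=\lambda_\varepsilon^{-1/2}\Big(\int_{\R^3}A_{3,\mu}(|\cdot|^{-\mu}*U^{6-\mu})U^{5-\mu}+o(1)\Big)=\lambda_\varepsilon^{-1/2}(4\pi+o(1)).$$
The value $4\pi$ comes from integrating $-\Delta U_{0,1}$, that is, $\lim_{R\to\infty}$ of the flux $=4\pi$, since $U\sim|x|^{-1}$. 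To justify passing to the limit in the mass, we need a uniform decay bound $\tilde u_\varepsilon(x)\lesssim(1+|x|)^{-1}$, which in turn makes the tails of $f_\varepsilon$ integrable. I would obtain this bound by comparison. Outside $B_R$, the Hartree coefficient is $\lesssim|x|^{-\mu}\tilde u^{4-\mu}\le\delta|x|^{-2}$, since $\tilde u\lesssim|x|^{-1/2}$ from the Sobolev/Kelvin-transform bound. A Kelvin transform plus Moser iteration then gives decay $|x|^{-1}$. We must also handle the term $\lambda_\varepsilon^{-2}(Q+\varepsilon V)$, which is lower order and is absorbed using coercivity of $-\Delta+Q$.

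With these ingredients, for fixed $z\neq z_0$, continuity of $G_Q(z,\cdot)$ near $z_0$ gives
$$\frac1{4\pi}\int G_Q(z,x)f_\varepsilon(x)\,dx=\lambda_\varepsilon^{-1/2}G_Q(z,z_0)+o\big(\lambda_\varepsilon^{-1/2}\big).$$
The contribution near $z$ itself is small by the decay bound. The term $\varepsilon\int G_Q(z,x)V u_\varepsilon$ is $O(\varepsilon\|u_\varepsilon\|_{1})=O\big(\varepsilon\lambda_\varepsilon^{-1/2}\big)$, hence of lower order. All estimates are uniform on compact subsets away from $z_0$.

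The main obstacle is the uniform decay estimate $u_\varepsilon(x)\lesssim\lambda_\varepsilon^{1/2}(1+\lambda_\varepsilon|x-z_\varepsilon|)^{-1}$. The nonlocal potential prevents direct use of the local arguments in \cite{FKK3}. The first thing to try is to bound $|x|^{-\mu}*\tilde u^{6-\mu}$ pointwise by $C(1+|x|)^{-\mu}$ via splitting the convolution, which reduces the problem to a local equation with a Hardy-type small potential, where the standard comparison argument applies.
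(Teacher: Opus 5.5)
Your route is genuinely different from the paper's and is sound in outline. Nearly all of the work, however, sits in the pointwise decay bound you defer.

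\textbf{How the paper proceeds.} The paper never rescales to a limiting profile and never proves pointwise decay. Instead it upgrades the integrability of the remainder in $u_\varepsilon=\alpha_\varepsilon(PU_{z_\varepsilon,\lambda_\varepsilon}+v_\varepsilon)$. Testing $-\Delta v_\varepsilon=\mathcal F$ against $|v_\varepsilon|^{r-1}v_\varepsilon$ and absorbing gives $\|v_\varepsilon\|_p\lesssim\lambda_\varepsilon^{-3/p}$ for every $p>6$. The $G_0$-representation together with $\|\mathcal F\|_q\lesssim\lambda_\varepsilon^{2-3/q}$, $q\in(3/2,3)$, then gives $\|v_\varepsilon\|_\infty=o(\lambda_\varepsilon^{\nu})$ (Proposition~\ref{prop43o}). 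Part (a) is read off directly, since $\|PU_{z_\varepsilon,\lambda_\varepsilon}\|_\infty$ and $PU_{z_\varepsilon,\lambda_\varepsilon}(z_\varepsilon)$ are both $\lambda_\varepsilon^{1/2}+O(\lambda_\varepsilon^{-1/2})$. Part (b) splits the $G_Q$-representation at radius $\lambda_\varepsilon^{-2/7}$ and controls the outer piece by H\"older, HLS and the bound on $\|v_\varepsilon\|_{6(5-\mu)/(3-\mu)}$. This keeps everything in integral norms, handles the Hartree term only through HLS, and never has to confront $Q$ pointwise.

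\textbf{What your route gives and costs.} Your blow-up route gives more: locally uniform convergence $\tilde u_\varepsilon\to U_{0,1}$ and an envelope $u_\varepsilon\lesssim\lambda_\varepsilon^{1/2}(1+\lambda_\varepsilon|x-z_\varepsilon|)^{-1}$, from which (b) is almost immediate. The price is a real decay argument for a nonlocal equation.

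\textbf{Repairs needed in the decay step.} As sketched, that step needs three repairs.

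First, with only $\tilde u_\varepsilon\le o(1)|x|^{-1/2}$ available, splitting the convolution gives $|x|^{-\mu}*\tilde u_\varepsilon^{6-\mu}\lesssim|x|^{-\mu}\|\tilde u_\varepsilon\|^{6-\mu}_{L^{6-\mu}(\tilde\Omega_\varepsilon)}+o(1)|x|^{-\mu/2}$, not $(1+|x|)^{-\mu}$. The uniform $L^{6-\mu}$ bound must come from the quantitative estimate $\|\nabla v_\varepsilon\|_2=O(\lambda_\varepsilon^{-1/2})$, since $|x|^{-1/2}$ is not $(6-\mu)$-integrable at infinity. The Hardy-type bound $W\le\delta|x|^{-2}$ on the Hartree coefficient for $|x|\ge R$ does survive.

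Second, such a potential yields only $\tilde u_\varepsilon\lesssim|x|^{-1+O(\delta)}$. This is true whether you use comparison or Kelvin transform plus Moser iteration, because the Kelvin transform sees a Hardy potential $\delta|x|^{-2}$ at the origin and need not be bounded. Reaching $|x|^{-1}$ requires one more pass with the improved coefficient $W\lesssim|x|^{-4+O(\delta)}$. Note, though, that $|x|^{-1+O(\delta)}$ already suffices for the mass convergence and the far-field estimate in (b).

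Third, $\lambda_\varepsilon^{-2}(Q+\varepsilon V)$ is not lower order at $|x|\sim\lambda_\varepsilon$, i.e.\ at distance of order one from $z_\varepsilon$. Its scale-invariant size is of order one there. After a Kelvin transform it becomes $\lambda_\varepsilon^{-2}|x|^{-4}$, which is not uniformly in $L^q$ for any $q>3/2$. The clean way around this is a fixed-point estimate for $\sup_{|x|\ge R}|x|^{a}\tilde u_\varepsilon$ run through the Green function of $-\Delta+Q+\varepsilon V$, using $G_{Q+\varepsilon V}(x,y)\lesssim|x-y|^{-1}$, which follows from coercivity.

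\textbf{Part (a) needs no decay.} Once the Riesz potential is bounded, uniform $L^6$-smallness of $\tilde u_\varepsilon$ outside $B_R$ plus the local maximum principle for the zero extension (a subsolution) make $\sup_{|x|\ge 2R}\tilde u_\varepsilon$ small for $R$ large and $\varepsilon$ small.
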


Theorem \ref{main} shows that the leading-order profile of
$u_{\varepsilon}$ is given by the projected bubble
$PU_{z_{\varepsilon},\lambda_{\varepsilon}}$. Such a single-bubble
decomposition is a standard phenomenon in the blow-up analysis of critical
elliptic problems and critical
 Hartree  problems (see \cite{Han,Re,Re2,YZ}). 
 The three-dimensional case is substantially more delicate than the higher-dimensional setting studied by Yang and Zhao \cite{YZ}. Indeed, the criticality of $Q$ implies that
$\inf_{z\in\Omega}\phi_Q(z)=0$, which causes a cancellation of the leading-order term determining the blow-up behavior in higher dimensions. 
To obtain a more precise description of the blow-up parameters, we need to
extract the leading-order correction to the bubble, which takes the form
$\lambda_{\varepsilon}^{-\frac12}
\Pi_{z_{\varepsilon},\lambda_{\varepsilon}}^{\perp}
\left(H_Q(z_{\varepsilon},\cdot)-H_0(z_{\varepsilon},\cdot)\right)$. 
An asymptotic expansion closely related to that in Theorem \ref{main}  is proved in \cite{GGYZ} for energy-minimizing solutions of \eqref{equation}.  Their analysis does not
require the nondegeneracy of $D^2\phi_Q(z_0)$; instead, it assumes that
$\{z\in\mathcal N_Q:\Theta_V(z)<0\}\neq\varnothing$. In addition, the minimizing property yields
the following characterization of the concentration point
\begin{equation*}
	\frac{\Theta_V(z_0)^2}{|Q(z_0)|}
	=
	\sup_{\substack{z\in\mathcal N_Q\\ \Theta_V(z)<0}}
	\frac{\Theta_V(z)^2}{|Q(z)|}.
\end{equation*}
This property is specific to the energy-minimizing setting and is not available in the general setting considered here.

Our proof is based on an iterative improvement of the approximation of
$u_{\varepsilon}$. The main tool is the coercivity estimate from
\cite{GGYZ}, which is stated in Lemma \ref{E40}. 
To arrive at the level of precision stated in Theorem \ref{main}, two iterations are necessary.
The first
iteration, carried out in Section \ref{A first expansion}, shows that the concentration point stays
away from the boundary and gives the sharp $H_0^1(\Omega)$ estimate for
$\alpha_{\varepsilon}^{-1}u_{\varepsilon}
-PU_{z_{\varepsilon},\lambda_{\varepsilon}}$. 
In Section \ref{sec:refining-expansion}, after extracting the subleading
correction
$\lambda_{\varepsilon}^{-\frac12}
\Pi_{z_{\varepsilon},\lambda_{\varepsilon}}^{\perp}
\left(H_Q(z_{\varepsilon},\cdot)-H_0(z_{\varepsilon},\cdot)\right)$,
we carry out the second iteration and obtain a sharp
$H_0^1(\Omega)$ estimate for the remainder $r_{\varepsilon}$. 
Moreover, suitable Pohozaev identities yield precise asymptotic expansions
for $\alpha_{\varepsilon}$ and $\phi_Q(z_{\varepsilon})$, together with an
estimate for $\nabla\phi_Q(z_{\varepsilon})$. These estimates imply that
$z_0\in\mathcal N_Q$ and $\nabla\phi_Q(z_0)=0$.
In Section \ref{sec:proof-main}, we determine the asymptotic behavior of
$\varepsilon \lambda_{\varepsilon}$ and complete the proof of Theorem \ref{main}. In Section \ref{sec:proof-main2}, we use a Moser iteration argument to show that
$\alpha_{\varepsilon}^{-1}u_{\varepsilon}
-PU_{z_{\varepsilon},\lambda_{\varepsilon}}$ is negligible in the
$L^{\infty}$ norm, which leads to Theorem \ref{main2}. Some technical estimates and auxiliary propositions  are
collected in the two appendices.

\section{A first expansion} \label{A first expansion}
The main result of this section is the following preliminary asymptotic expansion of the family of solutions $\{u_\varepsilon\}$, which is important for the proof of Theorem \ref{main}. 
\begin{Prop}\label{prop21}
	Let $\{u_\varepsilon\}$  be a family of solutions to \eqref{equation} satisfying \eqref{assume}. Then we have, for $\varepsilon$ small enough,
	\begin{equation} 
u_{\varepsilon}=\alpha_{\varepsilon}(PU_{z_{\varepsilon},\lambda_{\varepsilon}}+v_{\varepsilon}),
\end{equation}
where $z_{\varepsilon} \in \Omega, \lambda_{\varepsilon} \in \mathbb{R}^+, \alpha_{\varepsilon}\in \mathbb{R}$ and $v_{\varepsilon} \in  T_{z_{\varepsilon},\lambda_{\varepsilon}}^{\bot}$   satisfying 
\begin{equation}
	z_{\varepsilon} \to z_0 \in \Omega,  \quad \alpha_{\varepsilon} \to 1,  \quad\lambda_{\varepsilon} \to \infty,  \quad \|\nabla v_{\varepsilon}\|_2 =O\left(\lambda^{-\frac{1}{2}}_{\varepsilon}\right)  \quad \text{as }\varepsilon \to 0 . 
\end{equation}
\end{Prop}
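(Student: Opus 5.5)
We follow the scheme of Frank, K\"onig and Kova\v{r}\'{\i}k \cite{FKK3}, adapted to the nonlocal term. The argument has three stages: a qualitative decomposition, a quantitative bound on the remainder, and ruling out boundary concentration.

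\textbf{Step 1: qualitative decomposition.} Assumption \eqref{assume} says $u_\varepsilon$ is a minimizing sequence for $S_{H,L}$ on the bounded domain $\Omega$, extended by zero to $\mathbb{R}^3$. By the concentration-compactness (profile decomposition) characterization of optimizers of \eqref{Prm}, which is unique up to symmetries and given by the Aubin--Talenti family \eqref{atb}, there are $x_\varepsilon\in\Omega$, $\mu_\varepsilon\to\infty$ and constants $c_\varepsilon$ with
\[
\|\nabla(u_\varepsilon - c_\varepsilon U_{x_\varepsilon,\mu_\varepsilon})\|_2\to0 .
\]
Using the equation \eqref{equation} tested against $u_\varepsilon$ together with \eqref{assume}, one normalizes the amplitude so that $c_\varepsilon\to1$. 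This is where the constant $A_{3,\mu}$ enters: it makes the limit equation \eqref{nonlocal critical equation} hold with coefficient exactly one.

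We then invoke the standard minimization lemma of Bahri--Coron/Rey type. For a function $H_0^1$-close to the manifold $\{\alpha PU_{z,\lambda}\}$ with $d(z,\partial\Omega)\lambda\to\infty$, the infimum
\[
\inf_{\alpha,z,\lambda}\|\nabla(u-\alpha PU_{z,\lambda})\|_2
\]
is attained at some $(\alpha_\varepsilon,z_\varepsilon,\lambda_\varepsilon)$. The first-order conditions then give $v_\varepsilon:=\alpha_\varepsilon^{-1}u_\varepsilon-PU_{z_\varepsilon,\lambda_\varepsilon}\in T^\perp_{z_\varepsilon,\lambda_\varepsilon}$ with $\|\nabla v_\varepsilon\|_2\to0$. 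For this step we need to know $d_\varepsilon\lambda_\varepsilon\to\infty$, where $d_\varepsilon=d(z_\varepsilon,\partial\Omega)$; this follows because a bubble concentrating at distance $O(\lambda^{-1})$ from the boundary has energy ratio bounded away from $S_{H,L}$. Passing to a subsequence, $z_\varepsilon\to z_0\in\overline\Omega$.

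\textbf{Step 2: quantitative bound on $v_\varepsilon$.} Insert the decomposition into the weak form of \eqref{equation} and test with $v_\varepsilon$. Expanding the Hartree term to second order in $v_\varepsilon$ gives
\[
\int|\nabla v|^2+\int(Q+\varepsilon V)v^2-\mathcal{Q}_{z,\lambda}[v]
= -\langle \text{error of } PU_{z,\lambda},v\rangle + O\big(\|\nabla v\|_2^{3}\big),
\]
where $\mathcal{Q}_{z,\lambda}$ is the quadratic form of the linearized Hartree operator, consisting of the local $(5-\mu)$ term and the nonlocal cross term.

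The coercivity estimate of Lemma \ref{E40} (from \cite{GGYZ}) bounds the left side below by $c\|\nabla v\|_2^2$ for $v\in T^\perp_{z,\lambda}$. The linear term is controlled as follows. Since $PU=U-\lambda^{-1/2}H_0(z,\cdot)+\dots$, the error of $PU$ tested against $v$ consists of:
\begin{itemize}
\item $\int QPU\,v$, bounded by $\|PU\|_{6/5}\|v\|_6\lesssim\lambda^{-1/2}\|\nabla v\|_2$;
\item the discrepancy between the Hartree nonlinearity applied to $PU$ and to $U$, bounded by $\lambda^{-1/2}d^{-1}$ in the dual norm, using HLS and the standard estimates $\|U-PU\|_{L^6}\lesssim(\lambda d)^{-1/2}$ and $\|U-PU\|_\infty\lesssim \lambda^{-1/2}d^{-1}$;
\item the terms involving $\alpha_\varepsilon$, which are handled by testing the equation with $PU$, giving $\alpha_\varepsilon\to1$ with a rate.
\end{itemize}
Together these give $\|\nabla v_\varepsilon\|_2\lesssim\lambda_\varepsilon^{-1/2}+(\lambda_\varepsilon d_\varepsilon)^{-1/2}$.

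\textbf{Step 3: the concentration point is interior.} This is the main obstacle. Following \cite{FKK3} and Rey, we use the Pohozaev-type identity obtained by testing the equation with $\partial_{z_i}PU_{z_\varepsilon,\lambda_\varepsilon}$, or equivalently a boundary Pohozaev identity with $\nabla u_\varepsilon\cdot\nu$. The nonlocal term is translation invariant, so its contribution reduces to the interaction with $v$ and with $U-PU$, which can be estimated by HLS.

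The leading term is $\lambda^{-1}\nabla_z\phi_0(z_\varepsilon)\sim\lambda^{-1}d_\varepsilon^{-2}$, with a definite sign pointing toward the boundary. The remaining terms are $O(\lambda^{-1}+\|\nabla v\|_2^2\lambda)$ with $\|\nabla v\|_2^2\lesssim\lambda^{-1}+(\lambda d)^{-1}$, after an extra factor of $\lambda^{-1}$ from the derivative normalization. Comparing the two sides forces $d_\varepsilon^{-2}\lesssim 1+d_\varepsilon^{-1}$, hence $d_\varepsilon\gtrsim1$ and $z_0\in\Omega$.

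The delicate part is making the nonlocal convolution terms sharp enough near the boundary. We would first use the decay of $U^{6-\mu}*|x|^{-\mu}\sim\lambda^{(6-\mu)/2-3}\dots$, which behaves like $|x|^{-\mu}$ times a constant away from $z$, so that these terms are of the same order as the local case.

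Once $d_\varepsilon\gtrsim1$ is known, Step 2 gives $\|\nabla v_\varepsilon\|_2=O(\lambda_\varepsilon^{-1/2})$, and testing with $PU$ again yields $\alpha_\varepsilon\to1$. This completes the statement of Proposition \ref{prop21}.
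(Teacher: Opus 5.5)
Your Steps 1 and 2 follow the paper's route: the profile decomposition and Bahri--Coron minimization, then testing the $v_\varepsilon$-equation with $v_\varepsilon$ and using Lemma \ref{E40}. Step 3, however, has a genuine gap.

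\textbf{The two identities you call ``equivalent'' are not, and the one you rely on does not close.} If you test with $\partial_{z_i}PU_{z_\varepsilon,\lambda_\varepsilon}$, the leading term is of order $\lambda_\varepsilon^{-1}|\nabla\phi_0(z_\varepsilon)|\sim\lambda_\varepsilon^{-1}d_\varepsilon^{-2}$. The terms quadratic in $v_\varepsilon$, however, can only be bounded by $\lambda_\varepsilon\|\nabla v_\varepsilon\|_2^2$ (using $|\partial_{z_i}U_{z,\lambda}|\lesssim\lambda U_{z,\lambda}$). In dimension three $\|\nabla v_\varepsilon\|_2^2$ genuinely is of order $\lambda_\varepsilon^{-1}$, driven by the $Q$-term (compare \eqref{o118}). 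So this remainder is $O(1)$ and swamps the main term unless $d_\varepsilon\lesssim\lambda_\varepsilon^{-1/2}$. Normalizing the test function rescales both sides by the same factor. Applying an ``extra $\lambda_\varepsilon^{-1}$'' only to the remainder is therefore not legitimate, and $d_\varepsilon^{-2}\lesssim1+d_\varepsilon^{-1}$ does not follow.

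\textbf{What the paper does instead.} It uses the Pohozaev identity \eqref{o29} obtained by testing with $\nabla u_\varepsilon$. There the Hartree term vanishes \emph{exactly}, by antisymmetry of $(x-y)|x-y|^{-\mu-2}$ and not merely up to interaction terms, and no bulk quadratic term in $v_\varepsilon$ survives. The price is that $v_\varepsilon$ then enters only through $\int_{\partial\Omega}n\bigl(2\partial_nPU\,\partial_nv_\varepsilon+(\partial_nv_\varepsilon)^2\bigr)$, which $\|\nabla v_\varepsilon\|_2$ does not control at all. The missing ingredient is Lemma \ref{lemma26}:
\[
\|\partial_nv_\varepsilon\|_{L^2(\partial\Omega)}^2=O(\lambda_\varepsilon^{-1}d_\varepsilon^{-1})+o(\lambda_\varepsilon^{-1}d_\varepsilon^{-2}).
\]
It is proved via $\|\partial_nw\|_{L^2(\partial\Omega)}\lesssim\|\Delta w\|_{L^{3/2}(\Omega)}$ with $w=\zeta v_\varepsilon$, where the cutoff $\zeta$ vanishes on $B_{d_\varepsilon/2}(z_\varepsilon)$, so only the far field of the bubble enters. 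A Moser-type argument then handles the supercritical quantity $\||v_\varepsilon|^{5-\mu}\zeta\|_{L^{6/(4-\mu)}}$. Together with Lemma \ref{A3} and $|\nabla\phi_0(z)|\gtrsim d^{-2}$, this gives $d_\varepsilon^{-2}=O(d_\varepsilon^{-3/2})+o(d_\varepsilon^{-2})$.

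\textbf{Your Step 2 bound is also too weak for this.} The boundary estimate needs the sharp bound of Proposition \ref{prop24}, $\|\nabla v_\varepsilon\|_2=O\bigl(\lambda_\varepsilon^{-1/2}+(\lambda_\varepsilon d_\varepsilon)^{-1}\bigr)$. Your Step 2 only gives $(\lambda_\varepsilon d_\varepsilon)^{-1/2}$, which is moreover inconsistent with the dual-norm bound $\lambda_\varepsilon^{-1/2}d_\varepsilon^{-1}$ you quote. With that weaker bound, the cutoff term $\|\nabla\zeta\|_6^2\|\nabla v_\varepsilon\|_2^2$ could only be bounded by $O(\lambda_\varepsilon^{-1}d_\varepsilon^{-2})$. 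This is the same order as the main term, so the argument would not close. The sharp rate comes from three sources:
\begin{itemize}
\item the extra factor $\|U^{4-\mu}\|_{6/(5-\mu)}\lesssim\lambda^{-1/2}$ multiplying $\|\varphi_{z,\lambda}\|_\infty\lesssim\lambda^{-1/2}d^{-1}$;
\item the bound $\|\varphi_{z,\lambda}\|_6^2\lesssim(\lambda d)^{-1}$ in the terms quadratic in $\varphi$;
\item the bound $\|U\|_{L^6(\mathbb{R}^3\setminus\Omega)}^{6-\mu}\lesssim(\lambda d)^{-(6-\mu)/2}$ for the part of the convolution lying outside $\Omega$.
\end{itemize}
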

We first establish a preliminary version of Proposition \ref{prop21}, which is stated in Proposition \ref{prop22}. This qualitative initial expansion is mainly based on the concentration analysis of Yang and Zhao \cite{YZ} for critical Hartree equations on bounded domains, together with the works of Gao and Yang \cite{GY} and Bahri and Coron \cite{BC}. It has the same form as the decomposition in Proposition \ref{prop21}, but the estimate of the remainder is not yet quantitative and the limiting concentration point may initially lie on $\partial\Omega$.

To pass from Proposition \ref{prop22} to Proposition \ref{prop21}, it remains to establish the bound on $\|\nabla v_{\varepsilon}\|_2$ and the fact that the limiting concentration point $z_0$ lies in the interior of $\Omega$. The estimate for $\|\nabla v_{\varepsilon}\|_2$ follows from the coercivity inequality in \cite[Lemma~5.6]{GGYZ} and estimates of the nonlocal terms based on the HLS inequality. To exclude concentration at the boundary, we use a Pohozaev-type identity and the boundary estimates from \cite{Re}.

We divide the proof of Proposition \ref{prop21} into several subsections. The main conclusion of each subsection is stated as a proposition at its beginning.
\subsection{A qualitative initial expansion.}
\begin{Prop}\label{prop22}
	Let $\{u_\varepsilon\}$  be a family of solutions to \eqref{equation} satisfying \eqref{assume}. Then we have, for $\varepsilon$ small enough,
	\begin{equation} \label{2.1}
u_{\varepsilon}=\alpha_{\varepsilon}(PU_{z_{\varepsilon},\lambda_{\varepsilon}}+v_{\varepsilon}),
\end{equation}
where $z_{\varepsilon} \in \Omega, \lambda_{\varepsilon} \in \mathbb{R}^+, \alpha_{\varepsilon}\in \mathbb{R}$ and $v_{\varepsilon} \in  T_{z_{\varepsilon},\lambda_{\varepsilon}}^{\bot}$   satisfying 
\begin{equation}
	z_{\varepsilon} \to z_0 \in \bar{\Omega},  \quad \alpha_{\varepsilon} \to 1,  \quad\lambda_{\varepsilon}d_{\varepsilon} \to \infty,  \quad \|\nabla v_{\varepsilon}\|_2 \to 0  \quad \text{as } \varepsilon \to 0 ,   
\end{equation}
where  $d_{\varepsilon}=\mbox{dist}(z_{\varepsilon},\partial\Omega)$ is the distance between $z_{\varepsilon}$ and the boundary of  $\Omega$.
\end{Prop}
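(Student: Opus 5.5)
The plan is the standard concentration–compactness route followed by the Bahri–Coron minimization that produces the parameters.

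\emph{Step 1: normalization.} Testing \eqref{equation} with $u_\varepsilon$ gives
\[
\int_\Omega\left(|\nabla u_\varepsilon|^2+(Q+\varepsilon V)u_\varepsilon^2\right)dx = A_{3,\mu}D(u_\varepsilon),
\]
where $D(u)$ denotes the double integral in \eqref{assume}. Since $Q$ is critical, $-\Delta+Q$ is coercive, so for small $\varepsilon$
\[
c\|\nabla u_\varepsilon\|_2^2\le A_{3,\mu}D(u_\varepsilon)\le C S_{H,L}^{-(6-\mu)}\|\nabla u_\varepsilon\|_2^{2(6-\mu)} .
\]
Hence $\|\nabla u_\varepsilon\|_2$ is bounded above and below. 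Next I show $u_\varepsilon\rightharpoonup0$ in $H_0^1(\Omega)$. Along a subsequence $u_\varepsilon\rightharpoonup u$, and $u\ne0$ would contradict \eqref{assume}. Indeed, by a Brezis--Lieb splitting of both the Dirichlet integral and the HLS double integral, the quotient of a minimizing sequence can converge to $S_{H,L}$ only if one piece carries all the mass. The whole piece cannot be $u\neq0$, because $S_{H,L}$ is not attained on a bounded domain, as the extremals are exactly the functions $U_{z,\lambda}$. So $u=0$, and then $\int Qu_\varepsilon^2\to0$ by compact embedding.

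\emph{Step 2: profile decomposition.} Rescale $\tilde u_\varepsilon(x)=\lambda^{-1/2}u_\varepsilon(z+x/\lambda)$. Apply the concentration-compactness principle (Lions) to the measures $|\nabla u_\varepsilon|^2dx$ and $u_\varepsilon^6dx$, or equivalently the Struwe-type global compactness for Hartree equations of Yang--Zhao \cite{YZ} and Gao--Yang \cite{GY}. Together with the energy level equal to one bubble, which follows from \eqref{assume} and the identity above, this yields points $\tilde z_\varepsilon$, scales $\tilde\lambda_\varepsilon\to\infty$ and a constant $c_\varepsilon$ such that
\[
\|\nabla(u_\varepsilon-c_\varepsilon U_{\tilde z_\varepsilon,\tilde\lambda_\varepsilon})\|_{L^2(\mathbb R^3)}\to0 .
\]
Here I use the classification of the optimizers of \eqref{a9} to identify the limit profile, and I choose the normalization so that $c_\varepsilon\to1$. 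The value $c_\varepsilon\to1$ comes from the equation: since $u_\varepsilon$ solves \eqref{equation} with the constant $A_{3,\mu}$ for which $U$ solves \eqref{nonlocal critical equation}, $c^{2}=A c^{2(6-\mu)}$ forces $c=1$ in the limit. The point I expect to be the main obstacle is showing that $\tilde\lambda_\varepsilon\operatorname{dist}(\tilde z_\varepsilon,\partial\Omega)\to\infty$. If this distance stayed bounded, the blow-up limit would be a nontrivial optimizer in a half-space. The nonlocal Sobolev quotient is not attained on a half-space (again by the classification, since the Aubin--Talenti functions do not vanish on a hyperplane), so this is excluded. Once it is excluded, $\|\nabla(PU-U)\|_2\to0$ by Rey's estimates \cite{Re}, so $\|\nabla(u_\varepsilon-PU_{\tilde z_\varepsilon,\tilde\lambda_\varepsilon})\|_2\to0$.

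\emph{Step 3: orthogonal decomposition.} Following Bahri--Coron \cite{BC}, minimize
\[
\left\|\nabla\left(u_\varepsilon-\alpha PU_{z,\lambda}\right)\right\|_2
\]
over $(\alpha,z,\lambda)$ in a neighbourhood of $(1,\tilde z_\varepsilon,\tilde\lambda_\varepsilon)$. Since this quantity is $o(1)$ at $(1,\tilde z_\varepsilon,\tilde\lambda_\varepsilon)$, a minimizer $(\alpha_\varepsilon,z_\varepsilon,\lambda_\varepsilon)$ exists in the interior of that neighbourhood. This uses the near-orthogonality and non-degeneracy of the vectors $PU,\partial_\lambda PU,\partial_{z_i}PU$ when $\lambda d\to\infty$, and an implicit function argument. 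The first-order conditions say exactly that $v_\varepsilon:=\alpha_\varepsilon^{-1}u_\varepsilon-PU_{z_\varepsilon,\lambda_\varepsilon}$ lies in $T_{z_\varepsilon,\lambda_\varepsilon}^\perp$. Moreover the minimizing parameters satisfy $\alpha_\varepsilon\to1$, $|z_\varepsilon-\tilde z_\varepsilon|\tilde\lambda_\varepsilon\to0$ and $\lambda_\varepsilon/\tilde\lambda_\varepsilon\to1$. Consequently $\lambda_\varepsilon d_\varepsilon\to\infty$ and $\|\nabla v_\varepsilon\|_2\to0$. Finally, $z_\varepsilon\to z_0\in\bar\Omega$ along a subsequence by compactness of $\bar\Omega$.
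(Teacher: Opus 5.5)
Your proposal is correct and follows essentially the same route as the paper. You show $u_\varepsilon\rightharpoonup 0$ via the Brezis--Lieb splitting and the non-attainment of $S_{H,L}$ on $\Omega$. You obtain single-bubble strong convergence through Yang--Zhao's global compactness, excluding boundary concentration because $u_\varepsilon$ vanishes outside $\Omega$. You then pass to $PU$ by Rey's estimates and conclude with the Bahri--Coron minimization, whose parameter closeness the paper derives via Bahri's interaction estimate and Yang--Zhao's Lemma A.1.

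One small point to tidy: your displayed chain in Step 1 gives only the lower bound on $\|\nabla u_\varepsilon\|_2$. The upper bound needs \eqref{assume} combined with the energy identity, as the paper does.
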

\begin{proof}
	{\bf Step 1.} We first show that $\{u_\varepsilon\}$ is  bounded in	$H_{0}^{1}(\Omega)$ and that $\left\|u_\varepsilon\right\|_6\gtrsim 1$.  Integrating the equation \eqref{equation} against $u_\varepsilon$, we obtain
\begin{equation}\label{2.3}
	\int_\Omega \left(|\nabla u_\varepsilon|^2+(Q+\varepsilon V)u_\varepsilon^2\right)=A_{3,\mu} \int_\Omega \int_\Omega \frac{u_\varepsilon^{6-\mu}(x)u_\varepsilon^{6-\mu}(y)}{|x-y|^{\mu}}dxdy,
\end{equation}
and therefore
\begin{equation*}
	\frac{\int_\Omega|\nabla u_\varepsilon|^2}{\left(  \int_\Omega \int_\Omega \frac{u_\varepsilon^{6-\mu}(x)u_\varepsilon^{6-\mu}(y)}{|x-y|^{\mu}}dxdy \right)^{\frac{1}{6-\mu}}}+
	\frac{\int_\Omega(Q+\varepsilon V)u_\varepsilon^2}{\left(  \int_\Omega \int_\Omega \frac{u_\varepsilon^{6-\mu}(x)u_\varepsilon^{6-\mu}(y)}{|x-y|^{\mu}}dxdy \right)^{\frac{1}{6-\mu}}}
	=A_{3,\mu} \left(\int_\Omega \int_\Omega \frac{u_\varepsilon^{6-\mu}(x)u_\varepsilon^{6-\mu}(y)}{|x-y|^{\mu}}dxdy\right)^{\frac{5-\mu}{6-\mu}}. 
\end{equation*}
On the left side, the first quotient converges by \eqref{assume}  and the second quotient is bounded by H\"older's inequality. Thus $\int_\Omega \int_\Omega \frac{u_\varepsilon^{6-\mu}(x)u_\varepsilon^{6-\mu}(y)}{|x-y|^{\mu}}dxdy$ is bounded. Using \eqref{assume} again, we obtain that $\{u_\varepsilon\}$ is  bounded in	$H_{0}^{1}(\Omega)$. 

Since $Q$ is critical in $\Omega$, it follows that
 \begin{equation*}
	\int_\Omega(|\nabla u_\varepsilon|^2+ Q u_\varepsilon^2) \geq S \left(\int_\Omega u_\varepsilon^6     \right)^{\frac{1}{3}} .
\end{equation*}
Thus  by Sobolev's inequality,  the left side in \eqref{2.3}  is
bounded from below by a constant times $\|u_{\varepsilon}\|_{6}^{2}$. 
On the other hand, the right  side  is bounded from above by $C(\mu) \|u_{\varepsilon}\|_{6}^{12-2\mu}  $ via the HLS inequality. This yields the lower bound on $\|u_{\varepsilon}\|_{6} \gtrsim 1 $. 
\\ {\bf Step 2.} According to Step 1, $\{u_\varepsilon\}$ is  bounded in	$H_{0}^{1}(\Omega)$.  
Then, up to a subsequence, we may assume that $u_\varepsilon\rightharpoonup u_0$ in $H_{0}^{1}(\Omega)$  with $u_0 \in  H_{0}^{1}(\Omega)$ be a solution of 
\begin{equation}\label{equationu0}
	\begin{cases} \displaystyle-\Delta u+Q u= A_{3,\mu} \left(\int_{\Omega}\frac{u^{6-\mu}(y)}{|x-y|^{\mu}}dy\right) u^{5-\mu}
		&\mathrm{~in~}\Omega,\\u=0&\mathrm{~on~}\partial\Omega. \end{cases}
\end{equation}
Our goal is to show that $u_0\equiv0$. By Rellich's lemma, after passing to a subsequence, we can also suppose 
that  $u_\varepsilon\to u_0$ holds almost everywhere.  Moreover,  up to a subsequence,  we may also assume that 
 $\left\|\nabla u_{\varepsilon}\right\|$  has a limit.  Then, by \eqref{assume}, $\int_{\Omega}\int_{\Omega}\frac{|u_\varepsilon(x)|^{6-\mu}|u_\varepsilon(y)|^{6-\mu}}{|x-y|^\mu}dxdy$  has a limit as well and, by Step 1, none of these limits is zero.

By weak convergence,  it follows that
$$ \mathcal{T}:=\lim_{\varepsilon\to0}\int_\Omega|\nabla(u_\varepsilon-u_0)|^2 \quad \text{exists and satisfies}  \quad \lim_{\varepsilon\to0}\int_\Omega|\nabla u_\varepsilon|^2=\int_\Omega|\nabla u_0|^2+\mathcal{T}.$$
Moreover, by Lemma 2.2 in \cite{GY},  we know 
$$ \mathcal{M}:=\lim_{\varepsilon\to0} \int_{\Omega}\int_{\Omega}\frac{|u_\varepsilon(x)-u_0(x)|^{6-\mu}|u_\varepsilon(y)-u_0(y)|^{6-\mu}}{|x-y|^\mu}dxdy$$
exists and satisfies
$$  \lim_{\varepsilon\to0}  \int_{\Omega}\int_{\Omega}\frac{u_\varepsilon^{6-\mu}(x) u_\varepsilon^{6-\mu}(y)}{|x-y|^\mu} dxdy =\int_{\Omega}\int_{\Omega}\frac{|u_0(x)|^{6-\mu}|u_0(y)|^{6-\mu}}{|x-y|^\mu} dxdy +\mathcal{M} .$$
Thus, \eqref{assume} becomes
\begin{equation*}
	S_{H,L} \left(\int_{\Omega}\int_{\Omega}\frac{|u_0(x)|^{6-\mu}|u_0(y)|^{6-\mu}}{|x-y|^\mu} dxdy +\mathcal{M}\right) ^{\frac{1}{6-\mu}} = \int_\Omega|\nabla u_0|^2+\mathcal{T}.
\end{equation*}
We estimate the left  side from above using the elementary inequality
\begin{equation*}
	\left(\int_{\Omega}\int_{\Omega}\frac{|u_0(x)|^{6-\mu}|u_0(y)|^{6-\mu}}{|x-y|^\mu} dxdy +\mathcal{M}\right) ^{\frac{1}{6-\mu}} \leq \left(\int_{\Omega}\int_{\Omega}\frac{|u_0(x)|^{6-\mu}|u_0(y)|^{6-\mu}}{|x-y|^\mu} dxdy \right) ^{\frac{1}{6-\mu}}+\mathcal{M}^{\frac{1}{6-\mu}}.
\end{equation*}
By the definition of $S_{H,L}$, it is easy to see 
\begin{equation*}
	\mathcal{T} \geq S_{H,L} \mathcal{M}^{\frac{1}{6-\mu}}. 
\end{equation*}
Thus, 
\begin{equation*}
	S_{H,L} \left(\int_{\Omega}\int_{\Omega}\frac{|u_0(x)|^{6-\mu}|u_0(y)|^{6-\mu}}{|x-y|^\mu} dxdy \right) ^{\frac{1}{6-\mu}}  \geq \int_\Omega|\nabla u_0|^2. 
\end{equation*}
 This implies $u_{0} \equiv0$. In fact, if $u_{0}\not\equiv0$, then $u_{0}$ would be a minimizer for  $S_{H,L}$. However, by \cite{GY}, we know that the $S_{H,L}$ problem does not have a minimizer in $\Omega\subsetneq\mathbb{R}^3$.
\\ {\bf Step 3.} We consider the energy functional  defined by
\begin{equation*}
	\mathcal{J}_0(u)=\frac{1}{2}\int_{\mathbb{R}^3}|\nabla u|^2 dx-\frac{A_{3,\mu}}{2\cdot(6-\mu)}\int_{\mathbb{R}^3}\int_{\mathbb{R}^3}\frac{u^{6-\mu}(x)u^{6-\mu}(y)}{|x-y|^\mu}dxdy. 
\end{equation*}
According to Step 2 and  Sobolev embedding, we have
$$u_\varepsilon\rightharpoonup 0  \quad \text{in} \quad  H_{0}^{1}(\Omega)  \quad  \text{and}  \quad  u_\varepsilon\to 0  \quad \text{in} \quad  L^2(\Omega),$$
which implies $\int_{\Omega} (Q+\varepsilon V)u_\varepsilon^2 \to 0 $.
Then a direct calculation shows that
\begin{equation*}
	\int_{\Omega}|\nabla u_{\varepsilon}|^{2}dx\to  \frac{S_{H,L}^{\frac{6-\mu}{5-\mu}}}{A_{3,\mu}^{\frac{1}{5-\mu}}}   \quad \text{and} \quad  \left(\int_{\Omega}\int_{\Omega}\frac{u_\varepsilon^{6-\mu}(x) u_\varepsilon^{6-\mu}(y)}{|x-y|^\mu} dxdy \right)^{\frac{1}{6-\mu}}\to \frac{S_{H,L}^{\frac{1}{5-\mu}}}{A_{3,\mu}^{\frac{1}{5-\mu}}}. 
\end{equation*}
Moreover, 
 $\{u_\varepsilon\}$ is a Palais-Smale sequence for $\mathcal{J}_0(u)$, i.e., 
$$\mathcal{J}_0(u_\varepsilon) \to \frac{\left(5-\mu\right)S_{H,L}^{\frac{6-\mu}{5-\mu}}}{\left(12-2\mu\right)A_{3,\mu}^{\frac{1}{5-\mu}}} \quad \text{and} \quad  \mathcal{J}^{\prime}_0(u_\varepsilon)\to 0   \quad \text{as}  \quad \varepsilon \to 0.$$
Then  \cite[Lemma 2.1]{YZ}   asserts that there  exist 
sequences $\sigma_{\varepsilon}$ of $\mathbb{R}^{+}$ tending to zero and $a_\varepsilon$ of points in $\mathbb{R}^3$  such that 
\begin{equation*}
	w_\varepsilon(x):=\sigma_\varepsilon^{\frac{1}{2}}u_\varepsilon(\sigma_\varepsilon(x-a_\varepsilon))
\end{equation*}
converges weakly in $\mathcal{D}^{1,2}(\mathbb{R}^3)$ to 
$w_{0}$, a nontrivial solution of the equation
\begin{equation*}
-\Delta u = A_{3,\mu}\left( \int_{\mathbb{R}^3} \frac{|u(y)|^{6-\mu}}{|x-y|^\mu} dy \right) |u|^{4-\mu} u \quad \text{in } \mathbb{R}^3,
\end{equation*}
where  $w_\varepsilon(x)=0$  if $\sigma_\varepsilon(x-a_\varepsilon)\in\Omega^c$. Moreover, there exist $\hat{a} \in \mathbb{R}^3$ and $ \hat{\lambda} \in \mathbb{R}^+$  such that
\begin{equation*}
	w_0=U_{\hat{a},\hat{\lambda}}. 
\end{equation*}
Since $U_{\hat{a},\hat{\lambda}}$ satisfies \eqref{nonlocal critical equation} and is
exactly an optimizer of $S_{H,L}$, we have
\begin{equation*}
\int_{\mathbb{R}^3}|\nabla U_{\hat{a},\hat{\lambda}}|^{2}dx=A_{3,\mu}\int_{\mathbb{R}^3}\int_{\mathbb{R}^3}
\frac{|U_{\hat{a},\hat{\lambda}}(x)|^{6-\mu}|U_{\hat{a},\hat{\lambda}}(y)|^{6-\mu}}{|x-y|^{\mu}}dxdy
=   \frac{S_{H,L}^{\frac{6-\mu}{5-\mu}}}{A_{3,\mu}^{\frac{1}{5-\mu}}}      .
\end{equation*}
Consequently, we obtain
\begin{equation*}
	\begin{split}
		\int_{\mathbb{R}^{3}}|\nabla w_{\varepsilon}|^{2}dx
		&=\int_{\mathbb{R}^{3}}\left|
		\nabla\left(\sigma_\varepsilon^{\frac{1}{2}}
		u_\varepsilon(\sigma_\varepsilon(x-a_\varepsilon))\right)
		\right|^{2}dx \\
		&=\int_{\Omega}|\nabla u_{\varepsilon}|^{2}dx
		\to  \frac{S_{H,L}^{\frac{6-\mu}{5-\mu}}}
		{A_{3,\mu}^{\frac{1}{5-\mu}}} \\
		&=\int_{\mathbb{R}^{3}}|\nabla U_{\hat{a},\hat{\lambda}}|^{2}dx
		=\int_{\mathbb{R}^{3}}|\nabla w_{0}|^{2}dx.
	\end{split}
\end{equation*}
Thus, we obtain $w_\varepsilon  \to w_0$  in $\mathcal{D}^{1,2}(\mathbb{R}^3)$, i.e.,
\begin{equation*}
	\sigma_\varepsilon^{\frac{1}{2}}u_\varepsilon(\sigma_\varepsilon(x-a_\varepsilon)) \to w_0= U_{\hat{a},\hat{\lambda}} \quad \text{in} \quad \mathcal{D}^{1,2}(\mathbb{R}^3)
\end{equation*}
as $\varepsilon \to 0$. Taking $\hbar_{\varepsilon} :=\sigma_\varepsilon (\hat{a}-a_\varepsilon)$ and $\tau_{\varepsilon}:=\frac{\hat{\lambda}}{\sigma_{\varepsilon}}$, then  there exists  $\delta_\varepsilon$ such that 
\begin{equation}\label{27}
	u_\varepsilon=  U_{\hbar_{\varepsilon},\tau_{\varepsilon}}+\delta_\varepsilon, 
\end{equation}
	with  $\delta_\varepsilon \to 0$ in  $\mathcal{D}^{1,2}(\mathbb{R}^3)$  as $ \varepsilon \to 0$. Furthermore, we have 
	\begin{equation}\label{23}
			\int_{\Omega}|\nabla u_{\varepsilon}|^{2}dx\to  \frac{S_{H,L}^{\frac{6-\mu}{5-\mu}}}{A_{3,\mu}^{\frac{1}{5-\mu}}}  =\int_{\mathbb{R}^{3}}|\nabla U_{\hbar_{\varepsilon},\tau_{\varepsilon}} |^2dx. 
	\end{equation}
So we also have $\hbar_{\varepsilon} \to z_0 \in \bar{\Omega}$, $\tau_{\varepsilon} \to +\infty$ as $ \varepsilon \to 0$. Using \eqref{23}, we can deduce
\begin{equation}\label{24}
	\hbar_{\varepsilon} \in \Omega, \quad \tau_{\varepsilon}\mbox{dist}(\hbar_{\varepsilon},\partial\Omega) \to +\infty  \quad \text{as} \quad  \varepsilon \to 0. 
\end{equation}
Combining Lemma \ref{A2} and \eqref{27}, we  know 
\begin{equation*}
	u_\varepsilon -PU_{\hbar_{\varepsilon},\tau_{\varepsilon}}  \to 0 \quad  \text{in} \quad H_0^1(\Omega)\quad \text{as} \quad  \varepsilon \to 0. 
\end{equation*}

From \cite[Proposition 7]{BC}, we know that, if $u_\varepsilon \in H_0^1(\Omega) $ with $ \|u_\varepsilon\|_{H_0^1(\Omega)} = 1 $ such that $ \tilde{\eta} $ is small enough and
\begin{equation*}
	\inf_{\begin{array}{c}(z,\lambda)\in\Omega\times\mathbb{R}^+,\\\lambda d>1/\tilde{\eta}\end{array}}\left\|u_\varepsilon-\frac{PU_{z,\lambda}}{\kappa}\right\|_{H_0^1(\Omega)}<\tilde{\eta},
\end{equation*}
where $d=\operatorname{dist}(z,\partial\Omega)$ and $\kappa=\|U_{z,\lambda}\|_{H_0^1(\mathbb{R}^3)}$ is a positive constant, then the problem 
\begin{equation*}
	Minimize\begin{Vmatrix}u_\varepsilon- \alpha  \frac{PU_{z,\lambda}}{\kappa}\end{Vmatrix}_{H_0^1(\Omega)},
\end{equation*}
with $(\alpha,z,\lambda )$ varies in the open set defined by
\begin{equation*}
	\alpha \in(\frac{1}{2},2) \quad \text{and} \quad \lambda d >\frac{1}{4\tilde{\eta}}, 
\end{equation*}
admits a unique solution $(\alpha_\varepsilon,z_\varepsilon,\lambda_\varepsilon )$ for $\varepsilon$ small enough, i.e.,
\begin{equation}
	\label{28}
	u_{\varepsilon}=\alpha_{\varepsilon}(PU_{z_{\varepsilon},\lambda_{\varepsilon}}+v_{\varepsilon})
\end{equation}
with $v_{\varepsilon} \in  T_{z_{\varepsilon},\lambda_{\varepsilon}}^{\bot}$ and $\left\| v_{\varepsilon}   \right\|_{H_0^1(\Omega)}  \to 0$ as $\varepsilon \to 0$. 

Using \eqref{27} and \eqref{28}, we have
\begin{equation*}
	\int_{\Omega}|\nabla (  U_{\hbar_{\varepsilon},\tau_{\varepsilon}}-   \alpha_{\varepsilon}   PU_{z_{\varepsilon},\lambda_{\varepsilon}}     )|^{2}dx =o(1).
\end{equation*}
Thus \begin{equation}
	\label{29} \alpha_{\varepsilon}  \int_{\Omega}\nabla U_{\hbar_\varepsilon,\tau_\varepsilon}\cdot \nabla PU_{z_\varepsilon,\lambda_\varepsilon} dx = \frac{S_{H,L}^{\frac{6-\mu}{5-\mu}}}{A_{3,\mu}^{\frac{1}{5-\mu}}}  +o(1). 
\end{equation}
On the other hand,  it follows
\begin{equation}\label{210}
	\begin{aligned}
		\int_{\Omega} \nabla U_{\hbar_{\varepsilon},\tau_{\varepsilon}} \cdot \nabla  PU_{z_{\varepsilon},\lambda_{\varepsilon}}  dx &= 
		A_{3,\mu} \int_\Omega \int_\Omega \frac{U_{\hbar_{\varepsilon},\tau_{\varepsilon}}^{6-\mu}(x)U_{\hbar_{\varepsilon},\tau_{\varepsilon}}^{5-\mu}(y)PU_{z_{\varepsilon},\lambda_{\varepsilon}}(y)}{|x-y|^{\mu}}dxdy
		\\ &\leq A_{3,\mu} \int_{\mathbb{R}^{3}} \int_{\mathbb{R}^{3}} \frac{U_{\hbar_{\varepsilon},\tau_{\varepsilon}}^{6-\mu}(x)U_{\hbar_{\varepsilon},\tau_{\varepsilon}}^{5-\mu}(y)U_{z_{\varepsilon},\lambda_{\varepsilon}}(y)}{|x-y|^{\mu}}dxdy
		\\ &=3 \int_{\mathbb{R}^{3}}U_{\hbar_{\varepsilon},\tau_{\varepsilon}}^{5}U_{z_{\varepsilon},\lambda_{\varepsilon}} dx, 
		\end{aligned}
\end{equation}
where in the last equality we used \eqref{1esay}. Defining 
\begin{equation*}
	R_{\tau_{\varepsilon},\lambda_{\varepsilon}}:=\left(\frac{1}{\frac{\tau_{\varepsilon}}{\lambda_{\varepsilon}}+\frac{\lambda_{\varepsilon}}{\tau_{\varepsilon}}+\lambda_{\varepsilon}\tau_{\varepsilon}|z_{\varepsilon}-\hbar_{\varepsilon}|^{2}}\right)^{\frac{1}{2}}, 
\end{equation*}
then we have the following estimate from \cite{Bahri}, 
\begin{equation}\label{211}
	\int_{\mathbb{R}^{3}}U_{\hbar_{\varepsilon},\tau_{\varepsilon}}^{5}U_{z_{\varepsilon},\lambda_{\varepsilon}} dx = O(R_{\tau_{\varepsilon},\lambda_{\varepsilon}}+R_{\tau_{\varepsilon},\lambda_{\varepsilon}}^3). 
\end{equation}
Combining \eqref{29}-\eqref{211} and the fact that $\tau_{\varepsilon}\to +\infty$ and $\hbar_{\varepsilon} \to z_0 \in \bar{\Omega} $ as $\varepsilon\to 0$, we can get 
\begin{equation*}
	\lambda_{\varepsilon} \to +\infty \quad \text{and} \quad z_{\varepsilon} \to z_0 \in \bar{\Omega} \quad \text{as} \quad  \varepsilon\to 0.
\end{equation*}
And using \eqref{24} we find 
\begin{equation*}
	\lambda_{\varepsilon}\mbox{dist}(z_{\varepsilon},\partial\Omega) \to +\infty  \quad \text{as} \quad  \varepsilon \to 0. 
\end{equation*}
Finally, by \cite[Lemma A.1]{YZ}, we can derive that $\alpha_{\varepsilon} \to 1$ as $\varepsilon \to 0$. 
\end{proof}

\subsection{Coercivity.} 

The following coercivity inequality, taken from \cite[Lemma 5.6]{GGYZ},  serves as an essential tool for our later refinement of the expansion of  $u_\varepsilon$.  

\begin{lem}\label{E40}
There are constants $T^{\ast}<\infty$ and $\rho> 0$ such that for all $z\in\Omega$, all $\lambda> 0$ with $d(z)\lambda\geq T^{\ast}$ and all $v\in T_{z,\lambda}^{\bot}$,
$$
\aligned
\int_{\Omega}(|\nabla v|^{2}+Qv^{2})dx
&-A_{3,\mu}\Big((5-\mu)\int_{\Omega}\int_{\Omega}
\frac{U_{z,\lambda}^{6-\mu}(x)
U_{z,\lambda}^{4-\mu}(y)v(y)^{2}}{|x-y|^{\mu}}dxdy\\
&+(6-\mu)\int_{\Omega}\int_{\Omega}
\frac{U_{z,\lambda}^{5-\mu}(x)v(x)
U_{z,\lambda}^{5-\mu}(y)v(y)}{|x-y|^{\mu}}dxdy\Big)
\geq\rho\int_{\Omega}|\nabla v|^{2}dx.
\endaligned$$
\end{lem}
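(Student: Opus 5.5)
The plan is to argue by contradiction and compactness, combining the nondegeneracy of the bubble for the whole-space Hartree equation with the criticality of $Q$.

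Suppose the inequality fails. Then there are $z_n\in\Omega$, $\lambda_n$ with $d(z_n)\lambda_n\to\infty$, and $v_n\in T_{z_n,\lambda_n}^\perp$ with $\|\nabla v_n\|_2=1$, such that the quadratic form on the left is at most $\rho_n=o(1)$. Passing to a subsequence, $z_n\to z_*\in\overline\Omega$ and $v_n\rightharpoonup v$ weakly in $H_0^1(\Omega)$. We then split $v_n$ into a part concentrating at scale $\lambda_n^{-1}$ near $z_n$ and a part living at scale $1$.

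\emph{Concentrating part.} Set $\tilde v_n(x)=\lambda_n^{-1/2}v_n(z_n+x/\lambda_n)$, extended by zero; this is bounded in $\mathcal D^{1,2}(\mathbb R^3)$ and converges weakly to some $\tilde v$. Since $\lambda_n d(z_n)\to\infty$, we have $\|\nabla(U-PU)\|_2\to0$ (Lemma \ref{A2}). Hence the orthogonality conditions pass to the limit, giving $\tilde v\perp U_{0,1},\partial_\lambda U,\partial_{z_i}U$ in $\mathcal D^{1,2}$. The two nonlocal terms are compact with respect to this rescaled weak convergence, because the weights $U^{6-\mu}$, $U^{4-\mu}$, $U^{5-\mu}$ decay. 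By HLS together with local Rellich compactness, they converge to the corresponding integrals of $\tilde v$ against $U_{0,1}$. The term $\int Qv_n^2$ is controlled by $\|v_n\|_2^2$.

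\emph{Whole-space linearized form.} For the whole-space linearized operator at $U_{0,1}$ we invoke the nondegeneracy of bubbles for the critical Hartree equation (the kernel is spanned by $\partial_\lambda U$ and $\partial_{z_i}U$), together with the variational characterization of $U$ as an $S_{H,L}$ optimizer. This gives a spectral gap: on the orthogonal complement of $\{U,\partial_\lambda U,\partial_{z_i}U\}$, one has $\int|\nabla w|^2-A_{3,\mu}(\dots)\ge c\int|\nabla w|^2$ with some $c>0$. Here we use that $U$ is the unique eigenfunction with eigenvalue below $1$ in the associated weighted eigenvalue problem, and that the next eigenvalue is $1$ with exactly the tangent space as eigenspace.

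\emph{Decomposition and conclusion.} Writing $\|\nabla v_n\|^2=\|\nabla\tilde v\|^2+\|\nabla(\tilde v_n-\tilde v)\|^2+o(1)$, the nonlocal terms only see $\tilde v$ in the limit. This gives
\[
\liminf(\text{form})\ \ge\ c\|\nabla\tilde v\|^2+\liminf\Big(\|\nabla w_n\|^2+\int Q w_n^2\Big),
\]
where $w_n$ is the non-concentrating remainder. The last term is where criticality enters. Criticality gives $S(Q)=S>0$, and coercivity of $-\Delta+Q$ gives $\int|\nabla w|^2+Qw^2\ge c'\int|\nabla w|^2$. We need this in quantitative form, uniformly, which follows from the first Dirichlet eigenvalue of $-\Delta+Q$ being positive: the function $S(\cdot)$ would drop below $S$ otherwise.

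The splitting of the $Q$-term must handle cross terms. Since $\tilde v_n\to\tilde v$ concentrates, its $L^2$ mass vanishes, so $\int Qv_n^2=\int Qw_n^2+o(1)$. Adding up the contributions, the left side is at least $\min(c,c')-o(1)$, which contradicts $\rho_n\to0$.

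\textbf{Main obstacle.} The hardest step is the whole-space spectral gap for the nonlocal linearized operator. It requires the nondegeneracy result for Hartree bubbles, for example in the range $0<\mu<2$ in dimension $3$, which I would cite from the literature (Du--Yang, Giacomoni--Wei--Yang). A second delicate point is ensuring the profile decomposition captures all of the nonlocal contribution, including mixed concentrating/nonconcentrating interactions in the double integral; I would bound these via HLS, using Rellich on compacts and the decay of $U$ at infinity.
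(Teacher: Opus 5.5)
Your argument is correct, but it is organized differently from the usual proof. The paper gives no proof of its own; it imports the inequality from \cite[Lemma~5.6]{GGYZ}.

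The more common way to run this contradiction argument is to split off the weak $H_0^1(\Omega)$-limit $v$ of $v_n$. One checks that the nonlocal terms, including the cross terms, do not see $v$. A Rey-type inequality on $T^\perp_{z_n,\lambda_n}$ is then applied to $v_n-v$, which is only asymptotically orthogonal to $T_{z_n,\lambda_n}$ and so needs a small projection correction. Coercivity of $-\Delta+Q$ is applied to $v$.

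You split at the other scale instead. The blow-up profile $\tilde v$ carries the entire nonlocal form and receives the whole-space spectral gap, while the remainder $w_n$ receives coercivity of $-\Delta+Q$. This makes the orthogonality clean. In fact, $U_{z,\lambda}-PU_{z,\lambda}$ and its $\lambda$- and $z$-derivatives are harmonic in $\Omega$. Hence every $v\in T^\perp_{z,\lambda}$ extended by zero is \emph{exactly} $\mathcal D^{1,2}(\mathbb R^3)$-orthogonal to $U_{z,\lambda}$, $\partial_\lambda U_{z,\lambda}$, $\partial_{z_i}U_{z,\lambda}$, so you do not even need $\|\nabla(U-PU)\|_2\to0$.

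The price is two technical points you should make explicit.
\begin{itemize}
\item The rescaled-back profile $\lambda_n^{1/2}\tilde v(\lambda_n(\cdot-z_n))$ is not in $H_0^1(\Omega)$. Cut it off at scale $d(z_n)$, or approximate $\tilde v$ by $C_c^\infty$ functions, before applying coercivity of $-\Delta+Q$ to $w_n$. The error is $o(1)$ in $\mathcal D^{1,2}$ because $\lambda_n d(z_n)\to\infty$.
\item Since $\tilde v$ need not lie in $L^2(\mathbb R^3)$, the vanishing of its $L^2(\Omega)$-mass after rescaling cannot come from scaling $\|\tilde v\|_2$. It has to be argued through the $L^6$-tail of $\tilde v$, or through the same approximation.
\end{itemize}

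Both routes rest on the same external input. That input is nondegeneracy of $U_{0,1}$, together with its minimizing property for $S_{H,L}$; the latter is what makes $U$ the only direction with weighted eigenvalue below $1$. It cannot be avoided, because the lemma itself forces nondegeneracy: an extra kernel element, rescaled, cut off and projected onto $T^\perp_{z,\lambda}$, would make the form $o(1)\|\nabla v\|_2^2$. So make sure the nondegeneracy result you invoke is available for every $0<\mu<2$ in dimension three.
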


\subsection{The bound on $\boldsymbol{\|\nabla v_\varepsilon\|_2}$. } 
In this subsection, we are going to show:
\begin{Prop}\label{prop24}
	As $\varepsilon \to 0$, 
	\begin{equation}
		\label{2.12}
		\| \nabla v_{\varepsilon}\|_2=O\left(\lambda^{-\frac{1}{2}}_{\varepsilon}\right)+O\left((\lambda_{\varepsilon}d_{\varepsilon})^{-1}\right). 
	\end{equation}
\end{Prop}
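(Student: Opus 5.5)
We follow the scheme of \cite{FKK3}: test the equation against $v_\varepsilon$, expand the nonlinearity around $PU_{z_\varepsilon,\lambda_\varepsilon}$, and absorb the quadratic part with the coercivity of Lemma \ref{E40}.

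\textbf{Step 1: the tested identity.} Write $u_\varepsilon=\alpha_\varepsilon(PU+v)$ with $PU=PU_{z_\varepsilon,\lambda_\varepsilon}$, $U=U_{z_\varepsilon,\lambda_\varepsilon}$, $v=v_\varepsilon$. Testing \eqref{equation} with $v$ and dividing by $\alpha_\varepsilon$ gives
\begin{equation*}
\int_\Omega \nabla PU\cdot\nabla v+\int_\Omega|\nabla v|^2+\int_\Omega (Q+\varepsilon V)(PU+v)v
=A_{3,\mu}\alpha_\varepsilon^{10-2\mu}\int_\Omega\int_\Omega\frac{(PU+v)^{6-\mu}(x)(PU+v)^{5-\mu}(y)v(y)}{|x-y|^\mu}dxdy .
\end{equation*}
The first term vanishes because $v\in T^\perp_{z,\lambda}$. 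Since $\alpha_\varepsilon\to1$, we may replace $\alpha_\varepsilon^{10-2\mu}$ by $1+o(1)$ in the quadratic terms. The linear contribution of the right side must also vanish up to errors. For this we use that $-\Delta PU=A_{3,\mu}(|x|^{-\mu}*U^{6-\mu})U^{5-\mu}$ on $\Omega$, so $\int\nabla PU\cdot\nabla v=0$ kills the leading linear term, and the coefficient mismatch $\alpha_\varepsilon^{10-2\mu}-1$ multiplies a quantity that is itself small. This is handled after Step 2 by a Young inequality.

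\textbf{Step 2: expansion of the nonlinearity.} Expand $(PU+v)^{6-\mu}(x)(PU+v)^{5-\mu}(y)$ pointwise to second order in $v$. The zeroth order gives $\iint (PU)^{6-\mu}(PU)^{5-\mu}v$, which we compare with $\iint U^{6-\mu}U^{5-\mu}v$. The difference is controlled by $U-PU=\lambda^{-1/2}H_0(z,\cdot)+O(\lambda^{-5/2}d^{-3})$, using the estimates of Lemma \ref{A2} and \cite{Re}, together with the HLS inequality and H\"older's inequality. This yields a linear error $O\bigl((\lambda d)^{-1}\bigr)\|\nabla v\|_2$, since $\|U^{4}(U-PU)\|_{6/5}\lesssim(\lambda d)^{-1}$ in dimension three; to be safe we track $\|U^4\varphi\|_{6/5}$ with $\varphi=\lambda^{-1/2}d^{-1}$. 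The quadratic terms in $v$ are exactly those of Lemma \ref{E40}, again up to replacing $PU$ by $U$ with a small error. The cubic and higher remainders are $O(\|\nabla v\|_2^{3})+O(\|\nabla v\|_2^{12-2\mu})$ by HLS and Sobolev, hence $o(\|\nabla v\|_2^2)$ because $\|\nabla v\|_2\to0$ by Proposition \ref{prop22}.

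\textbf{Step 3: the linear terms from $Q$ and $\varepsilon V$.} Bound
\begin{equation*}
\Bigl|\int_\Omega(Q+\varepsilon V)PU\,v\Bigr|\lesssim\|PU\|_{6/5}\|v\|_6\lesssim\lambda^{-1/2}\|\nabla v\|_2,
\end{equation*}
using $\|U_{z,\lambda}\|_{L^{6/5}(\Omega)}\lesssim\lambda^{-1/2}$ on a bounded set. This is the source of the $\lambda^{-1/2}$ term in \eqref{2.12}.

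\textbf{Step 4: conclusion.} Combining the three steps,
\begin{equation*}
\int_\Omega(|\nabla v|^2+Qv^2)-A_{3,\mu}\bigl((5-\mu)\textstyle\iint\cdots+(6-\mu)\iint\cdots\bigr)\le C\bigl(\lambda^{-1/2}+(\lambda d)^{-1}\bigr)\|\nabla v\|_2+o(\|\nabla v\|_2^2),
\end{equation*}
where the $\varepsilon\int Vv^2$ term is $O(\varepsilon)\|\nabla v\|_2^2$. Lemma \ref{E40} applies because $\lambda_\varepsilon d_\varepsilon\to\infty$ (Proposition \ref{prop22}), so the left side is at least $\rho\|\nabla v\|_2^2$. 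Dividing by $\|\nabla v\|_2$ gives \eqref{2.12}.

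\textbf{Main obstacle.} The delicate part is the treatment of the term linear in $v$ coming from the nonlocal nonlinearity. We need to show that replacing $PU$ by $U$ inside the double convolution, together with the factor $\alpha_\varepsilon^{10-2\mu}\neq1$, produces only $O(\lambda^{-1/2}+(\lambda d)^{-1})\|\nabla v\|_2$. For the $\alpha_\varepsilon$ issue, first test the equation with $PU$ to get $|\alpha_\varepsilon^{10-2\mu}-1|\lesssim\lambda^{-1/2}+(\lambda d)^{-1}+\|\nabla v\|_2^2$, which keeps the linear mismatch small. The product $(\alpha_\varepsilon^{10-2\mu}-1)\iint U^{6-\mu}U^{5-\mu}v$ is in fact exactly zero by orthogonality, since $\iint U^{6-\mu}(x)U^{5-\mu}(y)v(y)|x-y|^{-\mu}$ equals $A_{3,\mu}^{-1}\int\nabla PU\cdot\nabla v=0$ up to integration over $\Omega$ versus $\mathbb R^3$. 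The remaining mixed terms of the form $(PU^{6-\mu}-U^{6-\mu})(x)$ times $U^{5-\mu}v(y)$ are then bounded with HLS.
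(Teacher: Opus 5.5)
Your argument is correct and is essentially the paper's proof. You test with $v_\varepsilon$, kill the leading linear nonlocal term by orthogonality up to the $\mathbb{R}^3\setminus\Omega$ tail (of size $O((\lambda_\varepsilon d_\varepsilon)^{-(6-\mu)/2})\|\nabla v_\varepsilon\|_2$), pay $(\lambda_\varepsilon d_\varepsilon)^{-1}$ for $\varphi_{z_\varepsilon,\lambda_\varepsilon}=U-PU$ via HLS and $\|\varphi\|_\infty\lesssim\lambda_\varepsilon^{-1/2}d_\varepsilon^{-1}$, pay $\lambda_\varepsilon^{-1/2}$ for $\int(Q+\varepsilon V)PU\,v$, and conclude with Lemma \ref{E40}; the only cosmetic difference is that you expand around $PU$ and then swap to $U$, whereas the paper expands around $U$ with perturbation $v_\varepsilon-\varphi_{z_\varepsilon,\lambda_\varepsilon}$. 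The detour of testing with $PU$ to estimate $\alpha_\varepsilon^{10-2\mu}-1$ (and the promised Young inequality) is unnecessary: the linear term is already small without any factor of $\alpha_\varepsilon^{10-2\mu}-1$, and only $\alpha_\varepsilon\to1$ (Proposition \ref{prop22}) is needed to transfer coercivity to the quadratic form.
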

In Subsection \ref{subsec:boundary-concentration}, we prove that
$d_{\varepsilon}^{-1}=O(1)$. Substituting this bound into \eqref{2.12},
we obtain
\begin{equation*}
	\|\nabla v_{\varepsilon}\|_2
	=O\left(\lambda_{\varepsilon}^{-\frac12}\right),
\end{equation*}
which is the estimate stated in Proposition \ref{prop21}.
\begin{proof}
	We first find the equation satisfied by $v_{\varepsilon}$. Combining  \eqref{equation} and \eqref{2.1},  we obtain
\begin{equation}
	\label{equation.v}
	\begin{split}
		(-\Delta+Q)v_{\varepsilon} = \Delta PU_{z_{\varepsilon},\lambda_{\varepsilon}}&+
A_{3,\mu} \alpha_{\varepsilon}^{10-2\mu} \left(\int_{\Omega}\frac{(PU_{z_{\varepsilon},\lambda_{\varepsilon}}+v_{\varepsilon})^{6-\mu}(y)}{|x-y|^{\mu}}dy\right) (PU_{z_{\varepsilon},\lambda_{\varepsilon}}+v_{\varepsilon})^{5-\mu} \\ 
&-(Q+\varepsilon V)PU_{z_{\varepsilon},\lambda_{\varepsilon}} -\varepsilon V v_{\varepsilon}. 
	\end{split}
\end{equation}
Integrating this equation against $v_{\varepsilon}$ and using
\begin{equation*}
	\int_{\Omega} \Delta PU_{z_{\varepsilon},\lambda_{\varepsilon}} v_{\varepsilon}=- \int_{\Omega} \nabla PU_{z_{\varepsilon},\lambda_{\varepsilon}} \nabla v_{\varepsilon}=0, 
\end{equation*}
we can obtain 
\begin{equation}
	\label{o28}
	\begin{split}
			\int_{\Omega}(|\nabla v_{\varepsilon}|^2+Qv_{\varepsilon}^2)=&A_{3,\mu} \alpha_{\varepsilon}^{10-2\mu}\int_{\Omega}\int_{\Omega}\frac{(PU_{z_{\varepsilon},\lambda_{\varepsilon}}+v_{\varepsilon})^{6-\mu}(y) (PU_{z_{\varepsilon},\lambda_{\varepsilon}}+v_{\varepsilon})^{5-\mu}(x)v_{\varepsilon}(x)
	}{|x-y|^{\mu}}dxdy
\\   & -	\int_{\Omega} (Q+\varepsilon V)PU_{z_{\varepsilon},\lambda_{\varepsilon}}v_{\varepsilon}
-\int_{\Omega} \varepsilon V v_{\varepsilon}^2. 
\end{split}
\end{equation}

We estimate the three terms on the right-hand side individually.  The second and third terms are simple. By Lemma \ref{A1}   and Sobolev embedding,  we have
\begin{equation*}
	\left| 
		\int_{\Omega} (Q+\varepsilon V)PU_{z_{\varepsilon},\lambda_{\varepsilon}}v_{\varepsilon}
	\right|\lesssim 	\| v_{\varepsilon}\|_{6}\| U_{z_{\varepsilon},\lambda_{\varepsilon}}\|_{\frac{6}{5}}    \lesssim  \lambda_{\varepsilon}^{-\frac{1}{2}}  \|\nabla v_{\varepsilon}\|_{2},
\end{equation*}
\begin{equation*}
	\left| 
		\int_{\Omega} \varepsilon V v_{\varepsilon}^2
	\right|\lesssim  \varepsilon	\| v_{\varepsilon}\|^{2}_{6}
	= o(\|\nabla v_{\varepsilon}\|^{2}_{2}).
\end{equation*}
Then we begin to estimate the first term on the right side of  \eqref{o28}.  First, we define $\varphi_{z_{\varepsilon},\lambda_{\varepsilon}} := U_{z_{\varepsilon},\lambda_{\varepsilon}} -PU_{z_{\varepsilon},\lambda_{\varepsilon}}$, then
\begin{equation*}
	(PU_{z_{\varepsilon},\lambda_{\varepsilon}}+v_{\varepsilon})^{6-\mu}= 
	(U_{z_{\varepsilon},\lambda_{\varepsilon}}+v_{\varepsilon}-\varphi_{z_{\varepsilon},\lambda_{\varepsilon}})^{6-\mu}= U_{z_{\varepsilon},\lambda_{\varepsilon}}^{6-\mu}+(6-\mu)U_{z_{\varepsilon},\lambda_{\varepsilon}}^{5-\mu}(v_{\varepsilon}-\varphi_{z_{\varepsilon},\lambda_{\varepsilon}})+h_1,
\end{equation*}
and 
\begin{equation*}
	(PU_{z_{\varepsilon},\lambda_{\varepsilon}}+v_{\varepsilon})^{5-\mu}= 
	(U_{z_{\varepsilon},\lambda_{\varepsilon}}+v_{\varepsilon}-\varphi_{z_{\varepsilon},\lambda_{\varepsilon}})^{5-\mu}= U_{z_{\varepsilon},\lambda_{\varepsilon}}^{5-\mu}+(5-\mu)U_{z_{\varepsilon},\lambda_{\varepsilon}}^{4-\mu}(v_{\varepsilon}-\varphi_{z_{\varepsilon},\lambda_{\varepsilon}})+h_2,
\end{equation*}
where 
\begin{equation*}
	h_1=O(  U_{z_{\varepsilon},\lambda_{\varepsilon}}^{4-\mu}  v_{\varepsilon}^2+   U_{z_{\varepsilon},\lambda_{\varepsilon}}^{4-\mu} \varphi_{z_{\varepsilon},\lambda_{\varepsilon}} ^2
	+\left|v_{\varepsilon}\right|^{6-\mu}  +\varphi_{z_{\varepsilon},\lambda_{\varepsilon}} ^{6-\mu}),
\end{equation*}
\begin{equation*}
	h_2=O(  U_{z_{\varepsilon},\lambda_{\varepsilon}}^{3-\mu}  v_{\varepsilon}^2+   U_{z_{\varepsilon},\lambda_{\varepsilon}}^{3-\mu} \varphi_{z_{\varepsilon},\lambda_{\varepsilon}} ^2
	+\left|v_{\varepsilon}\right|^{5-\mu}  +\varphi_{z_{\varepsilon},\lambda_{\varepsilon}} ^{5-\mu}). 
\end{equation*}
Then  we have 
\begin{equation}\label{d11}
	\begin{split}
		\int_{\Omega}\int_{\Omega}&\frac{(PU_{z_{\varepsilon},\lambda_{\varepsilon}}+v_{\varepsilon})^{6-\mu}(y) (PU_{z_{\varepsilon},\lambda_{\varepsilon}}+v_{\varepsilon})^{5-\mu}(x)v_{\varepsilon}(x)
	}{|x-y|^{\mu}}dxdy =  	\int_{\Omega}\int_{\Omega}\frac{U_{z_{\varepsilon},\lambda_{\varepsilon}}^{6-\mu}(y) U_{z_{\varepsilon},\lambda_{\varepsilon}}^{5-\mu}(x)v_{\varepsilon}(x)
	}{|x-y|^{\mu}}dxdy
	\\ &+(5-\mu)\int_{\Omega}\int_{\Omega}\frac{U_{z_{\varepsilon},\lambda_{\varepsilon}}^{6-\mu}(y) U_{z_{\varepsilon},\lambda_{\varepsilon}}^{4-\mu}(x)(v_{\varepsilon}-\varphi_{z_{\varepsilon},\lambda_{\varepsilon}})(x)v_{\varepsilon}(x)
	}{|x-y|^{\mu}}dxdy+\int_{\Omega}\int_{\Omega}\frac{U_{z_{\varepsilon},\lambda_{\varepsilon}}^{6-\mu}(y) h_2(x)v_{\varepsilon}(x)
	}{|x-y|^{\mu}}dxdy
	\\ &+(6-\mu) \left( \int_{\Omega}\int_{\Omega}\frac{U_{z_{\varepsilon},\lambda_{\varepsilon}}^{5-\mu}(y) (v_{\varepsilon}-\varphi_{z_{\varepsilon},\lambda_{\varepsilon}})(y)U_{z_{\varepsilon},\lambda_{\varepsilon}}^{5-\mu}(x)v_{\varepsilon}(x)
	+U_{z_{\varepsilon},\lambda_{\varepsilon}}^{5-\mu}(y) (v_{\varepsilon}-\varphi_{z_{\varepsilon},\lambda_{\varepsilon}})(y)h_2(x)v_{\varepsilon}(x)}{|x-y|^{\mu}}dxdy\right)
\\ &+	O\left( \int_{\Omega}\int_{\Omega}\frac{U_{z_{\varepsilon},\lambda_{\varepsilon}}^{5-\mu}(y) (v_{\varepsilon}-\varphi_{z_{\varepsilon},\lambda_{\varepsilon}})(y)U_{z_{\varepsilon},\lambda_{\varepsilon}}^{4-\mu}(x) (v_{\varepsilon}-\varphi_{z_{\varepsilon},\lambda_{\varepsilon}})(x)v_{\varepsilon}(x)
	}{|x-y|^{\mu}}dxdy    \right) 
	\\ &+\int_{\Omega}\int_{\Omega}\frac{h_1(y) (U_{z_{\varepsilon},\lambda_{\varepsilon}}+v_{\varepsilon}-\varphi_{z_{\varepsilon},\lambda_{\varepsilon}})^{5-\mu}(x)v_{\varepsilon}(x)
	}{|x-y|^{\mu}}dxdy.  
	\end{split}
\end{equation}
Since 
\begin{equation*}
	-\Delta PU_{z_{\varepsilon},\lambda_{\varepsilon}} =-\Delta U_{z_{\varepsilon},\lambda_{\varepsilon}}= A_{3,\mu}\left( \int_{\mathbb{R}^3} \frac{U_{z_{\varepsilon},\lambda_{\varepsilon}}^{6-\mu}(y)}{|x-y|^\mu} dy \right) U_{z_{\varepsilon},\lambda_{\varepsilon}}^{5-\mu} , 
\end{equation*}
by the fact that $v_{\varepsilon}$ vanishes on the boundary and  $v_{\varepsilon} \in  T_{z_{\varepsilon},\lambda_{\varepsilon}}^{\bot}$, we obtain
\begin{equation}
	\label{d12}
	\int_{\mathbb{R}^3} \int_{\Omega} \frac{U_{z_{\varepsilon},\lambda_{\varepsilon}}^{6-\mu}(y)
	U_{z_{\varepsilon},\lambda_{\varepsilon}}^{5-\mu}(x) v_{\varepsilon}(x)
	}{|x-y|^\mu} dxdy = \frac{1}{A_{3,\mu}} \int_{\Omega} -\Delta PU_{z_{\varepsilon},\lambda_{\varepsilon}} v_{\varepsilon} 
	=\frac{1}{A_{3,\mu}} \int_{\Omega}  \nabla  PU_{z_{\varepsilon},\lambda_{\varepsilon}} \nabla v_{\varepsilon} =0. 
\end{equation}
Now we begin to estimate each   term on the right-hand side of \eqref{d11}. Combining \eqref{d12} with Lemmas \ref{A1} and \ref{A1RN}, we have  
\begin{equation*}
	\begin{split}
		&\left|\int_{\Omega}\int_{\Omega}\frac{U_{z_{\varepsilon},\lambda_{\varepsilon}}^{6-\mu}(y) U_{z_{\varepsilon},\lambda_{\varepsilon}}^{5-\mu}(x)v_{\varepsilon}(x)
	}{|x-y|^{\mu}}dxdy\right|	
	\\ \leq& \left| \int_{\mathbb{R}^3} \int_{\Omega} \frac{U_{z_{\varepsilon},\lambda_{\varepsilon}}^{6-\mu}(y)
	U_{z_{\varepsilon},\lambda_{\varepsilon}}^{5-\mu}(x) v_{\varepsilon}(x)
	}{|x-y|^\mu} dxdy   \right|+\left| \int_{\mathbb{R}^3\backslash\Omega} \int_{\Omega} \frac{U_{z_{\varepsilon},\lambda_{\varepsilon}}^{6-\mu}(y)
	U_{z_{\varepsilon},\lambda_{\varepsilon}}^{5-\mu}(x) v_{\varepsilon}(x)
	}{|x-y|^\mu} dxdy   \right|
	\\ \lesssim&  \| U_{z_{\varepsilon},\lambda_{\varepsilon}}^{6-\mu}\|_{L^{\frac{6}{6-\mu}}(\mathbb{R}^3\backslash\Omega)}
	\| U_{z_{\varepsilon},\lambda_{\varepsilon}}^{5-\mu} v_{\varepsilon}\|_{\frac{6}{6-\mu}}
	\\ \lesssim& \| U_{z_{\varepsilon},\lambda_{\varepsilon}}\|^{6-\mu}_{L^{6}(\mathbb{R}^3\backslash\Omega)}
	\| U_{z_{\varepsilon},\lambda_{\varepsilon}}^{5-\mu} \|_{\frac{6}{5-\mu}}
	\| v_{\varepsilon}\|_{6}
	\\ \lesssim&  \| U_{z_{\varepsilon},\lambda_{\varepsilon}}\|^{6-\mu}_{L^{6}(\mathbb{R}^3\backslash B_{d_{\varepsilon}}(z_\varepsilon)  )}   \| U_{z_{\varepsilon},\lambda_{\varepsilon}} \|^{5-\mu}_{6}  \|\nabla v_{\varepsilon}\|_{2}
	\\ \lesssim& (\lambda_{\varepsilon}d_{\varepsilon})^{-\frac{6-\mu}{2}}  \|\nabla v_{\varepsilon}\|_{2}
	\lesssim (\lambda_{\varepsilon}d_{\varepsilon})^{-1}  \|\nabla v_{\varepsilon}\|_{2}. 
	\end{split}
\end{equation*}
By H\"older's inequality, the HLS inequality, and Lemmas \ref{A1} and \ref{A2}, we can find
\begin{equation*}
	\begin{split}
			&(5-\mu)\int_{\Omega}\int_{\Omega}\frac{U_{z_{\varepsilon},\lambda_{\varepsilon}}^{6-\mu}(y) U_{z_{\varepsilon},\lambda_{\varepsilon}}^{4-\mu}(x)(v_{\varepsilon}-\varphi_{z_{\varepsilon},\lambda_{\varepsilon}})(x)v_{\varepsilon}(x)
	}{|x-y|^{\mu}}dxdy
	\\ &=(5-\mu)\int_{\Omega}\int_{\Omega}\frac{U_{z_{\varepsilon},\lambda_{\varepsilon}}^{6-\mu}(y) U_{z_{\varepsilon},\lambda_{\varepsilon}}^{4-\mu}(x)v^2_{\varepsilon}(x)
	}{|x-y|^{\mu}}dxdy+O\left((\lambda_{\varepsilon}d_{\varepsilon})^{-1}  \|\nabla v_{\varepsilon}\|_{2} \right),
	\end{split}
\end{equation*}
since 
\begin{equation*}
	\begin{split}
		\left| 
			(5-\mu)\int_{\Omega}\int_{\Omega}\frac{U_{z_{\varepsilon},\lambda_{\varepsilon}}^{6-\mu}(y) U_{z_{\varepsilon},\lambda_{\varepsilon}}^{4-\mu}(x)\varphi_{z_{\varepsilon},\lambda_{\varepsilon}}(x)v_{\varepsilon}(x)
	}{|x-y|^{\mu}}dxdy
		\right| &\lesssim \| U_{z_{\varepsilon},\lambda_{\varepsilon}}^{6-\mu}\|_{\frac{6}{6-\mu}}
		 \|    U_{z_{\varepsilon},\lambda_{\varepsilon}}^{4-\mu}\varphi_{z_{\varepsilon},\lambda_{\varepsilon}}v_{\varepsilon}\|_{\frac{6}{6-\mu}}
		 \\ &\lesssim \| \varphi_{z_{\varepsilon},\lambda_{\varepsilon}} \| _{\infty} \| v_{\varepsilon}\|_6
		 \|    U_{z_{\varepsilon},\lambda_{\varepsilon}}^{4-\mu} \| _{\frac{6}{5-\mu}}
		 \\ &\lesssim \lambda_{\varepsilon}^{-\frac{1}{2}}  d_{\varepsilon}^{-1}   \|\nabla v_{\varepsilon}\|_{2} 
		 \lambda_{\varepsilon}^{-\frac{1}{2}}. 
	\end{split}
\end{equation*}
Similarly,  we can deduce 
\begin{equation*}
	\begin{split}
		&(6-\mu)  \int_{\Omega}\int_{\Omega}\frac{U_{z_{\varepsilon},\lambda_{\varepsilon}}^{5-\mu}(y) (v_{\varepsilon}-\varphi_{z_{\varepsilon},\lambda_{\varepsilon}})(y)U_{z_{\varepsilon},\lambda_{\varepsilon}}^{5-\mu}(x)v_{\varepsilon}(x)}{|x-y|^{\mu}}dxdy
		\\ &=(6-\mu)  \int_{\Omega}\int_{\Omega}\frac{U_{z_{\varepsilon},\lambda_{\varepsilon}}^{5-\mu}(y) v_{\varepsilon}(y)U_{z_{\varepsilon},\lambda_{\varepsilon}}^{5-\mu}(x)v_{\varepsilon}(x)}{|x-y|^{\mu}}dxdy+O\left((\lambda_{\varepsilon}d_{\varepsilon})^{-1}  \|\nabla v_{\varepsilon}\|_{2} \right), 
	\end{split}
\end{equation*}
\begin{equation*}
	\begin{split}
		\int_{\Omega}\int_{\Omega}\frac{U_{z_{\varepsilon},\lambda_{\varepsilon}}^{6-\mu}(y) h_2(x)v_{\varepsilon}(x)
	}{|x-y|^{\mu}}dxdy \lesssim & \| U_{z_{\varepsilon},\lambda_{\varepsilon}}^{6-\mu}\|_{\frac{6}{6-\mu}}   \left(   \| U_{z_{\varepsilon},\lambda_{\varepsilon}}^{3-\mu}v_{\varepsilon}^3\|_{\frac{6}{6-\mu}}+  \| U_{z_{\varepsilon},\lambda_{\varepsilon}}^{3-\mu}\varphi_{z_{\varepsilon},\lambda_{\varepsilon}}^2 v_{\varepsilon}\|_{\frac{6}{6-\mu}}  + \|  \left|v_{\varepsilon}\right|^{6-\mu}  +  \varphi_{z_{\varepsilon},\lambda_{\varepsilon}}^{5-\mu} v_{\varepsilon}        \|_{\frac{6}{6-\mu}} \right)
	\\ \lesssim & \|  v_{\varepsilon}\|_6^3 +\| \varphi_{z_{\varepsilon},\lambda_{\varepsilon}}\|_6^2 \|  v_{\varepsilon}\|_6+\|  v_{\varepsilon}\|_6^{6-\mu} +\| \varphi_{z_{\varepsilon},\lambda_{\varepsilon}}\|_6^{5-\mu}\|  v_{\varepsilon}\|_6
	\\ \lesssim & \|\nabla v_{\varepsilon}\|_{2}^3 +(\lambda_{\varepsilon}d_{\varepsilon})^{-1}  \|\nabla v_{\varepsilon}\|_{2} + \|\nabla v_{\varepsilon}\|_{2}^{6-\mu}  
	\\ = & O\left((\lambda_{\varepsilon}d_{\varepsilon})^{-1}  \|\nabla v_{\varepsilon}\|_{2}   \right)+o\left(     \|\nabla v_{\varepsilon}\|_{2}^2    \right),  
	\end{split}
\end{equation*}
\begin{equation*}
	\begin{split}
		(6-\mu)  \int_{\Omega}\int_{\Omega}\frac{U_{z_{\varepsilon},\lambda_{\varepsilon}}^{5-\mu}(y) (v_{\varepsilon}-\varphi_{z_{\varepsilon},\lambda_{\varepsilon}})(y)h_2(x)v_{\varepsilon}(x)}{|x-y|^{\mu}}dxdy=  O\left((\lambda_{\varepsilon}d_{\varepsilon})^{-1}  \|\nabla v_{\varepsilon}\|_{2}   \right)+o\left(     \|\nabla v_{\varepsilon}\|_{2}^2    \right),  
	\end{split}
\end{equation*}
\begin{equation*}
	\begin{split}
		\int_{\Omega}\int_{\Omega}\frac{U_{z_{\varepsilon},\lambda_{\varepsilon}}^{5-\mu}(y) (v_{\varepsilon}-\varphi_{z_{\varepsilon},\lambda_{\varepsilon}})(y)U_{z_{\varepsilon},\lambda_{\varepsilon}}^{4-\mu}(x) (v_{\varepsilon}-\varphi_{z_{\varepsilon},\lambda_{\varepsilon}})(x)v_{\varepsilon}(x)
	}{|x-y|^{\mu}}dxdy   =  O\left((\lambda_{\varepsilon}d_{\varepsilon})^{-1}  \|\nabla v_{\varepsilon}\|_{2}   \right)+o\left(     \|\nabla v_{\varepsilon}\|_{2}^2    \right),  
	\end{split}
\end{equation*}
and
\begin{equation*}
	\int_{\Omega}\int_{\Omega}\frac{h_1(y) (U_{z_{\varepsilon},\lambda_{\varepsilon}}+v_{\varepsilon}-\varphi_{z_{\varepsilon},\lambda_{\varepsilon}})^{5-\mu}(x)v_{\varepsilon}(x)
	}{|x-y|^{\mu}}dxdy  =  O\left((\lambda_{\varepsilon}d_{\varepsilon})^{-1}  \|\nabla v_{\varepsilon}\|_{2}   \right)+o\left(     \|\nabla v_{\varepsilon}\|_{2}^2    \right).  
\end{equation*}
Thus, the first term on the right side of  \eqref{o28}  becomes 
\begin{equation*}
	\begin{split}
		&A_{3,\mu} \alpha_{\varepsilon}^{10-2\mu}\int_{\Omega}\int_{\Omega}\frac{(PU_{z_{\varepsilon},\lambda_{\varepsilon}}+v_{\varepsilon})^{6-\mu}(y) (PU_{z_{\varepsilon},\lambda_{\varepsilon}}+v_{\varepsilon})^{5-\mu}(x)v_{\varepsilon}(x)
	}{|x-y|^{\mu}}dxdy
	\\  =& A_{3,\mu} (5-\mu)\alpha_{\varepsilon}^{10-2\mu}\int_{\Omega}\int_{\Omega}\frac{U_{z_{\varepsilon},\lambda_{\varepsilon}}^{6-\mu}(y) U_{z_{\varepsilon},\lambda_{\varepsilon}}^{4-\mu}(x)v^2_{\varepsilon}(x)
	}{|x-y|^{\mu}}dxdy \\ &\quad + A_{3,\mu}(6-\mu) \alpha_{\varepsilon}^{10-2\mu} \int_{\Omega}\int_{\Omega}\frac{U_{z_{\varepsilon},\lambda_{\varepsilon}}^{5-\mu}(y) v_{\varepsilon}(y)U_{z_{\varepsilon},\lambda_{\varepsilon}}^{5-\mu}(x)v_{\varepsilon}(x)}{|x-y|^{\mu}}dxdy
	+ O\left((\lambda_{\varepsilon}d_{\varepsilon})^{-1}  \|\nabla v_{\varepsilon}\|_{2}   \right)+o\left(     \|\nabla v_{\varepsilon}\|_{2}^2    \right). 
	\end{split}
\end{equation*}
Combining all the estimates, it follows from \eqref{o28} that 
$$
\aligned
\int_{\Omega}&(|\nabla v_{\varepsilon}|^{2}+Qv_{\varepsilon}^{2})dx
-A_{3,\mu} \alpha_{\varepsilon}^{10-2\mu}\Big((5-\mu)\int_{\Omega}\int_{\Omega}
\frac{U_{z_{\varepsilon},\lambda_{\varepsilon}}^{6-\mu}(x)
U_{z_{\varepsilon},\lambda_{\varepsilon}}^{4-\mu}(y)v_{\varepsilon}^2(y)}{|x-y|^{\mu}}dxdy\\
&+(6-\mu)\int_{\Omega}\int_{\Omega}
\frac{U_{z_{\varepsilon},\lambda_{\varepsilon}}^{5-\mu}(x)v_{\varepsilon}(x)
U_{z_{\varepsilon},\lambda_{\varepsilon}}^{5-\mu}(y)v_{\varepsilon}(y)}{|x-y|^{\mu}}dxdy\Big)
= O\left( \lambda_{\varepsilon}^{-\frac{1}{2}}  \|\nabla v_{\varepsilon}\|_{2}  + (\lambda_{\varepsilon}d_{\varepsilon})^{-1}  \|\nabla v_{\varepsilon}\|_{2}   \right)+ o\left(     \|\nabla v_{\varepsilon}\|_{2}^2    \right) .
\endaligned$$
The coercivity inequality of Lemma \ref{E40} implies that the left side is at least some positive constant multiplied by  $\|\nabla v_{\varepsilon}\|_{2}^2$. Hence, we obtain \eqref{2.12}.
\end{proof}
\subsection{Excluding boundary concentration.}\label{subsec:boundary-concentration}
In this subsection, we are going to show:
\begin{Prop} \label{prop25}
	\begin{equation*}
			d_{\varepsilon}^{-1}= O(1). 
	\end{equation*}
\end{Prop}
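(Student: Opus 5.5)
We argue by contradiction and use a Pohozaev-type identity obtained by testing the equation against a translation vector field, in the spirit of Rey \cite{Re} for the local problem. Suppose, along a subsequence, $d_\varepsilon\to0$, so that $z_0\in\partial\Omega$. The identity comes from multiplying \eqref{equation} by $\partial_{x_i}u_\varepsilon$ and integrating over $\Omega$. The nonlocal term is translation invariant:
\[
\int_\Omega\Big(\int_\Omega\frac{u^{6-\mu}(y)}{|x-y|^\mu}dy\Big)u^{5-\mu}\partial_iu\,dx=\frac{1}{2(6-\mu)}\int_\Omega\int_\Omega(\partial_{x_i}+\partial_{y_i})\frac{u^{6-\mu}(x)u^{6-\mu}(y)}{|x-y|^{\mu}}dxdy .
\]
This vanishes because $u=0$ on $\partial\Omega$ and $(\partial_{x_i}+\partial_{y_i})|x-y|^{-\mu}=0$. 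We are left with
\begin{equation*}
\frac12\int_{\partial\Omega}\Big(\frac{\partial u_\varepsilon}{\partial n}\Big)^2 n_i\,d\sigma=\frac12\int_\Omega \partial_i(Q+\varepsilon V)\,u_\varepsilon^2\,dx ,
\end{equation*}
which is valid since $Q,V$ are Lipschitz. Because $\|u_\varepsilon\|_2^2\lesssim\|U_{z_\varepsilon,\lambda_\varepsilon}\|_2^2+\|v_\varepsilon\|_6^2\lesssim\lambda_\varepsilon^{-1}+\|\nabla v_\varepsilon\|_2^2$, Proposition \ref{prop24} bounds the right-hand side by $O(\lambda_\varepsilon^{-1}+(\lambda_\varepsilon d_\varepsilon)^{-2})$.

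For the left-hand side we take $i$ along the inner normal direction at the boundary point nearest $z_\varepsilon$. We follow Rey's boundary estimates, using the expansion of $PU_{z,\lambda}$ near the boundary. For the pure bubble,
\[
\int_{\partial\Omega}\Big(\frac{\partial PU_{z_\varepsilon,\lambda_\varepsilon}}{\partial n}\Big)^2 n\cdot\nu_\varepsilon\,d\sigma=c\,\lambda_\varepsilon^{-1}\big|\nabla\phi_0(z_\varepsilon)\big|(1+o(1))\ge c'\lambda_\varepsilon^{-1}d_\varepsilon^{-2},
\]
where $\phi_0$ is the Robin function of $-\Delta$ and $|\nabla\phi_0(z)|\sim d(z)^{-2}$ near $\partial\Omega$. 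We must also show that the contribution of $v_\varepsilon$ to the normal derivative on $\partial\Omega$ is of lower order. For this we will write $\partial_n u_\varepsilon$ through the Green representation of $u_\varepsilon$. Away from $z_\varepsilon$, in a region $\{x:|x-z_\varepsilon|\ge d_\varepsilon/2\}$ near the boundary, we will use elliptic estimates for the equation satisfied by $v_\varepsilon$. We expect
\[
\int_{\partial\Omega}(\partial_n v_\varepsilon)^2\lesssim d_\varepsilon^{-1}\|\nabla v_\varepsilon\|_2^2+\text{(source terms)}.
\]
Plugging in $\|\nabla v_\varepsilon\|_2^2\lesssim\lambda_\varepsilon^{-1}+(\lambda_\varepsilon d_\varepsilon)^{-2}$, this is $o(\lambda_\varepsilon^{-1}d_\varepsilon^{-2})$ once $d_\varepsilon\to0$ and $\lambda_\varepsilon d_\varepsilon\to\infty$.

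Comparing the two sides gives $\lambda_\varepsilon^{-1}d_\varepsilon^{-2}\lesssim\lambda_\varepsilon^{-1}+\lambda_\varepsilon^{-2}d_\varepsilon^{-2}$. Since $\lambda_\varepsilon\to\infty$, the last term is absorbed, and we get $d_\varepsilon^{-2}\lesssim1$, contradicting $d_\varepsilon\to0$.

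The main obstacle is controlling the boundary flux of the remainder, $\int_{\partial\Omega}(\partial_n v_\varepsilon)^2$, together with the cross term $\int_{\partial\Omega}\partial_nPU\,\partial_nv_\varepsilon$. Only an $H^1$ bound on $v_\varepsilon$ is available, so a direct trace argument does not work. We will first try a cutoff argument. Take $\chi$ supported in the set $\{x:|x-z_\varepsilon|>d_\varepsilon/2\}$; there $U_{z_\varepsilon,\lambda_\varepsilon}\lesssim\lambda_\varepsilon^{-1/2}d_\varepsilon^{-1}$, so the nonlinear right-hand side is small in $L^p$ for some $p>3/2$. We will then apply a Rellich/Pohozaev identity with the vector field $\chi e_i$ to $v_\varepsilon$ itself. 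In this identity the bulk terms are bounded by $d_\varepsilon^{-1}\|\nabla v_\varepsilon\|_2^2$ plus source terms estimated via HLS as in Proposition \ref{prop24}. The part of the boundary near $z_\varepsilon$ needs a separate treatment: there we would rescale by $d_\varepsilon$ and use the smallness of $\|\nabla v_\varepsilon\|_2$.
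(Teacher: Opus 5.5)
Your skeleton matches the paper's. You use the translation Pohozaev identity, in which the nonlocal term cancels by symmetry (your sign is flipped, $\int_{\partial\Omega}n_i(\partial_nu_\varepsilon)^2=-\int_\Omega\partial_i(Q+\varepsilon V)u_\varepsilon^2$, but this is harmless). You bound the volume term by $O(\lambda_\varepsilon^{-1}+(\lambda_\varepsilon d_\varepsilon)^{-2})$, and you use Rey's expansion of $\int_{\partial\Omega}n(\partial_nPU_{z_\varepsilon,\lambda_\varepsilon})^2$ together with $|\nabla\phi_0|\gtrsim d_\varepsilon^{-2}$.

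\textbf{The gap.} The step you only ``expect'' is the heart of the proof (Lemma \ref{lemma26}): $\int_{\partial\Omega}(\partial_nv_\varepsilon)^2=O(\lambda_\varepsilon^{-1}d_\varepsilon^{-1})+o(\lambda_\varepsilon^{-1}d_\varepsilon^{-2})$. Your sketch does not deliver it, for two reasons.
\begin{itemize}
\item The field $\chi e_i$ only produces $\int_{\partial\Omega}\chi n_i(\partial_nv_\varepsilon)^2$, which changes sign. You need $X\cdot n\ge c>0$ on $\partial\Omega$, for instance $\chi$ times a $C^1$ extension of the normal. This is easily fixed.
\item More seriously, the source term of the Rellich identity is $\int_\Omega\mathcal F\,X\cdot\nabla v_\varepsilon$, and $\nabla v_\varepsilon$ is only in $L^2$. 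The HLS argument of Proposition \ref{prop24} does not transfer, because there the test function was $v_\varepsilon\in L^6$.
\end{itemize}
On the support of $\chi$ the bubble is pointwise small, but $v_\varepsilon$ is not. Consider a term such as $\big(\int_\Omega\frac{(PU_{z_\varepsilon,\lambda_\varepsilon}+v_\varepsilon)^{6-\mu}(y)}{|x-y|^\mu}dy\big)|v_\varepsilon|^{5-\mu}$ (and the mixed term with $U^{5-\mu}_{z_\varepsilon,\lambda_\varepsilon}v_\varepsilon$ inside the convolution). Pairing it with $\nabla v_\varepsilon$ requires $v_\varepsilon\in L^p$ for some $p>6$ there. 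Such terms are supercritical for the $H^1$ bound; this is exactly the obstruction $(5-\mu)\cdot\frac{6}{4-\mu}>6$ flagged in the paper. Routing $\partial_nu_\varepsilon$ through the Green representation instead runs into the same problem.

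\textbf{What the paper does instead.} The missing idea is a higher-integrability estimate for $v_\varepsilon$ away from $z_\varepsilon$. The paper applies the scale-invariant inequality \eqref{o212}, $\|\partial_nw\|_{L^2(\partial\Omega)}\lesssim\|\Delta w\|_{L^{3/2}(\Omega)}$, to $w=\zeta v_\varepsilon$. This reduces everything to $L^{3/2}$ norms of $\zeta\mathcal F$, and the only hard piece is $\||v_\varepsilon|^{5-\mu}\zeta\|_{L^{6/(4-\mu)}}$. That piece is controlled in three steps:
\begin{itemize}
\item test $-\Delta v_\varepsilon=\mathcal F$ against $\zeta^{\frac{2}{4-\mu}}|v_\varepsilon|^{\frac{2}{4-\mu}}v_\varepsilon$;
\item apply Sobolev to $\zeta^{\frac{1}{4-\mu}}|v_\varepsilon|^{\frac{1}{4-\mu}}v_\varepsilon$;
\item absorb the self-interaction terms using $\|v_\varepsilon\|_6=o(1)$.
\end{itemize}

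\textbf{The boundary near $z_\varepsilon$.} Your concern that this part needs rescaling is unnecessary. Since $\partial\Omega\subset\{|x-z_\varepsilon|\ge d_\varepsilon\}$, a cutoff that vanishes on $B_{d_\varepsilon/2}(z_\varepsilon)$ and equals $1$ outside $B_{d_\varepsilon}(z_\varepsilon)$ is identically $1$ on $\partial\Omega$. Its only cost is $|\nabla\zeta|\lesssim d_\varepsilon^{-1}$, which produces the $d_\varepsilon^{-1}\|\nabla v_\varepsilon\|_2^2$-type terms. Once the boundary-flux bound is established, Cauchy--Schwarz and Lemma \ref{A3}(c) give a cross term of order $O(\lambda_\varepsilon^{-1}d_\varepsilon^{-3/2})+o(\lambda_\varepsilon^{-1}d_\varepsilon^{-2})$, and your final comparison closes.
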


We first integrate the equation for $u_\varepsilon$ against $\nabla u_\varepsilon$, to obtain 
\begin{equation}
	\label{o291}
	\int_{\Omega} \left( -\Delta u_{\varepsilon}+(Q+\varepsilon V)u_{\varepsilon}  \right)\nabla u_{\varepsilon}= A_{3,\mu} \int_{\Omega}\int_{\Omega}\frac{u_{\varepsilon}^{6-\mu}(y) u_{\varepsilon}^{5-\mu}(x)\nabla u_{\varepsilon}(x)}{|x-y|^{\mu}}dxdy.  
\end{equation}
Applying the divergence theorem in $\Omega$, we obtain
\begin{equation*}
	\int_{\Omega}  -\Delta u_{\varepsilon}  \nabla u_{\varepsilon} =- \frac{1}{2} \int_{\partial \Omega} n \left( \frac{\partial u_{\varepsilon}}{\partial n}\right) ^2, 
\end{equation*}
and 
\begin{equation*}
	\int_{\Omega} (Q+\varepsilon V)u_{\varepsilon} \nabla u_{\varepsilon}= -\frac{1}{2} \int_{ \Omega} (\nabla(Q+\varepsilon V) )u_{\varepsilon}^2. 
\end{equation*}
On the other hand, we have
\begin{equation*}
	A_{3,\mu} \int_{\Omega}\int_{\Omega}\frac{u_{\varepsilon}^{6-\mu}(y) u_{\varepsilon}^{5-\mu}(x)\nabla u_{\varepsilon}(x)}{|x-y|^{\mu}}dxdy= \frac{ \mu  A_{3,\mu}}{6-\mu}  \int_{\Omega}\int_{\Omega}\frac{u_{\varepsilon}^{6-\mu}(y) u_{\varepsilon}^{6-\mu}(x)(x-y)}{|x-y|^{\mu+2}}dxdy. 
\end{equation*}
Using the symmetry, we can also prove
\begin{equation*}
	A_{3,\mu} \int_{\Omega}\int_{\Omega}\frac{u_{\varepsilon}^{6-\mu}(x) u_{\varepsilon}^{5-\mu}(y)\nabla u_{\varepsilon}(y)}{|x-y|^{\mu}}dxdy= \frac{ \mu  A_{3,\mu}}{6-\mu}  \int_{\Omega}\int_{\Omega}\frac{u_{\varepsilon}^{6-\mu}(y) u_{\varepsilon}^{6-\mu}(x)(y-x)}{|x-y|^{\mu+2}}dxdy. 
\end{equation*}
Hence we obtain
\begin{equation*}
	A_{3,\mu} \int_{\Omega}\int_{\Omega}\frac{u_{\varepsilon}^{6-\mu}(y) u_{\varepsilon}^{5-\mu}(x)\nabla u_{\varepsilon}(x)}{|x-y|^{\mu}}dxdy=0,
\end{equation*}
and then \eqref{o291} becomes the following Pohozaev-type identity 
\begin{equation}\label{o29}
	- \int_{ \Omega} (\nabla(Q+\varepsilon V) )u_{\varepsilon}^2 =\int_{\partial \Omega} n \left( \frac{\partial u_{\varepsilon}}{\partial n}\right) ^2. 
\end{equation}
By inserting the decomposition $u_{\varepsilon}=\alpha_{\varepsilon}(PU_{z_{\varepsilon},\lambda_{\varepsilon}}+v_{\varepsilon})$,  we find 
\begin{equation}
	\label{o210}
	\int_{\partial \Omega} n \left( \frac{\partial PU_{z_{\varepsilon},\lambda_{\varepsilon}}}{\partial n}\right) ^2=-\int_{\partial \Omega}n \left( 2\frac{\partial PU_{z_{\varepsilon},\lambda_{\varepsilon}}}{\partial n}
	\frac{\partial v_{\varepsilon}}{\partial n}
	+   \left(\frac{\partial v_{\varepsilon}}{\partial n} \right)^2\right)- \int_{ \Omega} (\nabla(Q+\varepsilon V) )(PU_{z_{\varepsilon},\lambda_{\varepsilon}}+v_{\varepsilon})^2 . 
\end{equation}
By Lemma \ref{A1},  \eqref{2.12} and $Q,V \in C^1(\overline{\Omega})$, the volume integral is estimated by
\begin{equation}
	\label{o211}
	\left|  \int_{ \Omega} (\nabla(Q+\varepsilon V) )(PU_{z_{\varepsilon},\lambda_{\varepsilon}}+v_{\varepsilon})^2   \right|\lesssim \left\| PU_{z_{\varepsilon},\lambda_{\varepsilon}}   \right\|_2^2+ \left\|v_{\varepsilon}  \right\|_2^2 
	\lesssim \lambda_{\varepsilon}^{-1}  +(\lambda_{\varepsilon} d_{\varepsilon})^{-2}. 
\end{equation}
The function $ \frac{\partial PU_{z_{\varepsilon},\lambda_{\varepsilon}}}{\partial n}$ on the boundary is discussed in Lemma \ref{A3}. Then we estimate the function $ \frac{\partial v_{\varepsilon} }{\partial n}$ on the boundary. 
\begin{lem} \label{lemma26}
	\begin{equation*}
	\int_{\partial \Omega}    \left(\frac{\partial v_{\varepsilon}}{\partial n} \right)^2 =O\left(\lambda_\varepsilon^{-1}d_\varepsilon^{-1}\right)+o\left(\lambda_\varepsilon^{-1}d_\varepsilon^{-2}\right).  
	\end{equation*}
\end{lem}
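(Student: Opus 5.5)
This follows the strategy of Rey \cite{Re} and \cite{FKK3}: turn the boundary trace of $\partial_n v_\varepsilon$ into an interior elliptic estimate near $\partial\Omega$. The reason this should work is that, away from $z_\varepsilon$, all source terms in the equation for $v_\varepsilon$ are small.

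Fix a cutoff $\chi$ with $\chi\equiv1$ near $\partial\Omega$ and $\chi\equiv0$ on $B_{d_\varepsilon/2}(z_\varepsilon)$, with $|\nabla\chi|\lesssim d_\varepsilon^{-1}$ and $|D^2\chi|\lesssim d_\varepsilon^{-2}$. More simply, one can work in the region $\Omega\setminus B_{d_\varepsilon/2}(z_\varepsilon)$. The standard trace/Rellich-type identity is obtained by integrating the equation \eqref{equation.v} against $(X\cdot\nabla v_\varepsilon)\chi$, where $X$ is a $C^1$ vector field extending the outer normal $n$. This gives
\[
\int_{\partial\Omega}\Big(\frac{\partial v_\varepsilon}{\partial n}\Big)^2\lesssim \int_{\Omega}|\nabla v_\varepsilon|^2|\nabla(\chi X)| + \int_\Omega |f_\varepsilon|\,|\nabla v_\varepsilon|\,\chi ,
\]
where $f_\varepsilon=(-\Delta)v_\varepsilon$ is the right-hand side of \eqref{equation.v} with the $Qv_\varepsilon$ term moved over. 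In a neighborhood of size $\sim d_\varepsilon$ around $\partial\Omega$ this is a simple weighted $H^1$ estimate. Rather than use the crude global bound on the first term, I would first prove a local improvement. This improvement is needed because the crude bound $\|\nabla v_\varepsilon\|_2^2 d_\varepsilon^{-1}$ is only $O(\lambda_\varepsilon^{-1}d_\varepsilon^{-1}+\lambda_\varepsilon^{-2}d_\varepsilon^{-3})$ by Proposition \ref{prop24}, and the second piece is not of the claimed form.

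\textbf{Step 1 (source terms away from the bubble).} On $\{\chi\neq0\}$ we have $U_{z_\varepsilon,\lambda_\varepsilon}\lesssim\lambda_\varepsilon^{-1/2}d_\varepsilon^{-1}$. By Lemma \ref{A1}, also $\varphi_{z_\varepsilon,\lambda_\varepsilon}\lesssim \lambda_\varepsilon^{-1/2}d_\varepsilon^{-1}$ and $\Delta PU=\Delta U=O(\lambda_\varepsilon^{-5/2}d_\varepsilon^{-5})$ there. The nonlocal potential $\int_\Omega|x-y|^{-\mu}(PU+v)^{6-\mu}dy$ is bounded in $L^{3/\mu}$ by HLS, and is $O(d_\varepsilon^{-\mu})$ plus a small term coming from $v_\varepsilon$. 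Hence the nonlinear term is $O(\lambda_\varepsilon^{-(5-\mu)/2}d_\varepsilon^{-(5-\mu)-\mu})+O(|v_\varepsilon|^{5-\mu}\cdot\text{potential})$. The linear term $(Q+\varepsilon V)PU$ is $O(\lambda_\varepsilon^{-1/2}d_\varepsilon^{-1})$ pointwise; paired with $\nabla v_\varepsilon$ on a set of measure $\lesssim1$, it gives $O(\lambda_\varepsilon^{-1/2}\|\nabla v_\varepsilon\|_2)=O(\lambda_\varepsilon^{-1}+\lambda_\varepsilon^{-3/2}d_\varepsilon^{-1})$. The remaining contributions are handled by Hölder and Sobolev, using $\|\nabla v_\varepsilon\|_2\to0$.

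\textbf{Step 2 (localized energy bound).} This is the main obstacle. I need $\int_{\{\chi\neq0\}}|\nabla v_\varepsilon|^2$ to be $O(\lambda_\varepsilon^{-1})+o(\lambda_\varepsilon^{-1}d_\varepsilon^{-1})$, that is, better than the global bound by a factor tied to $d_\varepsilon$. I would test \eqref{equation.v} against $\chi^2v_\varepsilon$ (a Caccioppoli argument). The cross term $\int|\nabla\chi|^2v_\varepsilon^2\lesssim d_\varepsilon^{-2}\|v_\varepsilon\|_{L^2(\mathrm{supp}\nabla\chi)}^2$ is controlled by Hölder on an annulus of volume $d_\varepsilon^3$: it is $\lesssim d_\varepsilon^{-2}d_\varepsilon^{2}\|v_\varepsilon\|_6^2=O(\|\nabla v_\varepsilon\|_2^2)$. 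This does not gain anything by itself. The gain must come from the fact that the $(\lambda_\varepsilon d_\varepsilon)^{-1}$ part of $v_\varepsilon$ in Proposition \ref{prop24} originates only from the projection defect near the boundary, and that $\|\nabla v_\varepsilon\|_2^2\,d_\varepsilon^{-1}\cdot o(1)$ is acceptable. So I would aim for exactly
\[
\int_{\partial\Omega}\Big(\frac{\partial v_\varepsilon}{\partial n}\Big)^2\lesssim d_\varepsilon^{-1}\|\nabla v_\varepsilon\|_{L^2(\{\chi\ne 0\})}^2+\lambda_\varepsilon^{-1}d_\varepsilon^{-1}+\text{(Step 1 terms)},
\]
and then use Proposition \ref{prop24} to write $\|\nabla v_\varepsilon\|_2^2\lesssim\lambda_\varepsilon^{-1}+(\lambda_\varepsilon d_\varepsilon)^{-2}$. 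The term $(\lambda_\varepsilon d_\varepsilon)^{-2}d_\varepsilon^{-1}=\lambda_\varepsilon^{-1}d_\varepsilon^{-2}\cdot(\lambda_\varepsilon d_\varepsilon)^{-1}$ is $o(\lambda_\varepsilon^{-1}d_\varepsilon^{-2})$, because $\lambda_\varepsilon d_\varepsilon\to\infty$ by Proposition \ref{prop22}. This yields the claimed bound.

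If the source estimates in Step 1 produce larger powers of $d_\varepsilon^{-1}$, those powers come multiplied by extra factors $\lambda_\varepsilon^{-1/2}$, and I would absorb them in the same way into $o(\lambda_\varepsilon^{-1}d_\varepsilon^{-2})$.
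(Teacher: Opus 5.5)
Your Rellich identity (testing with $\chi\,X\cdot\nabla v_\varepsilon$) is correct. However, it pairs $\chi\mathcal F$, with $\mathcal F=-\Delta v_\varepsilon$, against $\nabla v_\varepsilon$, and you only control $\nabla v_\varepsilon$ in $L^2$. So every piece of $\chi\mathcal F$ has to be bounded in $L^2$, and this fails at the term you wave through with ``Hölder and Sobolev'': $\bigl(|x|^{-\mu}*(PU_{z_\varepsilon,\lambda_\varepsilon}+v_\varepsilon)^{6-\mu}\bigr)|v_\varepsilon|^{5-\mu}$.

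Take first the part of the potential coming from the bubble, which on $\operatorname{supp}\chi$ is bounded by $\frac{3}{A_{3,\mu}}U_{z_\varepsilon,\lambda_\varepsilon}^{\mu}\lesssim(\lambda_\varepsilon^{-1/2}d_\varepsilon^{-1})^{\mu}$. Even there you must control $\int\chi|v_\varepsilon|^{5-\mu}|\nabla v_\varepsilon|\le\|v_\varepsilon\|_{L^{2(5-\mu)}}^{5-\mu}\|\nabla v_\varepsilon\|_2$, and $2(5-\mu)>6$ for $\mu<2$. The piece $|x|^{-\mu}*|v_\varepsilon|^{6-\mu}$ is only in $L^{6/\mu}$, so for it you would even need $v_\varepsilon\in L^{6(5-\mu)/(3-\mu)}$. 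Note that $\int\chi|v|^{5-\mu}|\nabla v|$ is, up to a constant, $\|\chi\nabla(|v|^{6-\mu})\|_{L^1}$, which controls a supercritical Lebesgue norm of $v$. Hence no combination of Hölder, Sobolev and $\|\nabla v_\varepsilon\|_2\to0$ can bound it.

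This supercriticality is the real obstacle in the lemma. The paper meets it in a milder form: it uses the trace inequality \eqref{o212} with $\|\Delta(\zeta v_\varepsilon)\|_{L^{3/2}}$ instead of a Rellich identity. It then only needs $\||v_\varepsilon|^{5-\mu}\zeta\|_{L^{6/(4-\mu)}}$, which is still supercritical since $(5-\mu)\frac{6}{4-\mu}>6$. It closes this with a localized Moser-type step:
\begin{itemize}
\item test $-\Delta v_\varepsilon=\mathcal F$ with $\zeta^{2/(4-\mu)}|v_\varepsilon|^{2/(4-\mu)}v_\varepsilon$;
\item use the pointwise inequality \eqref{o217} and Sobolev for $\zeta^{1/(4-\mu)}|v_\varepsilon|^{1/(4-\mu)}v_\varepsilon$;
\item absorb the self-interaction terms using $\|v_\varepsilon\|_6=o(1)$.
\end{itemize}
Your proposal has no such higher-integrability argument, and in your $L^2$ framework the required exponent is even higher.

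Step 2, by contrast, addresses a non-issue. The crude bound $d_\varepsilon^{-1}\|\nabla v_\varepsilon\|_2^2=O(\lambda_\varepsilon^{-1}d_\varepsilon^{-1})+O(\lambda_\varepsilon^{-2}d_\varepsilon^{-3})$ is already of the claimed form, because $\lambda_\varepsilon^{-2}d_\varepsilon^{-3}=\lambda_\varepsilon^{-1}d_\varepsilon^{-2}(\lambda_\varepsilon d_\varepsilon)^{-1}$. You note this yourself at the end, which contradicts your earlier claim that the second piece is not of the claimed form. This crude bound is exactly what the paper uses for $\||\nabla\zeta||\nabla v_\varepsilon|\|_{3/2}^2$, so no Caccioppoli improvement is needed.

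Also, the sup-norm bounds of Step 1 taken over a set of measure $O(1)$ are too weak. For instance, $\sup_{\operatorname{supp}\chi}U_{z_\varepsilon,\lambda_\varepsilon}^5\cdot\|\nabla v_\varepsilon\|_2$ only gives $\lambda_\varepsilon^{-3}d_\varepsilon^{-5}$. This is not $o(\lambda_\varepsilon^{-1}d_\varepsilon^{-2})$ when $d_\varepsilon\sim\lambda_\varepsilon^{-2/3}$, so your closing remark about ``extra factors $\lambda_\varepsilon^{-1/2}$'' does not hold as stated. That part is repairable with the decay estimates of Lemma \ref{A1} on $\Omega\setminus B_{d_\varepsilon/2}(z_\varepsilon)$, e.g. $\|U_{z_\varepsilon,\lambda_\varepsilon}^5\|_{L^2(\Omega\setminus B_{d_\varepsilon/2}(z_\varepsilon))}\lesssim\lambda_\varepsilon^{-5/2}d_\varepsilon^{-7/2}$. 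The supercritical nonlinear term above is not repairable this way.
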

\begin{proof}
	The following proof relies on the inequality
	\begin{equation}\label{o212}
		\left\|\frac{\partial z}{\partial n}\right\|_{L^2(\partial\Omega)}^2\lesssim\left\|\Delta z\right\|_{L^{\frac{3}{2}}(\Omega)}^2\quad\text{for all }z\in H^2(\Omega)\cap H_0^1(\Omega).
	\end{equation}
This inequality appears in \cite[Appendix C]{Re}, and we refer to \cite{HWY} for its proof.

Equation \eqref{equation.v} for $v_\varepsilon$ can be written as $-\Delta v_\varepsilon = \mathcal{F}$ with 
\begin{equation}
	\label{o213}
	\begin{split}
		\mathcal{F}:=&
A_{3,\mu} \alpha_{\varepsilon}^{10-2\mu} \left(\int_{\Omega}\frac{(PU_{z_{\varepsilon},\lambda_{\varepsilon}}+v_{\varepsilon})^{6-\mu}(y)}{|x-y|^{\mu}}dy\right) (PU_{z_{\varepsilon},\lambda_{\varepsilon}}+v_{\varepsilon})^{5-\mu} \\& -A_{3,\mu}\left( \int_{\mathbb{R}^3} \frac{U_{z_{\varepsilon},\lambda_{\varepsilon}}^{6-\mu}(y)}{|x-y|^\mu} dy \right) U_{z_{\varepsilon},\lambda_{\varepsilon}}^{5-\mu}
-(Q+\varepsilon V)(PU_{z_{\varepsilon},\lambda_{\varepsilon}}+v_{\varepsilon} ). 
	\end{split}
\end{equation}
Moreover, the function $\mathcal{F}$ satisfies the simple pointwise bound
\begin{equation}\label{o215}
	\begin{split}
		\left| \mathcal{F}\right| \lesssim&\int_{\mathbb{R}^3} \frac{U_{z_{\varepsilon},\lambda_{\varepsilon}}^{6-\mu}(y)}{|x-y|^\mu} dyU_{z_{\varepsilon},\lambda_{\varepsilon}}^{5-\mu}+ \int_{\mathbb{R}^3} \frac{U_{z_{\varepsilon},\lambda_{\varepsilon}}^{6-\mu}(y)}{|x-y|^\mu} dy\left|v_{\varepsilon}\right|^{5-\mu}
	+ \int_{\Omega} \frac{\left|v_{\varepsilon}\right|^{6-\mu}(y)}{|x-y|^\mu} dyU_{z_{\varepsilon},\lambda_{\varepsilon}}^{5-\mu} \\ &+\int_{\Omega} \frac{\left|v_{\varepsilon}\right|^{6-\mu}(y)}{|x-y|^\mu} dy\left|v_{\varepsilon}\right|^{5-\mu}
	+ U_{z_{\varepsilon},\lambda_{\varepsilon}}+ \left|v_{\varepsilon}\right|. 
	\end{split}
\end{equation}
In order to make the boundary estimate sharper, we introduce a cut-off function
\begin{equation*}
	\zeta(y):=\chi\left(\frac{y-z_{\varepsilon}}{d_{\varepsilon}}\right), 
\end{equation*}
where $\chi \in [0,1]$ is a smooth function with $\chi \equiv0 $ on $\left\{|y|\leq\frac{1}{2}\right\}$  and 
$\chi \equiv 1 $ on $\left\{|y|\geq 1 \right\}$. 
Then $\zeta v_{\varepsilon} \in H^2(\Omega)\cap H_0^1(\Omega) $ and satisfies 
\begin{equation}
	\label{oo215}
	-\Delta(\zeta v_{\varepsilon})=\zeta \mathcal{F}-2\nabla\zeta\cdot\nabla v_{\varepsilon}-(\Delta\zeta)v_{\varepsilon}.
\end{equation}

Taking inequalities \eqref{o212} \eqref{o215} and equality \eqref{oo215} into  consideration,  we have 
\begin{equation}
	\begin{split}
		\left\| \frac{\partial v_{\varepsilon}}{\partial n}  \right\|_{L^2(\partial\Omega)}^2 =&
	\left\| \frac{\partial (\zeta v_{\varepsilon})}{\partial n}  \right\|_{L^2(\partial\Omega)}^2 
	\lesssim\left\|\Delta (\zeta v_{\varepsilon})\right\|_{L^{\frac{3}{2}}(\Omega)}^2
	\lesssim\left\|\zeta \mathcal{F}-2\nabla\zeta\cdot\nabla v_{\varepsilon}-(\Delta\zeta)v_{\varepsilon}\right\|_{L^{\frac{3}{2}}(\Omega)}^2
	\\ \lesssim& \left\|    \int_{\mathbb{R}^3} \frac{U_{z_{\varepsilon},\lambda_{\varepsilon}}^{6-\mu}(y)}{|x-y|^\mu} dyU_{z_{\varepsilon},\lambda_{\varepsilon}}^{5-\mu}  \zeta\right\|_{L^{\frac{3}{2}}(\Omega)}^2
	+\left\|     \int_{\mathbb{R}^3} \frac{U_{z_{\varepsilon},\lambda_{\varepsilon}}^{6-\mu}(y)}{|x-y|^\mu} dy\left|v_{\varepsilon}\right|^{5-\mu}     \zeta\right\|_{L^{\frac{3}{2}}(\Omega)}^2
	\\ &+\left\|      \int_{\Omega} \frac{\left|v_{\varepsilon}\right|^{6-\mu}(y)}{|x-y|^\mu} dyU_{z_{\varepsilon},\lambda_{\varepsilon}}^{5-\mu}    \zeta\right\|_{L^{\frac{3}{2}}(\Omega)}^2
	+\left\|    \int_{\Omega} \frac{\left|v_{\varepsilon}\right|^{6-\mu}(y)}{|x-y|^\mu} dy\left|v_{\varepsilon}\right|^{5-\mu}     \zeta\right\|_{L^{\frac{3}{2}}(\Omega)}^2
	\\ & +\left\|  U_{z_{\varepsilon},\lambda_{\varepsilon}}   \zeta\right\|_{L^{\frac{3}{2}}(\Omega)}^2 
	+\left\|  \left|v_{\varepsilon}\right|   \zeta\right\|_{L^{\frac{3}{2}}(\Omega)}^2 +\left\|    |\nabla \zeta|  |\nabla  v_{\varepsilon}|  \right\|_{L^{\frac{3}{2}}(\Omega)}^2 +
	\left\|   ( \Delta \zeta )   v_{\varepsilon}  \right\|_{L^{\frac{3}{2}}(\Omega)}^2. 
	\end{split}
\end{equation}
Then we begin to estimate the norms on the right-hand side. Taking equation \eqref{1esay}, Lemma \ref{A1} and the estimate $\|v_{\varepsilon}\|_{6} =O\left(\lambda^{-\frac{1}{2}}_{\varepsilon}\right)+O\left((\lambda_{\varepsilon}d_{\varepsilon})^{-1}\right)$ into consideration,  we obtain
\begin{equation*}
	\left\|    \int_{\mathbb{R}^3} \frac{U_{z_{\varepsilon},\lambda_{\varepsilon}}^{6-\mu}(y)}{|x-y|^\mu} dyU_{z_{\varepsilon},\lambda_{\varepsilon}}^{5-\mu}  \zeta\right\|_{L^{\frac{3}{2}}(\Omega)}^2
	\lesssim 	\left\|  U_{z_{\varepsilon},\lambda_{\varepsilon}}^{5}  \zeta\right\|_{L^{\frac{3}{2}}(\Omega)}^2
	\lesssim 	\left\|  U_{z_{\varepsilon},\lambda_{\varepsilon}}  \right\|_{L^{\frac{15}{2}}( \Omega\backslash B_{d_{\varepsilon}/2}(z_{\varepsilon}))}^{10}
	\lesssim  \lambda_{\varepsilon}^{-5} d_{\varepsilon}^{-6} =o(\lambda_{\varepsilon}^{-1} d_{\varepsilon}^{-2}), 
\end{equation*}
\begin{equation*}
	\left\|     \int_{\mathbb{R}^3} \frac{U_{z_{\varepsilon},\lambda_{\varepsilon}}^{6-\mu}(y)}{|x-y|^\mu} dy\left|v_{\varepsilon}\right|^{5-\mu}     \zeta\right\|_{L^{\frac{3}{2}}(\Omega)}^2
	\lesssim  \left\|    U_{z_{\varepsilon},\lambda_{\varepsilon}}^{\mu}\left|v_{\varepsilon}\right|^{5-\mu}     \zeta\right\|_{L^{\frac{3}{2}}(\Omega)}^2
	\lesssim  \left\|    U_{z_{\varepsilon},\lambda_{\varepsilon}}^{\mu}\right\|_{L^{\frac{6}{\mu}}(\Omega)}^2
	\left\|    \left|v_{\varepsilon}\right|^{5-\mu}     \zeta\right\|_{L^{\frac{6}{4-\mu}}(\Omega)}^2
	\lesssim \left\|    \left|v_{\varepsilon}\right|^{5-\mu}     \zeta\right\|_{L^{\frac{6}{4-\mu}}(\Omega)}^2, 
\end{equation*}
\begin{equation*}
	\left\|  U_{z_{\varepsilon},\lambda_{\varepsilon}}   \zeta\right\|_{L^{\frac{3}{2}}(\Omega)}^2 \lesssim 
	\left\|  U_{z_{\varepsilon},\lambda_{\varepsilon}}  \right\|_{L^{\frac{3}{2}}( \Omega  \backslash B_{d_{\varepsilon}/2}(z_{\varepsilon}))}^{2} \lesssim \lambda_{\varepsilon}^{-1} =O\left(\lambda_\varepsilon^{-1}d_\varepsilon^{-1}\right), 
\end{equation*}
\begin{equation*}
	\left\|  \left|v_{\varepsilon}\right|   \zeta\right\|_{L^{\frac{3}{2}}(\Omega)}^2
		\lesssim \left\| v_{\varepsilon}   \right\|_{L^{6}(\Omega)}^2= O\left(\lambda^{-1}_{\varepsilon}\right)+O\left((\lambda_{\varepsilon}d_{\varepsilon})^{-2}\right)=O\left(\lambda_\varepsilon^{-1}d_\varepsilon^{-1}\right)+o\left(\lambda_\varepsilon^{-1}d_\varepsilon^{-2}\right), 
\end{equation*}
\begin{equation*}
	\left\|    |\nabla \zeta|  |\nabla  v_{\varepsilon}|  \right\|_{L^{\frac{3}{2}}(\Omega)}^2 
	\lesssim \|\nabla v_\varepsilon\|_{L^2(\Omega)}^2 \|\nabla\zeta\|_{L^6(\Omega)}^2\lesssim
	\left(\lambda_\varepsilon^{-1}+(\lambda_\varepsilon d_\varepsilon)^{-2}\right)d_\varepsilon^{-1}
		=O\left(\lambda_\varepsilon^{-1}d_\varepsilon^{-1}\right)+o\left(\lambda_\varepsilon^{-1}d_\varepsilon^{-2}\right),
\end{equation*}
and
\begin{equation*}
	\left\|   ( \Delta \zeta )   v_{\varepsilon}  \right\|_{L^{\frac{3}{2}}(\Omega)}^2
	\lesssim 	\left\|   ( \Delta \zeta )     \right\|_{L^{2}(\Omega)}^2 \left\|      v_{\varepsilon}  \right\|_{L^{6}(\Omega)}^2
	\lesssim
	\left(\lambda_\varepsilon^{-1}+(\lambda_\varepsilon d_\varepsilon)^{-2}\right)d_\varepsilon^{-1}
		=O\left(\lambda_\varepsilon^{-1}d_\varepsilon^{-1}\right)+o\left(\lambda_\varepsilon^{-1}d_\varepsilon^{-2}\right). 
\end{equation*}
Moreover, taking Proposition \ref{A5} into  consideration,  we have 
\begin{equation*}\begin{split}
		\left\|      \int_{\Omega} \frac{\left|v_{\varepsilon}\right|^{6-\mu}(y)}{|x-y|^\mu} dyU_{z_{\varepsilon},\lambda_{\varepsilon}}^{5-\mu}    \zeta\right\|_{L^{\frac{3}{2}}(\Omega)}^2
	&\lesssim \left\|      \int_{\Omega} \frac{\left|v_{\varepsilon}\right|^{6-\mu}(y)}{|x-y|^\mu} dy\right\|_{L^{\frac{6}{\mu}}(\Omega)}^2 \left\|     U_{z_{\varepsilon},\lambda_{\varepsilon}}^{5-\mu}    \zeta\right\|_{L^{\frac{6}{4-\mu}}(\Omega)}^2
	 \\ &\lesssim \left\|    \left|v_{\varepsilon}\right|^{6-\mu}\right\|_{L^{\frac{6}{6-\mu}}(\Omega)}^2
		\left\|     U_{z_{\varepsilon},\lambda_{\varepsilon}}\right\|_{L^{\frac{6(5-\mu)}{4-\mu}}(\Omega \backslash B_{d_{\varepsilon}/2}(z_{\varepsilon}))}^{2(5-\mu)}=o\left(\lambda_\varepsilon^{-1}d_\varepsilon^{-2}\right), 
\end{split}
\end{equation*}
and 
\begin{equation*}
	\begin{split}
		\left\|    \int_{\Omega} \frac{\left|v_{\varepsilon}\right|^{6-\mu}(y)}{|x-y|^\mu} dy\left|v_{\varepsilon}\right|^{5-\mu}     \zeta\right\|_{L^{\frac{3}{2}}(\Omega)}^2
&\lesssim \left\|      \int_{\Omega} \frac{\left|v_{\varepsilon}\right|^{6-\mu}(y)}{|x-y|^\mu} dy\right\|_{L^{\frac{6}{\mu}}(\Omega)}^2 \left\|     \left|v_{\varepsilon}\right|^{5-\mu}     \zeta\right\|_{L^{\frac{6}{4-\mu}}(\Omega)}^2
 \\ &\lesssim \left\|    \left|v_{\varepsilon}\right|^{6-\mu}\right\|_{L^{\frac{6}{6-\mu}}(\Omega)}^2 \left\|    \left|v_{\varepsilon}\right|^{5-\mu}     \zeta\right\|_{L^{\frac{6}{4-\mu}}(\Omega)}^2 =o\left(  \left\|    \left|v_{\varepsilon}\right|^{5-\mu}     \zeta\right\|_{L^{\frac{6}{4-\mu}}(\Omega)}^2    \right). 
	\end{split}
\end{equation*}
It remains to bound the term  $\left\|    \left|v_{\varepsilon}\right|^{5-\mu}     \zeta\right\|_{L^{\frac{6}{4-\mu}}(\Omega)}^2 $. 
This term is difficult to estimate,  since  $(5-\mu)\cdot \frac{6}{4-\mu} >6$. We now proceed to estimate it as follows. Firstly, we multiply the equation $-\Delta v_\varepsilon = \mathcal{F}$  by $ \zeta ^{\frac{2}{4-\mu}}\left| v_{\varepsilon}\right|^{\frac{2}{4-\mu}} v_{\varepsilon}$ and integrate over $\Omega$ to obtain
\begin{equation}
	\label{o216}
	\int_{\Omega}\nabla (\zeta ^{\frac{2}{4-\mu}}\left| v_{\varepsilon}\right|^{\frac{2}{4-\mu}} v_{\varepsilon}) \cdot \nabla v_{\varepsilon} \leq \int_{\Omega} \left| \mathcal{F}  \right| \zeta ^{\frac{2}{4-\mu}} \left| v_{\varepsilon}\right|^{\frac{6-\mu}{4-\mu}}. 
\end{equation}
Now we claim that there are universal constants $c > 0$ and $C < \infty$ such that, pointwise a.e.,
\begin{equation}
	\label{o217}
	\nabla (\zeta ^{\frac{2}{4-\mu}}\left| v_{\varepsilon}\right|^{\frac{2}{4-\mu}} v_{\varepsilon}) \cdot \nabla v_{\varepsilon}
	\geq c \left|   \nabla \left(\zeta ^{\frac{1}{4-\mu}}  \left| v_{\varepsilon}\right|^{\frac{1}{4-\mu}} v_{\varepsilon}   \right)  \right|^{2} -C \left| v_{\varepsilon}\right|^{\frac{10-2\mu}{4-\mu}} \left|   \nabla \left(\zeta ^{\frac{1}{4-\mu}}    \right)  \right|^{2}. 
\end{equation}
Actually, using the product rule and chain rule, we find 
\begin{equation*}
	\begin{split}
		\nabla (\zeta ^{\frac{2}{4-\mu}}\left| v_{\varepsilon}\right|^{\frac{2}{4-\mu}} v_{\varepsilon}) \cdot \nabla v_{\varepsilon} =&\frac{(6-\mu)(4-\mu)}{(5-\mu)^2} \left|   \nabla \left(\zeta ^{\frac{1}{4-\mu}}  \left| v_{\varepsilon}\right|^{\frac{1}{4-\mu}} v_{\varepsilon}   \right)  \right|^{2}-\frac{(4-\mu)^2}{(5-\mu)^2}\left| v_{\varepsilon}\right|^{\frac{10-2\mu}{4-\mu}} \left|   \nabla \left(\zeta ^{\frac{1}{4-\mu}}    \right)  \right|^{2}
		\\  &-\frac{2(4-\mu)}{(5-\mu)^2}\left| v_{\varepsilon}\right|^{\frac{1}{4-\mu}} v_{\varepsilon}\nabla \left(\zeta ^{\frac{1}{4-\mu}}    \right) \cdot \nabla \left(\zeta ^{\frac{1}{4-\mu}}  \left| v_{\varepsilon}\right|^{\frac{1}{4-\mu}} v_{\varepsilon}   \right) . 
	\end{split}
\end{equation*} 
By applying Schwarz's inequality $v_1\cdot v_2\geq-\eta|v_1|^2-|v_2|^2/(4\eta)$ to the cross term on the right-hand side and choosing  $\eta >0$ sufficiently small, we obtain the desired inequality \eqref{o217}.

Using \eqref{o217}, we can estimate the left side in \eqref{o216} from below by
\begin{equation*}
	\int_{\Omega}\nabla (\zeta ^{\frac{2}{4-\mu}}\left| v_{\varepsilon}\right|^{\frac{2}{4-\mu}} v_{\varepsilon}) \cdot \nabla v_{\varepsilon}
	\geq c \int_{\Omega}\left|   \nabla \left(\zeta ^{\frac{1}{4-\mu}}  \left| v_{\varepsilon}\right|^{\frac{1}{4-\mu}} v_{\varepsilon}   \right)  \right|^{2} -C \int_{\Omega}\left| v_{\varepsilon}\right|^{\frac{10-2\mu}{4-\mu}} \left|   \nabla \left(\zeta ^{\frac{1}{4-\mu}}    \right)  \right|^{2}. 
\end{equation*}
Combining the Sobolev inequality for the function $\zeta ^{\frac{1}{4-\mu}}  \left| v_{\varepsilon}\right|^{\frac{1}{4-\mu}} v_{\varepsilon} $ and \eqref{o216}, we observe
\begin{equation}
	\label{o218}
	\begin{split}
		\left\|    \left|v_{\varepsilon}\right|^{5-\mu}     \zeta\right\|_{L^{\frac{6}{4-\mu}}(\Omega)}^2 
=&\left( \int_{\Omega} \left| \zeta ^{\frac{1}{4-\mu}}  \left| v_{\varepsilon}\right|^{\frac{1}{4-\mu}} v_{\varepsilon} \right|  ^{6} \right)^{\frac{4-\mu}{3}} \lesssim 
\left(  \int_{\Omega}\left|   \nabla \left(\zeta ^{\frac{1}{4-\mu}}  \left| v_{\varepsilon}\right|^{\frac{1}{4-\mu}} v_{\varepsilon}   \right)  \right|^{2}  \right) ^{4-\mu}
\\
\lesssim & \left( \int_{\Omega}\left| v_{\varepsilon}\right|^{\frac{10-2\mu}{4-\mu}} \left|   \nabla \left(\zeta ^{\frac{1}{4-\mu}}    \right)  \right|^{2}\right)^{4-\mu}
+\left( \int_{\Omega} \left| \mathcal{F}  \right| \zeta ^{\frac{2}{4-\mu}} \left| v_{\varepsilon}\right|^{\frac{6-\mu}{4-\mu}} \right)^{4-\mu}. 
	\end{split}
\end{equation}
As for the first term on the right side, we have 
\begin{equation*}
	\begin{split}
		 \left( \int_{\Omega}\left| v_{\varepsilon}\right|^{\frac{10-2\mu}{4-\mu}} \left|   \nabla \left(\zeta ^{\frac{1}{4-\mu}}    \right)  \right|^{2}\right)^{4-\mu}
	&\lesssim \left\| v_{\varepsilon}   \right\|_{6}^{10-2\mu}   \left( \int_{\Omega} \left|   \nabla \left(\zeta ^{\frac{1}{4-\mu}}    \right)  \right|^{\frac{24-6\mu}{7-2\mu}}\right)^{\frac{7-2\mu}{3}}
	\\ &\lesssim \left(\lambda_\varepsilon^{-5+\mu} +\lambda_\varepsilon^{-10+2\mu} d_\varepsilon^{-10+2\mu}\right) d_\varepsilon^{-1}
		=O\left(\lambda_\varepsilon^{-1}d_\varepsilon^{-1}\right)+o\left(\lambda_\varepsilon^{-1}d_\varepsilon^{-2}\right). 
	\end{split}
\end{equation*}
In order to estimate  the second term on the right side of \eqref{o218}, we take  the pointwise estimate  \eqref{o215} into consideration. For the  contribution of the terms $\int_{\mathbb{R}^3} \frac{U_{z_{\varepsilon},\lambda_{\varepsilon}}^{6-\mu}(y)}{|x-y|^\mu} dy\left|v_{\varepsilon}\right|^{5-\mu}$  and $\int_{\Omega} \frac{\left|v_{\varepsilon}\right|^{6-\mu}(y)}{|x-y|^\mu} dy\left|v_{\varepsilon}\right|^{5-\mu}$,  we have the following estimates:
\begin{equation*}
	\begin{split}
		\left( \int_{\mathbb{R}^3}  \int_{\Omega} \frac{U_{z_{\varepsilon},\lambda_{\varepsilon}}^{6-\mu}(y) \left|v_{\varepsilon}(x)\right|^{5-\mu+\frac{6-\mu}{4-\mu}}   \zeta ^{\frac{2}{4-\mu}}(x)}{|x-y|^\mu} dxdy \right)^{4-\mu}= &
		\left(  \frac{3}{A_{3,\mu}}  \int_{\Omega} U_{z_{\varepsilon},\lambda_{\varepsilon}}^{\mu} \left|v_{\varepsilon}\right|^{4-\mu}   \zeta ^{\frac{2}{4-\mu}}  \left|v_{\varepsilon}\right|^{\frac{10-2\mu}{4-\mu}}   \right)^{4-\mu}
		\\ \lesssim & \left( \left\| U_{z_{\varepsilon},\lambda_{\varepsilon}}^{\mu} \left|v_{\varepsilon}\right|^{4-\mu}  \right\|_{\frac{3}{2}}    \left\|  \zeta ^{\frac{2}{4-\mu}}  \left|v_{\varepsilon}\right|^{\frac{10-2\mu}{4-\mu}}  \right\|_{3}      \right)^{4-\mu}
		\\ \lesssim & \left\| \left|v_{\varepsilon}\right|^{4-\mu}  \right\|_{\frac{6}{4-\mu}}^{4-\mu} 
		  \left\| U_{z_{\varepsilon},\lambda_{\varepsilon}}^{\mu} \right\|_{\frac{6}{\mu}}^{4-\mu}
		\left\|    \left|v_{\varepsilon}\right|^{5-\mu}     \zeta\right\|_{\frac{6}{4-\mu}}^2 =o\left(\left\|    \left|v_{\varepsilon}\right|^{5-\mu}     \zeta\right\|_{\frac{6}{4-\mu}}^2\right)	 
	\end{split}
\end{equation*}
and  
\begin{equation*}
	\begin{split}
		\left( \int_{\Omega}  \int_{\Omega} \frac{\left|v_{\varepsilon}(y) \right|^{6-\mu}\left|v_{\varepsilon}(x)\right|^{5-\mu+\frac{6-\mu}{4-\mu}}   \zeta ^{\frac{2}{4-\mu}}(x)}{|x-y|^\mu} dxdy \right)^{4-\mu}
		&\lesssim \left\|  \int_{\Omega}\frac{\left|v_{\varepsilon}(y) \right|^{6-\mu}}{|x-y|^\mu} dy\right\|_{\frac{6}{\mu}} ^{4-\mu}
		\left\|  \left|v_{\varepsilon}\right|^{5-\mu+\frac{6-\mu}{4-\mu}}   \zeta ^{\frac{2}{4-\mu}}\right\|_{\frac{6}{6-\mu}} ^{4-\mu}
		\\ &\lesssim \left\| \left|v_{\varepsilon}\right| ^{6-\mu}\right\|_{\frac{6}{6-\mu}} ^{4-\mu} \left\|  \zeta ^{\frac{2}{4-\mu}}  \left|v_{\varepsilon}\right|^{\frac{10-2\mu}{4-\mu}}  \right\|_{3}   ^{4-\mu}
		\left\| \left|v_{\varepsilon}\right| ^{4-\mu}\right\|_{\frac{6}{4-\mu}} ^{4-\mu}
		\\ &\lesssim \left\| v_{\varepsilon}\right\|_{6} ^{(4-\mu)(6-\mu)+(4-\mu)^2}\left\|    \left|v_{\varepsilon}\right|^{5-\mu}     \zeta\right\|_{\frac{6}{4-\mu}}^2=o\left(\left\|    \left|v_{\varepsilon}\right|^{5-\mu}     \zeta\right\|_{\frac{6}{4-\mu}}^2\right),   
	\end{split}
\end{equation*}
which can be absorbed into the left side of \eqref{o218}. Next, we turn to estimating the remaining terms. Firstly, we have
\begin{equation*}
	\begin{split}
		\left( \int_{\mathbb{R}^3}  \int_{\Omega} \frac{U_{z_{\varepsilon},\lambda_{\varepsilon}}^{6-\mu}(y) U_{z_{\varepsilon},\lambda_{\varepsilon}}^{5-\mu}(x)\left|v_{\varepsilon}(x)\right|^{\frac{6-\mu}{4-\mu}}   \zeta ^{\frac{2}{4-\mu}}(x)}{|x-y|^\mu} dxdy \right)^{4-\mu}&= 
			\left(  \frac{3}{A_{3,\mu}} \int_{\Omega}   U_{z_{\varepsilon},\lambda_{\varepsilon}}^{5}\left|v_{\varepsilon}\right|^{\frac{6-\mu}{4-\mu}}   \zeta ^{\frac{2}{4-\mu}} \right)^{4-\mu}    \\ 
			&\lesssim  \left\| \left|v_{\varepsilon}\right|^{\frac{6-\mu}{4-\mu}}\right\|_{\frac{6(4-\mu)}{6-\mu}}^{4-\mu}
			 \left\|  U_{z_{\varepsilon},\lambda_{\varepsilon}}^{5}\zeta ^{\frac{2}{4-\mu}}\right\|_{\frac{6(4-\mu)}{18-5\mu}}^{4-\mu}
			\\ &\lesssim  \left\| v_{\varepsilon} \right\|_{6}^{6-\mu} \left\|  U_{z_{\varepsilon},\lambda_{\varepsilon}}   \right\|_{L^{\frac{30(4-\mu)}{18-5\mu}}(\Omega \backslash B_{d_{\varepsilon}/2}(z_{\varepsilon}))}^{5(4-\mu)}
				 =o\left(\lambda_\varepsilon^{-1}d_\varepsilon^{-2}\right), 
	\end{split}
\end{equation*}
\begin{equation*}
	\begin{split}
		\left( \int_{\Omega}  \int_{\Omega} \frac{\left|v_{\varepsilon}(y) \right|^{6-\mu} U_{z_{\varepsilon},\lambda_{\varepsilon}}^{5-\mu}(x) \left|v_{\varepsilon}(x)\right|^{\frac{6-\mu}{4-\mu}}   \zeta ^{\frac{2}{4-\mu}}(x)}{|x-y|^\mu} dxdy \right)^{4-\mu} &\lesssim \left\|  \int_{\Omega}\frac{\left|v_{\varepsilon}(y) \right|^{6-\mu}}{|x-y|^\mu} dy\right\|_{\frac{6}{\mu}} ^{4-\mu}
		\left\| U_{z_{\varepsilon},\lambda_{\varepsilon}}^{5-\mu} \left|v_{\varepsilon}\right|^{\frac{6-\mu}{4-\mu}}   \zeta ^{\frac{2}{4-\mu}}\right\|_{\frac{6}{6-\mu}} ^{4-\mu}
		\\ &\lesssim \left\| \left|v_{\varepsilon}\right| ^{6-\mu}\right\|_{\frac{6}{6-\mu}} ^{4-\mu} \left\| U_{z_{\varepsilon},\lambda_{\varepsilon}}^{5-\mu}  \zeta ^{\frac{2}{4-\mu}}\right\|_{\frac{6(4-\mu)}{(6-\mu)(3-\mu)}} ^{4-\mu}  \left\| \left|v_{\varepsilon}\right|^{\frac{6-\mu}{4-\mu}}\right\|_{\frac{6(4-\mu)}{6-\mu}} ^{4-\mu}
		\\ &\lesssim \left\| v_{\varepsilon} \right\|_{6}^{(6-\mu)(5-\mu)} \left\|  U_{z_{\varepsilon},\lambda_{\varepsilon}}   \right\|_{L^{\frac{6(4-\mu)(5-\mu)}{(6-\mu)(3-\mu)}}(\Omega \backslash B_{d_{\varepsilon}/2}(z_{\varepsilon}))}^{(5-\mu)(4-\mu)} =o\left(\lambda_\varepsilon^{-1}d_\varepsilon^{-2}\right), 
	\end{split}
\end{equation*}
\begin{equation*}
	\begin{split}
		\left( \int_{\Omega}  U_{z_{\varepsilon},\lambda_{\varepsilon}}   \left|v_{\varepsilon}\right|^{\frac{6-\mu}{4-\mu}}   \zeta ^{\frac{2}{4-\mu}}\right)^{4-\mu} &\lesssim  \left\| \left|v_{\varepsilon}\right|^{\frac{6-\mu}{4-\mu}}\right\|_{\frac{6(4-\mu)}{6-\mu}}^{4-\mu}
			 \left\|  U_{z_{\varepsilon},\lambda_{\varepsilon}}\zeta ^{\frac{2}{4-\mu}}\right\|_{\frac{6(4-\mu)}{18-5\mu}}^{4-\mu}
			\\ &\lesssim  \left\| v_{\varepsilon} \right\|_{6}^{6-\mu} \left\|  U_{z_{\varepsilon},\lambda_{\varepsilon}}   \right\|_{L^{\frac{6(4-\mu)}{18-5\mu}}(\Omega \backslash B_{d_{\varepsilon}/2}(z_{\varepsilon}))}^{(4-\mu)}=O\left(\lambda_\varepsilon^{-1}d_\varepsilon^{-1}\right)+o\left(\lambda_\varepsilon^{-1}d_\varepsilon^{-2}\right), 
	\end{split}
\end{equation*}
and 
\begin{equation*}
	\begin{split}
		\left( \int_{\Omega}    \left|v_{\varepsilon}\right|^{\frac{10-2\mu}{4-\mu}}   \zeta ^{\frac{2}{4-\mu}}\right)^{4-\mu}
		\lesssim \left\| \left|v_{\varepsilon}\right|^{\frac{10-2\mu}{4-\mu}}    \right\|_{\frac{6(4-\mu)}{10-2\mu}}    ^{4-\mu}
		\lesssim \left\| v_{\varepsilon}   \right\|_{6}    ^{10-2\mu}. 
			\end{split}
\end{equation*}
Then we obtain the following estimate	
\begin{equation*}
		\left\|    \left|v_{\varepsilon}\right|^{5-\mu}     \zeta\right\|_{L^{\frac{6}{4-\mu}}(\Omega)}^2=O\left(\lambda_\varepsilon^{-1}d_\varepsilon^{-1}\right)+o\left(\lambda_\varepsilon^{-1}d_\varepsilon^{-2}\right).  
\end{equation*}
Thus Lemma \ref{lemma26} follows. 
\end{proof}

We can now easily complete the proof of the main result of this section.
\begin{proof}[Proof of Proposition \ref{prop25}]
	Taking Lemma \ref{A3}(a) and \eqref{o211} into consideration, \eqref{o210} becomes
\begin{equation*}
	C\lambda_{\varepsilon}^{-1}\left| \nabla\phi_0(z_{\varepsilon})  \right|=O(\lambda_{\varepsilon}^{-1})+o(\lambda_{\varepsilon}^{-1}d_{\varepsilon}^{-2})+O\left(\left\|\frac{\partial PU_{z_{\varepsilon},\lambda_{\varepsilon}}}{\partial n}\right\|_{L^2(\partial\Omega)}\left\|\frac{\partial v_{\varepsilon}}{\partial n}\right\|_{L^2(\partial\Omega)}+\left\|\frac{\partial v_{\varepsilon}}{\partial n}\right\|_{L^2(\partial\Omega)}^2\right)
\end{equation*}
for some constant $C>0$. By Lemmas \ref{A3}(c) and \ref{lemma26}, we obtain
\begin{equation*}
	\begin{split}
		\left\|\frac{\partial PU_{z_{\varepsilon},\lambda_{\varepsilon}}}{\partial n}\right\|_{L^2(\partial\Omega)} \left\|\frac{\partial v_{\varepsilon}}{\partial n}\right\|_{L^2(\partial\Omega)}  +\left\|\frac{\partial v_{\varepsilon}}{\partial n}\right\|_{L^2(\partial\Omega)}^2&=O\left(\lambda_\varepsilon^{-1}d_\varepsilon^{-\frac{3}{2}}\right) + o\left( \lambda_\varepsilon^{-1}d_\varepsilon^{-2}\right)+O\left(\lambda_\varepsilon^{-1}d_\varepsilon^{-1}\right)
		\\ &=O\left(\lambda_\varepsilon^{-1}d_\varepsilon^{-\frac{3}{2}}\right) + o\left( \lambda_\varepsilon^{-1}d_\varepsilon^{-2}\right). 
	\end{split}
\end{equation*}
Thus we have 
\begin{equation*}
	\left| \nabla\phi_0(z_{\varepsilon})  \right|=O\left( d_\varepsilon^{-\frac{3}{2}} \right) +o\left(  d_\varepsilon^{-2}  \right). 
\end{equation*}
From \cite[Equation (2.9)]{Re}, we know $\left| \nabla\phi_0(z_{\varepsilon})  \right|\gtrsim d_\varepsilon^{-2}$. It follows that 
\begin{equation*}
	d_\varepsilon^{-2}= O\left( d_\varepsilon^{-\frac{3}{2}} \right) +o\left(  d_\varepsilon^{-2}  \right),
\end{equation*}
which yields $d_\varepsilon^{-1}=O(1)$. The proof is now complete.
\end{proof}

\subsection{Proof of Proposition \ref{prop21}. } \begin{proof}
	Proposition \ref{prop22} guarantees the existence of the expansion. From Proposition \ref{prop25} we obtain $d_{\varepsilon}^{-1}= O(1)$, which forces $z_0$ to lie in $\Omega$. Applying the bound $d_{\varepsilon}^{-1}= O(1)$ to Proposition \ref{prop24} then yields $\|\nabla v_{\varepsilon}\|_2 =O\left(\lambda^{-\frac{1}{2}}_{\varepsilon}\right)$, exactly as required by Proposition \ref{prop21}. The proof of the proposition is therefore complete. 
\end{proof}

\section{Refining the expansion}\label{sec:refining-expansion}

In this section, we are going to improve the decomposition given in Proposition \ref{prop21}. More precisely, 	we will 	derive a better approximation to $u_{\varepsilon}$, which is given by 
\begin{equation}
	\label{o31}
	\psi_{z_{\varepsilon},\lambda_{\varepsilon}}:=
	PU_{z_{\varepsilon},\lambda_{\varepsilon}}
	-\lambda_{\varepsilon}^{-1/2}(H_Q(z_{\varepsilon},\cdot)-H_0(z_{\varepsilon},\cdot)).
\end{equation}
We define 
\begin{equation}
	\label{o32}
	q_\varepsilon:=v_{\varepsilon}+ \lambda_{\varepsilon}^{-1/2}(H_Q(z_{\varepsilon},\cdot)-H_0(z_{\varepsilon},\cdot)),
\end{equation}
which leads to 
\begin{equation*}
	u_{\varepsilon}=\alpha_{\varepsilon}\left( \psi_{z_{\varepsilon},\lambda_{\varepsilon}}+ q_\varepsilon \right). 
\end{equation*}
Moreover, we can decompose $q_\varepsilon$  as follows
\begin{equation}
	\label{o33}
	q_\varepsilon=s_\varepsilon +r_\varepsilon, 
\end{equation}
where $s_\varepsilon \in T_{z_{\varepsilon},\lambda_{\varepsilon}}$ and $r_\varepsilon \in T_{z_{\varepsilon},\lambda_{\varepsilon}}^{\bot}$  given by 
\begin{equation}
	\label{o34}
	s_\varepsilon:= \Pi_{z_{\varepsilon},\lambda_{\varepsilon}} q_\varepsilon \quad \text{and}
	\quad r_\varepsilon:=\Pi_{z_{\varepsilon},\lambda_{\varepsilon}}^{\bot}       q_\varepsilon. 
\end{equation}
Since $v_{\varepsilon}$ obtained in Proposition \ref{prop21} belongs to $T_{z_{\varepsilon},\lambda_{\varepsilon}}^{\bot}$, we have 
\begin{equation}
	\label{o35}
	s_\varepsilon=  \lambda_{\varepsilon}^{-1/2}\Pi_{z_{\varepsilon},\lambda_{\varepsilon}}(H_Q(z_{\varepsilon},\cdot)-H_0(z_{\varepsilon},\cdot)). 
\end{equation}

The following proposition summarizes the results of this section.
\begin{Prop}
	\label{prop31}
	Let $\{u_{\varepsilon}\}$ be a family of solutions to \eqref{equation}
	satisfying \eqref{assume}. Then, up to a subsequence, there exist
	$z_{\varepsilon}\in\Omega$, $\lambda_{\varepsilon}>0$,
	$\alpha_{\varepsilon}\in\mathbb R$,
	$s_{\varepsilon}\in T_{z_{\varepsilon},\lambda_{\varepsilon}}$ and
	$r_{\varepsilon}\in T_{z_{\varepsilon},\lambda_{\varepsilon}}^{\bot}$
	such that
	\begin{equation}
		\label{o36}
		u_{\varepsilon}
		=\alpha_{\varepsilon}\left(
		\psi_{z_{\varepsilon},\lambda_{\varepsilon}}
		+s_{\varepsilon}+r_{\varepsilon}\right)
	\end{equation}
	and a point $z_0\in\Omega$ such that, in addition to Proposition
	\ref{prop21},
	\begin{equation}
		\label{o37}
		\|\nabla r_{\varepsilon}\|_2
		=O\left(\varepsilon\lambda_{\varepsilon}^{-\frac12}\right),
	\end{equation}
	\begin{equation}
		\label{o37a}
		\phi_Q(z_{\varepsilon})
		=\pi Q(z_{\varepsilon})\lambda_{\varepsilon}^{-1}
		-\frac{\varepsilon}{4\pi}\Theta_V(z_{\varepsilon})
		+o\left(\lambda_{\varepsilon}^{-1}\right)+o(\varepsilon),
	\end{equation}
	\begin{equation}
		\label{o37b}
		|\nabla\phi_Q(z_{\varepsilon})|
		\lesssim\varepsilon^{\nu}
		\qquad\text{for any }\nu<1,
	\end{equation}
	\begin{equation}
		\label{o37c}
		\lambda_{\varepsilon}^{-1}=O(\varepsilon),
	\end{equation}
	and
	\begin{equation}
		\label{o37d}
		\alpha_{\varepsilon}^{10-2\mu}
		=1+\frac{16(10-2\mu)}{3\pi}
		\phi_0(z_{\varepsilon})\lambda_{\varepsilon}^{-1}
		+O\left(\varepsilon\lambda_{\varepsilon}^{-1}\right).
	\end{equation}
\end{Prop}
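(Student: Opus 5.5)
The plan follows the second iteration of Frank--K\"onig--Kova\v{r}\'{\i}k \cite{FKK3}, adapted to the nonlocal nonlinearity. It has three stages: coercivity for $r_\varepsilon$, Pohozaev-type identities for the parameters, and a final bootstrap.

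\textbf{Stage 1: equation for $r_\varepsilon$ and the bound \eqref{o37}.} First derive the equation satisfied by $q_\varepsilon=s_\varepsilon+r_\varepsilon$. The key identity is $(-\Delta+Q)\bigl(H_Q(z,\cdot)-H_0(z,\cdot)\bigr)=-Q\,G_Q(z,\cdot)+Q/|\cdot-z|$ up to the sign convention of \eqref{a41}--\eqref{a51}. Hence
\[
(-\Delta+Q)\psi_{z,\lambda}=-\Delta U_{z,\lambda}+Q\bigl(PU_{z,\lambda}-\lambda^{-1/2}(H_Q-H_0)\bigr)+\dots
\]
Since $PU_{z,\lambda}\approx\lambda^{-1/2}G_0(z,\cdot)$ away from $z$, the $O(1)$ part of $Q\,\psi$ cancels at order $\lambda^{-1/2}$. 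What survives is $Q\bigl(U_{z,\lambda}-\lambda^{-1/2}|x-z|^{-1}\bigr)$, which is localized, plus $\varepsilon V\psi$.

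Next integrate the equation against $r_\varepsilon$ and expand the Hartree term around $U_{z_\varepsilon,\lambda_\varepsilon}$ exactly as in the proof of Proposition \ref{prop24}. The orthogonality $r_\varepsilon\in T^\perp$ and \eqref{d12} kill the zeroth-order term. The linear terms in $r_\varepsilon$ are then handled by Lemma \ref{E40}. The source terms paired with $r_\varepsilon$ are:
\begin{itemize}
\item $\varepsilon V\psi$, contributing $O(\varepsilon\lambda^{-1/2})\|\nabla r\|$;
\item $Q\bigl(U-\lambda^{-1/2}|x-z|^{-1}\bigr)$, contributing $O(\lambda^{-1})\|\nabla r\|$ by an $L^{6/5}$ computation of the difference;
\item the nonlinear interactions of $U$ with $\lambda^{-1/2}(H_Q-H_0)$ and with $s_\varepsilon$, giving $O(\lambda^{-1})\|\nabla r\|$;
\item terms quadratic in $q_\varepsilon$, which are $o(\|\nabla r\|^2)+O(\lambda^{-1})\|\nabla r\|$.
\end{itemize}
The bound on $s_\varepsilon$ comes from \eqref{o35}, estimating the projection coefficients by testing against $PU$, $\partial_\lambda PU$, $\partial_z PU$. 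The result is $\|\nabla r_\varepsilon\|\lesssim\varepsilon\lambda^{-1/2}+\lambda^{-1}$. The bound \eqref{o37} follows once \eqref{o37c} is established, so I plan a bootstrap: prove the weaker bound first, then derive \eqref{o37c}, then return.

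\textbf{Stage 2: Pohozaev-type identities.} Test the equation for $u_\varepsilon$ against $\partial_\lambda\psi_{z,\lambda}$, against $\partial_{z_i}\psi_{z,\lambda}$, and against $\psi_{z,\lambda}$ itself. The nonlocal term can be symmetrized as in \eqref{o291}. These tests give, respectively, the expansion of $\phi_Q(z_\varepsilon)$, the gradient bound, and the energy identity fixing $\alpha_\varepsilon$.

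\begin{itemize}
\item The $\partial_\lambda$ test should produce
\[
\lambda^{-2}\Bigl(c_1\phi_Q(z)\lambda^{-1}\cdot\lambda-\pi Q(z)\lambda^{-1}\cdot\ldots+\tfrac{\varepsilon}{4\pi}\Theta_V(z)\Bigr)
\]
plus errors. The $Q$ coefficient comes from $\int Q\,U\,\partial_\lambda U$ on the scale $\lambda^{-1}$, in the usual three-dimensional manner. The $\Theta_V$ term comes from $\varepsilon\int V\psi\,\partial_\lambda\psi\approx-\tfrac12\varepsilon\lambda^{-2}\int V G_Q^2$. This gives \eqref{o37a}.
\item The $\partial_z$ test gives $\lambda^{-1}\nabla\phi_Q(z)=O(\varepsilon\lambda^{-1}+\lambda^{-3/2}+\|\nabla r\|\lambda^{-1/2}+\dots)$, which leads to \eqref{o37b}.
\item Testing against $\psi$ gives \eqref{o37d}. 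The constant $\tfrac{16(10-2\mu)}{3\pi}$ is traced back via the identity \eqref{1esay}, which reduces the nonlocal integrals to $3\int U^5(\cdot)$.
\end{itemize}

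\textbf{Stage 3: closing the bootstrap.} Since $\phi_Q\ge0$ and $Q(z_0)<0$ by Assumption (e), \eqref{o37a} forces $\pi|Q(z_\varepsilon)|\lambda^{-1}\le\tfrac{\varepsilon}{4\pi}|\Theta_V|+o(\lambda^{-1})+o(\varepsilon)$, hence $\lambda^{-1}=O(\varepsilon)$, which is \eqref{o37c}. Feeding this back into Stage 1 gives \eqref{o37}, and re-running the error terms improves the remainders to the stated orders.

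\textbf{Main obstacle.} The hardest part is the precise extraction of the $\lambda^{-1}$ and $\varepsilon$ coefficients in the $\partial_\lambda$ identity. Every error term there must be $o(\lambda^{-2})+o(\varepsilon\lambda^{-1})$, including the nonlocal cross terms between $U$, $H_Q-H_0$, and $r_\varepsilon$. For those I would rely on the HLS inequality plus \eqref{1esay}, converting each convolution $|x|^{-\mu}*U^{6-\mu}$ into a multiple of $U^{\mu}$.
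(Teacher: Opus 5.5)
\textbf{Gap 1: the bootstrap does not close.} The Stage~1 bound $\|\nabla r_\varepsilon\|_2\lesssim\varepsilon\lambda_\varepsilon^{-1/2}+\lambda_\varepsilon^{-1}$ fails in two places. First, even granting \eqref{o37c}, $\lambda_\varepsilon^{-1}$ is not $O(\varepsilon\lambda_\varepsilon^{-1/2})$ when $\lambda_\varepsilon^{-1}\asymp\varepsilon$, which is precisely the case $\Theta_V(z_0)<0$. So \eqref{o37} never follows; you only get $O(\varepsilon)$ instead of $O(\varepsilon^{3/2})$.

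Second, \eqref{o37c} cannot be reached from this bound. In the identity obtained by testing against $\partial_\lambda\psi_{z_\varepsilon,\lambda_\varepsilon}$, with $|\partial_\lambda U_{z,\lambda}|\le\lambda^{-1}U_{z,\lambda}$, the $r_\varepsilon$-parts of the terms quadratic in $q_\varepsilon$ have size $\lambda_\varepsilon^{-2}\|r_\varepsilon\|_6+\lambda_\varepsilon^{-1}\|r_\varepsilon\|_6^2$. With your bound this is $O(\lambda_\varepsilon^{-3})$, the same order as the term $-4\pi^2Q(z_\varepsilon)\lambda_\varepsilon^{-3}$ that produces $\pi Q(z_\varepsilon)\lambda_\varepsilon^{-1}$ in \eqref{o37a}. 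That coefficient is therefore lost, and Stage~3 has nothing to work with.

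The paper proves $\|\nabla r_\varepsilon\|_2=O(\lambda_\varepsilon^{-3/2}+\lambda_\varepsilon^{-1}\phi_Q(z_\varepsilon)+\varepsilon\lambda_\varepsilon^{-1/2})$ before any parameter identity. This uses two ideas absent from your list.
\begin{itemize}
\item[(i)] The part $\beta\lambda_\varepsilon^{-1}U_{z_\varepsilon,\lambda_\varepsilon}+\gamma\,\partial_\lambda U_{z_\varepsilon,\lambda_\varepsilon}$ of $s_\varepsilon$ has $\dot H^1$-size $\lambda_\varepsilon^{-1}$, yet it drops out of the pairing with $r_\varepsilon$. The reason is that the linearized Hartree operator maps $U$ and $\partial_\lambda U$ to multiples of $-\Delta PU$ and $-\Delta\partial_\lambda PU$, up to integrals over $\mathbb R^3\setminus\Omega$, and these are orthogonal to $r_\varepsilon\in T_{z_\varepsilon,\lambda_\varepsilon}^{\perp}$.
\item[(ii)] Since $\psi_{z,\lambda}-U_{z,\lambda}=-\lambda^{-1/2}H_Q(z,\cdot)-f_{z,\lambda}$, the bubble--correction interaction should be estimated via $H_Q(z,x)=\phi_Q(z)+O(|x-z|)$. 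This gives $\lambda^{-1}\phi_Q(z)+O(\lambda^{-3/2})$, using $\mu<2$, rather than $O(\lambda^{-1})$.
\end{itemize}
Only with this sharper bound does the $\partial_\lambda$-identity yield $\phi_Q(z_\varepsilon)=O(\lambda_\varepsilon^{-1}+\varepsilon)$. That step also needs the exact cancellation at order $\lambda_\varepsilon^{-3}$ between the $(1-\alpha_\varepsilon^{10-2\mu})\phi_0(z_\varepsilon)$ term and the $\beta\gamma$ term, using Proposition~\ref{prop33}. The order of deductions is then: \eqref{o37a}; then $\phi_Q(z_0)=0$, which you need before invoking Assumption~(e); then \eqref{o37c}; and only at the end \eqref{o37}.

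\textbf{Gap 2: the gradient bound \eqref{o37b}.} Your $\partial_{z_i}$-test as stated, $\lambda_\varepsilon^{-1}\nabla\phi_Q(z_\varepsilon)=O(\varepsilon\lambda_\varepsilon^{-1}+\lambda_\varepsilon^{-3/2}+\dots)$, yields only $|\nabla\phi_Q(z_\varepsilon)|\lesssim\varepsilon+\lambda_\varepsilon^{-1/2}\lesssim\varepsilon^{1/2}$, i.e.\ $\nu=1/2$. The statement requires every $\nu<1$, and $\nu>1/2$ is exactly what is used later to get $\phi_Q(z_\varepsilon)=o(\varepsilon)$. Moreover, in a $\partial_{z_i}\psi$-test the nonlinear remainders carry the weight $|\partial_{z_i}U_{z,\lambda}|\lesssim\lambda U_{z,\lambda}$. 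The $s_\varepsilon$--$r_\varepsilon$ cross terms, for example, are then a priori only $O(\lambda_\varepsilon\|s_\varepsilon\|_6\|r_\varepsilon\|_6)=O(\|r_\varepsilon\|_6)$, which is not $O(\varepsilon\lambda_\varepsilon^{-1})$.

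The paper instead uses the Pohozaev identity \eqref{o29}, obtained by multiplying by $\nabla u_\varepsilon$; there the Hartree term vanishes identically by antisymmetry in $x-y$. In this identity $q_\varepsilon$ enters only through $\|\partial_n q_\varepsilon\|_{L^2(\partial\Omega)}$ and $\|q_\varepsilon\|_2$. Both are $O(\varepsilon\lambda_\varepsilon^{-1/2})$, because $s_\varepsilon$ is small away from $z_\varepsilon$ by \eqref{o310} and \eqref{o311}, together with the cut-off argument of Lemma~\ref{lemma313o}. The main term is $4\pi\nabla\phi_Q(z_\varepsilon)\lambda_\varepsilon^{-1}+O(\lambda_\varepsilon^{-1-\nu})$, thanks to the $C^{1,\nu}$ expansion \eqref{oB5} of $H_Q$. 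After dividing by $\lambda_\varepsilon^{-1}$, the error is $\lambda_\varepsilon^{-\nu}\lesssim\varepsilon^{\nu}$.
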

The expansion \eqref{o37a} will also be needed in the proof of Theorem
\ref{main}. In fact, the gradient estimate \eqref{o37b} will allow us to
show that
$\phi_Q(z_{\varepsilon})=o(\lambda_{\varepsilon}^{-1})+o(\varepsilon)$,
which will then determine the limit of
$\varepsilon\lambda_{\varepsilon}$. The estimates in Proposition
\ref{prop31} are established in the following subsections, and its proof
is completed in Subsection \ref{subsec:proof-prop31}.

\subsection{Bounds on $\boldsymbol{ s_\varepsilon}$.}
In this section, we establish bounds for the function $s_\varepsilon$, as well as for the coefficients $\beta ,\gamma $ and $\delta _i$ defined by
\begin{equation}
	\label{o38}
	s_\varepsilon= \Pi_{z_{\varepsilon},\lambda_{\varepsilon}} q_\varepsilon =\beta \lambda_{\varepsilon}^{-1}PU_{z_{\varepsilon},\lambda_{\varepsilon}}                  +\gamma\partial_{\lambda}PU_{z_{\varepsilon},\lambda_{\varepsilon}}+ \sum_{i=1}^{3}\delta_{i}\lambda_{\varepsilon}^{-3}\partial_{z_{i}}PU_{z_{\varepsilon},\lambda_{\varepsilon}}.
\end{equation}
Then the following estimates hold.
\begin{Prop}
	\label{prop32}
	As $\varepsilon\rightarrow 0$, we have
\begin{equation}
	\label{o39}
	\beta,\gamma,\delta_{i}=O(1).
\end{equation}
Moreover, we have the following estimates
\begin{equation}
	\label{o310}
	\|s_\varepsilon\|_\infty=O(\lambda_\varepsilon^{-\frac{1}{2}}),\quad\|\nabla s_\varepsilon\|_2=O(\lambda_\varepsilon^{-1})\quad and\quad\|s_\varepsilon\|_2=O(\lambda_\varepsilon^{-\frac{3}{2}}),
\end{equation}
and 
\begin{equation}
	\label{o311}
	\|\nabla s_\varepsilon\|_{L^2(\Omega\backslash B_{d_{\varepsilon}/2}(z_{\varepsilon}))}=O(\lambda_\varepsilon^{-\frac{3}{2}}). 
\end{equation}
\end{Prop}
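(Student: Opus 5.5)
The plan follows the analogous argument in \cite{FKK3}. Since $s_\varepsilon=\lambda_\varepsilon^{-1/2}\Pi_{z_\varepsilon,\lambda_\varepsilon}h_\varepsilon$ with $h_\varepsilon:=H_Q(z_\varepsilon,\cdot)-H_0(z_\varepsilon,\cdot)$, the coefficients $\beta,\gamma,\delta_i$ in \eqref{o38} are determined by a $5\times5$ linear system. Its matrix is the Gram matrix of the basis functions $e_1=\lambda^{-1}PU$, $e_2=\partial_\lambda PU$, $e_{2+i}=\lambda^{-3}\partial_{z_i}PU$ in the inner product $\langle\cdot,\cdot\rangle$. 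Its right-hand side is $b_j=\lambda_\varepsilon^{-1/2}\langle h_\varepsilon,e_j\rangle$.

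\textbf{Step 1: the Gram matrix.} Using the expansions of $\langle PU,PU\rangle$, $\langle\partial_\lambda PU,\partial_\lambda PU\rangle$, $\langle\partial_{z_i}PU,\partial_{z_j}PU\rangle$ and the cross terms from Appendix A (Rey-type estimates, valid since $d_\varepsilon\gtrsim1$ by Proposition \ref{prop25}), I will check that after the chosen scalings the Gram matrix is
\[
\mathrm{diag}(c_1\lambda^{-2},\,c_2\lambda^{-2},\,c_3\lambda^{-4},\dots)\,(1+o(1))
\]
with positive constants. The off-diagonal entries are smaller, of order $\lambda^{-3}$ relative to these, since the whole-space inner products between different basis directions vanish by symmetry and the boundary corrections are $O(\lambda^{-1})$ times the diagonal. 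Hence the matrix is invertible with a controlled inverse.

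\textbf{Step 2: the right-hand side.} Since $h_\varepsilon$ is bounded and $C^1$ near $z_\varepsilon$ (here $Q\in C^{2,\sigma}_{loc}$, and $h_\varepsilon$ solves $-\Delta h=-Q G_Q$, which is harmonic up to a locally $L^p$ term), I integrate by parts: $\langle h,PU\rangle=\int h(-\Delta U)$. This gives $\langle h,PU\rangle=O(\lambda^{-1/2})$, because $-\Delta U$ has mass $\sim\lambda^{-1/2}$. Likewise $\langle h,\partial_\lambda PU\rangle=O(\lambda^{-3/2})$ and $\langle h,\partial_{z_i}PU\rangle=\int \partial_{z_i}(-\Delta U)\,h=O(\lambda^{-1/2})$, after moving the derivative onto $h$. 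Multiplying by the scalings and by $\lambda^{-1/2}$ gives $b_1,b_2=O(\lambda^{-2})$ and $b_{2+i}=O(\lambda^{-4})$. Inverting the system then yields $\beta,\gamma,\delta_i=O(1)$, which is \eqref{o39}.

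\textbf{Step 3: norms.} With the coefficients bounded, \eqref{o310} and \eqref{o311} follow from the known sizes of the basis functions:
\begin{itemize}
\item $\|PU\|_\infty\lesssim\lambda^{1/2}$, $\|\partial_\lambda PU\|_\infty\lesssim\lambda^{-1/2}$, $\|\partial_{z_i}PU\|_\infty\lesssim\lambda^{3/2}$, which give $\|s\|_\infty=O(\lambda^{-1/2})$;
\item $\|\nabla PU\|_2\sim1$, $\|\nabla\partial_\lambda PU\|_2\sim\lambda^{-1}$, $\|\nabla\partial_{z}PU\|_2\sim\lambda$, which give $\|\nabla s\|_2=O(\lambda^{-1})$;
\item $\|U\|_2\sim\lambda^{-1/2}$, $\|\partial_\lambda U\|_2\sim\lambda^{-3/2}$, $\|\partial_zU\|_2\sim\lambda^{1/2}$, which give $\|s\|_2=O(\lambda^{-3/2})$.
\end{itemize}
For \eqref{o311}, away from $B_{d/2}(z)$ each basis function has gradient bounded pointwise by $\lambda^{-1/2}$, $\lambda^{-3/2}$, $\lambda^{-1/2}$ respectively (for instance $\nabla PU=O(\lambda^{-1/2})$ there), so each contributes $O(\lambda^{-3/2})$ after scaling.

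The main obstacle is Step 1. Its diagonal dominance must be checked carefully, because the off-diagonal entries (for example $\langle PU,\partial_\lambda PU\rangle=O(\lambda^{-2})$ from the boundary) only become negligible after rescaling. If they are not negligible, I would instead invert the $2\times2$ block in $(\beta,\gamma)$ exactly.
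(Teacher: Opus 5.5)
Your proposal is correct and is essentially the argument the paper relies on through its citations (\cite[Lemma~5.12]{GGYZ} for the coefficients, \cite[Proposition~3.2]{FKK3} for the norms): you solve the $5\times5$ Gram system, which after two-sided scaling by $\mathrm{diag}(\lambda_\varepsilon,\lambda_\varepsilon,\lambda_\varepsilon^{2},\lambda_\varepsilon^{2},\lambda_\varepsilon^{2})$ is a positive diagonal matrix plus $O(\lambda_\varepsilon^{-1})$ (this settles the worry in your last paragraph), compute the right-hand side by integrating by parts against $h_\varepsilon\in H_0^1(\Omega)$ and moving the $z_i$-derivative onto $h_\varepsilon$ (the essential point), and then read off \eqref{o310}--\eqref{o311} from the sizes of the basis functions. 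The remaining slips are harmless: the correct equation is $-\Delta h_\varepsilon=QG_Q(z_\varepsilon,\cdot)$ (your sign is reversed); $h_\varepsilon$ is only Lipschitz at $z_\varepsilon$, not $C^1$, but the Lipschitz bound of Lemma \ref{B1} suffices; and $\|\partial_{z_i}U_{z_\varepsilon,\lambda_\varepsilon}\|_{L^2(\Omega)}\sim1$ rather than $\lambda_\varepsilon^{1/2}$, which only makes that term smaller.
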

\begin{proof}
	The estimate \eqref{o39} follows from the argument of \cite[Lemma~5.12]{GGYZ}. Once \eqref{o39} is known, the estimates \eqref{o310} and \eqref{o311} can be derived in the same way as \cite[Proposition 3.2]{FKK3}. We omit the details.
\end{proof}
\begin{Prop} 
	\label{prop33}
	As $\varepsilon\rightarrow 0$, we have
	\begin{equation}
	\label{o313}
	\beta =\frac{16}{3\pi}(\phi_Q(z_{\varepsilon})-\phi_0(z_{\varepsilon}))+O(\lambda_\varepsilon^{-1}),\quad\gamma=-\frac{8}{5}\beta+O(\lambda_\varepsilon^{-1}).
\end{equation}
\end{Prop}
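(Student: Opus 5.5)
The plan is to compute $\beta,\gamma$ by writing out the defining relation $s_\varepsilon=\Pi_{z_\varepsilon,\lambda_\varepsilon}q_\varepsilon$ as a linear system. Since $r_\varepsilon\in T^\perp_{z_\varepsilon,\lambda_\varepsilon}$, we have $\langle q_\varepsilon,\tau\rangle=\langle s_\varepsilon,\tau\rangle$ for every $\tau\in T_{z_\varepsilon,\lambda_\varepsilon}$. Also $v_\varepsilon\in T^\perp$, so $\langle q_\varepsilon,\tau\rangle=\lambda_\varepsilon^{-1/2}\langle H_Q(z_\varepsilon,\cdot)-H_0(z_\varepsilon,\cdot),\tau\rangle$. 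I test against $\tau=PU_{z_\varepsilon,\lambda_\varepsilon}$ and $\tau=\partial_\lambda PU_{z_\varepsilon,\lambda_\varepsilon}$.

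\textbf{Step 1: the left-hand sides.} The terms $\langle H_Q-H_0,\tau\rangle$ are evaluated by integration by parts. Write $h:=H_Q(z_\varepsilon,\cdot)-H_0(z_\varepsilon,\cdot)$. It vanishes on $\partial\Omega$, since $G_Q=G_0=0$ there, so $h\in H^1_0(\Omega)$. Therefore
\[
\langle h,PU\rangle=\int_\Omega h(-\Delta U)=3\int_\Omega hU^5 ,
\]
where we use $-\Delta U=3U^5$, which comes from \eqref{1esay} or directly from the explicit bubble. Because $h$ is $C^1$ near $z_\varepsilon$ (here $H_Q-H_0$ is regular since $Q\in C^{0,1}$) and $h(z_\varepsilon)=\phi_Q(z_\varepsilon)-\phi_0(z_\varepsilon)$, a Taylor expansion gives
\[
3\int hU^5=3\lambda_\varepsilon^{-1/2}(\phi_Q-\phi_0)(z_\varepsilon)\int_{\mathbb R^3}(1+|x|^2)^{-5/2}+O(\lambda_\varepsilon^{-3/2}).
\]
The constant is $\int(1+|x|^2)^{-5/2}=4\pi/3$, so $\langle h,PU\rangle=4\pi\lambda_\varepsilon^{-1/2}(\phi_Q-\phi_0)(z_\varepsilon)+O(\lambda_\varepsilon^{-3/2})$. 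The corresponding identity for $\partial_\lambda$ gives
\[
\langle h,\partial_\lambda PU\rangle=15\int hU^4\partial_\lambda U .
\]
Its leading coefficient is $\lambda_\varepsilon^{-3/2}\cdot15\int U_{0,1}^4\,\partial_\lambda U|_{\lambda=1}$. This integral equals $\tfrac{1}{2}\cdot\tfrac{d}{d\lambda}$ of the $L^1$-type mass $\int U^5$ scaled, that is $\tfrac15\partial_\lambda\int U^5$. Since $\int U_\lambda^5=\lambda^{-1/2}\cdot 4\pi/3$, this yields
\[
\langle h,\partial_\lambda PU\rangle=-2\pi\lambda_\varepsilon^{-3/2}(\phi_Q-\phi_0)(z_\varepsilon)+O(\lambda_\varepsilon^{-5/2}).
\]
The error terms use $\nabla h=O(1)$ together with the moment bounds $\int|x-z|U^5\lesssim\lambda^{-3/2}$.

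\textbf{Step 2: the Gram matrix.} The right-hand sides are $\langle s_\varepsilon,\tau\rangle$, which requires the Gram entries of the basis. These are standard and appear in Appendix A / \cite{Re}:
\begin{align*}
\|\nabla PU\|_2^2&=3\pi^2+O(\lambda^{-1}),\\
\langle PU,\partial_\lambda PU\rangle&=O(\lambda^{-2}),\\
\|\nabla\partial_\lambda PU\|_2^2&=\tfrac{15\pi^2}{64}\lambda^{-2}+O(\lambda^{-3}),\\
\langle\partial_\lambda PU,\partial_{z_i}PU\rangle&=O(\lambda^{-2}),\\
\langle PU,\partial_{z_i}PU\rangle&=O(\lambda^{-1}).
\end{align*}
With the weights chosen in \eqref{o38} and Proposition \ref{prop32} giving $\delta_i=O(1)$, the $\delta_i$ contributions are $O(\lambda^{-4})$ against $PU$ and $O(\lambda^{-5})$ against $\partial_\lambda PU$, so they are negligible.

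\textbf{Step 3: solving the system.} Testing against $PU$ gives
\[
\beta\lambda^{-1}\cdot3\pi^2+O(\lambda^{-2})=4\pi\lambda^{-1}(\phi_Q-\phi_0)+O(\lambda^{-2}),
\]
hence $\beta=\tfrac{4}{3\pi}(\phi_Q-\phi_0)+O(\lambda^{-1})$. This disagrees with the claimed $\tfrac{16}{3\pi}$, so the normalization needs checking. In particular, the correct value of $\|\nabla U\|_2^2$ for $U=(\lambda/(1+\lambda^2|x|^2))^{1/2}$ should be recomputed: it is $3\int U^6=3\pi^2/4$, which then gives $\beta=\tfrac{16}{3\pi}(\phi_Q-\phi_0)$, consistent with the claim.

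Testing against $\partial_\lambda PU$ gives
\[
\gamma\,\|\nabla\partial_\lambda PU\|_2^2=-2\pi\lambda^{-2}(\phi_Q-\phi_0)+O(\lambda^{-3}),
\]
where $\|\nabla\partial_\lambda U\|_2^2=15\int U^4(\partial_\lambda U)^2$ is likewise recomputed with the correct normalization. The ratio then yields $\gamma=-\tfrac85\beta+O(\lambda^{-1})$.

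The main obstacle is bookkeeping the exact constants and checking that the boundary corrections ($PU$ versus $U$, of order $\lambda^{-1/2}$ in sup norm) and the $\delta_i$ cross terms only enter at relative order $\lambda^{-1}$. Here $d_\varepsilon\gtrsim1$ by Proposition \ref{prop25}, which keeps all $PU$-versus-$U$ errors at $O(\lambda^{-1})$ relative size.
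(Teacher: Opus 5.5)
Your proposal is correct and is essentially the computation behind the paper's proof, which just cites \cite[Proposition~3.3]{FKK3}. You test $q_\varepsilon$ against $PU_{z_\varepsilon,\lambda_\varepsilon}$ and $\partial_\lambda PU_{z_\varepsilon,\lambda_\varepsilon}$, use $\langle s_\varepsilon,\tau\rangle=\langle q_\varepsilon,\tau\rangle$ for $\tau\in T_{z_\varepsilon,\lambda_\varepsilon}$, and solve with the near-diagonal Gram matrix ($\|\nabla PU\|_2^2=\frac{3\pi^2}{4}+O(\lambda_\varepsilon^{-1})$, $\|\nabla\partial_\lambda PU\|_2^2=\frac{15\pi^2}{64}\lambda_\varepsilon^{-2}+O(\lambda_\varepsilon^{-3})$), which gives $\gamma=-\frac{128}{15\pi}(\phi_Q-\phi_0)(z_\varepsilon)+O(\lambda_\varepsilon^{-1})=-\frac85\beta+O(\lambda_\varepsilon^{-1})$.

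For a clean write-up, delete the initial wrong entry $3\pi^2$. Also note that $H_Q(z_\varepsilon,\cdot)-H_0(z_\varepsilon,\cdot)$ is only Lipschitz, not $C^1$, at $z_\varepsilon$, because of the $-\frac12 Q(z_\varepsilon)|x-z_\varepsilon|$ term in Lemma~\ref{B2}; Lipschitz is all your Taylor error bound needs, or you can quote \eqref{oB10}--\eqref{oB11} directly.
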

\begin{proof}
	The proof follows from \cite[Proposition~3.3]{FKK3}.
\end{proof}

\subsection{The bound on $\boldsymbol{ \| \nabla r_\varepsilon\|_{2}}$. } 
In this section, we  are going to prove:
\begin{Prop}
	\label{prop34}
	As $\varepsilon\rightarrow 0$, we have
	\begin{equation}
	\label{o318}
	\|\nabla r_{\varepsilon}\|_2=O(\lambda_\varepsilon^{-\frac{3}{2}}  + \lambda_{\varepsilon}^{-1}\phi_{Q}(z_{\varepsilon})    + \varepsilon \lambda_\varepsilon^{-\frac{1}{2}}). 
\end{equation}
\end{Prop}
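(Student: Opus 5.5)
We follow the scheme of Proposition \ref{prop24}, now applied to $r_\varepsilon\in T_{z_\varepsilon,\lambda_\varepsilon}^{\bot}$. First we derive the equation for $r_\varepsilon$. Write $u_\varepsilon=\alpha_\varepsilon(\psi_{z_\varepsilon,\lambda_\varepsilon}+s_\varepsilon+r_\varepsilon)$ and use two facts:
\[
(-\Delta+Q)\bigl(H_Q(z_\varepsilon,\cdot)-H_0(z_\varepsilon,\cdot)\bigr)=-Q\,\frac{1}{|\cdot-z_\varepsilon|}+Q H_0(z_\varepsilon,\cdot)=-Q\,G_0(z_\varepsilon,\cdot),
\]
and $PU_{z,\lambda}=U_{z,\lambda}-\lambda^{-1/2}H_0(z,\cdot)+O(\lambda^{-5/2}d^{-3})$ (Lemma \ref{A1}). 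Together these give
\[
(-\Delta+Q)\psi_{z_\varepsilon,\lambda_\varepsilon}=-\Delta U_{z_\varepsilon,\lambda_\varepsilon}+Q\bigl(U_{z_\varepsilon,\lambda_\varepsilon}-\lambda_\varepsilon^{-1/2}|\cdot-z_\varepsilon|^{-1}\bigr)+O(\lambda_\varepsilon^{-5/2}).
\]
The main point is that the $O(1)\cdot U$ term $QPU$, which produced the $\lambda^{-1/2}$ loss in Proposition \ref{prop24}, has been replaced by $Q(U-\lambda^{-1/2}|x-z|^{-1})$. This new term is concentrated at scale $\lambda^{-1}$ and satisfies
\[
\|Q(U_{z,\lambda}-\lambda^{-1/2}|\cdot-z|^{-1})\|_{6/5}\lesssim\lambda^{-3/2}
\]
(up to a possible logarithm, which we expect to disappear after computing the $L^{6/5}$ norm). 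We then test the equation for $r_\varepsilon$ against $r_\varepsilon$. Using $\langle r_\varepsilon,PU\rangle=\langle r_\varepsilon,s_\varepsilon\rangle=0$, we obtain
\[
\int(|\nabla r_\varepsilon|^2+Qr_\varepsilon^2)=A_{3,\mu}\alpha_\varepsilon^{10-2\mu}\iint\frac{(\psi+s+r)^{6-\mu}(\psi+s+r)^{5-\mu}r}{|x-y|^\mu}-\int Q(\ldots)r-\int Q s_\varepsilon r-\varepsilon\int V(\psi+s+r)r.
\]

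Next we expand the nonlinear term around $U=U_{z_\varepsilon,\lambda_\varepsilon}$, with perturbation $w:=\psi-U+s_\varepsilon+r_\varepsilon$. Here $\psi-U=-\lambda^{-1/2}H_Q(z_\varepsilon,\cdot)+O(\lambda^{-5/2})$. The zeroth-order term cancels against $-\Delta U$ by \eqref{d12}, up to $O(\lambda^{-3}\|\nabla r\|)$ coming from the exterior integral, since $d_\varepsilon\gtrsim1$. The first-order terms in $r$ give exactly the quadratic form of Lemma \ref{E40}. The first-order terms in $-\lambda^{-1/2}H_Q+s_\varepsilon$ tested against $r$ are handled as follows.

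For $H_Q(z_\varepsilon,\cdot)$, we Taylor expand at $z_\varepsilon$: $H_Q(z_\varepsilon,x)=\phi_Q(z_\varepsilon)+O(|x-z_\varepsilon|)$. The constant part contributes $\lambda^{-1/2}\phi_Q(z_\varepsilon)\iint U^{6-\mu}U^{4-\mu}r+\ldots$. Since the linearized operator applied to the constant produces a multiple of $-\Delta U$ up to the $\partial_\lambda U$ direction, this can be rewritten using orthogonality of $r$ to $PU$ and $\partial_\lambda PU$. We expect it to be $O(\lambda^{-1}\phi_Q(z_\varepsilon)\|\nabla r\|)$, or at worst bounded by $\lambda^{-1/2}\phi_Q\|U^{4}\|_{6/5}\|r\|_6\lesssim\lambda^{-1}\phi_Q\|\nabla r\|$. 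The Lipschitz remainder gives $\lambda^{-1/2}\||x-z|U^4\|_{6/5}\lesssim\lambda^{-3/2}$. For the $s_\varepsilon$ terms we use $s_\varepsilon\in T$ together with Proposition \ref{prop32}, \eqref{o39}, and write $s_\varepsilon$ as combinations of $PU,\partial_\lambda PU,\partial_zPU$. Pairing the linearization on $U$ of these functions with $r$ gives $\langle\cdot,r\rangle=0$ up to boundary corrections $O(\lambda^{-3/2}\|\nabla r\|)$. The remaining $Qs_\varepsilon r$ term is bounded by $\|s\|_{6/5}\|r\|_6\lesssim\lambda^{-3/2}\|\nabla r\|$, using $\|s\|_\infty=O(\lambda^{-1/2})$ and $\|s\|_2=O(\lambda^{-3/2})$.

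The remaining pieces are routine. The higher-order terms in $w$ are bounded by HLS and H\"older as in Proposition \ref{prop24}: they are $o(\|\nabla r\|^2)$ plus $O(\lambda^{-3/2}\|\nabla r\|)$ products, since $\|\lambda^{-1/2}H\|$ and $\|s\|$ carry the required smallness. The $\varepsilon V$ term gives $\varepsilon\|\psi+s\|_{6/5}\|r\|_6+\varepsilon\|r\|_6^2\lesssim\varepsilon\lambda^{-1/2}\|\nabla r\|+o(\|\nabla r\|^2)$. Finally, $\alpha_\varepsilon^{10-2\mu}=1+o(1)$ lets us apply Lemma \ref{E40} (its coercivity survives small perturbation of the coefficient), so
\[
\rho\|\nabla r_\varepsilon\|_2^2\lesssim(\lambda^{-3/2}+\lambda^{-1}\phi_Q(z_\varepsilon)+\varepsilon\lambda^{-1/2})\|\nabla r_\varepsilon\|_2+o(\|\nabla r_\varepsilon\|_2^2),
\]
which gives \eqref{o318}.

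The main obstacle is the linear term generated by $\lambda^{-1/2}H_Q$ against the linearized nonlocal operator. A crude bound there only gives $\lambda^{-1}$, so the Taylor expansion of $H_Q$ and the orthogonality cancellation must be done carefully. We would first try to rewrite $\iint U^{6-\mu}U^{4-\mu}\cdot1\cdot r$ and $\iint U^{5-\mu}\cdot1\cdot U^{5-\mu}r$ in terms of $\Delta\partial_\lambda U$ and $\Delta U$ via \eqref{nonlocal critical equation}, as in the derivation of the kernel of the linearization.
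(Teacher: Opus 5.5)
Your proposal is correct and follows essentially the same route as the paper. It tests the $r_\varepsilon$-equation against $r_\varepsilon$ as in \eqref{o320}, removes the zeroth-order and $s_\varepsilon$-contributions via \eqref{d12}, the $\partial_\lambda U_{z_\varepsilon,\lambda_\varepsilon}$-equation and orthogonality up to exterior errors, bounds the $H_Q$-terms through $H_Q(z_\varepsilon,x)=\phi_Q(z_\varepsilon)+O(|x-z_\varepsilon|)$ (for the nonlocal piece this needs $\||\cdot-z_\varepsilon|\,U_{z_\varepsilon,\lambda_\varepsilon}^{5-\mu}\|_{6/(6-\mu)}\lesssim\lambda_\varepsilon^{-3/2}$, one of the places where $\mu<2$ is used), and closes with Lemma \ref{E40}. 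The ``main obstacle'' you flag is not one, because the crude H\"older--HLS bound on the constant part $\phi_Q(z_\varepsilon)$ is exactly what produces the $\lambda_\varepsilon^{-1}\phi_Q(z_\varepsilon)$ term in \eqref{o318}, so no cancellation is needed; your heuristic that the linearized operator applied to a constant lies in the span of $\Delta U_{z_\varepsilon,\lambda_\varepsilon}$ and $\Delta\partial_\lambda U_{z_\varepsilon,\lambda_\varepsilon}$ is not correct, but you never use it, and the sign in $(-\Delta+Q)\bigl(H_Q(z_\varepsilon,\cdot)-H_0(z_\varepsilon,\cdot)\bigr)=+Q\,G_0(z_\varepsilon,\cdot)$ is a harmless slip since your final formula for $(-\Delta+Q)\psi_{z_\varepsilon,\lambda_\varepsilon}$ is right.
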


Firstly, we define 
\begin{equation}\label{o316}
	f_{z_{\varepsilon}, \lambda_{\varepsilon}} :=U_{z_{\varepsilon},\lambda_{\varepsilon}}- P U_{z_{\varepsilon},\lambda_{\varepsilon}} - \lambda_{\varepsilon}^{-1/2} H_0(z_{\varepsilon}, \cdot) ,
\end{equation}
and 
\begin{equation}\label{o317}
	g_{z_{\varepsilon},\lambda_{\varepsilon}}(x) := \frac{\lambda_{\varepsilon}^{-1/2}}{|z_{\varepsilon} - x|} - U_{z_{\varepsilon},\lambda_{\varepsilon}}(x). 
\end{equation}
Observing  $\Delta(H_Q(z_{\varepsilon},\cdot)-H_0(z_{\varepsilon},\cdot))=-QG_Q(z_{\varepsilon},\cdot)$, it follows from \eqref{equation.v} that
\begin{equation}
	\label{o319}
	\begin{split}
			(-\Delta+Q)r_{\varepsilon} = &\Delta PU_{z_{\varepsilon},\lambda_{\varepsilon}} +A_{3,\mu} \alpha_{\varepsilon}^{10-2\mu} \left(\int_{\Omega}\frac{(\psi_{z_{\varepsilon},\lambda_{\varepsilon}} + s_{\varepsilon} + r_{\varepsilon})^{6-\mu}(y)}{|x-y|^{\mu}}dy\right) (\psi_{z_{\varepsilon},\lambda_{\varepsilon}} + s_{\varepsilon} + r_{\varepsilon})^{5-\mu} \\
			& + Q(f_{z_{\varepsilon},\lambda_{\varepsilon}} + g_{z_{\varepsilon},\lambda_{\varepsilon}}) - Q s_{\varepsilon} - \varepsilon V(\psi_{z_{\varepsilon},\lambda_{\varepsilon}} + s_{\varepsilon} + r_{\varepsilon}) + \Delta s_{\varepsilon}. 
	\end{split}
\end{equation}
Multiplying by $r_{\varepsilon}$ and integrating over  $\Omega$ and  combining the orthogonality conditions 
\begin{equation*}
	\int_\Omega(\Delta PU_{z_{\varepsilon},\lambda_{\varepsilon}} )r_{\varepsilon}=-\int_\Omega\nabla PU_{z_{\varepsilon},\lambda_{\varepsilon}} \cdot\nabla r_{\varepsilon}=0 \quad \text{and} \quad 	\int_\Omega(\Delta s_{\varepsilon} )r_{\varepsilon}=-\int_\Omega\nabla s_{\varepsilon} \cdot\nabla r_{\varepsilon}=0,
\end{equation*}
we have
\begin{equation}
	\label{o320}
	\begin{split}
		\int_\Omega(|\nabla r_{\varepsilon}|^2 + Q r_{\varepsilon}^2) = &
		A_{3,\mu} \alpha_{\varepsilon}^{10-2\mu}  \int_{\Omega}\int_{\Omega}\frac{(\psi_{z_{\varepsilon},\lambda_{\varepsilon}} + s_{\varepsilon} + r_{\varepsilon})^{6-\mu}(y)(\psi_{z_{\varepsilon},\lambda_{\varepsilon}} + s_{\varepsilon} + r_{\varepsilon})^{5-\mu} (x)r_{\varepsilon}(x)}{|x-y|^{\mu}}dxdy  
		\\& - \int_\Omega Q (s_{\varepsilon} - f_{z_{\varepsilon},\lambda_{\varepsilon}} - g_{z_{\varepsilon},\lambda_{\varepsilon}}) r_{\varepsilon} - \int_\Omega \varepsilon V (\psi_{z_{\varepsilon},\lambda_{\varepsilon}} + s_{\varepsilon} + r_{\varepsilon}) r_{\varepsilon}.
	\end{split}
\end{equation}
For the first  term on the right side of \eqref{o320}, we have the following estimate:
\begin{lem}
	\label{lemma35}
	As $\varepsilon\rightarrow 0$, it holds that
	\begin{equation} \label{lemma35eq}
		\begin{split}
			\bigg|&A_{3,\mu} \alpha_{\varepsilon}^{10-2\mu}  \int_{\Omega}\int_{\Omega}\frac{(\psi_{z_{\varepsilon},\lambda_{\varepsilon}} + s_{\varepsilon} + r_{\varepsilon})^{6-\mu}(y)(\psi_{z_{\varepsilon},\lambda_{\varepsilon}} + s_{\varepsilon} + r_{\varepsilon})^{5-\mu} (x)r_{\varepsilon}(x)}{|x-y|^{\mu}}dxdy  \\& -A_{3,\mu} \alpha_{\varepsilon}^{10-2\mu}\int_{\Omega}\int_{\Omega}
\frac{(5-\mu)U_{z_{\varepsilon},\lambda_{\varepsilon}}^{6-\mu}(x)
U_{z_{\varepsilon},\lambda_{\varepsilon}}^{4-\mu}(y) r_{\varepsilon}^{2}(y)+(6-\mu)U_{z_{\varepsilon},\lambda_{\varepsilon}}^{5-\mu}(x)r_{\varepsilon}(x)
U_{z_{\varepsilon},\lambda_{\varepsilon}}^{5-\mu}(y)r_{\varepsilon}(y)}{|x-y|^{\mu}}
dxdy   \bigg|  \\ &\lesssim
 \left( \lambda_\varepsilon^{-\frac{3}{2}}  + \lambda_{\varepsilon}^{-1}\phi_{Q}(z_{\varepsilon})+\lambda_\varepsilon^{-1}\| r_{\varepsilon} \|_6   + \| r_{\varepsilon} \|_6^2  \right)\| r_{\varepsilon} \|_6 .
		\end{split}
	\end{equation}
\end{lem}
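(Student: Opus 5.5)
The plan is to write $\psi_{z_\varepsilon,\lambda_\varepsilon}+s_\varepsilon+r_\varepsilon = U_{z_\varepsilon,\lambda_\varepsilon} - \eta_\varepsilon + r_\varepsilon$, where
\[
\eta_\varepsilon := f_{z_\varepsilon,\lambda_\varepsilon}+\lambda_\varepsilon^{-1/2}H_Q(z_\varepsilon,\cdot)-s_\varepsilon ,
\]
using \eqref{o316}. The basic control on $\eta_\varepsilon$ comes from three sources. First, $\|f_{z_\varepsilon,\lambda_\varepsilon}\|_\infty=O(\lambda_\varepsilon^{-5/2})$ by the appendix estimates (Lemma \ref{A2}, since $d_\varepsilon^{-1}=O(1)$). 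Second, $\|s_\varepsilon\|_\infty=O(\lambda_\varepsilon^{-1/2})$ by Proposition \ref{prop32}. Third, $H_Q(z_\varepsilon,\cdot)$ is bounded near $z_\varepsilon$, with $H_Q(z_\varepsilon,z_\varepsilon)=\phi_Q(z_\varepsilon)$. Together these give $\|\eta_\varepsilon\|_\infty=O(\lambda_\varepsilon^{-1/2})$.

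We then expand both factors to second order, exactly as in the proof of Proposition \ref{prop24}:
\[
(U-\eta+r)^{6-\mu}=U^{6-\mu}+(6-\mu)U^{5-\mu}(r-\eta)+h_1,
\qquad
(U-\eta+r)^{5-\mu}=U^{5-\mu}+(5-\mu)U^{4-\mu}(r-\eta)+h_2 .
\]
Multiplying out produces the following contributions.

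\textbf{The zeroth-order term.} This is $\iint U^{6-\mu}(y)U^{5-\mu}(x)r(x)|x-y|^{-\mu}$. By \eqref{d12} and $r_\varepsilon\in T^\perp$, it reduces to the exterior piece over $\mathbb R^3\setminus\Omega$, which is $O(\lambda_\varepsilon^{-3/2}\|r_\varepsilon\|_6)$ because $d_\varepsilon$ is bounded below.

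\textbf{The quadratic terms in $r$.} These are exactly the two subtracted terms in \eqref{lemma35eq}.

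\textbf{The cross terms linear in $\eta$ and linear in $r$.} These are the heart of the lemma. They take the form
\[
\iint \frac{U^{6-\mu}(y)U^{4-\mu}(x)\eta(x)r(x)+U^{5-\mu}(y)\eta(y)U^{5-\mu}(x)r(x)}{|x-y|^{\mu}}.
\]
A crude bound $\|\eta\|_\infty\|U^{4}\|_{6/5}\|r\|_6\sim\lambda^{-1/2}\lambda^{-1/2}$ gives only $\lambda^{-1}\|r\|_6$, which is not enough. So I will split $\eta_\varepsilon$ into three pieces:
\[
\eta_\varepsilon=\lambda_\varepsilon^{-1/2}\phi_Q(z_\varepsilon)+\lambda_\varepsilon^{-1/2}\bigl(H_Q(z_\varepsilon,\cdot)-\phi_Q(z_\varepsilon)\bigr)+\bigl(f_{z_\varepsilon,\lambda_\varepsilon}-s_\varepsilon\bigr).
\]
The constant piece yields, after using the identity \eqref{1esay} to convert the convolution into $U^4$-type weights, a term of size $\lambda^{-1/2}\phi_Q\cdot\lambda^{-1/2}\|r\|_6=\lambda^{-1}\phi_Q\|r\|_6$.

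The Lipschitz piece is $O(\lambda^{-1/2}|x-z_\varepsilon|)$. Against the weight $U^4\sim\lambda^2(1+\lambda^2|x-z|^2)^{-2}$, whose $|x-z|$-moment in $L^{6/5}$ is $O(\lambda^{-1})$, it contributes $O(\lambda^{-3/2}\|r\|_6)$.

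For $s_\varepsilon$, the leading parts $\beta\lambda^{-1}PU$ and $\gamma\partial_\lambda PU$ are not small pointwise. For these I will use the orthogonality $r\perp T$: the linearized operator applied to $U$ and to $\partial_\lambda U$ is exactly $-\Delta$ of these functions up to constants. Replacing $PU$ by $U$ costs $O(\lambda^{-1/2})$ in $L^\infty$, and the $\partial_z$ part carries the factor $\lambda^{-3}\partial_zU=O(\lambda^{-3/2}\cdot\lambda^{-1/2})$. I expect this step, extracting the extra $\lambda^{-1/2}$ from $s_\varepsilon$ via orthogonality while handling the nonlocal kernel, to be the main obstacle; I will follow \cite[Lemma~3.5]{FKK3} and use \eqref{1esay} for the nonlocal identities.

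\textbf{The remaining terms.} Those of order $\eta^2 r$ are bounded by $\|\eta\|_\infty^2\|U^3\|_{6/5}\|r\|_6\lesssim \lambda^{-1}\lambda^{-1/2}\|r\|_6$ via HLS and Hölder. The $\eta r^2$ terms give $\lambda^{-1/2}\|U^{3}\|_{3/2}\|r\|_6^2\lesssim \lambda^{-1}\|r\|_6^2$. The $r^3$ and higher terms give $\|r\|_6^3$. Finally, replacing $\Omega\times\Omega$ by $\mathbb R^3$ integrals where needed costs only exterior tails of size $O(\lambda^{-3/2})$.

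Collecting all contributions gives \eqref{lemma35eq}.
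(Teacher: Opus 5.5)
Your proposal is correct and follows essentially the same route as the paper's proof. Both expand about $U_{z_\varepsilon,\lambda_\varepsilon}$ and cancel the terms quadratic in $r_\varepsilon$ exactly. Both use orthogonality (via \eqref{d12} and the linearized equations for $U_{z_\varepsilon,\lambda_\varepsilon}$ and $\partial_\lambda U_{z_\varepsilon,\lambda_\varepsilon}$) together with exterior tails to remove the zeroth-order term and the $\beta,\gamma$ components of $s_\varepsilon$. Both use the split $H_Q(z_\varepsilon,\cdot)=\phi_Q(z_\varepsilon)+O(|\cdot-z_\varepsilon|)$ to produce the $\lambda_\varepsilon^{-1}\phi_Q(z_\varepsilon)$ term.

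The differences are only bookkeeping. You group $s_\varepsilon$ with $\lambda_\varepsilon^{-1/2}H_Q+f_{z_\varepsilon,\lambda_\varepsilon}$ rather than with $r_\varepsilon$. You also use \eqref{1esay} to turn the $U^{6-\mu}(y)U^{4-\mu}(x)$ terms into local $U^4$ weights, instead of the case-by-case estimate \eqref{UUUU}. A few intermediate rates are looser than necessary but harmless; for example, $\|U^4_{z_\varepsilon,\lambda_\varepsilon}|\cdot-z_\varepsilon|\|_{6/5}$ is actually $O(\lambda_\varepsilon^{-3/2})$, not $O(\lambda_\varepsilon^{-1})$.
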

\begin{proof}
	A direct computation gives 
		$\psi_{z_{\varepsilon},\lambda_{\varepsilon}}=U_{z_{\varepsilon},\lambda_{\varepsilon}} -  \lambda_{\varepsilon}^{-1/2} H_Q(z_{\varepsilon}, \cdot)-f_{z_{\varepsilon}, \lambda_{\varepsilon}}$. Then we have the following bound pointwise
\begin{equation*}
		\begin{split}
			(\psi_{z_{\varepsilon}, \lambda_{\varepsilon}} + s_{\varepsilon} + r_{\varepsilon})^{6-\mu} = &U_{z_{\varepsilon}, \lambda_{\varepsilon}}^{6-\mu} + (6-\mu)U_{z_{\varepsilon}, \lambda_{\varepsilon}}^{5-\mu}(s_{\varepsilon} + r_{\varepsilon}) + O\Big(U_{z_{\varepsilon}, \lambda_{\varepsilon}}^{5-\mu}\big(\lambda_{\varepsilon}^{-1/2}     |H_Q(z_{\varepsilon}, \cdot)| + |f_{z_{\varepsilon}, \lambda_{\varepsilon}}|\big) \Big)\\&+O\Big( U_{z_{\varepsilon}, \lambda_{\varepsilon}}^{4-\mu}(r_{\varepsilon}^2 + s_{\varepsilon}^2)+\lambda_{\varepsilon}^{-(6-\mu)/2}     |H_Q(z_{\varepsilon}, \cdot)|^{6-\mu} + |f_{z_{\varepsilon}, \lambda_{\varepsilon}}|^{6-\mu} + |r_{\varepsilon}|^{6-\mu} + |s_{\varepsilon}|^{6-\mu}\Big),
		\end{split}
	\end{equation*}
and
\begin{equation*}
		\begin{split}
			(\psi_{z_{\varepsilon}, \lambda_{\varepsilon}} + s_{\varepsilon} + r_{\varepsilon})^{5-\mu} = &U_{z_{\varepsilon}, \lambda_{\varepsilon}}^{5-\mu} + (5-\mu)U_{z_{\varepsilon}, \lambda_{\varepsilon}}^{4-\mu}(s_{\varepsilon} + r_{\varepsilon}) + O\Big(U_{z_{\varepsilon}, \lambda_{\varepsilon}}^{4-\mu}\big(\lambda_{\varepsilon}^{-1/2}     |H_Q(z_{\varepsilon}, \cdot)| + |f_{z_{\varepsilon}, \lambda_{\varepsilon}}|\big) \Big)\\&+O\Big( U_{z_{\varepsilon}, \lambda_{\varepsilon}}^{3-\mu}(r_{\varepsilon}^2 + s_{\varepsilon}^2)+\lambda_{\varepsilon}^{-(5-\mu)/2}     |H_Q(z_{\varepsilon}, \cdot)|^{5-\mu} + |f_{z_{\varepsilon}, \lambda_{\varepsilon}}|^{5-\mu} + |r_{\varepsilon}|^{5-\mu} + |s_{\varepsilon}|^{5-\mu}\Big).
		\end{split}
	\end{equation*}
Thus 
\begin{equation}
	\label{o321} \Small
	\begin{split}
		&\int_{\Omega}\int_{\Omega}\frac{(\psi_{z_{\varepsilon},\lambda_{\varepsilon}} + s_{\varepsilon} + r_{\varepsilon})^{6-\mu}(y)(\psi_{z_{\varepsilon},\lambda_{\varepsilon}} + s_{\varepsilon} + r_{\varepsilon})^{5-\mu} (x)r_{\varepsilon}(x)}{|x-y|^{\mu}}dxdy \\ &=
		\underbrace{\int_{\Omega}\int_{\Omega} \frac{    U_{z_{\varepsilon}, \lambda_{\varepsilon}  }^{6-\mu}(y)     U_{z_{\varepsilon}, \lambda_{\varepsilon}}^{5-\mu} (x)r_{\varepsilon}(x)}{|x-y|^{\mu}}dxdy }_{:=H_1} +
		  \underbrace{\int_{\Omega}\int_{\Omega} \frac{      (5-\mu)  U_{z_{\varepsilon}, \lambda_{\varepsilon}  }^{6-\mu}(y)U_{z_{\varepsilon}, \lambda_{\varepsilon}}^{4-\mu}(x)(s_{\varepsilon} + r_{\varepsilon})(x)r_{\varepsilon}(x)}{|x-y|^{\mu}}dxdy }_{:=H_2}\\ &+
		\underbrace{\int_{\Omega}\int_{\Omega} \frac{    (6-\mu)U_{z_{\varepsilon}, \lambda_{\varepsilon}}^{5-\mu}(y)(s_{\varepsilon} + r_{\varepsilon})(y)     U_{z_{\varepsilon}, \lambda_{\varepsilon}}^{5-\mu} (x)r_{\varepsilon}(x)}{|x-y|^{\mu}}dxdy }_{:=H_3}
		 + \underbrace{ O\left(  \int_{\Omega}\int_{\Omega} \frac{  U_{z_{\varepsilon}, \lambda_{\varepsilon}}^{5-\mu}(y)(s_{\varepsilon} + r_{\varepsilon})(y)     U_{z_{\varepsilon}, \lambda_{\varepsilon}}^{4-\mu}(x)(s_{\varepsilon} + r_{\varepsilon}) (x)r_{\varepsilon}(x)}{|x-y|^{\mu}}dxdy                       \right)    }_{:=H_4}
		\\&+ \underbrace{ O\left(  \int_{\Omega}\int_{\Omega} \frac{  U_{z_{\varepsilon}, \lambda_{\varepsilon}}^{5-\mu}(y)\big(\lambda_{\varepsilon}^{-1/2}     |H_Q(z_{\varepsilon}, \cdot)| + |f_{z_{\varepsilon}, \lambda_{\varepsilon}}|\big)(y)    (\psi_{z_{\varepsilon}, \lambda_{\varepsilon}} + s_{\varepsilon} + r_{\varepsilon})^{5-\mu} (x)r_{\varepsilon}(x)}{|x-y|^{\mu}}dxdy                       \right)    }_{:=H_5}
		\\ &+ \underbrace{ O\left(  \int_{\Omega}\int_{\Omega} \frac{  \Big( U_{z_{\varepsilon}, \lambda_{\varepsilon}}^{4-\mu}(r_{\varepsilon}^2 + s_{\varepsilon}^2)+\lambda_{\varepsilon}^{-(6-\mu)/2}     |H_Q(z_{\varepsilon}, \cdot)|^{6-\mu} + |f_{z_{\varepsilon}, \lambda_{\varepsilon}}|^{6-\mu} + |r_{\varepsilon}|^{6-\mu} + |s_{\varepsilon}|^{6-\mu}\Big)(y)    (\psi_{z_{\varepsilon}, \lambda_{\varepsilon}} + s_{\varepsilon} + r_{\varepsilon})^{5-\mu} (x)r_{\varepsilon}(x)}{|x-y|^{\mu}}dxdy                       \right)    }_{:=H_6}
		\\ &+ \underbrace{ O\left(  \int_{\Omega}\int_{\Omega} \frac{  (\psi_{z_{\varepsilon}, \lambda_{\varepsilon}} + s_{\varepsilon} + r_{\varepsilon})^{6-\mu}(y)   \Big(U_{z_{\varepsilon}, \lambda_{\varepsilon}}^{4-\mu}\big(\lambda_{\varepsilon}^{-1/2}     |H_Q(z_{\varepsilon}, \cdot)| + |f_{z_{\varepsilon}, \lambda_{\varepsilon}}|\big) \Big)         (x)r_{\varepsilon}(x)}{|x-y|^{\mu}}dxdy                       \right)    }_{:=H_7}
		\\ &+ \underbrace{ O\left(  \int_{\Omega}\int_{\Omega} \frac{  (\psi_{z_{\varepsilon}, \lambda_{\varepsilon}} + s_{\varepsilon} + r_{\varepsilon})^{6-\mu}(y)  \Big( U_{z_{\varepsilon}, \lambda_{\varepsilon}}^{3-\mu}(r_{\varepsilon}^2 + s_{\varepsilon}^2)+\lambda_{\varepsilon}^{-(5-\mu)/2}     |H_Q(z_{\varepsilon}, \cdot)|^{5-\mu} + |f_{z_{\varepsilon}, \lambda_{\varepsilon}}|^{5-\mu} + |r_{\varepsilon}|^{5-\mu} + |s_{\varepsilon}|^{5-\mu}\Big)     (x)r_{\varepsilon}(x)}{|x-y|^{\mu}}dxdy                       \right)    }_{:=H_8}
	\end{split}
\end{equation}
Next, we will  estimate $H_1, H_2,\cdots,H_8$  respectively. Combining \eqref{d12} with Lemmas \ref{A1} and \ref{A1RN}, we know   
\begin{equation*}
	\begin{split}
		|H_1|= &\left|\int_{\Omega}\int_{\Omega}\frac{U_{z_{\varepsilon},\lambda_{\varepsilon}}^{6-\mu}(y) U_{z_{\varepsilon},\lambda_{\varepsilon}}^{5-\mu}(x)r_{\varepsilon}(x)
	}{|x-y|^{\mu}}dxdy\right|	
	\\ \leq& \left| \int_{\mathbb{R}^3} \int_{\Omega} \frac{U_{z_{\varepsilon},\lambda_{\varepsilon}}^{6-\mu}(y)
	U_{z_{\varepsilon},\lambda_{\varepsilon}}^{5-\mu}(x) r_{\varepsilon}(x)
	}{|x-y|^\mu} dxdy   \right|+\left| \int_{\mathbb{R}^3\backslash\Omega} \int_{\Omega} \frac{U_{z_{\varepsilon},\lambda_{\varepsilon}}^{6-\mu}(y)
	U_{z_{\varepsilon},\lambda_{\varepsilon}}^{5-\mu}(x)r_{\varepsilon}(x)
	}{|x-y|^\mu} dxdy   \right|
	\\ \lesssim&  \| U_{z_{\varepsilon},\lambda_{\varepsilon}}^{6-\mu}\|_{L^{\frac{6}{6-\mu}}(\mathbb{R}^3\backslash\Omega)}
	\| U_{z_{\varepsilon},\lambda_{\varepsilon}}^{5-\mu} r_{\varepsilon}\|_{\frac{6}{6-\mu}}
	\\ \lesssim& \| U_{z_{\varepsilon},\lambda_{\varepsilon}}\|^{6-\mu}_{L^{6}(\mathbb{R}^3\backslash\Omega)}
	\| U_{z_{\varepsilon},\lambda_{\varepsilon}}^{5-\mu} \|_{\frac{6}{5-\mu}}
	\| r_{\varepsilon}\|_{6}
	\\ \lesssim& (\lambda_{\varepsilon}d_{\varepsilon})^{-\frac{6-\mu}{2}} \| r_{\varepsilon}\|_{6}
	\lesssim \lambda_{\varepsilon}^{-\frac{3}{2}}  \| r_{\varepsilon}\|_{6}. 
	\end{split}
\end{equation*}
A direct computation for  $H_2$ and $H_3$ shows 
\begin{equation*}
	\begin{split}
		H_2+H_3= \underbrace{\int_{\Omega}\int_{\Omega} \frac{      (5-\mu)  U_{z_{\varepsilon}, \lambda_{\varepsilon}  }^{6-\mu}(y)U_{z_{\varepsilon}, \lambda_{\varepsilon}}^{4-\mu}(x)r_{\varepsilon}^2(x)}{|x-y|^{\mu}}dxdy
		+\int_{\Omega}\int_{\Omega} \frac{    (6-\mu)U_{z_{\varepsilon}, \lambda_{\varepsilon}}^{5-\mu}(y) r_{\varepsilon}(y)     U_{z_{\varepsilon}, \lambda_{\varepsilon}}^{5-\mu} (x)r_{\varepsilon}(x)}{|x-y|^{\mu}}dxdy}_{:=H_{2,1}}
		\\
		+ \underbrace{\int_{\Omega}\int_{\Omega} \frac{      (5-\mu)  U_{z_{\varepsilon}, \lambda_{\varepsilon}  }^{6-\mu}(y)U_{z_{\varepsilon}, \lambda_{\varepsilon}}^{4-\mu}(x)s_{\varepsilon} (x)r_{\varepsilon}(x)}{|x-y|^{\mu}}dxdy
		+\int_{\Omega}\int_{\Omega} \frac{    (6-\mu)U_{z_{\varepsilon}, \lambda_{\varepsilon}}^{5-\mu}(y)s_{\varepsilon}(y)     U_{z_{\varepsilon}, \lambda_{\varepsilon}}^{5-\mu} (x)r_{\varepsilon}(x)}{|x-y|^{\mu}}dxdy}_{:=H_{2,2}}. 
	\end{split}
\end{equation*}
In \eqref{lemma35eq}, the contribution of $H_{2,1}$ will  cancel with the term
$$-A_{3,\mu} \alpha_{\varepsilon}^{10-2\mu}\int_{\Omega}\int_{\Omega}
\frac{(5-\mu)U_{z_{\varepsilon},\lambda_{\varepsilon}}^{6-\mu}(x)
U_{z_{\varepsilon},\lambda_{\varepsilon}}^{4-\mu}(y) r_{\varepsilon}^{2}(y)+(6-\mu)U_{z_{\varepsilon},\lambda_{\varepsilon}}^{5-\mu}(x)r_{\varepsilon}(x)
U_{z_{\varepsilon},\lambda_{\varepsilon}}^{5-\mu}(y)r_{\varepsilon}(y)}{|x-y|^{\mu}}
dxdy. $$
In order to estimate $H_{2,2}$, we write $s_\varepsilon$ as 
\begin{equation} \label{sss1}
	s_\varepsilon =\beta \lambda_{\varepsilon}^{-1} U_{z_{\varepsilon},\lambda_{\varepsilon}}                  +\gamma\partial_{\lambda}U_{z_{\varepsilon},\lambda_{\varepsilon}}+\tilde{s}_\varepsilon, 
\end{equation}
where 
\begin{equation*}
		\tilde{s}_\varepsilon=-\beta \lambda_{\varepsilon}^{-1}\varphi_{z_{\varepsilon},\lambda_{\varepsilon}}-\gamma\partial_{\lambda}\varphi_{z_{\varepsilon},\lambda_{\varepsilon}}+ \sum_{i=1}^{3}\delta_{i}\lambda_{\varepsilon}^{-3}\partial_{z_{i}}PU_{z_{\varepsilon},\lambda_{\varepsilon}}.
\end{equation*}
Then $H_{2,2}$ becomes 
\begin{equation*}\small
	\begin{split}
		H_{2,2}= &\underbrace{
		\int_{\Omega}\int_{\Omega} \frac{    (11-2\mu) \beta \lambda_{\varepsilon}^{-1}U_{z_{\varepsilon}, \lambda_{\varepsilon}}^{6-\mu}(y)     U_{z_{\varepsilon}, \lambda_{\varepsilon}}^{5-\mu} (x)r_{\varepsilon}(x)}{|x-y|^{\mu}}dxdy}_{:=H_{2,3}}
		\\ &+ \underbrace{\int_{\Omega}\int_{\Omega} \frac{      (5-\mu) \gamma U_{z_{\varepsilon}, \lambda_{\varepsilon}  }^{6-\mu}(y)U_{z_{\varepsilon}, \lambda_{\varepsilon}}^{4-\mu}(x) \partial_{\lambda}U_{z_{\varepsilon},\lambda_{\varepsilon}} (x)r_{\varepsilon}(x)}{|x-y|^{\mu}}dxdy
		+\int_{\Omega}\int_{\Omega} \frac{    (6-\mu)\gamma U_{z_{\varepsilon}, \lambda_{\varepsilon}}^{5-\mu}(y)\partial_{\lambda}U_{z_{\varepsilon},\lambda_{\varepsilon}}(y)     U_{z_{\varepsilon}, \lambda_{\varepsilon}}^{5-\mu} (x)r_{\varepsilon}(x)}{|x-y|^{\mu}}dxdy}_{:=H_{2,4}}
		\\ &+ \underbrace{\int_{\Omega}\int_{\Omega} \frac{      (5-\mu)  U_{z_{\varepsilon}, \lambda_{\varepsilon}  }^{6-\mu}(y)U_{z_{\varepsilon}, \lambda_{\varepsilon}}^{4-\mu}(x)\tilde{s}_\varepsilon (x)r_{\varepsilon}(x)}{|x-y|^{\mu}}dxdy
		+\int_{\Omega}\int_{\Omega} \frac{    (6-\mu)U_{z_{\varepsilon}, \lambda_{\varepsilon}}^{5-\mu}(y)\tilde{s}_\varepsilon(y)     U_{z_{\varepsilon}, \lambda_{\varepsilon}}^{5-\mu} (x)r_{\varepsilon}(x)}{|x-y|^{\mu}}dxdy}_{:=H_{2,5}}. 
	\end{split}
\end{equation*}
Arguing as in the estimate of $H_1$, we obtain
$
	|H_{2,3}|\lesssim \lambda_{\varepsilon}^{-\frac{3}{2}}  \| r_{\varepsilon}\|_{6}. 
$ 
We now turn to  estimate $H_{2,4}$. Since 
\begin{equation*}
	\begin{split}
			-\Delta  \partial_{\lambda} PU_{z_{\varepsilon},\lambda_{\varepsilon}} =-\Delta \partial_{\lambda} U_{z_{\varepsilon},\lambda_{\varepsilon}}=& A_{3,\mu}\left( \int_{\mathbb{R}^3} \frac{(6-\mu)U_{z_{\varepsilon},\lambda_{\varepsilon}}^{5-\mu}(y)  \partial_{\lambda} U_{z_{\varepsilon},\lambda_{\varepsilon}}(y)}{|x-y|^\mu} dy \right) U_{z_{\varepsilon},\lambda_{\varepsilon}}^{5-\mu}  \\ & +A_{3,\mu}(5-\mu)\left( \int_{\mathbb{R}^3} \frac{U_{z_{\varepsilon},\lambda_{\varepsilon}}^{6-\mu}(y)}{|x-y|^\mu} dy \right) U_{z_{\varepsilon},\lambda_{\varepsilon}}^{4-\mu} \partial_{\lambda} U_{z_{\varepsilon},\lambda_{\varepsilon}} , 
	\end{split}
\end{equation*}
by the fact that $  r_{\varepsilon}$ vanishes on the boundary and  $ r_{\varepsilon}\in  T_{z_{\varepsilon},\lambda_{\varepsilon}}^{\bot}$, we obtain
\begin{equation*}
	\begin{split}
		&\int_{\mathbb{R}^3}\int_{\Omega} \frac{      (5-\mu)  U_{z_{\varepsilon}, \lambda_{\varepsilon}  }^{6-\mu}(y)U_{z_{\varepsilon}, \lambda_{\varepsilon}}^{4-\mu}(x) \partial_{\lambda}U_{z_{\varepsilon},\lambda_{\varepsilon}} (x)r_{\varepsilon}(x)}{|x-y|^{\mu}}dxdy
		+\int_{\mathbb{R}^3}\int_{\Omega} \frac{    (6-\mu) U_{z_{\varepsilon}, \lambda_{\varepsilon}}^{5-\mu}(y)\partial_{\lambda}U_{z_{\varepsilon},\lambda_{\varepsilon}}(y)     U_{z_{\varepsilon}, \lambda_{\varepsilon}}^{5-\mu} (x)r_{\varepsilon}(x)}{|x-y|^{\mu}}dxdy
	\\ = &\frac{1}{A_{3,\mu}} \int_{\Omega} -\Delta \partial_{\lambda} PU_{z_{\varepsilon},\lambda_{\varepsilon}}\cdot r_{\varepsilon} 
	=\frac{1}{A_{3,\mu}} \int_{\Omega} \nabla \partial_{\lambda} PU_{z_{\varepsilon},\lambda_{\varepsilon}} \cdot\nabla r_{\varepsilon} =0. 
	\end{split}
\end{equation*}
Thus 
\begin{equation*}\small
	\begin{split}
		|H_{2,4}| \leq & \int_{\mathbb{R}^3 \backslash\Omega}\int_{\Omega} \frac{      (5-\mu)  U_{z_{\varepsilon}, \lambda_{\varepsilon}  }^{6-\mu}(y)U_{z_{\varepsilon},\lambda_{\varepsilon}}^{4-\mu}(x) \partial_{\lambda}U_{z_{\varepsilon},\lambda_{\varepsilon}} (x)r_{\varepsilon}(x)}{|x-y|^{\mu}}dxdy
		+\int_{\mathbb{R}^3 \backslash\Omega}\int_{\Omega} \frac{    (6-\mu) U_{z_{\varepsilon}, \lambda_{\varepsilon}}^{5-\mu}(y)\partial_{\lambda}U_{z_{\varepsilon},\lambda_{\varepsilon}}(y)     U_{z_{\varepsilon}, \lambda_{\varepsilon}}^{5-\mu} (x)r_{\varepsilon}(x)}{|x-y|^{\mu}}dxdy
		\\ \lesssim&  \| U_{z_{\varepsilon},\lambda_{\varepsilon}}^{6-\mu}\|_{L^{\frac{6}{6-\mu}}(\mathbb{R}^3\backslash\Omega)}
	\| U_{z_{\varepsilon},\lambda_{\varepsilon}}^{4-\mu}\partial_{\lambda}U_{z_{\varepsilon},\lambda_{\varepsilon}}  r_{\varepsilon}\|_{\frac{6}{6-\mu}}
	+\| U_{z_{\varepsilon},\lambda_{\varepsilon}}^{5-\mu} \partial_{\lambda}U_{z_{\varepsilon},\lambda_{\varepsilon}}  \|_{L^{\frac{6}{6-\mu}}(\mathbb{R}^3\backslash\Omega)}
	\| U_{z_{\varepsilon},\lambda_{\varepsilon}}^{5-\mu} r_{\varepsilon}\|_{\frac{6}{6-\mu}}
	\\ \lesssim& \| U_{z_{\varepsilon},\lambda_{\varepsilon}}\|^{6-\mu}_{L^{6}(\mathbb{R}^3\backslash\Omega)}
	\| U_{z_{\varepsilon},\lambda_{\varepsilon}}^{4-\mu} \|_{\frac{6}{4-\mu}}
	\| \partial_{\lambda}U_{z_{\varepsilon},\lambda_{\varepsilon}} \|_{6}
	\| r_{\varepsilon}\|_{6} +\| U_{z_{\varepsilon},\lambda_{\varepsilon}}\|^{5-\mu}_{L^{6}(\mathbb{R}^3\backslash\Omega)}
	\| \partial_{\lambda} U_{z_{\varepsilon},\lambda_{\varepsilon}}\|_{L^{6}(\mathbb{R}^3\backslash\Omega)}
	\| U_{z_{\varepsilon},\lambda_{\varepsilon}}^{5-\mu} \|_{\frac{6}{5-\mu}}
	\| r_{\varepsilon}\|_{6} 
	\\ \lesssim&  \lambda_{\varepsilon}^{-\frac{3}{2}}  \| r_{\varepsilon}\|_{6}. 
	\end{split}
\end{equation*}
For the term $H_{2,5}$, we first estimate the norm of $\|\tilde{s}_\varepsilon\|_{6}$. From Lemmas \ref{A1}, \ref{A2} and Proposition \ref{prop32}, we know
\begin{equation} \label{sss2}
	\|\tilde{s}_\varepsilon\|_6\leq(|\beta|+|\gamma|)(\lambda_\varepsilon^{-1}\|\varphi_{z_{\varepsilon},\lambda_{\varepsilon}}\|_6+\|\partial_\lambda\varphi_{z_{\varepsilon},\lambda_{\varepsilon}}\|_6)+\lambda_\varepsilon^{-3}\sum_{i=1}^3|\delta_i|\|\partial_{z_{i}}PU_{z_{\varepsilon},\lambda_{\varepsilon}}\|_6\lesssim\lambda_\varepsilon^{-3/2}.
\end{equation}
By H\"older's inequality and the HLS inequality, we can find
\begin{equation*}
	\begin{split}
		H_{2,5} &=\int_{\Omega}\int_{\Omega} \frac{      (5-\mu)  U_{z_{\varepsilon}, \lambda_{\varepsilon}  }^{6-\mu}(y)U_{z_{\varepsilon}, \lambda_{\varepsilon}}^{4-\mu}(x)\tilde{s}_\varepsilon (x)r_{\varepsilon}(x)}{|x-y|^{\mu}}dxdy
		+\int_{\Omega}\int_{\Omega} \frac{    (6-\mu)U_{z_{\varepsilon}, \lambda_{\varepsilon}}^{5-\mu}(y)\tilde{s}_\varepsilon(y)     U_{z_{\varepsilon}, \lambda_{\varepsilon}}^{5-\mu} (x)r_{\varepsilon}(x)}{|x-y|^{\mu}}dxdy
		\\
		&\lesssim \| U_{z_{\varepsilon},\lambda_{\varepsilon}}^{6-\mu}\|_{\frac{6}{6-\mu}}
	\| U_{z_{\varepsilon},\lambda_{\varepsilon}}^{4-\mu}\tilde{s}_\varepsilon  r_{\varepsilon}\|_{\frac{6}{6-\mu}}
	+\| U_{z_{\varepsilon},\lambda_{\varepsilon}}^{5-\mu} \tilde{s}_\varepsilon \|_{\frac{6}{6-\mu}}
	\| U_{z_{\varepsilon},\lambda_{\varepsilon}}^{5-\mu}  r_{\varepsilon}\|_{\frac{6}{6-\mu}}
	\\
		&\lesssim \| U_{z_{\varepsilon},\lambda_{\varepsilon}}\|_{6}^{6-\mu}
		\| U_{z_{\varepsilon},\lambda_{\varepsilon}}\|_{6}^{4-\mu}
		\| \tilde{s}_\varepsilon\|_{6}
		\| r_{\varepsilon}\|_{6}
	+\| U_{z_{\varepsilon},\lambda_{\varepsilon}}\|_{6}^{5-\mu}
	\| \tilde{s}_\varepsilon\|_{6}
	\| U_{z_{\varepsilon},\lambda_{\varepsilon}}\|_{6}^{5-\mu}
	\| r_{\varepsilon}\|_{6}
	\\  & \lesssim  \lambda_{\varepsilon}^{-\frac{3}{2}}  \| r_{\varepsilon}\|_{6}. 
	\end{split}
\end{equation*}
Then $|H_{2,2}|\lesssim  \lambda_{\varepsilon}^{-\frac{3}{2}}  \| r_{\varepsilon}\|_{6}$. Using H\"older's inequality, the HLS inequality, and Proposition \ref{prop32}, we can obtain 
\begin{equation*}
	\begin{split}
		 H_4 &\lesssim \int_{\Omega}\int_{\Omega} \frac{  U_{z_{\varepsilon}, \lambda_{\varepsilon}}^{5-\mu}(y)(s_{\varepsilon} + r_{\varepsilon})(y)     U_{z_{\varepsilon}, \lambda_{\varepsilon}}^{4-\mu}(x)(s_{\varepsilon} + r_{\varepsilon}) (x)r_{\varepsilon}(x)}{|x-y|^{\mu}}dxdy    \\
		 &\lesssim  \|U_{z_{\varepsilon}, \lambda_{\varepsilon}}^{5-\mu} (s_{\varepsilon} + r_{\varepsilon})  \|_{\frac{6}{6-\mu}} \| U_{z_{\varepsilon}, \lambda_{\varepsilon}}^{4-\mu}  (s_{\varepsilon} + r_{\varepsilon})r_{\varepsilon}\|_{\frac{6}{6-\mu}}
		  \\
		 &\lesssim \left[  \|U_{z_{\varepsilon}, \lambda_{\varepsilon}}^{5-\mu}  \|_{\frac{6}{5-\mu}}  \left(  \| s_{\varepsilon} \|_6 + \| r_{\varepsilon} \|_6 \right)   \right]  \left[  \|U_{z_{\varepsilon}, \lambda_{\varepsilon}}^{4-\mu}  \|_{\frac{6}{4-\mu}}  \left(  \| s_{\varepsilon} \|_6 + \| r_{\varepsilon} \|_6 \right)   \| r_{\varepsilon} \|_6 \right]
		 \\ &\lesssim \left( \| s_{\varepsilon} \|_6 ^2  + \| s_{\varepsilon} \|_6 \| r_{\varepsilon} \|_6 +\| r_{\varepsilon} \|_6^2  \right)\| r_{\varepsilon} \|_6
		 \\ &\lesssim \left( \lambda_\varepsilon^{-2}  + \lambda_\varepsilon^{-1}\| r_{\varepsilon} \|_6 +\| r_{\varepsilon} \|_6^2  \right)\| r_{\varepsilon} \|_6. 
	\end{split}
\end{equation*}
It  remains to estimate $H_5, H_6,H_7,H_8$. Now we give norm estimates for parts of these terms, for example 
\begin{equation*}
	\left\|   \psi_{z_{\varepsilon}, \lambda_{\varepsilon}} + s_{\varepsilon} + r_{\varepsilon}  \right\|_{6} =   \left\|   PU_{z_{\varepsilon},\lambda_{\varepsilon}}+v_{\varepsilon}  \right\|_{6}    =O(1).   
\end{equation*}
Since $H_Q(z_{\varepsilon}, x)= \phi_{Q}(z_{\varepsilon})+O\left( | z_{\varepsilon}- x|  \right)$, we can deduce
\begin{equation*}
	\begin{split}
		\left\| U_{z_{\varepsilon},\lambda_{\varepsilon}}^{5-\mu} |H_Q(z_{\varepsilon}, \cdot)| \right\|_{\frac{6}{6-\mu}}&= \left(\int_\Omega  U_{z_{\varepsilon},\lambda_{\varepsilon}}^{\frac{6(5-\mu)}{6-\mu}}   |H_Q(z_{\varepsilon}, \cdot)| ^{\frac{6}{6-\mu}}          \right)^{\frac{6-\mu}{6}}  
	\\& \lesssim \left(\int_\Omega  U_{z_{\varepsilon},\lambda_{\varepsilon}}^{\frac{6(5-\mu)}{6-\mu}}   \phi_{Q}^{\frac{6}{6-\mu}}(z_{\varepsilon})           \right)^{\frac{6-\mu}{6}}  +\left(\int_\Omega  U_{z_{\varepsilon},\lambda_{\varepsilon}}^{\frac{6(5-\mu)}{6-\mu}}  | z_{\varepsilon}- x|^{\frac{6}{6-\mu}}          \right)^{\frac{6-\mu}{6}}  \\
		&\lesssim  \phi_{Q}(z_{\varepsilon}) \left\| U_{z_{\varepsilon},\lambda_{\varepsilon}}  \right\|_{\frac{6(5-\mu)}{6-\mu}}^{5-\mu} +\lambda_{\varepsilon}^{-\frac{3}{2}}
		\\
		&\lesssim  \lambda_{\varepsilon}^{-\frac{1}{2}} \phi_{Q}(z_{\varepsilon})  +\lambda_{\varepsilon}^{-\frac{3}{2}}, 
	\end{split}
\end{equation*}
where we used  $\mu <2$ to ensure the bound $\left(\int_\Omega  U_{z_{\varepsilon},\lambda_{\varepsilon}}^{\frac{6(5-\mu)}{6-\mu}}  | z_{\varepsilon}- x|^{\frac{6}{6-\mu}}          \right)^{\frac{6-\mu}{6}} \lesssim \lambda_{\varepsilon}^{-\frac{3}{2}}$ holds. 
Now we estimate $\left\| U_{z_{\varepsilon},\lambda_{\varepsilon}}^{4-\mu} |H_Q(z_{\varepsilon}, \cdot)| \right\|_{\frac{6}{5-\mu}}$. 
Let $0<\rho<\operatorname{dist}(z_{\varepsilon},\partial\Omega)$. Then
\begin{equation*}
\begin{split}
	&\left\|
	U_{z_{\varepsilon},\lambda_{\varepsilon}}^{4-\mu}
	H_Q(z_{\varepsilon},\cdot)
	\right\|_{\frac{6}{5-\mu}}\leq
	\left\|
	U_{z_{\varepsilon},\lambda_{\varepsilon}}^{4-\mu}
	H_Q(z_{\varepsilon},\cdot)
	\right\|_{L^{\frac{6}{5-\mu}}(B_{\rho}(z_{\varepsilon}))}+
	\left\|
	U_{z_{\varepsilon},\lambda_{\varepsilon}}^{4-\mu}
	H_Q(z_{\varepsilon},\cdot)
	\right\|_{L^{\frac{6}{5-\mu}}(\Omega\setminus B_{\rho}(z_{\varepsilon}))}.
\end{split}
\end{equation*}
By Lemma \ref{B2},
\begin{equation*}
	H_Q(z_{\varepsilon},x)
	=\phi_Q(z_{\varepsilon})+O(|x-z_{\varepsilon}|)
	\qquad\forall x\in B_{\rho}(z_{\varepsilon}).
\end{equation*}
Hence, by Lemma \ref{A1} and a direct computation, it holds that 
\begin{equation*}
\begin{split}
	&\left\|
	U_{z_{\varepsilon},\lambda_{\varepsilon}}^{4-\mu}
	H_Q(z_{\varepsilon},\cdot)
	\right\|_{L^{\frac{6}{5-\mu}}(B_{\rho}(z_{\varepsilon}))}\\
	&\lesssim
	\phi_Q(z_{\varepsilon})
	\left\|
	U_{z_{\varepsilon},\lambda_{\varepsilon}}^{4-\mu}
	\right\|_{L^{\frac{6}{5-\mu}}(B_{\rho}(z_{\varepsilon}))}
	+
	\left\|
	U_{z_{\varepsilon},\lambda_{\varepsilon}}^{4-\mu}
	|\cdot-z_{\varepsilon}|
	\right\|_{L^{\frac{6}{5-\mu}}(B_{\rho}(z_{\varepsilon}))}\\
	&\lesssim
	\phi_Q(z_{\varepsilon})\lambda_{\varepsilon}^{-\frac12}
	+
	\begin{cases}
		O(\lambda_{\varepsilon}^{-\frac32}), & \text{if} \quad 0<\mu<1,\\
		O(\lambda_{\varepsilon}^{-\frac32}
		(\log\lambda_{\varepsilon})^{\frac23}), & \text{if} \quad \mu=1,\\
		O\left(\lambda_{\varepsilon}^{-\frac{4-\mu}{2}}\right),
		& \text{if} \quad 1<\mu<2.
	\end{cases}
\end{split}
\end{equation*}
On the other hand, by Lemma \ref{B1},
\begin{equation*}
\begin{split}
	&\left\|
	U_{z_{\varepsilon},\lambda_{\varepsilon}}^{4-\mu}
	H_Q(z_{\varepsilon},\cdot)
	\right\|_{L^{\frac{6}{5-\mu}}(\Omega\setminus B_{\rho}(z_{\varepsilon}))}\lesssim
	\left\|
	U_{z_{\varepsilon},\lambda_{\varepsilon}}^{4-\mu}
	\right\|_{L^{\frac{6}{5-\mu}}(\Omega\setminus B_{\rho}(z_{\varepsilon}))}
	=O\left(\lambda_{\varepsilon}^{-\frac{4-\mu}{2}}\right).
\end{split}
\end{equation*}
Thus it follows that
\begin{equation}
	\label{UUUU}
	\left\| U_{z_{\varepsilon},\lambda_{\varepsilon}}^{4-\mu} |H_Q(z_{\varepsilon}, \cdot)| \right\|_{\frac{6}{5-\mu}}   \lesssim \lambda_{\varepsilon}^{-\frac{1}{2}} \phi_{Q}(z_{\varepsilon})  +\begin{cases}
		O(\lambda_{\varepsilon}^{-\frac32}), & \text{if} \quad 0<\mu<1,\\
		O(\lambda_{\varepsilon}^{-\frac32}
		(\log\lambda_{\varepsilon})^{\frac23}), & \text{if} \quad \mu=1,\\
		O\left(\lambda_{\varepsilon}^{-\frac{4-\mu}{2}}\right),
		& \text{if} \quad 1<\mu<2.
	\end{cases}
\end{equation}
Based on the above estimate and Lemma \ref{A2}, we obtain 
\begin{equation*}
	\begin{split}
		 H_5 &\lesssim \int_{\Omega}\int_{\Omega} \frac{  U_{z_{\varepsilon}, \lambda_{\varepsilon}}^{5-\mu}(y)\big(\lambda_{\varepsilon}^{-1/2}     |H_Q(z_{\varepsilon}, \cdot)| + |f_{z_{\varepsilon}, \lambda_{\varepsilon}}|\big)(y)    (\psi_{z_{\varepsilon}, \lambda_{\varepsilon}} + s_{\varepsilon} + r_{\varepsilon})^{5-\mu} (x)r_{\varepsilon}(x)}{|x-y|^{\mu}}dxdy   
		 \\  &\lesssim  \|U_{z_{\varepsilon}, \lambda_{\varepsilon}}^{5-\mu} (\lambda_{\varepsilon}^{-1/2}     |H_Q(z_{\varepsilon}, \cdot)| + |f_{z_{\varepsilon}, \lambda_{\varepsilon}}|)  \|_{\frac{6}{6-\mu}} \|  (\psi_{z_{\varepsilon}, \lambda_{\varepsilon}} + s_{\varepsilon} + r_{\varepsilon})^{5-\mu}   r_{\varepsilon}    \|_{\frac{6}{6-\mu}}
		 \\  &\lesssim \left( \lambda_{\varepsilon}^{-1/2}  \|U_{z_{\varepsilon}, \lambda_{\varepsilon}}^{5-\mu}     |H_Q(z_{\varepsilon}, \cdot)|   \|_{\frac{6}{6-\mu}}  +  \|U_{z_{\varepsilon}, \lambda_{\varepsilon}}^{5-\mu}       \|_{\frac{6}{6-\mu}} \| f_{z_{\varepsilon}, \lambda_{\varepsilon}}\|_{\infty} \right)
		  \|  (\psi_{z_{\varepsilon}, \lambda_{\varepsilon}} + s_{\varepsilon} + r_{\varepsilon})^{5-\mu}    \|_{\frac{6}{5-\mu}}
		    \|    r_{\varepsilon}    \|_{6}
			 \\  &\lesssim \left( \lambda_{\varepsilon}^{-1}\phi_{Q}(z_{\varepsilon})  +\lambda_{\varepsilon}^{-2}  \right)  \|    r_{\varepsilon}    \|_{6},
	\end{split}
\end{equation*}
and 
\begin{equation*}
	\begin{split}
		H_7 &\lesssim \int_{\Omega}\int_{\Omega} \frac{  (\psi_{z_{\varepsilon}, \lambda_{\varepsilon}} + s_{\varepsilon} + r_{\varepsilon})^{6-\mu}(y)   \Big(U_{z_{\varepsilon}, \lambda_{\varepsilon}}^{4-\mu}\big(\lambda_{\varepsilon}^{-1/2}     |H_Q(z_{\varepsilon}, \cdot)| + |f_{z_{\varepsilon}, \lambda_{\varepsilon}}|\big) \Big)         (x)r_{\varepsilon}(x)}{|x-y|^{\mu}}dxdy     
		\\   &\lesssim   \|  (\psi_{z_{\varepsilon}, \lambda_{\varepsilon}} + s_{\varepsilon} + r_{\varepsilon})^{6-\mu}     \|_{\frac{6}{6-\mu}}    \left(  \lambda_{\varepsilon}^{-1/2}  \|U_{z_{\varepsilon}, \lambda_{\varepsilon}}^{4-\mu}     |H_Q(z_{\varepsilon}, \cdot)|   \|_{\frac{6}{5-\mu}}  +  \|U_{z_{\varepsilon}, \lambda_{\varepsilon}}^{4-\mu}       \|_{\frac{6}{5-\mu}} \| f_{z_{\varepsilon}, \lambda_{\varepsilon}}\|_{\infty}  \right)  \|    r_{\varepsilon}    \|_{6}
		 \\  &\lesssim \left( \lambda_{\varepsilon}^{-1}\phi_{Q}(z_{\varepsilon})  +\lambda_{\varepsilon}^{-\frac{3}{2}}  \right)  \|    r_{\varepsilon}    \|_{6}. 
	\end{split}
\end{equation*}
Moreover, taking  \cite[Equation (2.6)]{FKK1} into consideration, we deduce
\begin{equation*}\small
	\begin{split}
		H_6 &\lesssim  \int_{\Omega}\int_{\Omega} \frac{  \Big( U_{z_{\varepsilon}, \lambda_{\varepsilon}}^{4-\mu}(r_{\varepsilon}^2 + s_{\varepsilon}^2)+\lambda_{\varepsilon}^{-(6-\mu)/2}     |H_Q(z_{\varepsilon}, \cdot)|^{6-\mu} + |f_{z_{\varepsilon}, \lambda_{\varepsilon}}|^{6-\mu} + |r_{\varepsilon}|^{6-\mu} + |s_{\varepsilon}|^{6-\mu}\Big)(y)    (\psi_{z_{\varepsilon}, \lambda_{\varepsilon}} + s_{\varepsilon} + r_{\varepsilon})^{5-\mu} (x)r_{\varepsilon}(x)}{|x-y|^{\mu}}dxdy  
		\\ &\lesssim   \left(\left\|U_{z_{\varepsilon}, \lambda_{\varepsilon}}^{4-\mu}(r_{\varepsilon}^2 + s_{\varepsilon}^2)\right\|_{\frac{6}{6-\mu}}   +  \lambda_{\varepsilon}^{-\frac{6-\mu}{2}}   +\left\| |f_{z_{\varepsilon}, \lambda_{\varepsilon}}|^{6-\mu}  \right\|_{\frac{6}{6-\mu}}  +\left\|   |r_{\varepsilon}|^{6-\mu} \right\|_{\frac{6}{6-\mu}} +\left\| |s_{\varepsilon}|^{6-\mu}  \right\|_{\frac{6}{6-\mu}}       \right) \left\|  (\psi_{z_{\varepsilon}, \lambda_{\varepsilon}} + s_{\varepsilon} + r_{\varepsilon})^{5-\mu}  \right\|_{\frac{6}{5-\mu}} \|r_{\varepsilon}    \|_{6} 
		\\ &\lesssim \left( \left\| r_{\varepsilon}\right\|_{6}^{2}+ \left\| s_{\varepsilon}\right\|_{6}^{2} + \lambda_{\varepsilon}^{-\frac{6-\mu}{2}}+ \left\|f_{z_{\varepsilon}, \lambda_{\varepsilon}}\right\|_{\infty}^{6-\mu} + \left\|r_{\varepsilon}\right\|_{6}^{6-\mu} + \left\|s_{\varepsilon}\right\|_{6}^{6-\mu}\right) \left\|  \psi_{z_{\varepsilon}, \lambda_{\varepsilon}} + s_{\varepsilon} + r_{\varepsilon} \right\|_{6} ^{5-\mu}  \|r_{\varepsilon}    \|_{6} 
		\\ &\lesssim \left( \left\| r_{\varepsilon}\right\|_{6}^{2}+\lambda_{\varepsilon}^{-\frac{3}{2}}  \right)   \|r_{\varepsilon}    \|_{6},
	\end{split}
\end{equation*}
and
\begin{equation*}\small
	\begin{split}
		H_8 &\lesssim \int_{\Omega}\int_{\Omega} \frac{  (\psi_{z_{\varepsilon}, \lambda_{\varepsilon}} + s_{\varepsilon} + r_{\varepsilon})^{6-\mu}(y)  \Big( U_{z_{\varepsilon}, \lambda_{\varepsilon}}^{3-\mu}(r_{\varepsilon}^2 + s_{\varepsilon}^2)+\lambda_{\varepsilon}^{-(5-\mu)/2}     |H_Q(z_{\varepsilon}, \cdot)|^{5-\mu} + |f_{z_{\varepsilon}, \lambda_{\varepsilon}}|^{5-\mu} + |r_{\varepsilon}|^{5-\mu} + |s_{\varepsilon}|^{5-\mu}\Big)     (x)r_{\varepsilon}(x)}{|x-y|^{\mu}}dxdy \\
		&\lesssim     \left\|  (\psi_{z_{\varepsilon}, \lambda_{\varepsilon}} + s_{\varepsilon} + r_{\varepsilon})^{6-\mu}  \right\|_{\frac{6}{6-\mu}} 
		\left( \left\|U_{z_{\varepsilon}, \lambda_{\varepsilon}}^{3-\mu}(r_{\varepsilon}^2 + s_{\varepsilon}^2)\right\|_{\frac{6}{5-\mu}} +  \lambda_{\varepsilon}^{-\frac{5-\mu}{2}}   +\left\| |f_{z_{\varepsilon}, \lambda_{\varepsilon}}|^{5-\mu}  \right\|_{\frac{6}{5-\mu}}  +\left\|   |r_{\varepsilon}|^{5-\mu} \right\|_{\frac{6}{5-\mu}} +\left\| |s_{\varepsilon}|^{5-\mu}  \right\|_{\frac{6}{5-\mu}}        \right)  \|r_{\varepsilon}    \|_{6}
		\\
		&\lesssim \left( \left\| r_{\varepsilon}\right\|_{6}^{2}+ \left\| s_{\varepsilon}\right\|_{6}^{2} + \lambda_{\varepsilon}^{-\frac{5-\mu}{2}}+ \left\|f_{z_{\varepsilon}, \lambda_{\varepsilon}}\right\|_{\infty}^{5-\mu} + \left\|r_{\varepsilon}\right\|_{6}^{5-\mu} + \left\|s_{\varepsilon}\right\|_{6}^{5-\mu}\right)\|r_{\varepsilon}    \|_{6}
		\\
		&\lesssim \left( \left\| r_{\varepsilon}\right\|_{6}^{2}+\lambda_{\varepsilon}^{-\frac{3}{2}}  \right)   \|r_{\varepsilon}    \|_{6}. 
	\end{split}
\end{equation*}
Here we used  $\mu <2$ to deduce $\lambda_{\varepsilon}^{-\frac{5-\mu}{2}} \lesssim \lambda_{\varepsilon}^{-\frac{3}{2}} $.  Then Lemma \ref{lemma35} follows. 
\end{proof}

To estimate the second and third terms on the right-hand side of
\eqref{o320}, we use the same argument as in
\cite[Lemma~3.5(b)]{FKK3}.
\begin{lem}
	\label{lemma36}
	As $\varepsilon\rightarrow 0$,   it holds that
	\begin{equation} \label{lemma36eq}
		\begin{split}
			\left| \int_\Omega Q (s_{\varepsilon} - f_{z_{\varepsilon},\lambda_{\varepsilon}} - g_{z_{\varepsilon},\lambda_{\varepsilon}}) r_{\varepsilon} + \int_\Omega \varepsilon V (\psi_{z_{\varepsilon},\lambda_{\varepsilon}} + s_{\varepsilon} + r_{\varepsilon}) r_{\varepsilon} \right|\lesssim
 \left( \lambda_\varepsilon^{-\frac{3}{2}}  +  \varepsilon \lambda_\varepsilon^{-\frac{1}{2}} +\varepsilon \| r_{\varepsilon} \|_6 \right)\| r_{\varepsilon} \|_6 .
		\end{split}
	\end{equation}
\end{lem}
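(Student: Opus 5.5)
The bound \eqref{lemma36eq} is obtained by estimating each piece separately with H\"older's inequality against $\|r_\varepsilon\|_6$. This means we put every coefficient function in $L^{6/5}(\Omega)$ and use the boundedness of $Q,V$ on $\overline\Omega$. Throughout we use Proposition \ref{prop21}, which gives $d_\varepsilon^{-1}=O(1)$, together with Lemma \ref{A2} and Proposition \ref{prop32}.

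\textbf{Step 1: the $\varepsilon V$ terms.} We write
\begin{equation*}
\int_\Omega \varepsilon V(\psi_{z_\varepsilon,\lambda_\varepsilon}+s_\varepsilon+r_\varepsilon)r_\varepsilon=\varepsilon\int_\Omega V\psi_{z_\varepsilon,\lambda_\varepsilon} r_\varepsilon+\varepsilon\int_\Omega V s_\varepsilon r_\varepsilon+\varepsilon\int_\Omega V r_\varepsilon^2 .
\end{equation*}
The last term is bounded by $\varepsilon\|r_\varepsilon\|_6^2$ by H\"older, since $\Omega$ is bounded. For the middle term, \eqref{o310} gives $\|s_\varepsilon\|_{6/5}\lesssim\|s_\varepsilon\|_2=O(\lambda_\varepsilon^{-3/2})$, which is negligible. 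For the first term we use $\psi_{z_\varepsilon,\lambda_\varepsilon}=PU_{z_\varepsilon,\lambda_\varepsilon}-\lambda_\varepsilon^{-1/2}(H_Q-H_0)(z_\varepsilon,\cdot)$ and $0\le PU\le U$. Lemma \ref{A1} gives $\|U_{z_\varepsilon,\lambda_\varepsilon}\|_{6/5}\lesssim\lambda_\varepsilon^{-1/2}$. The function $H_Q(z_\varepsilon,\cdot)-H_0(z_\varepsilon,\cdot)$ is bounded in $L^\infty$ uniformly for $z_\varepsilon$ in a compact subset of $\Omega$, by Lemma \ref{B1} or \ref{B2}. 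Together these give the contribution $\varepsilon\lambda_\varepsilon^{-1/2}\|r_\varepsilon\|_6$.

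\textbf{Step 2: the $Q$ terms.} Again by \eqref{o310}, $|\int Q s_\varepsilon r_\varepsilon|\lesssim\|s_\varepsilon\|_{6/5}\|r_\varepsilon\|_6\lesssim\lambda_\varepsilon^{-3/2}\|r_\varepsilon\|_6$. For $f_{z_\varepsilon,\lambda_\varepsilon}$, Lemma \ref{A2} (in the form used in \cite{FKK3}) gives $\|f_{z_\varepsilon,\lambda_\varepsilon}\|_\infty\lesssim\lambda_\varepsilon^{-5/2}d_\varepsilon^{-3}$, so this contribution is $O(\lambda_\varepsilon^{-5/2})\|r_\varepsilon\|_6$. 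The term with $g_{z_\varepsilon,\lambda_\varepsilon}$ is the only one needing an explicit computation. Put $\rho=|x-z_\varepsilon|$. Then
\begin{equation*}
0\le g_{z_\varepsilon,\lambda_\varepsilon}(x)=\lambda_\varepsilon^{-1/2}\Big(\frac1{\rho}-\frac{1}{\sqrt{\lambda_\varepsilon^{-2}+\rho^2}}\Big)\lesssim \lambda_\varepsilon^{-1/2}\min\{\rho^{-1},\lambda_\varepsilon^{-2}\rho^{-3}\}.
\end{equation*}
We compute $\|g_{z_\varepsilon,\lambda_\varepsilon}\|_{6/5}$ by splitting at $\rho=\lambda_\varepsilon^{-1}$. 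The inner region contributes $\lambda_\varepsilon^{-1/2}(\int_0^{1/\lambda_\varepsilon}\rho^{-6/5}\rho^2d\rho)^{5/6}\sim\lambda_\varepsilon^{-1/2}\lambda_\varepsilon^{-3/2}$. The outer region contributes $\lambda_\varepsilon^{-5/2}(\int_{1/\lambda_\varepsilon}^\infty\rho^{-18/5+2}d\rho)^{5/6}\sim\lambda_\varepsilon^{-5/2}\lambda_\varepsilon^{1/2}$. Hence $\|g_{z_\varepsilon,\lambda_\varepsilon}\|_{6/5}\lesssim\lambda_\varepsilon^{-2}$, which is even better than needed.

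\textbf{Step 3: collecting.} Summing the bounds from Steps 1 and 2 gives \eqref{lemma36eq}. The main point needing care is the uniform $L^\infty$ bound on $H_Q(z_\varepsilon,\cdot)-H_0(z_\varepsilon,\cdot)$. It holds because $z_\varepsilon$ stays in a compact subset of $\Omega$ and the difference solves $-\Delta(H_Q-H_0)=-QG_Q$, whose right-hand side lies in $L^p$ for every $p<3$ uniformly in $z_\varepsilon$. Elliptic regularity with zero boundary data then gives the bound; this is exactly the argument of \cite[Lemma~3.5(b)]{FKK3}.
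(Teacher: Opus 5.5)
Your proof is correct and is essentially the argument the paper relies on when it cites \cite[Lemma~3.5(b)]{FKK3}: Hölder against $\|r_\varepsilon\|_6$, using $\|s_\varepsilon\|_{6/5}\lesssim\lambda_\varepsilon^{-3/2}$, $\|f_{z_\varepsilon,\lambda_\varepsilon}\|_\infty\lesssim\lambda_\varepsilon^{-5/2}$, $\|g_{z_\varepsilon,\lambda_\varepsilon}\|_{6/5}\lesssim\lambda_\varepsilon^{-2}$ (your hand computation reproduces Lemma~\ref{A4} with $p=6/5$) and $\|\psi_{z_\varepsilon,\lambda_\varepsilon}\|_{6/5}\lesssim\lambda_\varepsilon^{-1/2}$. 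There is one harmless sign slip, since in fact $-\Delta(H_Q-H_0)(z_\varepsilon,\cdot)=QG_Q(z_\varepsilon,\cdot)$; in any case, Lemma~\ref{B1} applied with $b=Q$ and $b=0$ already gives the uniform $L^\infty$ bound you need.
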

\begin{proof}[Proof of Proposition \ref{prop34}]
It follows from identity  \eqref{o320}, Lemma \ref{lemma35}  and  Lemma \ref{lemma36} that
\begin{equation*}\small
	\begin{split}
		\int_\Omega(|\nabla r_{\varepsilon}|^2 + Q r_{\varepsilon}^2)-A_{3,\mu} \alpha_{\varepsilon}^{10-2\mu}\int_{\Omega}\int_{\Omega}&
\frac{(5-\mu)U_{z_{\varepsilon},\lambda_{\varepsilon}}^{6-\mu}(x)
U_{z_{\varepsilon},\lambda_{\varepsilon}}^{4-\mu}(y) r_{\varepsilon}^{2}(y)+(6-\mu)U_{z_{\varepsilon},\lambda_{\varepsilon}}^{5-\mu}(x)r_{\varepsilon}(x)
U_{z_{\varepsilon},\lambda_{\varepsilon}}^{5-\mu}(y)r_{\varepsilon}(y)}{|x-y|^{\mu}}
dxdy  
\\ \lesssim&  \left( \lambda_\varepsilon^{-\frac{3}{2}}  + \lambda_{\varepsilon}^{-1}\phi_{Q}(z_{\varepsilon})    + \varepsilon \lambda_\varepsilon^{-\frac{1}{2}} +\varepsilon \| r_{\varepsilon} \|_6+\lambda_\varepsilon^{-1}\| r_{\varepsilon} \|_6   + \| r_{\varepsilon} \|_6^2  \right)\| r_{\varepsilon} \|_6 . 
	\end{split}
\end{equation*}
Since $\alpha_{\varepsilon}\to 1$ and $r_{\varepsilon} \in T_{z_{\varepsilon},\lambda_{\varepsilon}}^{\bot}$, the coercivity property stated in Lemma \ref{E40} ensures that, for sufficiently small $\varepsilon > 0$, there exists a universal constant $c>0$ such that
\begin{equation*} \small
	\begin{split}
		\int_\Omega(|\nabla r_{\varepsilon}|^2 + Q r_{\varepsilon}^2)-A_{3,\mu} \alpha_{\varepsilon}^{10-2\mu}\int_{\Omega}\int_{\Omega}
\frac{(5-\mu)U_{z_{\varepsilon},\lambda_{\varepsilon}}^{6-\mu}(x)
U_{z_{\varepsilon},\lambda_{\varepsilon}}^{4-\mu}(y) r_{\varepsilon}^{2}(y)+(6-\mu)U_{z_{\varepsilon},\lambda_{\varepsilon}}^{5-\mu}(x)r_{\varepsilon}(x)
U_{z_{\varepsilon},\lambda_{\varepsilon}}^{5-\mu}(y)r_{\varepsilon}(y)}{|x-y|^{\mu}}
dxdy  \gtrsim c \|\nabla r_{\varepsilon}\|_2^2.
	\end{split}
\end{equation*}
Consequently, we can bound
\begin{equation*}
	\|\nabla r_{\varepsilon}\|_2\lesssim     \lambda_\varepsilon^{-\frac{3}{2}}  + \lambda_{\varepsilon}^{-1}\phi_{Q}(z_{\varepsilon})    + \varepsilon \lambda_\varepsilon^{-\frac{1}{2}} +\varepsilon \|\nabla r_{\varepsilon}\|_2+\lambda_\varepsilon^{-1}\|\nabla r_{\varepsilon}\|_2   + \|\nabla r_{\varepsilon}\|_2 ^2
\end{equation*}
By choosing $\varepsilon$ sufficiently small, the last three terms on the right-hand side can be absorbed into the left-hand side, which immediately establishes Proposition \ref{prop34}.
\end{proof}

\subsection{The estimate of   $\boldsymbol{\alpha_{\varepsilon}^{10-2\mu}}$. } 
The main purpose of this subsection is to prove the following estimate for $\alpha_{\varepsilon}^{10-2\mu}$.
\begin{Prop}
	\label{prop36o}
	As $\varepsilon\rightarrow 0$, we have
	\begin{equation}
	\label{o324}
	\alpha_{\varepsilon}^{10-2\mu}=1-(10-2\mu) \beta \lambda_\varepsilon^{-1}+O\left(\phi_Q(z_{\varepsilon}) \lambda_{\varepsilon}^{-1} + \lambda_{\varepsilon}^{-2}+\varepsilon \lambda_{\varepsilon}^{-1}  \right), 
\end{equation}
where $\beta$ is the zero-mode coefficient from \eqref{o38}. 
\end{Prop}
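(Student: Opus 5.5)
The plan is to test the equation \eqref{equation} for $u_\varepsilon$ against $PU_{z_\varepsilon,\lambda_\varepsilon}$, as in the corresponding step of \cite{FKK3}. Writing $u_\varepsilon=\alpha_\varepsilon(\psi_{z_\varepsilon,\lambda_\varepsilon}+s_\varepsilon+r_\varepsilon)$ and dividing by $\alpha_\varepsilon$, we obtain
\begin{equation*}
\int_\Omega \nabla(\psi_{z_\varepsilon,\lambda_\varepsilon}+s_\varepsilon+r_\varepsilon)\cdot\nabla PU_{z_\varepsilon,\lambda_\varepsilon}+\int_\Omega (Q+\varepsilon V)(\psi_{z_\varepsilon,\lambda_\varepsilon}+s_\varepsilon+r_\varepsilon)PU_{z_\varepsilon,\lambda_\varepsilon}
= A_{3,\mu}\alpha_\varepsilon^{10-2\mu}\iint_{\Omega\times\Omega}\frac{w^{6-\mu}(y)w^{5-\mu}(x)PU_{z_\varepsilon,\lambda_\varepsilon}(x)}{|x-y|^\mu}dxdy,
\end{equation*}
where $w:=\psi_{z_\varepsilon,\lambda_\varepsilon}+s_\varepsilon+r_\varepsilon$. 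We expand both sides to order $\lambda_\varepsilon^{-1}$, with errors of order $O(\phi_Q(z_\varepsilon)\lambda_\varepsilon^{-1}+\lambda_\varepsilon^{-2}+\varepsilon\lambda_\varepsilon^{-1})$. Both sides have leading term $\|\nabla U\|_2^2=A_{3,\mu}\iint U^{6-\mu}U^{6-\mu}/|x-y|^\mu$, so comparing them gives $\alpha_\varepsilon^{10-2\mu}$.

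\textbf{Left side.} We use $r_\varepsilon\perp PU_{z_\varepsilon,\lambda_\varepsilon}$, so the $r_\varepsilon$ gradient term vanishes. By \eqref{o38}, $\int\nabla s_\varepsilon\cdot\nabla PU_{z_\varepsilon,\lambda_\varepsilon}=\beta\lambda_\varepsilon^{-1}\|\nabla PU_{z_\varepsilon,\lambda_\varepsilon}\|_2^2+O(\lambda_\varepsilon^{-2})$, since the $\partial_\lambda$ and $\partial_{z_i}$ inner products with $PU$ are small (Lemma \ref{A1}, Proposition \ref{prop32}). The $\psi$-term is $\|\nabla PU_{z_\varepsilon,\lambda_\varepsilon}\|_2^2-\lambda_\varepsilon^{-1/2}\int\nabla(H_Q-H_0)(z_\varepsilon,\cdot)\cdot\nabla PU_{z_\varepsilon,\lambda_\varepsilon}$. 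Integrating by parts and using $\Delta(H_Q-H_0)=-QG_Q$, the second piece combines with $\int Q\psi_{z_\varepsilon,\lambda_\varepsilon}PU_{z_\varepsilon,\lambda_\varepsilon}$. Using $PU=U-\lambda^{-1/2}H_0-f$, $\psi=U-\lambda^{-1/2}H_Q-f$ and $g_{z_\varepsilon,\lambda_\varepsilon}$ from \eqref{o317}, the $Q$-terms reduce to $O(\lambda_\varepsilon^{-1}\phi_Q(z_\varepsilon)+\lambda_\varepsilon^{-2})$ at the $\lambda^{-1}$ level. I expect the bubble-squared contributions $\int QU^2$ and $\lambda^{-1/2}\int QG_QU$ to cancel at order $\lambda^{-1}$, exactly as in \cite[Section 3]{FKK3}; these local computations are identical to theirs and I would cite them.

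The remaining terms are bounded directly:
\begin{itemize}
\item $\varepsilon\int V w\,PU=O(\varepsilon\lambda_\varepsilon^{-1})$, since $\|PU\|_2^2\lesssim\lambda_\varepsilon^{-1}$;
\item $\int Q s_\varepsilon PU$ and $\int Q r_\varepsilon PU$ are $O(\lambda_\varepsilon^{-2})$ and $o(\lambda_\varepsilon^{-1})\cdot O(\lambda_\varepsilon^{-1/2})$ respectively, by \eqref{o310} and Proposition \ref{prop34}.
\end{itemize}
Finally, $\|\nabla PU\|_2^2=\|\nabla U\|_2^2-c\,\phi_0(z_\varepsilon)\lambda_\varepsilon^{-1}+O(\lambda_\varepsilon^{-2})$ by Lemma \ref{A1}.

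\textbf{Right side.} We expand $w^{6-\mu}$ and $w^{5-\mu}$ as in Lemma \ref{lemma35}. The leading term $\iint U^{6-\mu}U^{5-\mu}PU$ equals $A_{3,\mu}^{-1}\|\nabla PU\|_2^2$ up to $O(\lambda_\varepsilon^{-3})$ from $\mathbb R^3\setminus\Omega$, so the $\phi_0$ corrections on the two sides cancel. The linear terms in $s_\varepsilon$ contribute $(11-2\mu)\beta\lambda_\varepsilon^{-1}\|\nabla U\|^2/A$, because $U$ is homogeneous of total degree $11-2\mu$ in the Hartree form. The $\gamma\partial_\lambda U$ contribution is zero up to $O(\lambda^{-2})$, since $\int U^{5}\partial_\lambda U\propto\partial_\lambda\|U\|_6^6=0$. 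The linear terms in $r_\varepsilon$ vanish up to $O(\lambda^{-3/2}\|r\|_6)$, as in the estimate of $H_1$. The terms linear in $\lambda^{-1/2}H_Q$ and $f$ give $O(\lambda^{-1}\phi_Q+\lambda^{-2})$, using $H_Q(z,x)=\phi_Q(z)+O(|x-z|)$ and $\mu<2$, as in \eqref{UUUU}. Quadratic remainders are $O(\lambda^{-2})$.

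\textbf{Conclusion and main obstacle.} Equating the two sides gives
\[
(1+\beta\lambda^{-1})=\alpha^{10-2\mu}(1+(11-2\mu)\beta\lambda^{-1})+O(\cdots),
\]
hence $\alpha^{10-2\mu}=1-(10-2\mu)\beta\lambda^{-1}+O(\cdots)$, which is \eqref{o324}. The main obstacle is the $Q$-terms on the left side: a naive bound gives only $O(\lambda^{-1})$, so the sharp error requires the exact cancellation via $g$ and the Green function identity.
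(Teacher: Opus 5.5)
Your argument is correct, but it takes a different route from the paper.

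\textbf{The paper's route.} The paper tests the equation against $u_\varepsilon$ itself, i.e.\ it uses the energy identity with $u_\varepsilon=\alpha_\varepsilon(\psi_{z_\varepsilon,\lambda_\varepsilon}+q_\varepsilon)$, and splits the result into three pieces.
\begin{enumerate}
\item[(i)] The two energy pieces of $\psi_{z_\varepsilon,\lambda_\varepsilon}$ are imported from \cite[Theorem 5.1]{GGYZ} and give $(1-\alpha_\varepsilon^{10-2\mu})\frac{3\pi^2}{4}$.
\item[(ii)] The cross term is handled with \eqref{o326} and the orthogonality identity $3\int U^5q_\varepsilon=\int\nabla s_\varepsilon\cdot\nabla PU$. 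It gives $\bigl(1-(6-\mu)\alpha_\varepsilon^{10-2\mu}\bigr)\frac{3\pi^2}{4}\beta\lambda_\varepsilon^{-1}$.
\item[(iii)] The quadratic remainder $\mathcal R_0$ is bounded crudely by $\|q_\varepsilon\|_6^2$.
\end{enumerate}
There the coefficient $10-2\mu$ arises as $2\bigl((6-\mu)-1\bigr)$.

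\textbf{Your route.} You test against $PU_{z_\varepsilon,\lambda_\varepsilon}\in T_{z_\varepsilon,\lambda_\varepsilon}$. This buys three things.
\begin{itemize}
\item The gradient pairing is exact, since $u_\varepsilon/\alpha_\varepsilon=PU+v_\varepsilon$ with $v_\varepsilon\perp PU$.
\item The terms linear in $r_\varepsilon$ vanish to leading order by orthogonality and \eqref{1esay}.
\item Both zeroth-order terms reduce to the same number $\|\nabla PU\|_2^2$. So no second-order energy expansion of $\psi$ is needed, and the $\phi_0$-corrections cancel automatically.
\end{itemize}
Your coefficient comes out as $(11-2\mu)-1$, by homogeneity. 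The price is that you must expand an asymmetric Hartree form (one factor $PU$ instead of $w$) and compute its $s$-linear terms by hand.

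\textbf{Your ``main obstacle''.} The step you flag is in fact immediate. Testing \eqref{o326} against $PU$ gives $\int\nabla\psi\cdot\nabla PU+\int Q\psi\,PU=\|\nabla PU\|_2^2-\int Q(f+g)PU$. Then $\|f\|_\infty\lesssim\lambda^{-5/2}$ and $\|g\|_{6/5}\lesssim\lambda^{-2}$ bound the last integral by $O(\lambda^{-2})$ directly; no cancellation computed as in \cite{FKK3} is needed.

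\textbf{Bounds to sharpen.} Three bounds, as you wrote them, are too weak, though none of this affects the strategy.
\begin{itemize}
\item For $\int Qr_\varepsilon PU$, the bound $o(\lambda^{-1})\cdot O(\lambda^{-1/2})$ does not fit the error term. Moreover $\|\nabla r_\varepsilon\|_2=o(\lambda^{-1})$ is not available at this stage, because of the $\varepsilon\lambda^{-1/2}$ term. Use the full bound of Proposition \ref{prop34}, which gives $\lambda^{-1/2}\|\nabla r_\varepsilon\|_2\lesssim\lambda^{-2}+\phi_Q(z_\varepsilon)\lambda^{-3/2}+\varepsilon\lambda^{-1}$.
\item On the right side, the non-leading part $\tilde s_\varepsilon$ of $s_\varepsilon$ (the $\varphi$-corrections and the $\delta_i$-modes) cannot be handled with $\|\tilde s_\varepsilon\|_6\lesssim\lambda^{-3/2}$ from \eqref{sss2}, which only yields $O(\lambda^{-3/2})$. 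Use instead $\|\varphi\|_\infty\lesssim\lambda^{-1/2}$, $\|\partial_\lambda\varphi\|_\infty\lesssim\lambda^{-3/2}$, and $\lambda^{-3}\|\partial_{z_i}PU\|_6\lesssim\lambda^{-2}$, together with $\int_\Omega U^5\lesssim\lambda^{-1/2}$.
\item Replacing $PU$ by $U$ in the $r$-linear terms costs $O(\lambda^{-1}\|r_\varepsilon\|_6)$, not $O(\lambda^{-3/2}\|r_\varepsilon\|_6)$. This is still within the error.
\end{itemize}
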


To establish \eqref{o324},  we start from the energy identity obtained by multiplying the equation for $u_{\varepsilon}$ by  $u_{\varepsilon}$ and integrating over $\Omega$.   Substituting the decomposition  $u_{\varepsilon}=\alpha_{\varepsilon}\left( \psi_{z_{\varepsilon},\lambda_{\varepsilon}}+ q_\varepsilon \right)$  yields
\begin{equation}\label{kkkk1}\small
	\begin{split}
		\int_\Omega\left(|\nabla (\psi_{z_{\varepsilon},\lambda_{\varepsilon}}+ q_\varepsilon)|^2+(Q+\varepsilon V)( \psi_{z_{\varepsilon},\lambda_{\varepsilon}}+ q_\varepsilon)^2\right)=A_{3,\mu}	\alpha_{\varepsilon}^{10-2\mu} \int_\Omega \int_\Omega \frac{( \psi_{z_{\varepsilon},\lambda_{\varepsilon}}+ q_\varepsilon)^{6-\mu}(x)( \psi_{z_{\varepsilon},\lambda_{\varepsilon}}+ q_\varepsilon)^{6-\mu}(y)}{|x-y|^{\mu}}dxdy. 
	\end{split}
\end{equation}
Taking the  bound pointwise 
\begin{equation*}
		\begin{split}
			( \psi_{z_{\varepsilon},\lambda_{\varepsilon}}+ q_\varepsilon)^{6-\mu} 
				= \psi_{z_{\varepsilon},\lambda_{\varepsilon}}^{6-\mu} +(6-\mu)\psi_{z_{\varepsilon},\lambda_{\varepsilon}}^{5-\mu} q_\varepsilon +O\left(\psi_{z_{\varepsilon},\lambda_{\varepsilon}}^{4-\mu} q_\varepsilon ^{2} +|q_\varepsilon|^{6-\mu}   \right)
		\end{split}
	\end{equation*}
into consideration, we can write the equation \eqref{kkkk1}  as 
\begin{equation}
	\label{o325}
	\begin{split}
		&\int_\Omega \left(|\nabla \psi_{z_{\varepsilon},\lambda_{\varepsilon}}|^2+(Q+\varepsilon V)\psi_{z_{\varepsilon},\lambda_{\varepsilon}}^2\right)
		- A_{3,\mu}	\alpha_{\varepsilon}^{10-2\mu} \int_\Omega \int_\Omega \frac{\psi_{z_{\varepsilon},\lambda_{\varepsilon}}^{6-\mu}(x)\psi_{z_{\varepsilon},\lambda_{\varepsilon}}^{6-\mu}(y)}{|x-y|^{\mu}}dxdy
		\\ &+2\left[ \int_\Omega \left( \nabla \psi_{z_{\varepsilon},\lambda_{\varepsilon}} \cdot  \nabla  q_\varepsilon +\left(Q+\varepsilon V\right) \psi_{z_{\varepsilon},\lambda_{\varepsilon}} q_\varepsilon \right) - A_{3,\mu}	\alpha_{\varepsilon}^{10-2\mu} \int_\Omega \int_\Omega \frac{(6-\mu)\psi_{z_{\varepsilon},\lambda_{\varepsilon}}^{6-\mu}(y)\psi_{z_{\varepsilon},\lambda_{\varepsilon}}^{5-\mu}(x)q_\varepsilon(x)}{|x-y|^{\mu}}dxdy\right]=\mathcal{R}_0,
	\end{split}
\end{equation} 
with
\begin{equation*}
	\begin{split}
		\mathcal{R}_0:=- \int_\Omega \left(|\nabla q_\varepsilon|^2+(Q+\varepsilon V)q_\varepsilon^2\right)   & +A_{3,\mu}	\alpha_{\varepsilon}^{10-2\mu}(6-\mu)^2 \int_\Omega \int_\Omega \frac{\psi_{z_{\varepsilon},\lambda_{\varepsilon}}^{5-\mu}(x)q_\varepsilon(x) \psi_{z_{\varepsilon},\lambda_{\varepsilon}}^{5-\mu}(y)q_\varepsilon(y)}{|x-y|^{\mu}}dxdy 
			\\&+O\left( \int_\Omega \int_\Omega \frac{  (\psi_{z_{\varepsilon},\lambda_{\varepsilon}}^{4-\mu} q_\varepsilon ^{2} +|q_\varepsilon|^{6-\mu}   ) (x)( \psi_{z_{\varepsilon},\lambda_{\varepsilon}}+ q_\varepsilon ) ^{6-\mu} (y)}{|x-y|^{\mu}}dxdy    \right). 
	\end{split}
\end{equation*}

In the following lemma, we give the asymptotic expansions of the terms in \eqref{o325}. 
\begin{lem}
	\label{lemma37o}
	As $\varepsilon\rightarrow 0$, the following hold:
	\begin{itemize}
\item[(a)] \quad $ \displaystyle \int_\Omega \left(|\nabla \psi_{z_{\varepsilon},\lambda_{\varepsilon}}|^2+(Q+\varepsilon V)\psi_{z_{\varepsilon},\lambda_{\varepsilon}}^2\right)
		- A_{3,\mu}	\alpha_{\varepsilon}^{10-2\mu} \int_\Omega \int_\Omega \frac{\psi_{z_{\varepsilon},\lambda_{\varepsilon}}^{6-\mu}(x)\psi_{z_{\varepsilon},\lambda_{\varepsilon}}^{6-\mu}(y)}{|x-y|^{\mu}}dxdy$\\
		  $ \hspace*{6cm}
		 \displaystyle  \quad=(1-\alpha_{\varepsilon}^{10-2\mu}) \frac{3\pi ^2}{4}+O\left(\phi_Q(z_{\varepsilon}) \lambda_{\varepsilon}^{-1} + \lambda_{\varepsilon}^{-2}+\varepsilon \lambda_{\varepsilon}^{-1}  \right). $
\item[(b)]  \quad  $ \displaystyle \int_\Omega \left( \nabla \psi_{z_{\varepsilon},\lambda_{\varepsilon}} \cdot  \nabla  q_\varepsilon +\left(Q+\varepsilon V\right) \psi_{z_{\varepsilon},\lambda_{\varepsilon}} q_\varepsilon \right) - A_{3,\mu}	\alpha_{\varepsilon}^{10-2\mu} \int_\Omega \int_\Omega \frac{(6-\mu)\psi_{z_{\varepsilon},\lambda_{\varepsilon}}^{6-\mu}(y)\psi_{z_{\varepsilon},\lambda_{\varepsilon}}^{5-\mu}(x)q_\varepsilon(x)}{|x-y|^{\mu}}dxdy$ \\
 $ \hspace*{6cm}
		 \displaystyle =(1-(6-\mu)\alpha_{\varepsilon}^{10-2\mu}) \frac{3\pi ^2}{4}\beta \lambda_{\varepsilon}^{-1}+O\left( \lambda_{\varepsilon}^{-2}+\varepsilon \lambda_{\varepsilon}^{-\frac{3}{2}}+ \varepsilon^2 \lambda_{\varepsilon}^{-1} \right). $
\item[(c)] \quad	$\displaystyle \mathcal{R}_0=O\left(   \lambda_{\varepsilon}^{-2} +\varepsilon^{2} \lambda_{\varepsilon}^{-1}\right). $
\end{itemize}
\end{lem}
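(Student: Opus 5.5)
We prove the three expansions separately, following \cite[Lemma~3.6]{FKK3} and replacing each local quantity by its Hartree counterpart. The Hartree terms are reduced to local ones through the identity \eqref{1esay}, $A_{3,\mu}\int_{\mathbb R^3}U_{z,\lambda}^{6-\mu}(y)|x-y|^{-\mu}dy=3U_{z,\lambda}^{\mu}$. Throughout we use the splitting $\psi_{z_\varepsilon,\lambda_\varepsilon}=U_{z_\varepsilon,\lambda_\varepsilon}-\lambda_\varepsilon^{-1/2}H_Q(z_\varepsilon,\cdot)-f_{z_\varepsilon,\lambda_\varepsilon}$, with $\|f_{z_\varepsilon,\lambda_\varepsilon}\|_\infty\lesssim\lambda_\varepsilon^{-5/2}$. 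We also use the expansion $H_Q(z_\varepsilon,x)=\phi_Q(z_\varepsilon)+O(|x-z_\varepsilon|)$ of Lemma \ref{B2}, together with $d_\varepsilon^{-1}=O(1)$.

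\textbf{Part (a).} We compute the quadratic part via the equation $(-\Delta+Q)(H_Q-H_0)=\ldots$, i.e.\ $\Delta(H_Q(z_\varepsilon,\cdot)-H_0(z_\varepsilon,\cdot))=-QG_Q(z_\varepsilon,\cdot)$. This should give
\[
\int_\Omega\big(|\nabla\psi|^2+Q\psi^2\big)=\tfrac{3\pi^2}{4}-4\pi\phi_Q(z_\varepsilon)\lambda_\varepsilon^{-1}+O(\lambda_\varepsilon^{-2}),
\]
exactly as in \cite{FKK3}. The term $\varepsilon\int V\psi^2$ is $O(\varepsilon\lambda_\varepsilon^{-1})$ by Lemma \ref{A1}. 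For the nonlocal term we expand $\psi^{6-\mu}$ around $U$. The main part $A_{3,\mu}\iint U^{6-\mu}U^{6-\mu}|x-y|^{-\mu}$ equals $\int|\nabla U|^2=\tfrac{3\pi^2}{4}$, up to an exterior error $O(\lambda_\varepsilon^{-3})$. The linear correction is
\[
-2(6-\mu)A_{3,\mu}\iint U^{6-\mu}(y)U^{5-\mu}(x)\big(\lambda_\varepsilon^{-1/2}H_Q+f\big)(x)\,|x-y|^{-\mu}=-2(6-\mu)\cdot 3\int U^5\big(\lambda_\varepsilon^{-1/2}H_Q+f\big),
\]
which, since $\int U^5=\tfrac{4\pi}{3}\lambda^{-1/2}$, is $O(\phi_Q(z_\varepsilon)\lambda_\varepsilon^{-1}+\lambda_\varepsilon^{-2})$. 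The quadratic terms are $O(\lambda_\varepsilon^{-2})$ (using $\mu<2$, as in the $H_5$–$H_8$ estimates). Since $\alpha_\varepsilon^{10-2\mu}=1+o(1)$, the $\phi_Q$ contributions are all absorbed into the error term, and (a) follows.

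\textbf{Part (b).} Write $q_\varepsilon=s_\varepsilon+r_\varepsilon$. For the $r_\varepsilon$ part, use $-\Delta\psi=-\Delta U-\lambda_\varepsilon^{-1/2}QG_Q$, orthogonality $\langle PU,r\rangle=0$, and $\int U^{5-\mu}\!\ast\ldots\,r$ vanishing up to exterior terms as in the estimate of $H_1$. The remaining quantities are $\int(Q\text{-terms})\,r$, $\varepsilon\int V\psi r$ and the linearized Hartree errors. These are bounded by $(\lambda_\varepsilon^{-3/2}+\phi_Q\lambda_\varepsilon^{-1}+\varepsilon\lambda_\varepsilon^{-1/2})\|\nabla r_\varepsilon\|_2$, and Proposition \ref{prop34} turns this into the stated error after Young's inequality; for instance $\varepsilon\lambda_\varepsilon^{-1/2}\cdot\varepsilon\lambda_\varepsilon^{-1/2}=\varepsilon^2\lambda_\varepsilon^{-1}$. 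For the $s_\varepsilon$ part, insert \eqref{o38}. The $\beta\lambda_\varepsilon^{-1}PU$ component gives
\[
\beta\lambda_\varepsilon^{-1}\Big(\int|\nabla U|^2-(6-\mu)\alpha_\varepsilon^{10-2\mu}A_{3,\mu}\iint U^{6-\mu}U^{6-\mu}|x-y|^{-\mu}\Big)=\big(1-(6-\mu)\alpha_\varepsilon^{10-2\mu}\big)\tfrac{3\pi^2}{4}\beta\lambda_\varepsilon^{-1}
\]
plus lower order. The $\gamma\,\partial_\lambda PU$ component vanishes at leading order because $\int U^5\partial_\lambda U=0$ (scale invariance); the $\delta_i$ components vanish at leading order by oddness. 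The errors are controlled by Proposition \ref{prop32}, giving $O(\lambda_\varepsilon^{-2}+\varepsilon\lambda_\varepsilon^{-3/2})$.

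\textbf{Part (c).} Bound $|\mathcal R_0|\lesssim\|\nabla q_\varepsilon\|_2^2+\|\nabla q_\varepsilon\|_2^{3}$ using HLS and Hölder. The term $\|\nabla s_\varepsilon\|_2^2=O(\lambda_\varepsilon^{-2})$ by Proposition \ref{prop32}. The term $\|\nabla r_\varepsilon\|_2^2$, by Proposition \ref{prop34}, is $O(\lambda_\varepsilon^{-3}+\phi_Q^2\lambda_\varepsilon^{-2}+\varepsilon^2\lambda_\varepsilon^{-1})$, and $\phi_Q$ is bounded. One must take care that the cubic Hartree term is really quadratic in $q_\varepsilon$ weighted by $\psi^{4-\mu}$; this is handled as in the $H_4$ estimate.

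\textbf{Main obstacle.} We expect the hardest step to be the precise cancellation in (b) for the $\gamma$ and $\beta$ components in the nonlocal setting. These components need the linearized identity for $\partial_\lambda U$ from the $H_{2,4}$ computation, and it must be shown that the $H_Q$ correction in $\psi$ contributes only at order $\lambda_\varepsilon^{-2}$.
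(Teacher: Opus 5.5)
Your proposal is correct. Parts (a) and (c) are essentially the paper's argument. For (a), the paper simply quotes \cite[Theorem~5.1]{GGYZ} for the two expansions you derive by hand from \eqref{o326}, \eqref{1esay} and Lemma \ref{B2}. For (c), it bounds $\mathcal R_0$ by $\|\nabla q_\varepsilon\|_2^2$ (up to higher powers) and uses \eqref{o327}, which is exactly your combination of Propositions \ref{prop32} and \ref{prop34}.

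Part (b) is organized differently. You split $q_\varepsilon=s_\varepsilon+r_\varepsilon$ at the outset. You remove the leading $r_\varepsilon$-contribution by orthogonality and bound the rest via Proposition \ref{prop34}, then evaluate the $\beta$, $\gamma$, $\delta_i$ components of $s_\varepsilon$ one by one.

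The paper never splits. The expression is linear in $q_\varepsilon$, so using \eqref{o326} and \eqref{1esay} it rewrites it as $3\bigl[1-(6-\mu)\alpha_\varepsilon^{10-2\mu}\bigr]\int_\Omega U_{z_\varepsilon,\lambda_\varepsilon}^5q_\varepsilon$ plus error terms. These errors are $I_1,I_2,I_3$, $\int_\Omega Q(f_{z_\varepsilon,\lambda_\varepsilon}+g_{z_\varepsilon,\lambda_\varepsilon})q_\varepsilon$ and $\varepsilon\int_\Omega V\psi_{z_\varepsilon,\lambda_\varepsilon}q_\varepsilon$. All of them are bounded crudely by $(\lambda_\varepsilon^{-1}+\varepsilon\lambda_\varepsilon^{-1/2})\|q_\varepsilon\|_6$. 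Orthogonality enters only at the end, through $3\int_\Omega U_{z_\varepsilon,\lambda_\varepsilon}^5q_\varepsilon=\int_\Omega\nabla PU_{z_\varepsilon,\lambda_\varepsilon}\cdot\nabla s_\varepsilon=\frac{3\pi^2}{4}\beta\lambda_\varepsilon^{-1}+O(\lambda_\varepsilon^{-2})$.

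This shows that the ``main obstacle'' you anticipate does not arise. Here $q_\varepsilon$ sits only in the $x$-slot of the Hartree term, so \eqref{1esay} collapses its leading part to $3\int U^5q_\varepsilon$. Hence no linearized identity for $\partial_\lambda U$ (as in $H_{2,4}$) is needed. The $\gamma$- and $\delta_i$-components and the $H_Q$-correction inside $\psi_{z_\varepsilon,\lambda_\varepsilon}$ are automatically of lower order. Neither are the refined $\phi_Q$-dependent bounds of $H_5$, $H_7$ type needed. Your route reaches the same conclusion with more bookkeeping.

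One point to make explicit in your $r_\varepsilon$-part: the $Q$-terms must be combined as $Q\psi_{z_\varepsilon,\lambda_\varepsilon}-\lambda_\varepsilon^{-1/2}QG_Q(z_\varepsilon,\cdot)=-Q(f_{z_\varepsilon,\lambda_\varepsilon}+g_{z_\varepsilon,\lambda_\varepsilon})$, as in \eqref{o326}. Estimating either piece separately only gives $O(\lambda_\varepsilon^{-1/2}\|\nabla r_\varepsilon\|_2)$. That bound contains an $\varepsilon\lambda_\varepsilon^{-1}$ term, which the error in (b) does not allow.
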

\begin{proof}
	(a) Using  \cite[Theorem 5.1]{GGYZ} and straightforward calculations, we obtain
	\begin{equation*}
		\int_\Omega \left(|\nabla \psi_{z_{\varepsilon},\lambda_{\varepsilon}}|^2+(Q+\varepsilon V)\psi_{z_{\varepsilon},\lambda_{\varepsilon}}^2\right)=\frac{3\pi ^2}{4} +O\left(\phi_Q(z_{\varepsilon}) \lambda_{\varepsilon}^{-1} + \lambda_{\varepsilon}^{-2}+\varepsilon \lambda_{\varepsilon}^{-1}  \right), 
	\end{equation*}
	\begin{equation*}
		A_{3,\mu}	\int_\Omega \int_\Omega \frac{\psi_{z_{\varepsilon},\lambda_{\varepsilon}}^{6-\mu}(x)\psi_{z_{\varepsilon},\lambda_{\varepsilon}}^{6-\mu}(y)}{|x-y|^{\mu}}dxdy=\frac{3\pi ^2}{4} +O\left(\phi_Q(z_{\varepsilon}) \lambda_{\varepsilon}^{-1} + \lambda_{\varepsilon}^{-2}  \right). 
	\end{equation*}
Then the  asymptotic expansion (a) follows.  \\ 
(b) From \eqref{o31}, \eqref{o316} and \eqref{o317}, we have
\begin{equation*}
	\psi_{z_{\varepsilon},\lambda_{\varepsilon}}
	=\lambda_{\varepsilon}^{-1/2}G_Q(z_{\varepsilon},\cdot)
	-f_{z_{\varepsilon},\lambda_{\varepsilon}}
	-g_{z_{\varepsilon},\lambda_{\varepsilon}}.
\end{equation*}
Moreover, using
$\Delta(H_Q(z_{\varepsilon},\cdot)-H_0(z_{\varepsilon},\cdot))
=-QG_Q(z_{\varepsilon},\cdot)$, we obtain
\begin{equation}\label{o326}
	(-\Delta+Q)\psi_{z_{\varepsilon},\lambda_{\varepsilon}}
	=3U_{z_{\varepsilon},\lambda_{\varepsilon}}^5
	-Q(f_{z_{\varepsilon},\lambda_{\varepsilon}}
	+g_{z_{\varepsilon},\lambda_{\varepsilon}}).
\end{equation}
It follows that
\begin{equation*}
	\begin{split}
		&\int_\Omega \left( \nabla \psi_{z_{\varepsilon},\lambda_{\varepsilon}} \cdot  \nabla  q_\varepsilon +\left(Q+\varepsilon V\right) \psi_{z_{\varepsilon},\lambda_{\varepsilon}} q_\varepsilon \right) - A_{3,\mu}	\alpha_{\varepsilon}^{10-2\mu} \int_\Omega \int_\Omega \frac{(6-\mu)\psi_{z_{\varepsilon},\lambda_{\varepsilon}}^{6-\mu}(y)\psi_{z_{\varepsilon},\lambda_{\varepsilon}}^{5-\mu}(x)q_\varepsilon(x)}{|x-y|^{\mu}}dxdy
		\\ &=\int_\Omega \left(  3 U_{z_{\varepsilon}, \lambda_{\varepsilon}} ^{5} q_\varepsilon -Q(f_{z_{\varepsilon}, \lambda_{\varepsilon}}+g_{z_{\varepsilon},\lambda_{\varepsilon}})q_\varepsilon +\varepsilon V   \psi_{z_{\varepsilon},\lambda_{\varepsilon}} q_\varepsilon  \right) - A_{3,\mu}	\alpha_{\varepsilon}^{10-2\mu} \int_\Omega \int_\Omega \frac{(6-\mu)\psi_{z_{\varepsilon},\lambda_{\varepsilon}}^{6-\mu}(y)\psi_{z_{\varepsilon},\lambda_{\varepsilon}}^{5-\mu}(x)q_\varepsilon(x)}{|x-y|^{\mu}}dxdy. 
	\end{split}
\end{equation*}
By \eqref{1esay}, it holds that
\begin{equation*}
	A_{3,\mu}  \int_{\mathbb{R}^3} \int_\Omega  \frac{U_{z_{\varepsilon}, \lambda_{\varepsilon}} ^{6-\mu}(y)U_{z_{\varepsilon}, \lambda_{\varepsilon}} ^{5-\mu}(x)q_\varepsilon(x)}{|x-y|^{\mu}}dxdy= 3\int_\Omega U_{z_{\varepsilon}, \lambda_{\varepsilon}} ^{5} q_\varepsilon 
\end{equation*}
Thus, 
\begin{equation*}
	\begin{split}
		&\int_\Omega \left(  3 U_{z_{\varepsilon}, \lambda_{\varepsilon}} ^{5} q_\varepsilon -Q(f_{z_{\varepsilon}, \lambda_{\varepsilon}}+g_{z_{\varepsilon},\lambda_{\varepsilon}})q_\varepsilon +\varepsilon V   \psi_{z_{\varepsilon},\lambda_{\varepsilon}} q_\varepsilon  \right) - A_{3,\mu}	\alpha_{\varepsilon}^{10-2\mu} \int_\Omega \int_\Omega \frac{(6-\mu)\psi_{z_{\varepsilon},\lambda_{\varepsilon}}^{6-\mu}(y)\psi_{z_{\varepsilon},\lambda_{\varepsilon}}^{5-\mu}(x)q_\varepsilon(x)}{|x-y|^{\mu}}dxdy
		\\= &3\left[ 1-(6-\mu)\alpha_{\varepsilon}^{10-2\mu} \right]\int_\Omega   U_{z_{\varepsilon}, \lambda_{\varepsilon}} ^{5} q_\varepsilon    +\int_\Omega \left(  -Q(f_{z_{\varepsilon}, \lambda_{\varepsilon}}+g_{z_{\varepsilon},\lambda_{\varepsilon}})q_\varepsilon +\varepsilon V   \psi_{z_{\varepsilon},\lambda_{\varepsilon}} q_\varepsilon  \right)
		\\& - A_{3,\mu}(6-\mu)	\alpha_{\varepsilon}^{10-2\mu}\left[\int_\Omega \int_\Omega \frac{\psi_{z_{\varepsilon},\lambda_{\varepsilon}}^{6-\mu}(y)\psi_{z_{\varepsilon},\lambda_{\varepsilon}}^{5-\mu}(x)q_\varepsilon(x)}{|x-y|^{\mu}}dxdy -\int_{\mathbb{R}^3} \int_\Omega  \frac{U_{z_{\varepsilon}, \lambda_{\varepsilon}} ^{6-\mu}(y)U_{z_{\varepsilon}, \lambda_{\varepsilon}} ^{5-\mu}(x)q_\varepsilon(x)}{|x-y|^{\mu}}dxdy \right].  
	\end{split}
\end{equation*}
From the orthogonality condition and the computations carried out in the proof of  \cite[Proposition 3.3]{FKK3},
\begin{equation*}
	3\left[ 1-(6-\mu)\alpha_{\varepsilon}^{10-2\mu} \right]\int_\Omega   U_{z_{\varepsilon}, \lambda_{\varepsilon}} ^{5} q_\varepsilon = \left[ 1-(6-\mu)\alpha_{\varepsilon}^{10-2\mu} \right]\int_\Omega  \nabla s_{\varepsilon} \cdot  \nabla PU_{z_{\varepsilon},\lambda_{\varepsilon}} =   \left[ 1-(6-\mu)\alpha_{\varepsilon}^{10-2\mu} \right]   \frac{3\pi ^2}{4}\beta  \lambda_{\varepsilon}^{-1} +O\left(  \lambda_{\varepsilon}^{-2}  \right). 
\end{equation*}
From Propositions \ref{prop32} and Propositions \ref{prop34}  we have 
\begin{equation}\label{o327}
	 \left\| q_\varepsilon  \right\|_{6}\lesssim \left\| \nabla q_\varepsilon  \right\|_{2} \lesssim \lambda_{\varepsilon}^{-1} +\varepsilon \lambda_{\varepsilon}^{-\frac{1}{2}}.
\end{equation}
Since $\psi_{z_{\varepsilon},\lambda_{\varepsilon}}=U_{z_{\varepsilon},\lambda_{\varepsilon}} -  \lambda_{\varepsilon}^{-1/2} H_Q(z_{\varepsilon}, \cdot)-f_{z_{\varepsilon}, \lambda_{\varepsilon}}$, it follows from Lemmas \ref{A1}, \ref{A2} and \cite[Eq.~(B-1)]{FKK3} that $$\left\| \psi_{z_{\varepsilon},\lambda_{\varepsilon}}  \right\|_{\frac{6}{5}} \lesssim \lambda_{\varepsilon}^{-\frac{1}{2}}. $$
Moreover, Lemma \ref{A2} gives $\left\| f_{z_{\varepsilon}, \lambda_{\varepsilon}}  \right\|_{\infty}\lesssim  \lambda_{\varepsilon}^{-\frac{5}{2}}$
, and Lemma \ref{A4} provides $\left\| g_{z_{\varepsilon},\lambda_{\varepsilon}}  \right\|_{\frac{6}{5}} \lesssim  \lambda_{\varepsilon}^{-2}$, which implies
\begin{equation*}
	\begin{split}
		\left|  \int_\Omega \left(  -Q(f_{z_{\varepsilon}, \lambda_{\varepsilon}}+g_{z_{\varepsilon},\lambda_{\varepsilon}})q_\varepsilon +\varepsilon V   \psi_{z_{\varepsilon},\lambda_{\varepsilon}} q_\varepsilon  \right)  \right| \lesssim \left(   \left\| f_{z_{\varepsilon}, \lambda_{\varepsilon}}  \right\|_{\frac{6}{5}}+ \left\| g_{z_{\varepsilon},\lambda_{\varepsilon}}  \right\|_{\frac{6}{5}}+\varepsilon  \left\| \psi_{z_{\varepsilon},\lambda_{\varepsilon}}  \right\|_{\frac{6}{5}}  \right)\left\| q_\varepsilon  \right\|_{6} \lesssim  \lambda_{\varepsilon}^{-2} +\varepsilon \lambda_{\varepsilon}^{-\frac{3}{2}}  +\varepsilon ^{2}\lambda_{\varepsilon}^{-1}. 
	\end{split}
\end{equation*}

On the other hand, we decompose
\begin{equation*}
	\begin{split}
		&\int_\Omega \int_\Omega \frac{\psi_{z_{\varepsilon},\lambda_{\varepsilon}}^{6-\mu}(y)\psi_{z_{\varepsilon},\lambda_{\varepsilon}}^{5-\mu}(x)q_\varepsilon(x)}{|x-y|^{\mu}}dxdy -\int_{\mathbb{R}^3} \int_\Omega  \frac{U_{z_{\varepsilon}, \lambda_{\varepsilon}} ^{6-\mu}(y)U_{z_{\varepsilon}, \lambda_{\varepsilon}} ^{5-\mu}(x)q_\varepsilon(x)}{|x-y|^{\mu}}dxdy 
		\\ =& \underbrace{\int_\Omega \int_\Omega \frac{\psi_{z_{\varepsilon},\lambda_{\varepsilon}}^{6-\mu}(y)\psi_{z_{\varepsilon},\lambda_{\varepsilon}}^{5-\mu}(x)q_\varepsilon(x)}{|x-y|^{\mu}}dxdy -\int_\Omega \int_\Omega \frac{U_{z_{\varepsilon}, \lambda_{\varepsilon}}^{6-\mu}(y)\psi_{z_{\varepsilon},\lambda_{\varepsilon}}^{5-\mu}(x)q_\varepsilon(x)}{|x-y|^{\mu}}dxdy}_{:=I_1}
		\\ &+ \underbrace{\int_\Omega \int_\Omega \frac{U_{z_{\varepsilon}, \lambda_{\varepsilon}}^{6-\mu}(y)\psi_{z_{\varepsilon},\lambda_{\varepsilon}}^{5-\mu}(x)q_\varepsilon(x)}{|x-y|^{\mu}}dxdy -\int_\Omega  \int_\Omega  \frac{U_{z_{\varepsilon}, \lambda_{\varepsilon}} ^{6-\mu}(y)U_{z_{\varepsilon}, \lambda_{\varepsilon}} ^{5-\mu}(x)q_\varepsilon(x)}{|x-y|^{\mu}}dxdy}_{:=I_2}
		\\ &-\underbrace{ \int_{\mathbb{R}^3 \backslash \Omega } \int_\Omega  \frac{U_{z_{\varepsilon}, \lambda_{\varepsilon}} ^{6-\mu}(y)U_{z_{\varepsilon}, \lambda_{\varepsilon}} ^{5-\mu}(x)q_\varepsilon(x)}{|x-y|^{\mu}}dxdy.  }_{:=I_3}
	\end{split}
\end{equation*}
Taking the bound pointwise 
\begin{equation*}
	\begin{split}
		\left| \psi_{z_{\varepsilon},\lambda_{\varepsilon}}^{6-\mu} - U_{z_{\varepsilon}, \lambda_{\varepsilon}}^{6-\mu} \right| \lesssim&  U_{z_{\varepsilon}, \lambda_{\varepsilon}}^{5-\mu} \left(   \lambda_{\varepsilon}^{-\frac{1}{2}} \left| H_Q(z_{\varepsilon}, \cdot)\right|+\left|f_{z_{\varepsilon}, \lambda_{\varepsilon}} \right|    \right)+\left(   \lambda_{\varepsilon}^{-\frac{1}{2}} \left| H_Q(z_{\varepsilon}, \cdot)\right|+\left|f_{z_{\varepsilon}, \lambda_{\varepsilon}} \right|    \right)^{6-\mu}
\\ \lesssim &U_{z_{\varepsilon}, \lambda_{\varepsilon}}^{5-\mu} \left(   \lambda_{\varepsilon}^{-\frac{1}{2}} \left| H_Q(z_{\varepsilon}, \cdot)\right|+ \left|f_{z_{\varepsilon}, \lambda_{\varepsilon}} \right|    \right)+\lambda_{\varepsilon}^{-\frac{6-\mu}{2}}  \left| H_Q(z_{\varepsilon}, \cdot)\right|^{6-\mu} + \left|f_{z_{\varepsilon}, \lambda_{\varepsilon}} \right|^{6-\mu}
	\end{split}
\end{equation*}
and the estimate $\left\| \psi_{z_{\varepsilon},\lambda_{\varepsilon}}  \right\|_6 =O(1)$ into consideration, we obtain
\begin{equation*}
	\begin{split}
		I_1 & \lesssim  \left\|  \psi_{z_{\varepsilon},\lambda_{\varepsilon}}^{6-\mu} - U_{z_{\varepsilon}, \lambda_{\varepsilon}}^{6-\mu} \right\|_{\frac{6}{6-\mu}} 
		 \left\|  \psi_{z_{\varepsilon},\lambda_{\varepsilon}}^{5-\mu} q_\varepsilon     \right\|_{\frac{6}{6-\mu}} 
		\\& \lesssim \left(  \lambda_{\varepsilon}^{-\frac{1}{2}}\left\|U_{z_{\varepsilon}, \lambda_{\varepsilon}}^{5-\mu}  \right\|_{\frac{6}{6-\mu}} +\left\| f_{z_{\varepsilon}, \lambda_{\varepsilon}}  \right\|_{\infty} +\lambda_{\varepsilon}^{-\frac{6-\mu}{2}}   \right)    \left\|  \psi_{z_{\varepsilon},\lambda_{\varepsilon}}^{5-\mu}  \right\|_{\frac{6}{5-\mu}}  \left\|  q_\varepsilon     \right\|_{6} 
		\\&  \lesssim \lambda_{\varepsilon}^{-1} \left( \lambda_{\varepsilon}^{-1} +\varepsilon \lambda_{\varepsilon}^{-\frac{1}{2}}\right)
		\\&  \lesssim \lambda_{\varepsilon}^{-2} +\varepsilon \lambda_{\varepsilon}^{-\frac{3}{2}}. 
	\end{split}
\end{equation*}
Similarly, we have
\begin{equation*}
	\begin{split}
		I_2 & \lesssim  \left\|   U_{z_{\varepsilon}, \lambda_{\varepsilon}}^{6-\mu} \right\|_{\frac{6}{6-\mu}} \left\|  \psi_{z_{\varepsilon},\lambda_{\varepsilon}}^{5-\mu} - U_{z_{\varepsilon}, \lambda_{\varepsilon}}^{5-\mu} \right\|_{\frac{6}{5-\mu}} \left\|  q_\varepsilon     \right\|_{6} 
		\\& \lesssim \left(  \lambda_{\varepsilon}^{-\frac{1}{2}}\left\|U_{z_{\varepsilon}, \lambda_{\varepsilon}}^{4-\mu}  \right\|_{\frac{6}{5-\mu}} +\left\| f_{z_{\varepsilon}, \lambda_{\varepsilon}}  \right\|_{\infty} +\lambda_{\varepsilon}^{-\frac{5-\mu}{2}}   \right)    \left\|  q_\varepsilon     \right\|_{6} 
		\\&  \lesssim \lambda_{\varepsilon}^{-1} \left( \lambda_{\varepsilon}^{-1} +\varepsilon \lambda_{\varepsilon}^{-\frac{1}{2}}\right)
		\\&  \lesssim \lambda_{\varepsilon}^{-2} +\varepsilon \lambda_{\varepsilon}^{-\frac{3}{2}}, 
	\end{split}
\end{equation*}
and
\begin{equation*}
	\begin{split}
		I_3    \lesssim&  \| U_{z_{\varepsilon},\lambda_{\varepsilon}}^{6-\mu}\|_{L^{\frac{6}{6-\mu}}(\mathbb{R}^3\backslash\Omega)}
	\| U_{z_{\varepsilon},\lambda_{\varepsilon}}^{5-\mu}   q_\varepsilon\|_{\frac{6}{6-\mu}}
	\\ \lesssim& \| U_{z_{\varepsilon},\lambda_{\varepsilon}}\|^{6-\mu}_{L^{6}(\mathbb{R}^3\backslash\Omega)}
	\| U_{z_{\varepsilon},\lambda_{\varepsilon}}^{5-\mu} \|_{\frac{6}{5-\mu}}
	\| q_\varepsilon\|_{6}
	\\ \lesssim& \lambda_{\varepsilon}^{-\frac{6-\mu}{2}} \|   q_\varepsilon\|_{6}
	\\ \lesssim  &\lambda_{\varepsilon}^{-2} +\varepsilon \lambda_{\varepsilon}^{-\frac{3}{2}}. 
	\end{split}
\end{equation*}
Thus the  asymptotic expansion (b) follows. \\ 
(c)   Using H\"older's inequality, the HLS inequality, and \eqref{o327}, we have
\begin{equation*}
	 \int_\Omega \left(|\nabla q_\varepsilon|^2+(Q+\varepsilon V)q_\varepsilon^2\right) \lesssim  \lambda_{\varepsilon}^{-2} +\varepsilon^{2} \lambda_{\varepsilon}^{-1},
\end{equation*}
\begin{equation*}
	\begin{split}
		\int_\Omega \int_\Omega \frac{\psi_{z_{\varepsilon},\lambda_{\varepsilon}}^{5-\mu}(x)q_\varepsilon(x) \psi_{z_{\varepsilon},\lambda_{\varepsilon}}^{5-\mu}(y)q_\varepsilon(y)}{|x-y|^{\mu}}dxdy  \lesssim   \left\|  \psi_{z_{\varepsilon},\lambda_{\varepsilon}}^{5-\mu}  \right\|_{\frac{6}{5-\mu}}  \left\|  q_\varepsilon     \right\|_{6} \left\|  \psi_{z_{\varepsilon},\lambda_{\varepsilon}}^{5-\mu}  \right\|_{\frac{6}{5-\mu}}  \left\|  q_\varepsilon     \right\|_{6}  \lesssim  \lambda_{\varepsilon}^{-2} +\varepsilon^{2} \lambda_{\varepsilon}^{-1},
	\end{split}
\end{equation*}
and 
\begin{equation*}
	\begin{split}
		 \int_\Omega \int_\Omega \frac{  (\psi_{z_{\varepsilon},\lambda_{\varepsilon}}^{4-\mu} q_\varepsilon ^{2} +|q_\varepsilon|^{6-\mu}   ) (x)( \psi_{z_{\varepsilon},\lambda_{\varepsilon}}+ q_\varepsilon ) ^{6-\mu} (y)}{|x-y|^{\mu}}dxdy &\lesssim \left(  \left\|  \psi_{z_{\varepsilon},\lambda_{\varepsilon}}^{4-\mu} q_\varepsilon ^{2}  \right\|_{\frac{6}{6-\mu}} +  \left\|  |q_\varepsilon|^{6-\mu} \right\|_{\frac{6}{6-\mu}}     \right) \left\|  ( \psi_{z_{\varepsilon},\lambda_{\varepsilon}}+ q_\varepsilon ) ^{6-\mu} \right\|_{\frac{6}{6-\mu}}
		 \\& \lesssim \left\| q_\varepsilon\right\|_{6}^{2} +\left\| q_\varepsilon\right\|_{6}^{6-\mu} \lesssim  \lambda_{\varepsilon}^{-2} +\varepsilon^{2} \lambda_{\varepsilon}^{-1}. 
	\end{split}
\end{equation*}
Thus the asymptotic expansion (c) is obtained, and the proof is complete.
\end{proof}

\begin{proof}[Proof of Proposition \ref{prop36o}]
This proposition follows immediately from \eqref{o325} and Lemma \ref{lemma37o}. 
\end{proof}

\subsection{The expansion of   $\boldsymbol{\phi_Q(z_{\varepsilon})}$. } 
The main purpose of this subsection is to prove the following expansion of $\phi_Q(z_{\varepsilon})$. 
\begin{Prop}
	\label{prop38o}
	As $\varepsilon\rightarrow 0$, it holds  that 
	\begin{equation}
		\label{o328}
		\phi_Q(z_{\varepsilon})
		=\pi Q(z_{\varepsilon})\lambda_{\varepsilon}^{-1}
		-\frac{\varepsilon}{4\pi}\Theta_V(z_{\varepsilon})
		+o\left(\lambda_{\varepsilon}^{-1}\right)
		+o(\varepsilon).
	\end{equation}
\end{Prop}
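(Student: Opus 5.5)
This expansion is obtained from a local Pohozaev-type identity: test the equation for $u_\varepsilon$ against the derivative of the bubble with respect to $\lambda$. This follows \cite[Proposition 3.8]{FKK3}, with the local Hartree nonlinearity replaced by the nonlocal one. Concretely, I multiply \eqref{equation} by $\partial_\lambda PU_{z_\varepsilon,\lambda_\varepsilon}$ and integrate over $\Omega$. Since $u_\varepsilon=\alpha_\varepsilon(\psi_{z_\varepsilon,\lambda_\varepsilon}+s_\varepsilon+r_\varepsilon)$, this gives
\begin{equation*}
\int_\Omega \big(\nabla(\psi+q)\cdot\nabla\partial_\lambda PU+(Q+\varepsilon V)(\psi+q)\partial_\lambda PU\big)
=A_{3,\mu}\alpha_\varepsilon^{10-2\mu}\int_\Omega\int_\Omega\frac{(\psi+q)^{6-\mu}(y)(\psi+q)^{5-\mu}(x)\partial_\lambda PU(x)}{|x-y|^\mu}dxdy .
\end{equation*}
Here $\psi,q,PU$ stand for the functions indexed by $(z_\varepsilon,\lambda_\varepsilon)$.

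The leading contributions come from $\psi$ alone. First I use \eqref{o326} to rewrite the left side as $\int(3U^5-Q(f+g)+\varepsilon V\psi)\partial_\lambda PU$. Then I expand the nonlocal right side around $U$, using \eqref{1esay} to reduce the $U$-part to $3\int U^5\partial_\lambda PU$. The terms linear in $\lambda^{-1/2}H_Q(z_\varepsilon,\cdot)$ produce $c_1\phi_Q(z_\varepsilon)\lambda_\varepsilon^{-2}$. The term $-\int Q g\,\partial_\lambda U$ produces $c_2 Q(z_\varepsilon)\lambda_\varepsilon^{-3}$ at leading order; it involves $g=\lambda^{-1/2}/|x-z|-U$ and $\partial_\lambda U\approx -\frac12\lambda^{-3/2}|x-z|^{-1}$ away from the core, and its integral is localized at scale $\lambda^{-1}$. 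The term $\varepsilon\int V\psi\,\partial_\lambda PU\approx -\frac12\varepsilon\lambda^{-2}\int V G_Q(z_\varepsilon,\cdot)^2$ produces $c_3\varepsilon\Theta_V(z_\varepsilon)\lambda_\varepsilon^{-2}$. The constants are the same as in the local case, because after applying \eqref{1esay} the bubble integrals coincide with those of \cite{FKK3}. This yields the ratio $\pi Q(z)$ versus $-\Theta_V/(4\pi)$.

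The $q$-terms are then handled separately. For $r_\varepsilon$, the orthogonality $r_\varepsilon\in T^\perp$ together with the linearized equation for $\partial_\lambda PU$ (as in the treatment of $H_{2,4}$) removes the linear term up to boundary errors of size $\lambda^{-3/2}\|r\|_6$. The remaining terms are bounded by HLS and Hölder using Proposition \ref{prop34} and \eqref{o327}. For $s_\varepsilon$, the $\beta$- and $\gamma$-components must be tracked explicitly. Their contributions combine with the factor $1-(6-\mu)\alpha_\varepsilon^{10-2\mu}$ or $1-\alpha_\varepsilon^{10-2\mu}$, and Proposition \ref{prop36o} shows this is of lower order. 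Proposition \ref{prop33} then ties $\beta,\gamma$ to $\phi_Q-\phi_0$, and the terms in $\phi_0$ must cancel between the $s$-contribution and the $\alpha$-expansion. Finally I divide by $\lambda_\varepsilon^{-2}$.

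The main obstacle is making every error genuinely $o(\lambda^{-3})+o(\varepsilon\lambda^{-2})$. Naive estimates give $\phi_Q\lambda^{-2}$, $\lambda^{-2}\|\nabla r\|$ and $\varepsilon^2\lambda^{-1}$ type terms. Terms of the first kind move to the left and are absorbed, since they carry a factor $\lambda^{-1}$ or $\varepsilon$ times $\phi_Q$. For the second kind, \eqref{o318} gives $\lambda^{-2}(\lambda^{-3/2}+\lambda^{-1}\phi_Q+\varepsilon\lambda^{-1/2})$, which is acceptable. The delicate pieces are the quadratic terms in $q$ and the case $1\le\mu<2$ in \eqref{UUUU}, which gives only $\lambda^{-(4-\mu)/2}$. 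For these I would exploit the oddness in $\lambda$-scaling, namely that $\int U^{4-\mu}\partial_\lambda U\cdot$(radial smooth) has a cancellation. I would use the sharper pointwise decay of $\partial_\lambda U$ rather than Hölder, mimicking \cite[Lemma 3.9]{FKK3}.
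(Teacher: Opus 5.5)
The gap is the test function. Multiplying by $\partial_\lambda PU$ instead of $\partial_\lambda\psi$ (dropping indices as you do) discards the cancellation that the correction $-\lambda^{-1/2}(H_Q-H_0)$ in $\psi$ was built for.

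Away from $z_\varepsilon$ one has $\partial_\lambda PU\approx-\frac12\lambda_\varepsilon^{-3/2}G_0(z_\varepsilon,\cdot)$. Hence $\|\partial_\lambda PU\|_{6/5}\sim\lambda_\varepsilon^{-3/2}$ and $(-\Delta+Q)\partial_\lambda PU=15U^4\partial_\lambda U+Q\,\partial_\lambda PU$. Your identity therefore contains the term $\int_\Omega Q\,r_\varepsilon\,\partial_\lambda PU$.

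\begin{itemize}
\item Orthogonality does not remove this term, because $T_{z,\lambda}^{\perp}$ is defined through $\int\nabla u\cdot\nabla v$, not through the form of $-\Delta+Q$.
\item Proposition~\ref{prop34} only bounds it by $\lambda_\varepsilon^{-3/2}\|r_\varepsilon\|_6\lesssim\lambda_\varepsilon^{-3}+\lambda_\varepsilon^{-5/2}\phi_Q(z_\varepsilon)+\varepsilon\lambda_\varepsilon^{-2}$. This is the same order as the terms $Q(z_\varepsilon)\lambda_\varepsilon^{-3}$ and $\varepsilon\Theta_V(z_\varepsilon)\lambda_\varepsilon^{-2}$ you want to extract.
\item The bound is not just lossy. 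The part of $r_\varepsilon$ forced by $\varepsilon V\psi$ lives at scale one with size $\varepsilon\lambda_\varepsilon^{-1/2}$, so in general the term really is of order $\varepsilon\lambda_\varepsilon^{-2}$ and cannot be evaluated from norm bounds. Your bookkeeping ``$\lambda^{-2}\|\nabla r\|$'' is off by a factor $\lambda^{1/2}$.
\item The $s_\varepsilon$-part likewise creates new $O(\lambda_\varepsilon^{-3})$ terms of the type $\beta\lambda_\varepsilon^{-3}\int_\Omega QG_0^2$.
\end{itemize}

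The same defect shows in your $\varepsilon$-term. With your test function, $\varepsilon\int_\Omega V\psi\,\partial_\lambda PU\approx-\frac12\varepsilon\lambda_\varepsilon^{-2}\int_\Omega V\,G_Q(z_\varepsilon,\cdot)G_0(z_\varepsilon,\cdot)$, not $-\frac12\varepsilon\lambda_\varepsilon^{-2}\Theta_V(z_\varepsilon)$. You silently used the far field $-\frac12\lambda^{-3/2}G_Q$, which belongs to $\partial_\lambda\psi$. The difference, of order $\varepsilon\lambda_\varepsilon^{-2}\int_\Omega VG_Q(H_Q-H_0)$, sits in the term you declared negligible. So the scheme as written does not yield \eqref{o328}.

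The fix is to test against $\partial_\lambda\psi_{z_\varepsilon,\lambda_\varepsilon}$, as the paper does. Differentiating \eqref{o326} gives $(-\Delta+Q)\partial_\lambda\psi=15U^4\partial_\lambda U-Q(\partial_\lambda f+\partial_\lambda g)$, with $\|\partial_\lambda f\|_{6/5}+\|\partial_\lambda g\|_{6/5}\lesssim\lambda_\varepsilon^{-3}$. Then:

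\begin{itemize}
\item The potential term acting on $q_\varepsilon$ is only $O(\lambda_\varepsilon^{-3}\|q_\varepsilon\|_6)$.
\item The $r_\varepsilon$-part of $15\int q_\varepsilon U^4\partial_\lambda U$ vanishes, since $15U^4\partial_\lambda U=-\Delta\partial_\lambda PU$.
\item The boundary errors are $O(\lambda_\varepsilon^{-4+\mu/2}\|q_\varepsilon\|_6)$. The cruder $\lambda^{-3/2}\|r_\varepsilon\|_6$ you accept would not suffice at this precision.
\item The $\varepsilon$-term is $\varepsilon\int V\psi\,\partial_\lambda\psi=-\frac{\varepsilon}{2}\Theta_V\lambda_\varepsilon^{-2}+o(\varepsilon\lambda_\varepsilon^{-2})$.
\end{itemize}

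Even with the right test function, three points in your sketch need correcting.

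\begin{itemize}
\item The contribution $(1-\alpha_\varepsilon^{10-2\mu})\phi_0(z_\varepsilon)\lambda_\varepsilon^{-2}$ and the quadratic $s_\varepsilon$-term $\frac{15\pi^2(5-\mu)}{32}\beta\gamma\lambda_\varepsilon^{-3}$ are not of lower order. Both are exactly of order $\lambda_\varepsilon^{-3}$. Their $\phi_0^2$-parts cancel only after inserting Propositions~\ref{prop36o} and \ref{prop33}, using $\gamma=-\frac85\beta$.
\item A bootstrap $\phi_Q(z_\varepsilon)=O(\lambda_\varepsilon^{-1}+\varepsilon)$ is then needed to discard the $O(\phi_Q\lambda_\varepsilon^{-3})$ errors.
\item The oddness argument for $1\le\mu<2$ is unnecessary. \eqref{UUUU} enters only multiplied by $\lambda_\varepsilon^{-1}\|q_\varepsilon\|_6$, which already gives $o(\lambda_\varepsilon^{-3})+o(\varepsilon\lambda_\varepsilon^{-2})$.
\end{itemize}
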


The proof of \eqref{o328} is based on the identity obtained by integrating the equation for $u_{\varepsilon}$ against $\partial_{\lambda}\psi_{z_{\varepsilon},\lambda_{\varepsilon}}$.
Taking the following pointwise expansions
\begin{equation*}
	\begin{split}
		(\psi_{z_{\varepsilon},\lambda_{\varepsilon}}+q_{\varepsilon})^{6-\mu}
		=\psi_{z_{\varepsilon},\lambda_{\varepsilon}}^{6-\mu}
		+(6-\mu)\psi_{z_{\varepsilon},\lambda_{\varepsilon}}^{5-\mu}q_{\varepsilon}
		+\frac{(6-\mu)(5-\mu)}{2}\psi_{z_{\varepsilon},\lambda_{\varepsilon}}^{4-\mu}q_{\varepsilon}^{2}
		+O\left(\psi_{z_{\varepsilon},\lambda_{\varepsilon}}^{3-\mu}|q_{\varepsilon}|^{3}
		+|q_{\varepsilon}|^{6-\mu}\right)
	\end{split}
\end{equation*}
and
\begin{equation*}
	\begin{split}
		(\psi_{z_{\varepsilon},\lambda_{\varepsilon}}+q_{\varepsilon})^{5-\mu}
		=\psi_{z_{\varepsilon},\lambda_{\varepsilon}}^{5-\mu}
		+(5-\mu)\psi_{z_{\varepsilon},\lambda_{\varepsilon}}^{4-\mu}q_{\varepsilon}
		+\frac{(5-\mu)(4-\mu)}{2}\psi_{z_{\varepsilon},\lambda_{\varepsilon}}^{3-\mu}q_{\varepsilon}^{2}
		+O\left(\psi_{z_{\varepsilon},\lambda_{\varepsilon}}^{2-\mu}|q_{\varepsilon}|^{3}
		+|q_{\varepsilon}|^{5-\mu}\right)
	\end{split}
\end{equation*}
into  consideration,  we write the resulting identity in the form 
\begin{equation}\label{o330}\small
	\begin{split}
		&\int_{\Omega}\left(\nabla\psi_{z_{\varepsilon},\lambda_{\varepsilon}}\cdot\nabla\partial_{\lambda}\psi_{z_{\varepsilon},\lambda_{\varepsilon}}
		+(Q+\varepsilon V)\psi_{z_{\varepsilon},\lambda_{\varepsilon}}\partial_{\lambda}\psi_{z_{\varepsilon},\lambda_{\varepsilon}}\right)
		-A_{3,\mu}\alpha_{\varepsilon}^{10-2\mu}\int_{\Omega}\int_{\Omega}
		\frac{\psi_{z_{\varepsilon},\lambda_{\varepsilon}}^{6-\mu}(y)
		\psi_{z_{\varepsilon},\lambda_{\varepsilon}}^{5-\mu}(x)
		\partial_{\lambda}\psi_{z_{\varepsilon},\lambda_{\varepsilon}}(x)}
		{|x-y|^{\mu}}dxdy\\
		=&-\int_{\Omega}\left(\nabla q_{\varepsilon}\cdot\nabla\partial_{\lambda}\psi_{z_{\varepsilon},\lambda_{\varepsilon}}
		+Qq_{\varepsilon}\partial_{\lambda}\psi_{z_{\varepsilon},\lambda_{\varepsilon}}\right)\\
		&+A_{3,\mu}\alpha_{\varepsilon}^{10-2\mu}
		\int_{\Omega}\int_{\Omega}
		\frac{ (6-\mu)  \psi_{z_{\varepsilon},\lambda_{\varepsilon}}^{5-\mu}(y)q_{\varepsilon}(y)
		\psi_{z_{\varepsilon},\lambda_{\varepsilon}}^{5-\mu}(x)
		\partial_{\lambda}\psi_{z_{\varepsilon},\lambda_{\varepsilon}}(x) +(5-\mu)\psi_{z_{\varepsilon},\lambda_{\varepsilon}}^{6-\mu}(y)
		\psi_{z_{\varepsilon},\lambda_{\varepsilon}}^{4-\mu}(x)q_{\varepsilon}(x)
		\partial_{\lambda}\psi_{z_{\varepsilon},\lambda_{\varepsilon}}(x)     }
		{|x-y|^{\mu}}dxdy\\
		&+A_{3,\mu}\alpha_{\varepsilon}^{10-2\mu}\Bigg[
		\frac{(6-\mu)(5-\mu)}{2}
		\int_{\Omega}\int_{\Omega}
		\frac{\psi_{z_{\varepsilon},\lambda_{\varepsilon}}^{4-\mu}(y)q_{\varepsilon}^{2}(y)
		\psi_{z_{\varepsilon},\lambda_{\varepsilon}}^{5-\mu}(x)
		\partial_{\lambda}\psi_{z_{\varepsilon},\lambda_{\varepsilon}}(x)}
		{|x-y|^{\mu}}dxdy\\
		&\hspace*{4cm}+(6-\mu)(5-\mu)
		\int_{\Omega}\int_{\Omega}
		\frac{\psi_{z_{\varepsilon},\lambda_{\varepsilon}}^{5-\mu}(y)q_{\varepsilon}(y)
		\psi_{z_{\varepsilon},\lambda_{\varepsilon}}^{4-\mu}(x)q_{\varepsilon}(x)
		\partial_{\lambda}\psi_{z_{\varepsilon},\lambda_{\varepsilon}}(x)}
		{|x-y|^{\mu}}dxdy\\
		&\hspace*{4cm}+\frac{(5-\mu)(4-\mu)}{2}
		\int_{\Omega}\int_{\Omega}
		\frac{\psi_{z_{\varepsilon},\lambda_{\varepsilon}}^{6-\mu}(y)
		\psi_{z_{\varepsilon},\lambda_{\varepsilon}}^{3-\mu}(x)q_{\varepsilon}^{2}(x)
		\partial_{\lambda}\psi_{z_{\varepsilon},\lambda_{\varepsilon}}(x)}
		{|x-y|^{\mu}}dxdy\Bigg]+\mathcal{R}_{\varepsilon},
	\end{split}
\end{equation}
where
\begin{equation*}\small
	\begin{split}
		\mathcal{R}_{\varepsilon}
		=&-\varepsilon\int_{\Omega}Vq_{\varepsilon}
		\partial_{\lambda}\psi_{z_{\varepsilon},\lambda_{\varepsilon}}
		+O\Bigg(\int_{\Omega}\int_{\Omega}
		\frac{\left(\psi_{z_{\varepsilon},\lambda_{\varepsilon}}^{3-\mu}(y)|q_{\varepsilon}(y)|^{3}
		+|q_{\varepsilon}(y)|^{6-\mu}\right)
		\left(\psi_{z_{\varepsilon},\lambda_{\varepsilon}}(x)+q_{\varepsilon}(x)\right)^{5-\mu}}
		{|x-y|^{\mu}}
		\left|\partial_{\lambda}\psi_{z_{\varepsilon},\lambda_{\varepsilon}}(x)\right|dxdy\Bigg)\\
		&+O\Bigg(\int_{\Omega}\int_{\Omega}
		\frac{\left(\psi_{z_{\varepsilon},\lambda_{\varepsilon}}(y)+q_{\varepsilon}(y)\right)^{6-\mu}
		\left(\psi_{z_{\varepsilon},\lambda_{\varepsilon}}^{2-\mu}(x)|q_{\varepsilon}(x)|^{3}
		+|q_{\varepsilon}(x)|^{5-\mu}\right)}
		{|x-y|^{\mu}}
		\left|\partial_{\lambda}\psi_{z_{\varepsilon},\lambda_{\varepsilon}}(x)\right|dxdy\Bigg)\\
		&+O\Bigg(\int_{\Omega}\int_{\Omega}
		\frac{\psi_{z_{\varepsilon},\lambda_{\varepsilon}}^{4-\mu}(y)|q_{\varepsilon}(y)|^{2}
		\psi_{z_{\varepsilon},\lambda_{\varepsilon}}^{4-\mu}(x)|q_{\varepsilon}(x)| +\psi_{z_{\varepsilon},\lambda_{\varepsilon}}^{5-\mu}(y)|q_{\varepsilon}(y)|
		\psi_{z_{\varepsilon},\lambda_{\varepsilon}}^{3-\mu}(x)|q_{\varepsilon}(x)|^{2}}
		{|x-y|^{\mu}}
		\left|\partial_{\lambda}\psi_{z_{\varepsilon},\lambda_{\varepsilon}}(x)\right|dxdy\Bigg)\\
		&+O\Bigg(\int_{\Omega}\int_{\Omega}
		\frac{\psi_{z_{\varepsilon},\lambda_{\varepsilon}}^{4-\mu}(y)|q_{\varepsilon}(y)|^{2}
		\psi_{z_{\varepsilon},\lambda_{\varepsilon}}^{3-\mu}(x)|q_{\varepsilon}(x)|^{2}}
		{|x-y|^{\mu}}
		\left|\partial_{\lambda}\psi_{z_{\varepsilon},\lambda_{\varepsilon}}(x)\right|dxdy\Bigg). 
	\end{split}
\end{equation*}

The terms appearing in \eqref{o330} satisfy the following expansions.
\begin{lem}
	\label{lemma310}
	As $\varepsilon\rightarrow 0$, the following hold:
	\begin{itemize}
		\item[(a)] \quad $\displaystyle
		\int_{\Omega}\left(
		\nabla\psi_{z_{\varepsilon},\lambda_{\varepsilon}}
		\cdot\nabla\partial_{\lambda}\psi_{z_{\varepsilon},\lambda_{\varepsilon}}
		+(Q+\varepsilon V)\psi_{z_{\varepsilon},\lambda_{\varepsilon}}
		\partial_{\lambda}\psi_{z_{\varepsilon},\lambda_{\varepsilon}}
		\right)$\\
		$\hspace*{3cm}\displaystyle
		-A_{3,\mu}\alpha_{\varepsilon}^{10-2\mu}
		\int_{\Omega}\int_{\Omega}
		\frac{\psi_{z_{\varepsilon},\lambda_{\varepsilon}}^{6-\mu}(y)
		\psi_{z_{\varepsilon},\lambda_{\varepsilon}}^{5-\mu}(x)
		\partial_{\lambda}\psi_{z_{\varepsilon},\lambda_{\varepsilon}}(x)}
		{|x-y|^{\mu}}dxdy
		$\\
		$\displaystyle
		=-2\pi\phi_Q(z_{\varepsilon})\lambda_{\varepsilon}^{-2}
		+4\pi\left(1-\alpha_{\varepsilon}^{10-2\mu}\right)
		\phi_Q(z_{\varepsilon})\lambda_{\varepsilon}^{-2}
		-\frac{\varepsilon}{2}\Theta_{V}(z_{\varepsilon})
		\lambda_{\varepsilon}^{-2}$\\
		$\hspace*{3cm}\displaystyle
		+\left[2\pi^2Q(z_{\varepsilon})
		+\left((6-\mu)d_4+3\pi^2(5-\mu)\right)
		\phi_Q(z_{\varepsilon})^2\right]\lambda_{\varepsilon}^{-3}
		+o\left(\lambda_{\varepsilon}^{-3}\right)
		+o\left(\varepsilon\lambda_{\varepsilon}^{-2}\right),$
		\\ where 
		\begin{equation*}
			d_4=A_{3,\mu}\int_{\mathbb{R}^3}\int_{\mathbb{R}^3}
	\frac{U_{0,1}^{5-\mu}(y)U_{0,1}^{5-\mu}(x)}
	{|x-y|^{\mu}}dxdy.  
		\end{equation*}

		\item[(b)] \quad $\displaystyle
		\int_{\Omega}\left(
		\nabla q_{\varepsilon}\cdot
		\nabla\partial_{\lambda}\psi_{z_{\varepsilon},\lambda_{\varepsilon}}
		+Qq_{\varepsilon}
		\partial_{\lambda}\psi_{z_{\varepsilon},\lambda_{\varepsilon}}
		\right)$\\
		$\hspace*{3cm}\displaystyle
		-A_{3,\mu}\alpha_{\varepsilon}^{10-2\mu}(6-\mu)
		\int_{\Omega}\int_{\Omega}
		\frac{\psi_{z_{\varepsilon},\lambda_{\varepsilon}}^{5-\mu}(y)
		q_{\varepsilon}(y)
		\psi_{z_{\varepsilon},\lambda_{\varepsilon}}^{5-\mu}(x)
		\partial_{\lambda}\psi_{z_{\varepsilon},\lambda_{\varepsilon}}(x)}
		{|x-y|^{\mu}}dxdy$\\
		$\hspace*{3cm}\displaystyle
		-A_{3,\mu}\alpha_{\varepsilon}^{10-2\mu}(5-\mu)
		\int_{\Omega}\int_{\Omega}
		\frac{\psi_{z_{\varepsilon},\lambda_{\varepsilon}}^{6-\mu}(y)
		\psi_{z_{\varepsilon},\lambda_{\varepsilon}}^{4-\mu}(x)
		q_{\varepsilon}(x)
		\partial_{\lambda}\psi_{z_{\varepsilon},\lambda_{\varepsilon}}(x)}
		{|x-y|^{\mu}}dxdy
		$\\
		$\displaystyle
		=-2\pi\left(1-\alpha_{\varepsilon}^{10-2\mu}\right)
		\left(\phi_Q(z_{\varepsilon})-\phi_0(z_{\varepsilon})\right)
		\lambda_{\varepsilon}^{-2}
		+O\left(\phi_Q(z_{\varepsilon})\lambda_{\varepsilon}^{-3}\right)
		+o\left(\varepsilon\lambda_{\varepsilon}^{-2}\right)
		+o\left(\lambda_{\varepsilon}^{-3}\right).$

		\item[(c)] \quad $\displaystyle
		A_{3,\mu}\alpha_{\varepsilon}^{10-2\mu}\Bigg[
		\frac{(6-\mu)(5-\mu)}{2}
		\int_{\Omega}\int_{\Omega}
		\frac{\psi_{z_{\varepsilon},\lambda_{\varepsilon}}^{4-\mu}(y)
		q_{\varepsilon}^{2}(y)
		\psi_{z_{\varepsilon},\lambda_{\varepsilon}}^{5-\mu}(x)
		\partial_{\lambda}\psi_{z_{\varepsilon},\lambda_{\varepsilon}}(x)}
		{|x-y|^{\mu}}dxdy$\\
		$\hspace*{3cm}\displaystyle
		+(6-\mu)(5-\mu)
		\int_{\Omega}\int_{\Omega}
		\frac{\psi_{z_{\varepsilon},\lambda_{\varepsilon}}^{5-\mu}(y)
		q_{\varepsilon}(y)
		\psi_{z_{\varepsilon},\lambda_{\varepsilon}}^{4-\mu}(x)
		q_{\varepsilon}(x)
		\partial_{\lambda}\psi_{z_{\varepsilon},\lambda_{\varepsilon}}(x)}
		{|x-y|^{\mu}}dxdy$\\
		$\hspace*{3cm}\displaystyle
		+\frac{(5-\mu)(4-\mu)}{2}
		\int_{\Omega}\int_{\Omega}
		\frac{\psi_{z_{\varepsilon},\lambda_{\varepsilon}}^{6-\mu}(y)
		\psi_{z_{\varepsilon},\lambda_{\varepsilon}}^{3-\mu}(x)
		q_{\varepsilon}^{2}(x)
		\partial_{\lambda}\psi_{z_{\varepsilon},\lambda_{\varepsilon}}(x)}
		{|x-y|^{\mu}}dxdy\Bigg]$\\
		$\displaystyle=\frac{15\pi^2(5-\mu)}{32}
			\beta\gamma
		\lambda_{\varepsilon}^{-3}
		+O\left(\phi_Q(z_{\varepsilon})\lambda_{\varepsilon}^{-3}\right)
		+o\left(\lambda_{\varepsilon}^{-3}\right)
		+o\left(\varepsilon\lambda_{\varepsilon}^{-2}\right).$

			\item[(d)] \quad $\displaystyle
			\mathcal{R}_{\varepsilon}=O\left(\phi_Q(z_{\varepsilon})\lambda_{\varepsilon}^{-3}\right)
		+o\left(\lambda_{\varepsilon}^{-3}\right)
		+o\left(\varepsilon\lambda_{\varepsilon}^{-2}\right).$
		\end{itemize}
	\end{lem}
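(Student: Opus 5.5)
We treat the four parts separately. The common tool is the splitting $\psi_{z_\varepsilon,\lambda_\varepsilon}=U_{z_\varepsilon,\lambda_\varepsilon}-\lambda_\varepsilon^{-1/2}H_Q(z_\varepsilon,\cdot)-f_{z_\varepsilon,\lambda_\varepsilon}$ and its $\lambda$-derivative
\[
\partial_\lambda\psi_{z_\varepsilon,\lambda_\varepsilon}=\partial_\lambda U_{z_\varepsilon,\lambda_\varepsilon}+\tfrac12\lambda_\varepsilon^{-3/2}H_Q(z_\varepsilon,\cdot)-\partial_\lambda f_{z_\varepsilon,\lambda_\varepsilon}.
\]
We also use the equation \eqref{o326} for $(-\Delta+Q)\psi_{z_\varepsilon,\lambda_\varepsilon}$ and the fact that $\partial_\lambda$ commutes with it. Inside the bubble region the identity \eqref{1esay} turns every double integral $\int\!\!\int U^{6-\mu}(y)U^{5-\mu}(x)\cdots$ into $3\int U^5\cdots$. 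We then compare term by term with the local computations in \cite[Lemma 3.6--3.8]{FKK3}, where the same quantities appear with $3U^5$ in place of the Hartree nonlinearity.

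\textbf{Part (a).} Integrating \eqref{o326} against $\partial_\lambda\psi_{z_\varepsilon,\lambda_\varepsilon}$ makes the quadratic form equal to $3\int U^5\partial_\lambda\psi-\int Q(f+g)\partial_\lambda\psi+\varepsilon\int V\psi\,\partial_\lambda\psi$.

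The $V$-term is handled as in \cite{FKK3}. Since $\psi\approx\lambda^{-1/2}G_Q(z,\cdot)$ away from $z$ and $\partial_\lambda\psi\approx-\tfrac12\lambda^{-3/2}G_Q$, it gives $-\tfrac{\varepsilon}{2}\Theta_V(z_\varepsilon)\lambda_\varepsilon^{-2}+o(\varepsilon\lambda_\varepsilon^{-2})$.

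The $Q(f+g)$ term produces the $2\pi^2Q(z_\varepsilon)\lambda_\varepsilon^{-3}$ contribution, because $g$ is concentrated at scale $\lambda^{-1}$ and Lemma \ref{A4} applies.

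For the nonlinear term, expand $\psi^{6-\mu}(y)\psi^{5-\mu}(x)$ to second order in $w:=\lambda^{-1/2}H_Q+f$. The zeroth order, combined with $3\int U^5\partial_\lambda\psi$, gives $(1-\alpha^{10-2\mu})\cdot 3\int U^5\partial_\lambda\psi$. We evaluate this by writing $H_Q(z,x)=\phi_Q(z)+O(|x-z|)$, which yields $-2\pi\phi_Q\lambda^{-2}$-type terms. The first-order terms give $-2\pi\phi_Q\lambda^{-2}$ via \eqref{1esay} and the identity $\int U^5=\tfrac{4\pi}{3}\lambda^{-1/2}$ appropriately normalized. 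The second-order terms in $\phi_Q$ give $(6-\mu)d_4+3\pi^2(5-\mu)$ times $\phi_Q^2\lambda^{-3}$. Here $d_4$ comes from the cross term in which one factor of $\phi_Q$ sits in the convolution and one outside; the other piece reduces through \eqref{1esay} to $\int U^4\,\partial_\lambda U$, which scales exactly.

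Error terms (the $O(|x-z|)$ part of $H_Q$, $f$, and the region $\mathbb R^3\setminus\Omega$) are bounded by HLS plus Lemma \ref{B2} and the weighted estimates of the type \eqref{UUUU}. This is where $\mu<2$ is needed to make them $o(\lambda^{-3})$.

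\textbf{Part (b).} Write $q=s+r$. For $r$, differentiate the orthogonality relations satisfied by $PU$ and $\partial_\lambda PU$: the linearized Hartree operator applied to $\partial_\lambda U$ vanishes on $\mathbb R^3$. The remaining contributions come from the $H_Q$-corrections in $\psi$ and $\partial_\lambda\psi$ and from the exterior region. They are bounded by $(\phi_Q\lambda^{-3/2}+\lambda^{-2})\|\nabla r\|_2$, which is $o(\lambda^{-3})+o(\varepsilon\lambda^{-2})$ by Proposition \ref{prop34}.

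For $s$, use the explicit form \eqref{o38} together with Propositions \ref{prop32}--\ref{prop33}. The $\beta\lambda^{-1}U$ mode, paired against the nonlinearity with its $\alpha$ prefactor, leaves a multiple of $(1-\alpha^{10-2\mu})$. The $\gamma\partial_\lambda U$ and $\delta_i$ modes cancel with the linearized operator up to $O(\phi_Q\lambda^{-3})$. Inserting \eqref{o313} then gives $-2\pi(1-\alpha^{10-2\mu})(\phi_Q-\phi_0)\lambda^{-2}$.

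\textbf{Part (c).} Only $s$ contributes at order $\lambda^{-3}$, since $r$ enters quadratically and $\|\nabla r\|_2^2\lambda^{-1}$ is $o(\lambda^{-3})+o(\varepsilon\lambda^{-2})$. Replacing $\psi$ by $U$ and $s$ by $\beta\lambda^{-1}U+\gamma\partial_\lambda U$, the three integrals reduce by scaling to explicit constants of the form $\int\!\!\int U^aU^b\,\partial_\lambda U\cdots$ at $\lambda=1$. The $\beta^2$ and $\gamma^2$ pieces vanish by parity in $\lambda$-scaling (they are derivatives of constant quantities). The $\beta\gamma$ piece, via \eqref{1esay}, gives $\tfrac{15\pi^2(5-\mu)}{32}$. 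We expect this constant bookkeeping to be the main obstacle: the nonlocal term does not reduce directly to $U^5$ when the perturbation sits in the convolution. We would first verify that the sum of the three terms equals $\tfrac12\partial_\lambda^2$ of the energy of $(1+\beta\lambda^{-1})U+\gamma\partial_\lambda U$ along the scaling family, and use scaling invariance to compute it.

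\textbf{Part (d).} $\mathcal R_\varepsilon$ consists of cubic and higher terms in $q$, together with $\varepsilon\int Vq\,\partial_\lambda\psi$. The latter is $\lesssim\varepsilon\|q\|_6\|\partial_\lambda\psi\|_{6/5}\lesssim\varepsilon\lambda^{-3/2}(\lambda^{-1}+\varepsilon\lambda^{-1/2})$, which is $o(\varepsilon\lambda^{-2})$. The remaining terms are bounded by HLS and Hölder, using $\|\partial_\lambda\psi\|_6\lesssim\lambda^{-1}$ and \eqref{o327}; this gives $\|q\|_6^3\lambda^{-1}=o(\lambda^{-3})+o(\varepsilon\lambda^{-2})$.
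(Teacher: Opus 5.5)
Your overall structure follows the paper: you pair \eqref{o326} with $\partial_\lambda\psi$, convert the Hartree terms by \eqref{1esay}, expand in $\lambda^{-1/2}H_Q$ and $f$, and use orthogonality. Parts (c) and (d) essentially match the paper. However, two concrete steps in (a) and (b) fail, and both change coefficients that Proposition~\ref{prop38o} relies on. Below, $U,\psi,\lambda,\alpha$ stand for the $\varepsilon$-indexed objects.

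\textbf{Part (a).} The term $-\int_\Omega Q(f+g)\partial_\lambda\psi$ does not produce $2\pi^2Q(z_\varepsilon)\lambda^{-3}$. Since $\int_{\mathbb{R}^3}g_{z,\lambda}\,\partial_\lambda U_{z,\lambda}=2\pi(3-\pi)\lambda^{-3}$, this term equals $-2\pi(3-\pi)Q(z_\varepsilon)\lambda^{-3}+o(\lambda^{-3})$. The missing $+6\pi Q(z_\varepsilon)\lambda^{-3}$ comes from exactly the piece you discard as an error: the term $-\frac12Q(z)|x-z|$ in \eqref{oB5}, inserted into the two first-order nonlinear terms. Via \eqref{1esay} and its $\lambda$-derivative, those two terms collapse to a multiple of $\lambda^{-1/2}\int_\Omega U^4\partial_\lambda U\,H_Q(z_\varepsilon,\cdot)$. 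Moreover, $\int_{\mathbb{R}^3}U^4\partial_\lambda U\,|x-z|=-\frac{4\pi}{5}\lambda^{-5/2}$ exactly. So this piece contributes exactly $6\pi\alpha^{10-2\mu}Q(z_\varepsilon)\lambda^{-3}$, and no HLS/H\"older bound via \eqref{UUUU} can make it $o(\lambda^{-3})$. Only the gradient term (odd, hence vanishing) and the $O(|x-z|^{1+\nu})$ remainder are negligible. This is why the $b(z)\lambda^{-5/2}$ terms in \eqref{oB11} and \eqref{B4-1}--\eqref{B4-2} are needed. As written, your plan yields $(2\pi^2-6\pi)Q$, and hence a wrong limit in \eqref{o121}.

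\textbf{Part (b).} You have the roles of the modes reversed. After the linearized Hartree identity for $\partial_\lambda U$, the left side of (b) equals $15(1-\alpha^{10-2\mu})\int_\Omega q_\varepsilon U^4\partial_\lambda U$ up to admissible errors.
\begin{itemize}
\item The $\beta\lambda^{-1}PU$ mode contributes only $o(\lambda^{-3})$, because $\int_{\mathbb{R}^3}U^5\partial_\lambda U=0$ and $1-\alpha^{10-2\mu}=O(\lambda^{-1})$.
\item The $\gamma\,\partial_\lambda PU$ mode does \emph{not} cancel. It gives $15(1-\alpha^{10-2\mu})\gamma\int U^4(\partial_\lambda U)^2=\frac{15\pi^2}{64}(1-\alpha^{10-2\mu})\gamma\lambda^{-2}+o(\lambda^{-3})$. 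With $\gamma=-\frac{128}{15\pi}(\phi_Q-\phi_0)+O(\lambda^{-1})$, this is the entire main term.
\end{itemize}
Dropping the $\gamma$ contribution loses the $2\pi(1-\alpha^{10-2\mu})\phi_0\lambda^{-2}$ piece. That piece is exactly what cancels the $\phi_0$-part of the $\beta\gamma$ term from (c) inside $\tilde R$. Without it, $\tilde R$ is not $O(\phi_Q+\lambda^{-1}+\varepsilon)$.

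The paper avoids $\beta,\gamma$ here altogether. Orthogonality of $v_\varepsilon$ to $\partial_\lambda PU$ gives $\int_\Omega q_\varepsilon U^4\partial_\lambda U=\lambda^{-1/2}\int_\Omega (H_Q-H_0)(z_\varepsilon,\cdot)\,U^4\partial_\lambda U$, which is evaluated directly by \eqref{oB11}.

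Finally, your remainder bound $(\phi_Q\lambda^{-3/2}+\lambda^{-2})\|\nabla r_\varepsilon\|_2$ is too weak. It leaves a term $\varepsilon\phi_Q(z_\varepsilon)\lambda^{-2}$, which is not $o(\varepsilon\lambda^{-2})$, because $\phi_Q(z_\varepsilon)\to0$ is only proved later (Corollary~\ref{cor39o}). The correct factor is $\phi_Q\lambda^{-2}$. It comes from $\|\varrho\|_{6/(5-\mu)}\lesssim\phi_Q\lambda^{-1}+o(\lambda^{-1})$ multiplied by $\|\partial_\lambda U\|_6\lesssim\lambda^{-1}$.
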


\begin{proof}
(a) By \eqref{o326} and integration by parts, we obtain
\begin{equation*}
	\begin{split}
		&\int_{\Omega}\left(
		\nabla\psi_{z_{\varepsilon},\lambda_{\varepsilon}}
		\cdot\nabla\partial_{\lambda}\psi_{z_{\varepsilon},\lambda_{\varepsilon}}
		+(Q+\varepsilon V)\psi_{z_{\varepsilon},\lambda_{\varepsilon}}
		\partial_{\lambda}\psi_{z_{\varepsilon},\lambda_{\varepsilon}}
		\right)
		\\
		={}&3\int_{\Omega}
		U_{z_{\varepsilon},\lambda_{\varepsilon}}^{5}
		\partial_{\lambda}\psi_{z_{\varepsilon},\lambda_{\varepsilon}}
		-\int_{\Omega}
		Q\left(f_{z_{\varepsilon},\lambda_{\varepsilon}}
		+g_{z_{\varepsilon},\lambda_{\varepsilon}}\right)
		\partial_{\lambda}\psi_{z_{\varepsilon},\lambda_{\varepsilon}}
		+\varepsilon\int_{\Omega}
		V\psi_{z_{\varepsilon},\lambda_{\varepsilon}}
		\partial_{\lambda}\psi_{z_{\varepsilon},\lambda_{\varepsilon}}.
	\end{split}
\end{equation*}
Moreover, \eqref{1esay} gives
\begin{equation*}
	A_{3,\mu}\int_{\mathbb{R}^3}\int_{\Omega}
	\frac{U_{z_{\varepsilon},\lambda_{\varepsilon}}^{6-\mu}(y)
	U_{z_{\varepsilon},\lambda_{\varepsilon}}^{5-\mu}(x)
	\partial_{\lambda}\psi_{z_{\varepsilon},\lambda_{\varepsilon}}(x)}
	{|x-y|^{\mu}}dxdy
	=3\int_{\Omega}U_{z_{\varepsilon},\lambda_{\varepsilon}}^{5}
	\partial_{\lambda}\psi_{z_{\varepsilon},\lambda_{\varepsilon}}.
\end{equation*}
Combining the two identities, we can rewrite the expression in part (a) as
\begin{equation}\label{o331}
	\begin{split}
		&3\left(1-\alpha_{\varepsilon}^{10-2\mu}\right)
		\int_{\Omega}U_{z_{\varepsilon},\lambda_{\varepsilon}}^{5}
		\partial_{\lambda}\psi_{z_{\varepsilon},\lambda_{\varepsilon}}-\int_{\Omega}
		Q\left(f_{z_{\varepsilon},\lambda_{\varepsilon}}
		+g_{z_{\varepsilon},\lambda_{\varepsilon}}\right)
		\partial_{\lambda}\psi_{z_{\varepsilon},\lambda_{\varepsilon}}
		+\varepsilon\int_{\Omega}
		V\psi_{z_{\varepsilon},\lambda_{\varepsilon}}
		\partial_{\lambda}\psi_{z_{\varepsilon},\lambda_{\varepsilon}}
		\\ &+A_{3,\mu}\alpha_{\varepsilon}^{10-2\mu}\Bigg[
		\int_{\mathbb{R}^3}\int_{\Omega}
		\frac{U_{z_{\varepsilon},\lambda_{\varepsilon}}^{6-\mu}(y)
		U_{z_{\varepsilon},\lambda_{\varepsilon}}^{5-\mu}(x)
		\partial_{\lambda}\psi_{z_{\varepsilon},\lambda_{\varepsilon}}(x)}
		{|x-y|^{\mu}}dxdy
		-\int_{\Omega}\int_{\Omega}
		\frac{\psi_{z_{\varepsilon},\lambda_{\varepsilon}}^{6-\mu}(y)
		\psi_{z_{\varepsilon},\lambda_{\varepsilon}}^{5-\mu}(x)
		\partial_{\lambda}\psi_{z_{\varepsilon},\lambda_{\varepsilon}}(x)}
		{|x-y|^{\mu}}dxdy\Bigg].
\end{split}
\end{equation}

We first estimate the first integral in \eqref{o331}. Since
\begin{equation*}
	\partial_{\lambda}\psi_{z_{\varepsilon},\lambda_{\varepsilon}}=\partial_{\lambda}U_{z_{\varepsilon},\lambda_{\varepsilon}}+\frac{1}{2}\lambda_{\varepsilon}^{-3/2}H_Q(z_{\varepsilon},\cdot)-\partial_{\lambda}f_{z_{\varepsilon},\lambda_{\varepsilon}},
\end{equation*}
we have
\begin{equation*}
	\int_{\Omega}U_{z_{\varepsilon},\lambda_{\varepsilon}}^{5}\partial_{\lambda}\psi_{z_{\varepsilon},\lambda_{\varepsilon}}=\int_{\Omega}U_{z_{\varepsilon},\lambda_{\varepsilon}}^{5}\partial_{\lambda}U_{z_{\varepsilon},\lambda_{\varepsilon}}+\frac{1}{2}\lambda_{\varepsilon}^{-3/2}\int_{\Omega}U_{z_{\varepsilon},\lambda_{\varepsilon}}^{5}H_Q(z_{\varepsilon},\cdot)-\int_{\Omega}U_{z_{\varepsilon},\lambda_{\varepsilon}}^{5}\partial_{\lambda}f_{z_{\varepsilon},\lambda_{\varepsilon}}.
\end{equation*}
Observe that
\begin{equation*}
	\int_{\mathbb{R}^3}U_{z_{\varepsilon},\lambda_{\varepsilon}}^{5}\partial_{\lambda}U_{z_{\varepsilon},\lambda_{\varepsilon}}=\frac{1}{6}\partial_{\lambda}\int_{\mathbb{R}^3}U_{z_{\varepsilon},\lambda_{\varepsilon}}^{6}=0.
\end{equation*}
Therefore, by Lemma \ref{A1RN},
\begin{equation*}
	\left|\int_{\Omega}U_{z_{\varepsilon},\lambda_{\varepsilon}}^{5}\partial_{\lambda}U_{z_{\varepsilon},\lambda_{\varepsilon}}\right|=\left|\int_{\mathbb{R}^3\setminus\Omega}U_{z_{\varepsilon},\lambda_{\varepsilon}}^{5}\partial_{\lambda}U_{z_{\varepsilon},\lambda_{\varepsilon}}\right|\lesssim\lambda_{\varepsilon}^{-1}\int_{\mathbb{R}^3\setminus\Omega}U_{z_{\varepsilon},\lambda_{\varepsilon}}^{6}=O\left(\lambda_{\varepsilon}^{-4}\right).
\end{equation*}
Moreover, Lemma \ref{A2} yields
\begin{equation*}
	\left|\int_{\Omega}U_{z_{\varepsilon},\lambda_{\varepsilon}}^{5}\partial_{\lambda}f_{z_{\varepsilon},\lambda_{\varepsilon}}\right|\leq\left\|\partial_{\lambda}f_{z_{\varepsilon},\lambda_{\varepsilon}}\right\|_{\infty}\int_{\Omega}U_{z_{\varepsilon},\lambda_{\varepsilon}}^{5}=O\left(\lambda_{\varepsilon}^{-4}\right).
\end{equation*}
On the other hand, by \eqref{oB10},
\begin{equation*}
	\frac{1}{2}\lambda_{\varepsilon}^{-3/2}\int_{\Omega}U_{z_{\varepsilon},\lambda_{\varepsilon}}^{5}H_Q(z_{\varepsilon},\cdot)=\frac{2\pi}{3}\phi_Q(z_{\varepsilon})\lambda_{\varepsilon}^{-2}-\frac{2\pi}{3}Q(z_{\varepsilon})\lambda_{\varepsilon}^{-3}+o\left(\lambda_{\varepsilon}^{-3}\right).
\end{equation*}
It follows from Proposition \ref{prop36o}, which gives $1-\alpha_{\varepsilon}^{10-2\mu}=o(1)$, that
\begin{equation*}
	3\left(1-\alpha_{\varepsilon}^{10-2\mu}\right)\int_{\Omega}U_{z_{\varepsilon},\lambda_{\varepsilon}}^{5}\partial_{\lambda}\psi_{z_{\varepsilon},\lambda_{\varepsilon}}=2\pi\left(1-\alpha_{\varepsilon}^{10-2\mu}\right)\phi_Q(z_{\varepsilon})\lambda_{\varepsilon}^{-2}+o\left(\lambda_{\varepsilon}^{-3}\right).
\end{equation*}

The estimates of the second and third integrals in \eqref{o331} follow from
the computations in \cite[Lemma~3.10(a)]{FKK3}. More precisely,
\begin{equation*}
	-\int_{\Omega}Q\left(f_{z_{\varepsilon},\lambda_{\varepsilon}}+g_{z_{\varepsilon},\lambda_{\varepsilon}}\right)\partial_{\lambda}\psi_{z_{\varepsilon},\lambda_{\varepsilon}}=-2\pi(3-\pi)Q(z_{\varepsilon})\lambda_{\varepsilon}^{-3}+o\left(\lambda_{\varepsilon}^{-3}\right),
\end{equation*}
and
\begin{equation*}
	\varepsilon\int_{\Omega}V\psi_{z_{\varepsilon},\lambda_{\varepsilon}}\partial_{\lambda}\psi_{z_{\varepsilon},\lambda_{\varepsilon}}=-\frac{\varepsilon}{2}\Theta_{V}(z_{\varepsilon})\lambda_{\varepsilon}^{-2}+o\left(\varepsilon\lambda_{\varepsilon}^{-2}\right).
\end{equation*}

We now consider the last term in \eqref{o331}. We first show that all the terms involving $f_{z_{\varepsilon},\lambda_{\varepsilon}}$ are negligible. Set
\begin{equation*}
	F_{z_{\varepsilon},\lambda_{\varepsilon}}:=U_{z_{\varepsilon},\lambda_{\varepsilon}}-\lambda_{\varepsilon}^{-1/2}H_Q(z_{\varepsilon},\cdot).
\end{equation*}
Then
\begin{equation*}
	\psi_{z_{\varepsilon},\lambda_{\varepsilon}}=F_{z_{\varepsilon},\lambda_{\varepsilon}}-f_{z_{\varepsilon},\lambda_{\varepsilon}},\qquad \partial_{\lambda}\psi_{z_{\varepsilon},\lambda_{\varepsilon}}=\partial_{\lambda}F_{z_{\varepsilon},\lambda_{\varepsilon}}-\partial_{\lambda}f_{z_{\varepsilon},\lambda_{\varepsilon}}.
\end{equation*}
We use the pointwise estimates
\begin{equation*}
	\psi_{z_{\varepsilon},\lambda_{\varepsilon}}^{6-\mu}=F_{z_{\varepsilon},\lambda_{\varepsilon}}^{6-\mu}+O\left(F_{z_{\varepsilon},\lambda_{\varepsilon}}^{5-\mu}\left|f_{z_{\varepsilon},\lambda_{\varepsilon}}\right|+\left|f_{z_{\varepsilon},\lambda_{\varepsilon}}\right|^{6-\mu}\right)
\end{equation*}
and
\begin{equation*}
	\psi_{z_{\varepsilon},\lambda_{\varepsilon}}^{5-\mu}=F_{z_{\varepsilon},\lambda_{\varepsilon}}^{5-\mu}+O\left(F_{z_{\varepsilon},\lambda_{\varepsilon}}^{4-\mu}\left|f_{z_{\varepsilon},\lambda_{\varepsilon}}\right|+\left|f_{z_{\varepsilon},\lambda_{\varepsilon}}\right|^{5-\mu}\right), 
\end{equation*}
to obtain
\begin{equation*}
\begin{split}
	&\int_{\Omega}\int_{\Omega}
	\frac{\psi_{z_{\varepsilon},\lambda_{\varepsilon}}^{6-\mu}(y)
	\psi_{z_{\varepsilon},\lambda_{\varepsilon}}^{5-\mu}(x)
	\partial_{\lambda}\psi_{z_{\varepsilon},\lambda_{\varepsilon}}(x)}
	{|x-y|^{\mu}}dxdy\\
	&=\int_{\Omega}\int_{\Omega}
	\frac{F_{z_{\varepsilon},\lambda_{\varepsilon}}^{6-\mu}(y)
	F_{z_{\varepsilon},\lambda_{\varepsilon}}^{5-\mu}(x)
	\partial_{\lambda}F_{z_{\varepsilon},\lambda_{\varepsilon}}(x)}
	{|x-y|^{\mu}}dxdy\\
	&\quad+O\Bigg(
	\int_{\Omega}\int_{\Omega}
	\frac{\left(F_{z_{\varepsilon},\lambda_{\varepsilon}}^{5-\mu}
	|f_{z_{\varepsilon},\lambda_{\varepsilon}}|
	+|f_{z_{\varepsilon},\lambda_{\varepsilon}}|^{6-\mu}\right)(y)
	\psi_{z_{\varepsilon},\lambda_{\varepsilon}}^{5-\mu}(x)
	|\partial_{\lambda}F_{z_{\varepsilon},\lambda_{\varepsilon}}(x)|}
	{|x-y|^{\mu}}dxdy\\
	&\qquad+\int_{\Omega}\int_{\Omega}
	\frac{\psi_{z_{\varepsilon},\lambda_{\varepsilon}}^{6-\mu}(y)
	\left(F_{z_{\varepsilon},\lambda_{\varepsilon}}^{4-\mu}
	|f_{z_{\varepsilon},\lambda_{\varepsilon}}|
	+|f_{z_{\varepsilon},\lambda_{\varepsilon}}|^{5-\mu}\right)(x)
	|\partial_{\lambda}F_{z_{\varepsilon},\lambda_{\varepsilon}}(x)|}
	{|x-y|^{\mu}}dxdy\\
	&\qquad+\int_{\Omega}\int_{\Omega}
	\frac{\psi_{z_{\varepsilon},\lambda_{\varepsilon}}^{6-\mu}(y)
	\psi_{z_{\varepsilon},\lambda_{\varepsilon}}^{5-\mu}(x)
	|\partial_{\lambda}f_{z_{\varepsilon},\lambda_{\varepsilon}}(x)|}
	{|x-y|^{\mu}}dxdy\Bigg).
\end{split}
\end{equation*}
By the HLS inequality and Lemmas \ref{A1} and \ref{A2}, all the remainder terms are of order $o\left(\lambda_{\varepsilon}^{-3}\right)$. Hence,
\begin{equation*}
\begin{split}
	&\int_{\Omega}\int_{\Omega}
	\frac{\psi_{z_{\varepsilon},\lambda_{\varepsilon}}^{6-\mu}(y)
	\psi_{z_{\varepsilon},\lambda_{\varepsilon}}^{5-\mu}(x)
	\partial_{\lambda}\psi_{z_{\varepsilon},\lambda_{\varepsilon}}(x)}
	{|x-y|^{\mu}}dxdy=\int_{\Omega}\int_{\Omega}
	\frac{F_{z_{\varepsilon},\lambda_{\varepsilon}}^{6-\mu}(y)
	F_{z_{\varepsilon},\lambda_{\varepsilon}}^{5-\mu}(x)
	\partial_{\lambda}F_{z_{\varepsilon},\lambda_{\varepsilon}}(x)}
	{|x-y|^{\mu}}dxdy+o\left(\lambda_{\varepsilon}^{-3}\right).
\end{split}
\end{equation*}
Taking the following pointwise expansions
\begin{equation*}
	F_{z_{\varepsilon},\lambda_{\varepsilon}}^{6-\mu}=U_{z_{\varepsilon},\lambda_{\varepsilon}}^{6-\mu}-(6-\mu)\lambda_{\varepsilon}^{-1/2}U_{z_{\varepsilon},\lambda_{\varepsilon}}^{5-\mu}H_Q(z_{\varepsilon},\cdot)+\frac{(6-\mu)(5-\mu)}{2}\lambda_{\varepsilon}^{-1}U_{z_{\varepsilon},\lambda_{\varepsilon}}^{4-\mu}H_Q(z_{\varepsilon},\cdot)^2+\rho_{6-\mu},
\end{equation*}
\begin{equation*}
	F_{z_{\varepsilon},\lambda_{\varepsilon}}^{5-\mu}=U_{z_{\varepsilon},\lambda_{\varepsilon}}^{5-\mu}-(5-\mu)\lambda_{\varepsilon}^{-1/2}U_{z_{\varepsilon},\lambda_{\varepsilon}}^{4-\mu}H_Q(z_{\varepsilon},\cdot)+\frac{(5-\mu)(4-\mu)}{2}\lambda_{\varepsilon}^{-1}U_{z_{\varepsilon},\lambda_{\varepsilon}}^{3-\mu}H_Q(z_{\varepsilon},\cdot)^2+\rho_{5-\mu},
\end{equation*}
with 
\begin{equation*}
	\rho_{6-\mu}
	=O\left(\lambda_{\varepsilon}^{-3/2}
	U_{z_{\varepsilon},\lambda_{\varepsilon}}^{3-\mu}
	\left|H_Q(z_{\varepsilon},\cdot)\right|^3
	+\lambda_{\varepsilon}^{-(6-\mu)/2}
	\left|H_Q(z_{\varepsilon},\cdot)\right|^{6-\mu}\right),
\end{equation*}
\begin{equation*}
	\rho_{5-\mu}
	=O\left(\lambda_{\varepsilon}^{-3/2}
	U_{z_{\varepsilon},\lambda_{\varepsilon}}^{2-\mu}
	\left|H_Q(z_{\varepsilon},\cdot)\right|^3
	+\lambda_{\varepsilon}^{-(5-\mu)/2}
	\left|H_Q(z_{\varepsilon},\cdot)\right|^{5-\mu}\right),
\end{equation*}
and 
\begin{equation*}
	\partial_{\lambda}F_{z_{\varepsilon},\lambda_{\varepsilon}}
	=\partial_{\lambda}U_{z_{\varepsilon},\lambda_{\varepsilon}}
	+\frac{1}{2}\lambda_{\varepsilon}^{-3/2}H_Q(z_{\varepsilon},\cdot)
\end{equation*}
into consideration,  we obtain
\begin{equation*}
\begin{split}
	&\int_{\Omega}\int_{\Omega}
	\frac{F_{z_{\varepsilon},\lambda_{\varepsilon}}^{6-\mu}(y)
	F_{z_{\varepsilon},\lambda_{\varepsilon}}^{5-\mu}(x)
	\partial_{\lambda}F_{z_{\varepsilon},\lambda_{\varepsilon}}(x)}
	{|x-y|^{\mu}}dxdy\\
	&=\int_{\Omega}\int_{\Omega}
	\frac{U_{z_{\varepsilon},\lambda_{\varepsilon}}^{6-\mu}(y)
	U_{z_{\varepsilon},\lambda_{\varepsilon}}^{5-\mu}(x)
	\partial_{\lambda}F_{z_{\varepsilon},\lambda_{\varepsilon}}(x)}
	{|x-y|^{\mu}}dxdy
	+\mathcal{K}_{\varepsilon}+R_0, 
\end{split}
\end{equation*}
where 
\begin{equation*}
\begin{split}
	\mathcal{K}_{\varepsilon}
	:={}&-(6-\mu)\lambda_{\varepsilon}^{-1/2}
	\int_{\Omega}\int_{\Omega}
	\frac{U_{z_{\varepsilon},\lambda_{\varepsilon}}^{5-\mu}(y)
	H_Q(z_{\varepsilon},y)
	U_{z_{\varepsilon},\lambda_{\varepsilon}}^{5-\mu}(x)
	\partial_{\lambda}U_{z_{\varepsilon},\lambda_{\varepsilon}}(x)}
	{|x-y|^{\mu}}dxdy\\
	&\quad-(5-\mu)\lambda_{\varepsilon}^{-1/2}
	\int_{\Omega}\int_{\Omega}
	\frac{U_{z_{\varepsilon},\lambda_{\varepsilon}}^{6-\mu}(y)
	U_{z_{\varepsilon},\lambda_{\varepsilon}}^{4-\mu}(x)
	H_Q(z_{\varepsilon},x)
	\partial_{\lambda}U_{z_{\varepsilon},\lambda_{\varepsilon}}(x)}
	{|x-y|^{\mu}}dxdy\\
	&\quad+\frac{(6-\mu)(5-\mu)}{2}\lambda_{\varepsilon}^{-1}
	\int_{\Omega}\int_{\Omega}
	\frac{U_{z_{\varepsilon},\lambda_{\varepsilon}}^{4-\mu}(y)
	H_Q(z_{\varepsilon},y)^2
	U_{z_{\varepsilon},\lambda_{\varepsilon}}^{5-\mu}(x)
	\partial_{\lambda}U_{z_{\varepsilon},\lambda_{\varepsilon}}(x)}
	{|x-y|^{\mu}}dxdy\\
	&\quad+(6-\mu)(5-\mu)\lambda_{\varepsilon}^{-1}
	\int_{\Omega}\int_{\Omega}
	\frac{U_{z_{\varepsilon},\lambda_{\varepsilon}}^{5-\mu}(y)
	H_Q(z_{\varepsilon},y)
	U_{z_{\varepsilon},\lambda_{\varepsilon}}^{4-\mu}(x)
	H_Q(z_{\varepsilon},x)
	\partial_{\lambda}U_{z_{\varepsilon},\lambda_{\varepsilon}}(x)}
	{|x-y|^{\mu}}dxdy\\
	&\quad+\frac{(5-\mu)(4-\mu)}{2}\lambda_{\varepsilon}^{-1}
	\int_{\Omega}\int_{\Omega}
	\frac{U_{z_{\varepsilon},\lambda_{\varepsilon}}^{6-\mu}(y)
	U_{z_{\varepsilon},\lambda_{\varepsilon}}^{3-\mu}(x)
	H_Q(z_{\varepsilon},x)^2
	\partial_{\lambda}U_{z_{\varepsilon},\lambda_{\varepsilon}}(x)}
	{|x-y|^{\mu}}dxdy\\
	&\quad-\frac{6-\mu}{2}\lambda_{\varepsilon}^{-2}
	\int_{\Omega}\int_{\Omega}
	\frac{U_{z_{\varepsilon},\lambda_{\varepsilon}}^{5-\mu}(y)
	H_Q(z_{\varepsilon},y)
	U_{z_{\varepsilon},\lambda_{\varepsilon}}^{5-\mu}(x)
	H_Q(z_{\varepsilon},x)}
	{|x-y|^{\mu}}dxdy\\
	&\quad-\frac{5-\mu}{2}\lambda_{\varepsilon}^{-2}
	\int_{\Omega}\int_{\Omega}
	\frac{U_{z_{\varepsilon},\lambda_{\varepsilon}}^{6-\mu}(y)
	U_{z_{\varepsilon},\lambda_{\varepsilon}}^{4-\mu}(x)
	H_Q(z_{\varepsilon},x)^2}
	{|x-y|^{\mu}}dxdy,
\end{split}
\end{equation*}
and 
\begin{equation*}
\begin{split}
	R_0=O\Bigg(&\lambda_{\varepsilon}^{-3/2}
	\int_{\Omega}\int_{\Omega}\frac{
	U_{z_{\varepsilon},\lambda_{\varepsilon}}^{4-\mu}(y)
	\left|H_Q(z_{\varepsilon},y)\right|^2
	U_{z_{\varepsilon},\lambda_{\varepsilon}}^{4-\mu}(x)
	\left|H_Q(z_{\varepsilon},x)\right|
	\left|\partial_{\lambda}F_{z_{\varepsilon},\lambda_{\varepsilon}}(x)\right|}
	{|x-y|^{\mu}}dxdy\\
	&+\lambda_{\varepsilon}^{-3/2}
	\int_{\Omega}\int_{\Omega}\frac{
	U_{z_{\varepsilon},\lambda_{\varepsilon}}^{5-\mu}(y)
	\left|H_Q(z_{\varepsilon},y)\right|
	U_{z_{\varepsilon},\lambda_{\varepsilon}}^{3-\mu}(x)
	\left|H_Q(z_{\varepsilon},x)\right|^2
	\left|\partial_{\lambda}F_{z_{\varepsilon},\lambda_{\varepsilon}}(x)\right|}
	{|x-y|^{\mu}}dxdy\\
	&+\lambda_{\varepsilon}^{-2}
	\int_{\Omega}\int_{\Omega}\frac{
	U_{z_{\varepsilon},\lambda_{\varepsilon}}^{4-\mu}(y)
	\left|H_Q(z_{\varepsilon},y)\right|^2
	U_{z_{\varepsilon},\lambda_{\varepsilon}}^{3-\mu}(x)
	\left|H_Q(z_{\varepsilon},x)\right|^2
	\left|\partial_{\lambda}F_{z_{\varepsilon},\lambda_{\varepsilon}}(x)\right|}
	{|x-y|^{\mu}}dxdy\\
	&+\int_{\Omega}\int_{\Omega}\frac{
	\left[\left|\rho_{6-\mu}(y)\right|
	F_{z_{\varepsilon},\lambda_{\varepsilon}}^{5-\mu}(x)
	+F_{z_{\varepsilon},\lambda_{\varepsilon}}^{6-\mu}(y)
	\left|\rho_{5-\mu}(x)\right|\right]
	\left|\partial_{\lambda}F_{z_{\varepsilon},\lambda_{\varepsilon}}(x)\right|}
	{|x-y|^{\mu}}dxdy\\
	&+\lambda_{\varepsilon}^{-5/2}
	\int_{\Omega}\int_{\Omega}\frac{
	U_{z_{\varepsilon},\lambda_{\varepsilon}}^{4-\mu}(y)
	\left|H_Q(z_{\varepsilon},y)\right|^2
	U_{z_{\varepsilon},\lambda_{\varepsilon}}^{5-\mu}(x)
	\left|H_Q(z_{\varepsilon},x)\right|}
	{|x-y|^{\mu}}dxdy\\
	&+\lambda_{\varepsilon}^{-5/2}
	\int_{\Omega}\int_{\Omega}\frac{
	U_{z_{\varepsilon},\lambda_{\varepsilon}}^{5-\mu}(y)
	\left|H_Q(z_{\varepsilon},y)\right|
	U_{z_{\varepsilon},\lambda_{\varepsilon}}^{4-\mu}(x)
	\left|H_Q(z_{\varepsilon},x)\right|^2}
	{|x-y|^{\mu}}dxdy\\
	&+\lambda_{\varepsilon}^{-5/2}
	\int_{\Omega}\int_{\Omega}\frac{
	U_{z_{\varepsilon},\lambda_{\varepsilon}}^{6-\mu}(y)
	U_{z_{\varepsilon},\lambda_{\varepsilon}}^{3-\mu}(x)
	\left|H_Q(z_{\varepsilon},x)\right|^3}
	{|x-y|^{\mu}}dxdy\Bigg).
\end{split}
\end{equation*}
Now we estimate  $R_0$. Since
\begin{equation*}
	\partial_{\lambda}F_{z_{\varepsilon},\lambda_{\varepsilon}}
	=\partial_{\lambda}U_{z_{\varepsilon},\lambda_{\varepsilon}}
	+\frac{1}{2}\lambda_{\varepsilon}^{-3/2}H_Q(z_{\varepsilon},\cdot),
\end{equation*}
Lemmas \ref{A1} and \ref{B1} give
\begin{equation*}
\begin{split}
	&\left\|
	U_{z_{\varepsilon},\lambda_{\varepsilon}}^{4-\mu}
	\left|H_Q(z_{\varepsilon},\cdot)\right|
	\partial_{\lambda}F_{z_{\varepsilon},\lambda_{\varepsilon}}
	\right\|_{\frac{6}{6-\mu}}\\
	&\lesssim\lambda_{\varepsilon}^{-1}
	\left\|U_{z_{\varepsilon},\lambda_{\varepsilon}}^{5-\mu}\right\|_{\frac{6}{6-\mu}}
	+\lambda_{\varepsilon}^{-3/2}
	\left\|U_{z_{\varepsilon},\lambda_{\varepsilon}}^{4-\mu}\right\|_{\frac{6}{6-\mu}}.
\end{split}
\end{equation*}
Hence, by the HLS inequality,
\begin{equation*}
\begin{split}
	&\lambda_{\varepsilon}^{-3/2}
	\int_{\Omega}\int_{\Omega}
	\frac{U_{z_{\varepsilon},\lambda_{\varepsilon}}^{4-\mu}(y)
	\left|H_Q(z_{\varepsilon},y)\right|^2
	U_{z_{\varepsilon},\lambda_{\varepsilon}}^{4-\mu}(x)
	\left|H_Q(z_{\varepsilon},x)\right|
	\left|\partial_{\lambda}F_{z_{\varepsilon},\lambda_{\varepsilon}}(x)\right|}
	{|x-y|^{\mu}}dxdy\\
	&\lesssim\lambda_{\varepsilon}^{-3/2}
	\left\|U_{z_{\varepsilon},\lambda_{\varepsilon}}^{4-\mu}\right\|_{\frac{6}{6-\mu}}
	\left(
	\lambda_{\varepsilon}^{-1}
	\left\|U_{z_{\varepsilon},\lambda_{\varepsilon}}^{5-\mu}\right\|_{\frac{6}{6-\mu}}
	+\lambda_{\varepsilon}^{-3/2}
	\left\|U_{z_{\varepsilon},\lambda_{\varepsilon}}^{4-\mu}\right\|_{\frac{6}{6-\mu}}
	\right)
	\lesssim\lambda_{\varepsilon}^{-4}.
\end{split}
\end{equation*}
Moreover, by the definition of $\rho_{6-\mu}$ and Lemmas \ref{A1} and \ref{B1},
\begin{equation*}
\begin{split}
	\left\|\rho_{6-\mu}\right\|_{\frac{6}{6-\mu}}
	&\lesssim\lambda_{\varepsilon}^{-3/2}
	\left\|U_{z_{\varepsilon},\lambda_{\varepsilon}}^{3-\mu}\right\|_{\frac{6}{6-\mu}}
	+\lambda_{\varepsilon}^{-(6-\mu)/2}\\
	&=\lambda_{\varepsilon}^{-3/2}
	\left\|U_{z_{\varepsilon},\lambda_{\varepsilon}}\right\|_{\frac{6(3-\mu)}{6-\mu}}^{3-\mu}
	+\lambda_{\varepsilon}^{-(6-\mu)/2}
	\lesssim\lambda_{\varepsilon}^{-3+\mu/2},
\end{split}
\end{equation*}
where we used $\frac{6(3-\mu)}{6-\mu}<3$. It follows that
\begin{equation*}
\begin{split}
	&\int_{\Omega}\int_{\Omega}
	\frac{\left|\rho_{6-\mu}(y)\right|
	F_{z_{\varepsilon},\lambda_{\varepsilon}}^{5-\mu}(x)
	\left|\partial_{\lambda}F_{z_{\varepsilon},\lambda_{\varepsilon}}(x)\right|}
	{|x-y|^{\mu}}dxdy
	\lesssim\lambda_{\varepsilon}^{-4+\mu/2}
	=o\left(\lambda_{\varepsilon}^{-3}\right).
\end{split}
\end{equation*}
Here we used $0<\mu<2$, which implies $-4+\mu/2<-3$.
Finally,
\begin{equation*}
\begin{split}
	&\lambda_{\varepsilon}^{-5/2}
	\int_{\Omega}\int_{\Omega}
	\frac{U_{z_{\varepsilon},\lambda_{\varepsilon}}^{4-\mu}(y)
	\left|H_Q(z_{\varepsilon},y)\right|^2
	U_{z_{\varepsilon},\lambda_{\varepsilon}}^{5-\mu}(x)
	\left|H_Q(z_{\varepsilon},x)\right|}
	{|x-y|^{\mu}}dxdy\\
	&\lesssim\lambda_{\varepsilon}^{-5/2}
	\left\|U_{z_{\varepsilon},\lambda_{\varepsilon}}^{4-\mu}\right\|_{\frac{6}{6-\mu}}
	\left\|U_{z_{\varepsilon},\lambda_{\varepsilon}}^{5-\mu}\right\|_{\frac{6}{6-\mu}}
	\lesssim\lambda_{\varepsilon}^{-4}.
\end{split}
\end{equation*}
The remaining terms  in $R_0$ can be estimated in the same way. Therefore,
\begin{equation*}
	R_0=o\left(\lambda_{\varepsilon}^{-3}\right).
\end{equation*}
We now return to the last term in \eqref{o331}. Since
\begin{equation*}
	\partial_{\lambda}\psi_{z_{\varepsilon},\lambda_{\varepsilon}}
	=\partial_{\lambda}F_{z_{\varepsilon},\lambda_{\varepsilon}}
	-\partial_{\lambda}f_{z_{\varepsilon},\lambda_{\varepsilon}},
\end{equation*}
it follows from \eqref{1esay} and Lemma \ref{A2} that
\begin{equation*}
\begin{split}
	&\left|\int_{\mathbb{R}^3}\int_{\Omega}
	\frac{U_{z_{\varepsilon},\lambda_{\varepsilon}}^{6-\mu}(y)
	U_{z_{\varepsilon},\lambda_{\varepsilon}}^{5-\mu}(x)
	\partial_{\lambda}f_{z_{\varepsilon},\lambda_{\varepsilon}}(x)}
	{|x-y|^{\mu}}dxdy\right|\\
	&=\frac{3}{A_{3,\mu}}
	\left|\int_{\Omega}U_{z_{\varepsilon},\lambda_{\varepsilon}}^5
	\partial_{\lambda}f_{z_{\varepsilon},\lambda_{\varepsilon}}\right|
	=O\left(\lambda_{\varepsilon}^{-4}\right).
\end{split}
\end{equation*}
Moreover, by the HLS inequality and Lemmas \ref{A1}, \ref{A1RN}, and \ref{B1}, we have
\begin{equation*}
\begin{split}
	&\left|\int_{\mathbb{R}^3\setminus\Omega}\int_{\Omega}
	\frac{U_{z_{\varepsilon},\lambda_{\varepsilon}}^{6-\mu}(y)
	U_{z_{\varepsilon},\lambda_{\varepsilon}}^{5-\mu}(x)
	\partial_{\lambda}F_{z_{\varepsilon},\lambda_{\varepsilon}}(x)}
	{|x-y|^{\mu}}dxdy\right|\\
	&\lesssim
	\left\|U_{z_{\varepsilon},\lambda_{\varepsilon}}^{6-\mu}
	\right\|_{L^{\frac{6}{6-\mu}}(\mathbb{R}^3\setminus\Omega)}
	\left\|U_{z_{\varepsilon},\lambda_{\varepsilon}}^{5-\mu}
	\partial_{\lambda}F_{z_{\varepsilon},\lambda_{\varepsilon}}
	\right\|_{L^{\frac{6}{6-\mu}}(\Omega)}
	\lesssim\lambda_{\varepsilon}^{-4+\mu/2}
	=o\left(\lambda_{\varepsilon}^{-3}\right),
\end{split}
		\end{equation*}
		where we used $0<\mu<2$.
Consequently, the last term in \eqref{o331} is equal to
\begin{equation*}
	A_{3,\mu}\alpha_{\varepsilon}^{10-2\mu}
	\left[-\mathcal{K}_{\varepsilon}
	+o\left(\lambda_{\varepsilon}^{-3}\right)\right].
\end{equation*}
Taking Lemma \ref{B4} into consideration,  we obtain
\begin{equation*}
\begin{split}
	\mathcal{K}_{\varepsilon}
	={}&\frac{2\pi}{A_{3,\mu}}\phi_Q(z_{\varepsilon})
	\lambda_{\varepsilon}^{-2}
	-\frac{6\pi}{A_{3,\mu}}Q(z_{\varepsilon})
	\lambda_{\varepsilon}^{-3}-\frac{(6-\mu)d_4+3\pi^2(5-\mu)}{A_{3,\mu}}
	\phi_Q(z_{\varepsilon})^2\lambda_{\varepsilon}^{-3}
	+o\left(\lambda_{\varepsilon}^{-3}\right).
\end{split}
\end{equation*}
Therefore, the last term in \eqref{o331} is equal to
\begin{equation*}
\begin{split}
	&-2\pi\alpha_{\varepsilon}^{10-2\mu}
	\phi_Q(z_{\varepsilon})\lambda_{\varepsilon}^{-2}
	+6\pi\alpha_{\varepsilon}^{10-2\mu}
	Q(z_{\varepsilon})\lambda_{\varepsilon}^{-3}+\alpha_{\varepsilon}^{10-2\mu}
	\left[(6-\mu)d_4+3\pi^2(5-\mu)\right]
	\phi_Q(z_{\varepsilon})^2\lambda_{\varepsilon}^{-3}
	+o\left(\lambda_{\varepsilon}^{-3}\right).
\end{split}
\end{equation*}
Combining the  estimates above completes the proof of part (a).

(b) Differentiating \eqref{o326} with respect to $\lambda$, we obtain
\begin{equation*}
	(-\Delta+Q)\partial_{\lambda}\psi_{z_{\varepsilon},\lambda_{\varepsilon}}
	=15U_{z_{\varepsilon},\lambda_{\varepsilon}}^{4}
	\partial_{\lambda}U_{z_{\varepsilon},\lambda_{\varepsilon}}
	-Q\left(\partial_{\lambda}f_{z_{\varepsilon},\lambda_{\varepsilon}}
	+\partial_{\lambda}g_{z_{\varepsilon},\lambda_{\varepsilon}}\right).
\end{equation*}
Hence, by integration by parts, the left-hand side in part (b) can be written as
\begin{equation*}
\begin{split}
	&15\int_{\Omega}q_{\varepsilon}
	U_{z_{\varepsilon},\lambda_{\varepsilon}}^{4}
	\partial_{\lambda}U_{z_{\varepsilon},\lambda_{\varepsilon}}
	-\int_{\Omega}Qq_{\varepsilon}
	\left(\partial_{\lambda}f_{z_{\varepsilon},\lambda_{\varepsilon}}
	+\partial_{\lambda}g_{z_{\varepsilon},\lambda_{\varepsilon}}\right)\\
	&\quad-A_{3,\mu}\alpha_{\varepsilon}^{10-2\mu}(6-\mu)
	\int_{\Omega}\int_{\Omega}
	\frac{\psi_{z_{\varepsilon},\lambda_{\varepsilon}}^{5-\mu}(y)
	q_{\varepsilon}(y)
	\psi_{z_{\varepsilon},\lambda_{\varepsilon}}^{5-\mu}(x)
	\partial_{\lambda}\psi_{z_{\varepsilon},\lambda_{\varepsilon}}(x)}
	{|x-y|^{\mu}}dxdy\\
	&\quad-A_{3,\mu}\alpha_{\varepsilon}^{10-2\mu}(5-\mu)
	\int_{\Omega}\int_{\Omega}
	\frac{\psi_{z_{\varepsilon},\lambda_{\varepsilon}}^{6-\mu}(y)
	\psi_{z_{\varepsilon},\lambda_{\varepsilon}}^{4-\mu}(x)
	q_{\varepsilon}(x)
	\partial_{\lambda}\psi_{z_{\varepsilon},\lambda_{\varepsilon}}(x)}
	{|x-y|^{\mu}}dxdy.
\end{split}
\end{equation*}
On the other hand, differentiating the equation for
$U_{z_{\varepsilon},\lambda_{\varepsilon}}$ with respect to $\lambda$ gives
\begin{equation*}
\begin{split}
	&15U_{z_{\varepsilon},\lambda_{\varepsilon}}^{4}(x)
	\partial_{\lambda}U_{z_{\varepsilon},\lambda_{\varepsilon}}(x)\\
	={}&A_{3,\mu}(6-\mu)
	\int_{\mathbb{R}^{3}}
	\frac{U_{z_{\varepsilon},\lambda_{\varepsilon}}^{5-\mu}(y)
	\partial_{\lambda}U_{z_{\varepsilon},\lambda_{\varepsilon}}(y)}
	{|x-y|^{\mu}}dy\,
	U_{z_{\varepsilon},\lambda_{\varepsilon}}^{5-\mu}(x)+A_{3,\mu}(5-\mu)
	\int_{\mathbb{R}^{3}}
	\frac{U_{z_{\varepsilon},\lambda_{\varepsilon}}^{6-\mu}(y)}
	{|x-y|^{\mu}}dy\,
	U_{z_{\varepsilon},\lambda_{\varepsilon}}^{4-\mu}(x)
	\partial_{\lambda}U_{z_{\varepsilon},\lambda_{\varepsilon}}(x).
\end{split}
\end{equation*}
Multiplying this identity by $q_{\varepsilon}$, integrating over $\Omega$,
 we have
\begin{equation*}
\begin{split}
	&15\int_{\Omega}q_{\varepsilon}
	U_{z_{\varepsilon},\lambda_{\varepsilon}}^{4}
	\partial_{\lambda}U_{z_{\varepsilon},\lambda_{\varepsilon}}\\
	={}&A_{3,\mu}(6-\mu)\int_{\Omega}\int_{\mathbb{R}^{3}}
	\frac{U_{z_{\varepsilon},\lambda_{\varepsilon}}^{5-\mu}(y)
	q_{\varepsilon}(y)
	U_{z_{\varepsilon},\lambda_{\varepsilon}}^{5-\mu}(x)
	\partial_{\lambda}U_{z_{\varepsilon},\lambda_{\varepsilon}}(x)}
	{|x-y|^{\mu}}dxdy\\
	&+A_{3,\mu}(5-\mu)\int_{\mathbb{R}^{3}}\int_{\Omega}
	\frac{U_{z_{\varepsilon},\lambda_{\varepsilon}}^{6-\mu}(y)
	U_{z_{\varepsilon},\lambda_{\varepsilon}}^{4-\mu}(x)
	q_{\varepsilon}(x)
	\partial_{\lambda}U_{z_{\varepsilon},\lambda_{\varepsilon}}(x)}
	{|x-y|^{\mu}}dxdy.
\end{split}
\end{equation*}
Therefore, the left-hand side in part (b) is equal to
\begin{equation}\label{o337}
\begin{split}
	&15\left(1-\alpha_{\varepsilon}^{10-2\mu}\right)
	\int_{\Omega}q_{\varepsilon}
	U_{z_{\varepsilon},\lambda_{\varepsilon}}^{4}
	\partial_{\lambda}U_{z_{\varepsilon},\lambda_{\varepsilon}}\\
	&+A_{3,\mu}\alpha_{\varepsilon}^{10-2\mu}\Bigg[
	(6-\mu)\int_{\Omega}\int_{\mathbb{R}^{3}}
	\frac{U_{z_{\varepsilon},\lambda_{\varepsilon}}^{5-\mu}(y)
	q_{\varepsilon}(y)
	U_{z_{\varepsilon},\lambda_{\varepsilon}}^{5-\mu}(x)
	\partial_{\lambda}U_{z_{\varepsilon},\lambda_{\varepsilon}}(x)}
	{|x-y|^{\mu}}dxdy\\
	&\hspace*{1cm}-(6-\mu)\int_{\Omega}\int_{\Omega}
	\frac{\psi_{z_{\varepsilon},\lambda_{\varepsilon}}^{5-\mu}(y)
	q_{\varepsilon}(y)
	\psi_{z_{\varepsilon},\lambda_{\varepsilon}}^{5-\mu}(x)
	\partial_{\lambda}\psi_{z_{\varepsilon},\lambda_{\varepsilon}}(x)}
	{|x-y|^{\mu}}dxdy\\
	&\hspace*{1cm}+(5-\mu)\int_{\mathbb{R}^{3}}\int_{\Omega}
	\frac{U_{z_{\varepsilon},\lambda_{\varepsilon}}^{6-\mu}(y)
	U_{z_{\varepsilon},\lambda_{\varepsilon}}^{4-\mu}(x)
	q_{\varepsilon}(x)
	\partial_{\lambda}U_{z_{\varepsilon},\lambda_{\varepsilon}}(x)}
	{|x-y|^{\mu}}dxdy\\
	&\hspace*{1cm}-(5-\mu)\int_{\Omega}\int_{\Omega}
	\frac{\psi_{z_{\varepsilon},\lambda_{\varepsilon}}^{6-\mu}(y)
	\psi_{z_{\varepsilon},\lambda_{\varepsilon}}^{4-\mu}(x)
	q_{\varepsilon}(x)
	\partial_{\lambda}\psi_{z_{\varepsilon},\lambda_{\varepsilon}}(x)}
	{|x-y|^{\mu}}dxdy\Bigg]\\
	&-\int_{\Omega}Qq_{\varepsilon}
	\left(\partial_{\lambda}f_{z_{\varepsilon},\lambda_{\varepsilon}}
	+\partial_{\lambda}g_{z_{\varepsilon},\lambda_{\varepsilon}}\right).
\end{split}
\end{equation}
We first estimate the first term in \eqref{o337}. By the identity
$-\Delta\partial_{\lambda}PU_{z_{\varepsilon},\lambda_{\varepsilon}}
=15U_{z_{\varepsilon},\lambda_{\varepsilon}}^{4}
\partial_{\lambda}U_{z_{\varepsilon},\lambda_{\varepsilon}}$ and  the orthogonality
condition, we have
\begin{equation*}
	\int_{\Omega}q_{\varepsilon}
	U_{z_{\varepsilon},\lambda_{\varepsilon}}^{4}
	\partial_{\lambda}U_{z_{\varepsilon},\lambda_{\varepsilon}}
	=\lambda_{\varepsilon}^{-1/2}
	\int_{\Omega}
	\left(H_Q(z_{\varepsilon},\cdot)-H_0(z_{\varepsilon},\cdot)\right)
	U_{z_{\varepsilon},\lambda_{\varepsilon}}^{4}
	\partial_{\lambda}U_{z_{\varepsilon},\lambda_{\varepsilon}}.
\end{equation*}
By Lemma \ref{B3} and Proposition \ref{prop36o}, we obtain
\begin{equation*}
	15\left(1-\alpha_{\varepsilon}^{10-2\mu}\right)\int_{\Omega}q_{\varepsilon}U_{z_{\varepsilon},\lambda_{\varepsilon}}^{4}\partial_{\lambda}U_{z_{\varepsilon},\lambda_{\varepsilon}}
	=-2\pi\left(1-\alpha_{\varepsilon}^{10-2\mu}\right)\left(\phi_Q(z_{\varepsilon})-\phi_0(z_{\varepsilon})\right)\lambda_{\varepsilon}^{-2}
	+o\left(\lambda_{\varepsilon}^{-3}\right).
\end{equation*}
We next consider the second term  in \eqref{o337}. We write
\begin{equation*}
	\psi_{z_{\varepsilon},\lambda_{\varepsilon}}^{5-\mu}
	=U_{z_{\varepsilon},\lambda_{\varepsilon}}^{5-\mu}
	+\varrho,
\end{equation*}
where
\begin{equation*}
	|\varrho|
	\lesssim U_{z_{\varepsilon},\lambda_{\varepsilon}}^{4-\mu}
	\left(\lambda_{\varepsilon}^{-1/2}|H_Q(z_{\varepsilon},\cdot)|
	+|f_{z_{\varepsilon},\lambda_{\varepsilon}}|\right)
	+\lambda_{\varepsilon}^{-(5-\mu)/2}|H_Q(z_{\varepsilon},\cdot)|^{5-\mu}
	+|f_{z_{\varepsilon},\lambda_{\varepsilon}}|^{5-\mu}.
\end{equation*}
Since
\begin{equation*}
	\partial_{\lambda}\psi_{z_{\varepsilon},\lambda_{\varepsilon}}
	=\partial_{\lambda}U_{z_{\varepsilon},\lambda_{\varepsilon}}
	+\frac{1}{2}\lambda_{\varepsilon}^{-3/2}H_Q(z_{\varepsilon},\cdot)
	-\partial_{\lambda}f_{z_{\varepsilon},\lambda_{\varepsilon}},
\end{equation*}
we have
\begin{equation*}
\begin{split}
	&\int_{\Omega}\int_{\mathbb{R}^{3}}
	\frac{U_{z_{\varepsilon},\lambda_{\varepsilon}}^{5-\mu}(y)
	q_{\varepsilon}(y)
	U_{z_{\varepsilon},\lambda_{\varepsilon}}^{5-\mu}(x)
	\partial_{\lambda}U_{z_{\varepsilon},\lambda_{\varepsilon}}(x)}
	{|x-y|^{\mu}}dxdy-\int_{\Omega}\int_{\Omega}
	\frac{\psi_{z_{\varepsilon},\lambda_{\varepsilon}}^{5-\mu}(y)
	q_{\varepsilon}(y)
	\psi_{z_{\varepsilon},\lambda_{\varepsilon}}^{5-\mu}(x)
	\partial_{\lambda}\psi_{z_{\varepsilon},\lambda_{\varepsilon}}(x)}
	{|x-y|^{\mu}}dxdy
	= I_4+I_5,
\end{split}
\end{equation*}
where
\begin{equation*}
	I_4
	=\int_{\Omega}\int_{\mathbb{R}^{3}\setminus\Omega}
	\frac{U_{z_{\varepsilon},\lambda_{\varepsilon}}^{5-\mu}(y)
	q_{\varepsilon}(y)
	U_{z_{\varepsilon},\lambda_{\varepsilon}}^{5-\mu}(x)
	\partial_{\lambda}U_{z_{\varepsilon},\lambda_{\varepsilon}}(x)}
	{|x-y|^{\mu}}dxdy
\end{equation*}
and
\begin{equation*}
\begin{split}
	I_5=O\Bigg(
	&\int_{\Omega}\int_{\Omega}
	\frac{|q_{\varepsilon}(y)||\varrho(y)|
	\psi_{z_{\varepsilon},\lambda_{\varepsilon}}^{5-\mu}(x)
	|\partial_{\lambda}U_{z_{\varepsilon},\lambda_{\varepsilon}}(x)|}
	{|x-y|^{\mu}}dxdy\\
	&+\int_{\Omega}\int_{\Omega}
	\frac{|q_{\varepsilon}(y)|
	\psi_{z_{\varepsilon},\lambda_{\varepsilon}}^{5-\mu}(y)
	|\varrho(x)|
	|\partial_{\lambda}U_{z_{\varepsilon},\lambda_{\varepsilon}}(x)|}
	{|x-y|^{\mu}}dxdy\\
	&+\lambda_{\varepsilon}^{-3/2}\int_{\Omega}\int_{\Omega}
	\frac{|q_{\varepsilon}(y)|
	\psi_{z_{\varepsilon},\lambda_{\varepsilon}}^{5-\mu}(y)
	\psi_{z_{\varepsilon},\lambda_{\varepsilon}}^{5-\mu}(x)
	H_Q(z_{\varepsilon},x)}
	{|x-y|^{\mu}}dxdy\\
	&+\int_{\Omega}\int_{\Omega}
	\frac{|q_{\varepsilon}(y)|
	\psi_{z_{\varepsilon},\lambda_{\varepsilon}}^{5-\mu}(y)
	\psi_{z_{\varepsilon},\lambda_{\varepsilon}}^{5-\mu}(x)
	|\partial_{\lambda}f_{z_{\varepsilon},\lambda_{\varepsilon}}(x)|}
	{|x-y|^{\mu}}dxdy
	\Bigg).
\end{split}
\end{equation*}
Combining the HLS and H\"older inequalities with Lemmas \ref{A1},
\ref{A1RN}, and \ref{A2}, we have
\begin{equation*}
\begin{split}
	|I_4|
	&\lesssim
	\|q_{\varepsilon}\|_{6}
	\left\|U_{z_{\varepsilon},\lambda_{\varepsilon}}^{5-\mu}\right\|_{\frac{6}{5-\mu}}
	\left\|
	U_{z_{\varepsilon},\lambda_{\varepsilon}}^{5-\mu}
	\partial_{\lambda}U_{z_{\varepsilon},\lambda_{\varepsilon}}
	\right\|_{L^{\frac{6}{6-\mu}}(\mathbb{R}^{3}\setminus\Omega)}\\
	&\lesssim
	\lambda_{\varepsilon}^{-1}
	\|q_{\varepsilon}\|_{6}
	\left\|U_{z_{\varepsilon},\lambda_{\varepsilon}}
	\right\|_{L^{6}(\mathbb{R}^{3}\setminus\Omega)}^{6-\mu}\\
	&\lesssim
	\lambda_{\varepsilon}^{-4+\frac{\mu}{2}}
	\|q_{\varepsilon}\|_{6}
\end{split}
\end{equation*}
and
\begin{equation*}
\begin{split}
	|I_5|
	\lesssim{}&\|q_{\varepsilon}\|_{6}
	\left\|\psi_{z_{\varepsilon},\lambda_{\varepsilon}}^{5-\mu}\right\|_{\frac{6}{5-\mu}}
	\|\varrho\|_{\frac{6}{5-\mu}}
	\left\|
	\partial_{\lambda}U_{z_{\varepsilon},\lambda_{\varepsilon}}\right\|_{6}
	     \\
	&+\lambda_{\varepsilon}^{-3/2}
	\|q_{\varepsilon}\|_{6}
	\left\|\psi_{z_{\varepsilon},\lambda_{\varepsilon}}^{5-\mu}\right\|_{\frac{6}{5-\mu}}
	\left\|
	\psi_{z_{\varepsilon},\lambda_{\varepsilon}}^{5-\mu}
	H_Q(z_{\varepsilon},\cdot)
	\right\|_{\frac{6}{6-\mu}}\\
	&+\|q_{\varepsilon}\|_{6}
	\left\|\psi_{z_{\varepsilon},\lambda_{\varepsilon}}^{5-\mu}\right\|_{\frac{6}{5-\mu}}
	\left\|
	\psi_{z_{\varepsilon},\lambda_{\varepsilon}}^{5-\mu}
	\right\|_{\frac{6}{6-\mu}}
	\left\| \partial_{\lambda}f_{z_{\varepsilon},\lambda_{\varepsilon}}  \right\|_{\infty}
	\\
	\lesssim{}&
	\lambda_{\varepsilon}^{-1}
	\|q_{\varepsilon}\|_{6}
	\|\varrho\|_{\frac{6}{5-\mu}}
	+ \lambda_{\varepsilon}^{-3/2}
	\|q_{\varepsilon}\|_{6}
	\left\|
	\psi_{z_{\varepsilon},\lambda_{\varepsilon}}^{5-\mu}
	H_Q(z_{\varepsilon},\cdot)
	\right\|_{\frac{6}{6-\mu}}+\lambda_{\varepsilon}^{-4}
	\|q_{\varepsilon}\|_{6}.
\end{split}
\end{equation*}
By \eqref{o327}, we have
$
	\|q_\varepsilon\|_{6}
	\lesssim \lambda_{\varepsilon}^{-1}
	+\varepsilon\lambda_{\varepsilon}^{-\frac12}.
$
Thus it remains to estimate $\|\varrho\|_{\frac{6}{5-\mu}}$ and $\left\|
	\psi_{z_{\varepsilon},\lambda_{\varepsilon}}^{5-\mu}
	H_Q(z_{\varepsilon},\cdot)
	\right\|_{\frac{6}{6-\mu}}$.
	Now we estimate $\|\varrho\|_{\frac{6}{5-\mu}}$, 
by the definition of $\varrho$, we have
\begin{equation*}
\begin{split}
	\|\varrho\|_{\frac{6}{5-\mu}}
	\lesssim{}&
	\lambda_{\varepsilon}^{-\frac12}
	\left\|
	U_{z_{\varepsilon},\lambda_{\varepsilon}}^{4-\mu}
	H_Q(z_{\varepsilon},\cdot)
	\right\|_{\frac{6}{5-\mu}}
	+\left\|
	U_{z_{\varepsilon},\lambda_{\varepsilon}}^{4-\mu}
	f_{z_{\varepsilon},\lambda_{\varepsilon}}
	\right\|_{\frac{6}{5-\mu}}\\
	&+\lambda_{\varepsilon}^{-\frac{5-\mu}{2}}
	\left\||H_Q(z_{\varepsilon},\cdot)|^{5-\mu}
	\right\|_{\frac{6}{5-\mu}}
	+\left\|
		|f_{z_{\varepsilon},\lambda_{\varepsilon}}|^{5-\mu}
	\right\|_{\frac{6}{5-\mu}}.
\end{split}
\end{equation*}
For the second term, by Lemmas \ref{A1} and \ref{A2}, we obtain
\begin{equation*}
	\left\|
	U_{z_{\varepsilon},\lambda_{\varepsilon}}^{4-\mu}
	f_{z_{\varepsilon},\lambda_{\varepsilon}}
	\right\|_{\frac{6}{5-\mu}}
	\lesssim
	\|f_{z_{\varepsilon},\lambda_{\varepsilon}}\|_{\infty}
	\left\|
	U_{z_{\varepsilon},\lambda_{\varepsilon}}^{4-\mu}
	\right\|_{\frac{6}{5-\mu}}
	=O\left(\lambda_{\varepsilon}^{-3}\right).
\end{equation*}
Similarly, the third and fourth terms are bounded by
$O\left(\lambda_{\varepsilon}^{-\frac{5-\mu}{2}}\right)$.
From \eqref{UUUU}, we have 
\begin{equation*}
	\begin{split}
		\lambda_{\varepsilon}^{-\frac12}
	\left\|
	U_{z_{\varepsilon},\lambda_{\varepsilon}}^{4-\mu}
	H_Q(z_{\varepsilon},\cdot)
	\right\|_{\frac{6}{5-\mu}}\lesssim
	\phi_Q(z_{\varepsilon})\lambda_{\varepsilon}^{-1}
	+
	\begin{cases}
		O(\lambda_{\varepsilon}^{-2}), & \text{if} \quad0<\mu<1,\\
		O(\lambda_{\varepsilon}^{-2}(\log\lambda_{\varepsilon})^{\frac23}), &\text{if} \quad \mu=1,\\
		O\left(\lambda_{\varepsilon}^{-\frac{5-\mu}{2}}\right), &\text{if} \quad 1<\mu<2.\end{cases}
	\end{split}
\end{equation*}
Consequently,
\begin{equation*}
	\|\varrho\|_{\frac{6}{5-\mu}}
	\lesssim
	\phi_Q(z_{\varepsilon})\lambda_{\varepsilon}^{-1}
	+
	\begin{cases}
		O(\lambda_{\varepsilon}^{-2}), &\text{if} \quad 0<\mu<1,\\
		O(\lambda_{\varepsilon}^{-2}(\log\lambda_{\varepsilon})^{\frac23}), & \text{if} \quad\mu=1,\\
		O\left(\lambda_{\varepsilon}^{-\frac{5-\mu}{2}}\right), & \text{if} \quad1<\mu<2.
	\end{cases}
\end{equation*}
Now we estimate
$\left\|\psi_{z_{\varepsilon},\lambda_{\varepsilon}}^{5-\mu}
H_Q(z_{\varepsilon},\cdot)\right\|_{\frac{6}{6-\mu}}$.
Since
$\psi_{z_{\varepsilon},\lambda_{\varepsilon}}^{5-\mu}
=U_{z_{\varepsilon},\lambda_{\varepsilon}}^{5-\mu}+\varrho$, by H\"older's
inequality, Lemma \ref{B1} and the estimate above, we have
\begin{equation*}
\begin{split}
	&\left\|\psi_{z_{\varepsilon},\lambda_{\varepsilon}}^{5-\mu}
	H_Q(z_{\varepsilon},\cdot)\right\|_{\frac{6}{6-\mu}}\\
	&\lesssim
	\left\|U_{z_{\varepsilon},\lambda_{\varepsilon}}\right\|_{6}
	\left\|
	U_{z_{\varepsilon},\lambda_{\varepsilon}}^{4-\mu}
	H_Q(z_{\varepsilon},\cdot)
	\right\|_{\frac{6}{5-\mu}}
	+\|\varrho\|_{\frac{6}{5-\mu}}
	\left\|H_Q(z_{\varepsilon},\cdot)\right\|_{6}\\
	&\lesssim
	\phi_Q(z_{\varepsilon})\lambda_{\varepsilon}^{-\frac12}
	+
	\begin{cases}
		O(\lambda_{\varepsilon}^{-\frac32}), & \text{if} \quad 0<\mu<1,\\
		O(\lambda_{\varepsilon}^{-\frac32}(\log\lambda_{\varepsilon})^{\frac23}), & \text{if} \quad \mu=1,\\
		O\left(\lambda_{\varepsilon}^{-\frac{4-\mu}{2}}\right), & \text{if} \quad 1<\mu<2.
	\end{cases}
\end{split}
\end{equation*}
Using the estimates above, we obtain
\begin{equation*}
	\begin{split}
	&	\left| \int_{\Omega}\int_{\mathbb{R}^{3}}
	\frac{U_{z_{\varepsilon},\lambda_{\varepsilon}}^{5-\mu}(y)
	q_{\varepsilon}(y)
	U_{z_{\varepsilon},\lambda_{\varepsilon}}^{5-\mu}(x)
	\partial_{\lambda}U_{z_{\varepsilon},\lambda_{\varepsilon}}(x)}
	{|x-y|^{\mu}}dxdy-\int_{\Omega}\int_{\Omega}
	\frac{\psi_{z_{\varepsilon},\lambda_{\varepsilon}}^{5-\mu}(y)
	q_{\varepsilon}(y)
	\psi_{z_{\varepsilon},\lambda_{\varepsilon}}^{5-\mu}(x)
	\partial_{\lambda}\psi_{z_{\varepsilon},\lambda_{\varepsilon}}(x)}
	{|x-y|^{\mu}}dxdy   \right|\\
		\lesssim & 
	|I_4|+|I_5|
	=
	O\left(\phi_Q(z_{\varepsilon})\lambda_{\varepsilon}^{-3}\right)
	+o\left(\varepsilon\lambda_{\varepsilon}^{-2}\right)
	+o\left(\lambda_{\varepsilon}^{-3}\right).
	\end{split}
\end{equation*}
Similarly, we can deduce
\begin{equation*}
	\begin{split}
		&\left|\int_{\mathbb{R}^{3}}\int_{\Omega}
	\frac{U_{z_{\varepsilon},\lambda_{\varepsilon}}^{6-\mu}(y)
	U_{z_{\varepsilon},\lambda_{\varepsilon}}^{4-\mu}(x)
	q_{\varepsilon}(x)
	\partial_{\lambda}U_{z_{\varepsilon},\lambda_{\varepsilon}}(x)}
	{|x-y|^{\mu}}dxdy-\int_{\Omega}\int_{\Omega}
	\frac{\psi_{z_{\varepsilon},\lambda_{\varepsilon}}^{6-\mu}(y)
	\psi_{z_{\varepsilon},\lambda_{\varepsilon}}^{4-\mu}(x)
	q_{\varepsilon}(x)
	\partial_{\lambda}\psi_{z_{\varepsilon},\lambda_{\varepsilon}}(x)}
	{|x-y|^{\mu}}dxdy \right|
	\\ =&
	O\left(\phi_Q(z_{\varepsilon})\lambda_{\varepsilon}^{-3}\right)
	+o\left(\varepsilon\lambda_{\varepsilon}^{-2}\right)
	+o\left(\lambda_{\varepsilon}^{-3}\right).
	\end{split}
\end{equation*}
It remains to estimate the last term in \eqref{o337}. By H\"older's inequality,
Lemmas \ref{A2} and \ref{A4}, we have
\begin{equation*}
\begin{split}
	&\left|
	\int_{\Omega}Qq_{\varepsilon}
	\left(\partial_{\lambda}f_{z_{\varepsilon},\lambda_{\varepsilon}}
	+\partial_{\lambda}g_{z_{\varepsilon},\lambda_{\varepsilon}}\right)
	\right|\\
	&\lesssim
	\|q_{\varepsilon}\|_{6}
	\left(
	\left\|\partial_{\lambda}f_{z_{\varepsilon},\lambda_{\varepsilon}}
	\right\|_{\frac65}
	+
	\left\|\partial_{\lambda}g_{z_{\varepsilon},\lambda_{\varepsilon}}
	\right\|_{\frac65}
	\right)\\
	&\lesssim
	\left(\lambda_{\varepsilon}^{-1}
	+\varepsilon\lambda_{\varepsilon}^{-\frac12}\right)
	\left(\lambda_{\varepsilon}^{-\frac72}
	+\lambda_{\varepsilon}^{-3}\right)
	=o\left(\varepsilon\lambda_{\varepsilon}^{-2}\right)
	+o\left(\lambda_{\varepsilon}^{-3}\right).
\end{split}
\end{equation*}
Combining the estimates above with \eqref{o337}, we prove (b).

We next prove (c). By \eqref{o33}, \eqref{o310} and \eqref{o318}, the terms containing $r_{\varepsilon}$ are absorbed into
the remainder. Hence the left-hand side in part (c) is equal to
\begin{equation*}
\begin{split}
	&A_{3,\mu}\alpha_{\varepsilon}^{10-2\mu}\Bigg[
	\frac{(6-\mu)(5-\mu)}{2}
	\int_{\Omega}\int_{\Omega}
	\frac{\psi_{z_{\varepsilon},\lambda_{\varepsilon}}^{4-\mu}(y)
	s_{\varepsilon}^{2}(y)
	\psi_{z_{\varepsilon},\lambda_{\varepsilon}}^{5-\mu}(x)
	\partial_{\lambda}\psi_{z_{\varepsilon},\lambda_{\varepsilon}}(x)}
	{|x-y|^{\mu}}dxdy\\
	&\quad +(6-\mu)(5-\mu)
	\int_{\Omega}\int_{\Omega}
	\frac{\psi_{z_{\varepsilon},\lambda_{\varepsilon}}^{5-\mu}(y)
	s_{\varepsilon}(y)
	\psi_{z_{\varepsilon},\lambda_{\varepsilon}}^{4-\mu}(x)
	s_{\varepsilon}(x)
	\partial_{\lambda}\psi_{z_{\varepsilon},\lambda_{\varepsilon}}(x)}
	{|x-y|^{\mu}}dxdy\\
	&\quad +\frac{(5-\mu)(4-\mu)}{2}
	\int_{\Omega}\int_{\Omega}
	\frac{\psi_{z_{\varepsilon},\lambda_{\varepsilon}}^{6-\mu}(y)
	\psi_{z_{\varepsilon},\lambda_{\varepsilon}}^{3-\mu}(x)
	s_{\varepsilon}^{2}(x)
	\partial_{\lambda}\psi_{z_{\varepsilon},\lambda_{\varepsilon}}(x)}
	{|x-y|^{\mu}}dxdy\Bigg]\\
	&\quad
	+O\left(\phi_Q(z_{\varepsilon})\lambda_{\varepsilon}^{-3}\right)
	+o\left(\lambda_{\varepsilon}^{-3}\right)
	+o\left(\varepsilon\lambda_{\varepsilon}^{-2}\right).
\end{split}
\end{equation*}
Using \eqref{sss1}, \eqref{sss2}, and the same computation as in the proof of
(b), we obtain that the left-hand side in part (c) is equal to
\begin{equation*}
\begin{split}
	&A_{3,\mu}\alpha_{\varepsilon}^{10-2\mu}
	\left[
	\beta^2\lambda_{\varepsilon}^{-2}I_6
	+2\beta\gamma\lambda_{\varepsilon}^{-1}I_7
	+\gamma^2I_8
	\right]
	+O\left(\phi_Q(z_{\varepsilon})\lambda_{\varepsilon}^{-3}\right)
	+o\left(\lambda_{\varepsilon}^{-3}\right)
	+o\left(\varepsilon\lambda_{\varepsilon}^{-2}\right),
\end{split}
\end{equation*}
where
\begin{equation*}
	I_6
	=(5-\mu)(11-2\mu)
	\int_{\Omega}\int_{\Omega}
	\frac{U_{z_{\varepsilon},\lambda_{\varepsilon}}^{6-\mu}(y)
	U_{z_{\varepsilon},\lambda_{\varepsilon}}^{5-\mu}(x)
	\partial_{\lambda}U_{z_{\varepsilon},\lambda_{\varepsilon}}(x)}
	{|x-y|^{\mu}}dxdy,
\end{equation*}
\begin{equation*}
\begin{split}
	I_7={}&
	(6-\mu)(5-\mu)
	\int_{\Omega}\int_{\Omega}
	\frac{U_{z_{\varepsilon},\lambda_{\varepsilon}}^{5-\mu}(y)
	\partial_{\lambda}U_{z_{\varepsilon},\lambda_{\varepsilon}}(y)
	U_{z_{\varepsilon},\lambda_{\varepsilon}}^{5-\mu}(x)
	\partial_{\lambda}U_{z_{\varepsilon},\lambda_{\varepsilon}}(x)}
	{|x-y|^{\mu}}dxdy\\
	&+(5-\mu)^2
	\int_{\Omega}\int_{\Omega}
	\frac{U_{z_{\varepsilon},\lambda_{\varepsilon}}^{6-\mu}(y)
	U_{z_{\varepsilon},\lambda_{\varepsilon}}^{4-\mu}(x)
	(\partial_{\lambda}U_{z_{\varepsilon},\lambda_{\varepsilon}}(x))^2}
	{|x-y|^{\mu}}dxdy,
\end{split}
\end{equation*}
and
\begin{equation*}
\begin{split}
	I_8={}&
	\frac{(6-\mu)(5-\mu)}{2}
	\int_{\Omega}\int_{\Omega}
	\frac{U_{z_{\varepsilon},\lambda_{\varepsilon}}^{4-\mu}(y)
	(\partial_{\lambda}U_{z_{\varepsilon},\lambda_{\varepsilon}}(y))^2
	U_{z_{\varepsilon},\lambda_{\varepsilon}}^{5-\mu}(x)
	\partial_{\lambda}U_{z_{\varepsilon},\lambda_{\varepsilon}}(x)}
	{|x-y|^{\mu}}dxdy\\
	&+(6-\mu)(5-\mu)
	\int_{\Omega}\int_{\Omega}
	\frac{U_{z_{\varepsilon},\lambda_{\varepsilon}}^{5-\mu}(y)
	\partial_{\lambda}U_{z_{\varepsilon},\lambda_{\varepsilon}}(y)
	U_{z_{\varepsilon},\lambda_{\varepsilon}}^{4-\mu}(x)
	(\partial_{\lambda}U_{z_{\varepsilon},\lambda_{\varepsilon}}(x))^2}
	{|x-y|^{\mu}}dxdy\\
	&+\frac{(5-\mu)(4-\mu)}{2}
	\int_{\Omega}\int_{\Omega}
	\frac{U_{z_{\varepsilon},\lambda_{\varepsilon}}^{6-\mu}(y)
	U_{z_{\varepsilon},\lambda_{\varepsilon}}^{3-\mu}(x)
	(\partial_{\lambda}U_{z_{\varepsilon},\lambda_{\varepsilon}}(x))^3}
		{|x-y|^{\mu}}dxdy.
	\end{split}
\end{equation*}
We first compute $I_6$. By its definition,
\begin{equation*}
\begin{split}
	I_6={}&(5-\mu)(11-2\mu)
	\int_{\Omega}\int_{\Omega}
	\frac{U_{z_{\varepsilon},\lambda_{\varepsilon}}^{6-\mu}(y)
	U_{z_{\varepsilon},\lambda_{\varepsilon}}^{5-\mu}(x)
	\partial_{\lambda}U_{z_{\varepsilon},\lambda_{\varepsilon}}(x)}
	{|x-y|^{\mu}}dxdy\\
	={}&(5-\mu)(11-2\mu)\Bigg[
	\int_{\mathbb R^3}\int_{\Omega}
	\frac{U_{z_{\varepsilon},\lambda_{\varepsilon}}^{6-\mu}(y)
	U_{z_{\varepsilon},\lambda_{\varepsilon}}^{5-\mu}(x)
	\partial_{\lambda}U_{z_{\varepsilon},\lambda_{\varepsilon}}(x)}
	{|x-y|^{\mu}}dxdy-\int_{\mathbb R^3\setminus\Omega}\int_{\Omega}
	\frac{U_{z_{\varepsilon},\lambda_{\varepsilon}}^{6-\mu}(y)
	U_{z_{\varepsilon},\lambda_{\varepsilon}}^{5-\mu}(x)
	\partial_{\lambda}U_{z_{\varepsilon},\lambda_{\varepsilon}}(x)}
	{|x-y|^{\mu}}dxdy\Bigg]\\
	={}&\frac{3(5-\mu)(11-2\mu)}{A_{3,\mu}}\int_{\Omega}
	U_{z_{\varepsilon},\lambda_{\varepsilon}}^5
	\partial_{\lambda}U_{z_{\varepsilon},\lambda_{\varepsilon}}
	+o\left(\lambda_{\varepsilon}^{-3}\right)\\
	={}&\frac{3(5-\mu)(11-2\mu)}{A_{3,\mu}} \left(\int_{\mathbb R^3}
	U_{z_{\varepsilon},\lambda_{\varepsilon}}^5
	\partial_{\lambda}U_{z_{\varepsilon},\lambda_{\varepsilon}}
	-\int_{\mathbb R^3\setminus\Omega}
	U_{z_{\varepsilon},\lambda_{\varepsilon}}^5
	\partial_{\lambda}U_{z_{\varepsilon},\lambda_{\varepsilon}}      \right)+o\left(\lambda_{\varepsilon}^{-3}\right)
	\\
	={}&\frac{(5-\mu)(11-2\mu)}{2 A_{3,\mu}}\partial_{\lambda}
	\int_{\mathbb R^3}U_{z_{\varepsilon},\lambda_{\varepsilon}}^6+o\left(\lambda_{\varepsilon}^{-3}\right)\\
	={}&o\left(\lambda_{\varepsilon}^{-3}\right). 
\end{split}
\end{equation*}

Next, differentiating the equation of
$U_{z_{\varepsilon},\lambda_{\varepsilon}}$ with respect to $\lambda$, we have
\begin{equation*}
\begin{split}
	-\Delta\partial_{\lambda}U_{z_{\varepsilon},\lambda_{\varepsilon}}(x)
	={}&A_{3,\mu}(6-\mu)
	\left(\int_{\mathbb R^3}
	\frac{U_{z_{\varepsilon},\lambda_{\varepsilon}}^{5-\mu}(y)
	\partial_{\lambda}U_{z_{\varepsilon},\lambda_{\varepsilon}}(y)}
	{|x-y|^\mu}dy\right)
	U_{z_{\varepsilon},\lambda_{\varepsilon}}^{5-\mu}(x)\\
	&\quad+A_{3,\mu}(5-\mu)
	\left(\int_{\mathbb R^3}
	\frac{U_{z_{\varepsilon},\lambda_{\varepsilon}}^{6-\mu}(y)}
	{|x-y|^\mu}dy\right)
	U_{z_{\varepsilon},\lambda_{\varepsilon}}^{4-\mu}(x)
	\partial_{\lambda}U_{z_{\varepsilon},\lambda_{\varepsilon}}(x).
\end{split}
\end{equation*}
Therefore,
\begin{equation*}
\begin{split}
	I_7
	={}&(6-\mu)(5-\mu)
	\int_{\mathbb R^3}\int_{\Omega}
	\frac{U_{z_{\varepsilon},\lambda_{\varepsilon}}^{5-\mu}(y)
	\partial_{\lambda}U_{z_{\varepsilon},\lambda_{\varepsilon}}(y)
	U_{z_{\varepsilon},\lambda_{\varepsilon}}^{5-\mu}(x)
	\partial_{\lambda}U_{z_{\varepsilon},\lambda_{\varepsilon}}(x)}
	{|x-y|^\mu}dxdy\\
	&+(5-\mu)^2
	\int_{\mathbb R^3}\int_{\Omega}
	\frac{U_{z_{\varepsilon},\lambda_{\varepsilon}}^{6-\mu}(y)
	U_{z_{\varepsilon},\lambda_{\varepsilon}}^{4-\mu}(x)
	(\partial_{\lambda}U_{z_{\varepsilon},\lambda_{\varepsilon}}(x))^2}
	{|x-y|^\mu}dxdy+o\left(\lambda_{\varepsilon}^{-2}\right)\\
	={}&\frac{5-\mu}{A_{3,\mu}}\int_{\Omega}
	\left(-\Delta\partial_{\lambda}
	U_{z_{\varepsilon},\lambda_{\varepsilon}}\right)
	\partial_{\lambda}U_{z_{\varepsilon},\lambda_{\varepsilon}}
	+o\left(\lambda_{\varepsilon}^{-2}\right)\\
	={}&\frac{15(5-\mu)}{A_{3,\mu}}\int_{\Omega}
	U_{z_{\varepsilon},\lambda_{\varepsilon}}^4
	(\partial_{\lambda}U_{z_{\varepsilon},\lambda_{\varepsilon}})^2
	+o\left(\lambda_{\varepsilon}^{-2}\right),  
\end{split}
\end{equation*}
since $-\Delta\partial_\lambda U_{z_\varepsilon,\lambda_\varepsilon}
=
15U_{z_\varepsilon,\lambda_\varepsilon}^{4}
\partial_\lambda U_{z_\varepsilon,\lambda_\varepsilon}$. 
A direct computation gives
\begin{equation*}
	\int_{\Omega}
	U_{z_{\varepsilon},\lambda_{\varepsilon}}^4
	(\partial_{\lambda}U_{z_{\varepsilon},\lambda_{\varepsilon}})^2
	=\frac{\pi^2}{64}\lambda_{\varepsilon}^{-2}
	+o\left(\lambda_{\varepsilon}^{-2}\right).
\end{equation*}
It follows that
\begin{equation*}
	I_7=\frac{15\pi^2(5-\mu)}{64A_{3,\mu}}
	\lambda_{\varepsilon}^{-2}
	+o\left(\lambda_{\varepsilon}^{-2}\right).
\end{equation*}
It remains to estimate $I_8$. Let $I_8^{\mathbb R^3}$ be the corresponding
integral with $\Omega\times\Omega$ replaced by
$\mathbb R^3\times\mathbb R^3$, that is 
\begin{equation*}
\begin{split}
	I_8^{\mathbb R^3}:={}&
	\frac{(6-\mu)(5-\mu)}{2}
	\int_{\mathbb R^3}\int_{\mathbb R^3}
	\frac{U_{z_{\varepsilon},\lambda_{\varepsilon}}^{4-\mu}(y)
	(\partial_{\lambda}U_{z_{\varepsilon},\lambda_{\varepsilon}}(y))^2
	U_{z_{\varepsilon},\lambda_{\varepsilon}}^{5-\mu}(x)
	\partial_{\lambda}U_{z_{\varepsilon},\lambda_{\varepsilon}}(x)}
	{|x-y|^{\mu}}dxdy\\
	&+(6-\mu)(5-\mu)
	\int_{\mathbb R^3}\int_{\mathbb R^3}
	\frac{U_{z_{\varepsilon},\lambda_{\varepsilon}}^{5-\mu}(y)
	\partial_{\lambda}U_{z_{\varepsilon},\lambda_{\varepsilon}}(y)
	U_{z_{\varepsilon},\lambda_{\varepsilon}}^{4-\mu}(x)
	(\partial_{\lambda}U_{z_{\varepsilon},\lambda_{\varepsilon}}(x))^2}
	{|x-y|^{\mu}}dxdy\\
	&+\frac{(5-\mu)(4-\mu)}{2}
	\int_{\mathbb R^3}\int_{\mathbb R^3}
	\frac{U_{z_{\varepsilon},\lambda_{\varepsilon}}^{6-\mu}(y)
	U_{z_{\varepsilon},\lambda_{\varepsilon}}^{3-\mu}(x)
	(\partial_{\lambda}U_{z_{\varepsilon},\lambda_{\varepsilon}}(x))^3}
	{|x-y|^{\mu}}dxdy.
\end{split}
\end{equation*}
  Differentiating the equation of
$U_{z_{\varepsilon},\lambda_{\varepsilon}}$ twice with respect to $\lambda$,
multiplying the resulting identity by
$\partial_{\lambda}U_{z_{\varepsilon},\lambda_{\varepsilon}}$, and integrating
over $\mathbb R^3$, we get
\begin{equation*}
\begin{split}
	&\int_{\mathbb R^3}
	\left(-\Delta\partial_{\lambda\lambda}
	U_{z_{\varepsilon},\lambda_{\varepsilon}}\right)
	\partial_{\lambda}U_{z_{\varepsilon},\lambda_{\varepsilon}}
	=2A_{3,\mu}I_8^{\mathbb R^3}\\
	&\qquad+A_{3,\mu}(6-\mu)
	\int_{\mathbb R^3}\int_{\mathbb R^3}
	\frac{U_{z_{\varepsilon},\lambda_{\varepsilon}}^{5-\mu}(y)
	\partial_{\lambda\lambda}U_{z_{\varepsilon},\lambda_{\varepsilon}}(y)
	U_{z_{\varepsilon},\lambda_{\varepsilon}}^{5-\mu}(x)
	\partial_{\lambda}U_{z_{\varepsilon},\lambda_{\varepsilon}}(x)}
	{|x-y|^\mu}dxdy\\
	&\qquad+A_{3,\mu}(5-\mu)
	\int_{\mathbb R^3}\int_{\mathbb R^3}
	\frac{U_{z_{\varepsilon},\lambda_{\varepsilon}}^{6-\mu}(y)
	U_{z_{\varepsilon},\lambda_{\varepsilon}}^{4-\mu}(x)
	\partial_{\lambda\lambda}U_{z_{\varepsilon},\lambda_{\varepsilon}}(x)
	\partial_{\lambda}U_{z_{\varepsilon},\lambda_{\varepsilon}}(x)}
	{|x-y|^\mu}dxdy.
\end{split}
\end{equation*}
On the other hand, multiplying the equation of
$\partial_{\lambda}U_{z_{\varepsilon},\lambda_{\varepsilon}}$ by
$\partial_{\lambda\lambda}U_{z_{\varepsilon},\lambda_{\varepsilon}}$ and
integrating over $\mathbb R^3$, we obtain
\begin{equation*}
\begin{split}
	&\int_{\mathbb R^3}
	\left(-\Delta\partial_{\lambda}
	U_{z_{\varepsilon},\lambda_{\varepsilon}}\right)
	\partial_{\lambda\lambda}U_{z_{\varepsilon},\lambda_{\varepsilon}}\\
	={}&A_{3,\mu}(6-\mu)
	\int_{\mathbb R^3}\int_{\mathbb R^3}
	\frac{U_{z_{\varepsilon},\lambda_{\varepsilon}}^{5-\mu}(y)
	\partial_{\lambda}U_{z_{\varepsilon},\lambda_{\varepsilon}}(y)
	U_{z_{\varepsilon},\lambda_{\varepsilon}}^{5-\mu}(x)
	\partial_{\lambda\lambda}U_{z_{\varepsilon},\lambda_{\varepsilon}}(x)}
	{|x-y|^\mu}dxdy\\
	&+A_{3,\mu}(5-\mu)
	\int_{\mathbb R^3}\int_{\mathbb R^3}
	\frac{U_{z_{\varepsilon},\lambda_{\varepsilon}}^{6-\mu}(y)
	U_{z_{\varepsilon},\lambda_{\varepsilon}}^{4-\mu}(x)
	\partial_{\lambda}U_{z_{\varepsilon},\lambda_{\varepsilon}}(x)
	\partial_{\lambda\lambda}U_{z_{\varepsilon},\lambda_{\varepsilon}}(x)}
	{|x-y|^\mu}dxdy.
\end{split}
\end{equation*}
Since  $\int_{\mathbb R^3}
	\left(-\Delta\partial_{\lambda\lambda}
	U_{z_{\varepsilon},\lambda_{\varepsilon}}\right)
	\partial_{\lambda}U_{z_{\varepsilon},\lambda_{\varepsilon}}=\int_{\mathbb R^3}
	\left(-\Delta\partial_{\lambda}
	U_{z_{\varepsilon},\lambda_{\varepsilon}}\right)
	\partial_{\lambda\lambda}U_{z_{\varepsilon},\lambda_{\varepsilon}}$, we have 
	\begin{equation*}
	I_8^{\mathbb R^3}=0.
\end{equation*}
	It is easy to see $\left|I_8^{\mathbb R^3}-I_8\right| =  o\left(\lambda_{\varepsilon}^{-3}\right) $, thus 
\begin{equation*}
	I_8=o\left(\lambda_{\varepsilon}^{-3}\right).
\end{equation*}
	Combining these estimates and the fact $1-\alpha_{\varepsilon}^{10-2\mu}=o(1)$, we obtain
	\begin{equation*}
	\begin{split}
		 A_{3,\mu}\alpha_{\varepsilon}^{10-2\mu}
		\left[
		\beta^2\lambda_{\varepsilon}^{-2}I_6
		+2\beta\gamma\lambda_{\varepsilon}^{-1}I_7
		+\gamma^2I_8
		\right]
		=&\frac{15\pi^2(5-\mu)}{32}
		\alpha_{\varepsilon}^{10-2\mu}\beta\gamma
		\lambda_{\varepsilon}^{-3}
		+o\left(\lambda_{\varepsilon}^{-3}\right)
		\\
		=&\frac{15\pi^2(5-\mu)}{32}
			\beta\gamma
		\lambda_{\varepsilon}^{-3}
		+o\left(\lambda_{\varepsilon}^{-3}\right).
	\end{split}
		\end{equation*}
		Thus (c) follows.

		The proof of (d) relies on the same estimates as above, and is omitted.
	\end{proof}

\begin{proof}[Proof of Proposition \ref{prop38o}]
	Inserting the estimates of Lemma \ref{lemma310} into \eqref{o330}, we obtain
	\begin{equation}\label{o343}
	\begin{split}
		&4\pi\phi_Q(z_{\varepsilon})\lambda_{\varepsilon}^{-2}
		+\varepsilon\Theta_{V}(z_{\varepsilon})\lambda_{\varepsilon}^{-2}
		-4\pi^2Q(z_{\varepsilon})\lambda_{\varepsilon}^{-3}
		+\lambda_{\varepsilon}^{-3}\tilde{R}+O\left(\phi_Q(z_{\varepsilon})\lambda_{\varepsilon}^{-3}\right)
		+o\left(\lambda_{\varepsilon}^{-3}\right)
		+o\left(\varepsilon\lambda_{\varepsilon}^{-2}\right)=0,
	\end{split}
	\end{equation}
	where
	\begin{equation*}
	\begin{split}
		\tilde{R}
		=-4\pi\lambda_{\varepsilon}
		\left(1-\alpha_{\varepsilon}^{10-2\mu}\right)
		\left(\phi_Q(z_{\varepsilon})+\phi_0(z_{\varepsilon})\right)-2\left((6-\mu)d_4+3\pi^2(5-\mu)\right)
		\phi_Q(z_{\varepsilon})^2
		+\frac{15\pi^2(5-\mu)}{16}\beta\gamma .
	\end{split}
	\end{equation*}
	Using Proposition \ref{prop36o}, we first get
	\begin{equation*}
		\tilde{R}
		=-8\pi(5-\mu)\beta
		\phi_0(z_{\varepsilon})
		+\frac{15\pi^2(5-\mu)}{16}\beta\gamma
		+O\left(\phi_Q(z_{\varepsilon})+\lambda_{\varepsilon}^{-1}+\varepsilon\right).
	\end{equation*}
	Inserting the expansions of $\beta$ and $\gamma$, we find that
	\begin{equation*}
		\tilde{R}=O\left(\phi_Q(z_{\varepsilon})
		+\lambda_{\varepsilon}^{-1}
		+\varepsilon\right).
	\end{equation*}
	In particular, $\tilde{R}=O(1)$. Inserting this into \eqref{o343}, we obtain
	\begin{equation*}
		\phi_Q(z_{\varepsilon})
		=O\left(\lambda_{\varepsilon}^{-1}+\varepsilon\right).
	\end{equation*}
	Thus
	\begin{equation*}
		O\left(\phi_Q(z_{\varepsilon})\lambda_{\varepsilon}^{-3}\right)
		=o\left(\lambda_{\varepsilon}^{-3}\right)
		+o\left(\varepsilon\lambda_{\varepsilon}^{-2}\right).
	\end{equation*}
	Moreover, using the above estimate of $\phi_Q(z_{\varepsilon})$, we also have
	\begin{equation*}
		\tilde{R}=O\left(\lambda_{\varepsilon}^{-1}+\varepsilon\right).
	\end{equation*}
	Inserting this improved bound into \eqref{o343}, we get
	\begin{equation*}
	\begin{split}
		0={}&4\pi\phi_Q(z_{\varepsilon})\lambda_{\varepsilon}^{-2}
		+\varepsilon\Theta_{V}(z_{\varepsilon})\lambda_{\varepsilon}^{-2}
		-4\pi^2Q(z_{\varepsilon})\lambda_{\varepsilon}^{-3}
		+o\left(\lambda_{\varepsilon}^{-3}\right)
		+o\left(\varepsilon\lambda_{\varepsilon}^{-2}\right).
	\end{split}
	\end{equation*}
	Dividing by $4\pi\lambda_{\varepsilon}^{-2}$, we obtain \eqref{o328}.
		\end{proof}

	\begin{cor}\label{cor39o}
		We have $\phi_Q(z_0)=0$, $\Theta_V(z_0)\leq 0$ and
		\begin{equation}\label{o329}
			\lambda_{\varepsilon}^{-1}=O(\varepsilon),
		\end{equation}
		as $\varepsilon\rightarrow 0$. Moreover,
		$\|\nabla r_{\varepsilon}\|_{2}
		=O\left(\varepsilon\lambda_{\varepsilon}^{-\frac{1}{2}}\right)$ and
		$\displaystyle
			\alpha_{\varepsilon}^{10-2\mu}
			=1+\frac{16(10-2\mu)}{3\pi}
			\phi_0(z_{\varepsilon})\lambda_{\varepsilon}^{-1}
			+O\left(
			\varepsilon\lambda_{\varepsilon}^{-1}\right).
		$
	\end{cor}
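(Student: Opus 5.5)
The corollary follows from the expansion \eqref{o328} of Proposition \ref{prop38o}, together with two facts: $\phi_Q\geq 0$ on $\Omega$ (criticality of $Q$, since $\inf_\Omega\phi_Q=0$), and $Q(z_0)<0$ whenever $z_0\in\mathcal N_Q$ (assumption (e)). The new bounds will then be fed back into Propositions \ref{prop34} and \ref{prop36o}.

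\emph{Step 1: $\phi_Q(z_0)=0$.} By Proposition \ref{prop21}, $z_\varepsilon\to z_0\in\Omega$ and $\lambda_\varepsilon\to\infty$. The right side of \eqref{o328} is $O(\lambda_\varepsilon^{-1}+\varepsilon)\to 0$, because $Q$ and $\Theta_V$ are bounded near $z_0$. Continuity of $\phi_Q$ then gives $\phi_Q(z_0)=0$, so $z_0\in\mathcal N_Q$ and, by (e), $Q(z_0)<0$.

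\emph{Step 2: $\lambda_\varepsilon^{-1}=O(\varepsilon)$ and $\Theta_V(z_0)\le 0$.} Rearrange \eqref{o328} as
\begin{equation*}
\phi_Q(z_\varepsilon)+\pi|Q(z_\varepsilon)|\lambda_\varepsilon^{-1}+o(\lambda_\varepsilon^{-1})=-\frac{\varepsilon}{4\pi}\Theta_V(z_\varepsilon)+o(\varepsilon).
\end{equation*}
By Step 1, $|Q(z_\varepsilon)|\ge |Q(z_0)|/2>0$ for small $\varepsilon$. Since $\phi_Q(z_\varepsilon)\ge0$, the left side is at least $c\lambda_\varepsilon^{-1}$ with $c>0$, while the right side is $O(\varepsilon)$; this yields \eqref{o329}. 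Because the left side is also $\geq 0$, we get $-\Theta_V(z_\varepsilon)\ge o(1)$, and continuity of $\Theta_V$ (a consequence of $G_Q(z,\cdot)\in L^2$ depending continuously on $z$) gives $\Theta_V(z_0)\le0$. This step carries the main idea: nonnegativity of $\phi_Q$ forces the term $\pi Q\lambda^{-1}$ to be balanced by $\varepsilon\Theta_V$. The rest is bookkeeping.

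\emph{Step 3: consequences.} The displayed identity also gives $\phi_Q(z_\varepsilon)=O(\varepsilon+\lambda_\varepsilon^{-1})=O(\varepsilon)$. Inserting this into Proposition \ref{prop34} gives $\|\nabla r_\varepsilon\|_2=O(\lambda_\varepsilon^{-3/2}+\varepsilon\lambda_\varepsilon^{-1}+\varepsilon\lambda_\varepsilon^{-1/2})$. Using $\lambda_\varepsilon^{-1}\lesssim\varepsilon$, each term is $O(\varepsilon\lambda_\varepsilon^{-1/2})$, e.g. $\lambda_\varepsilon^{-3/2}=\lambda_\varepsilon^{-1}\lambda_\varepsilon^{-1/2}\lesssim \varepsilon\lambda_\varepsilon^{-1/2}$. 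For $\alpha_\varepsilon$, insert Proposition \ref{prop33} into \eqref{o324}:
\begin{equation*}
-(10-2\mu)\beta\lambda_\varepsilon^{-1}=-\frac{16(10-2\mu)}{3\pi}(\phi_Q(z_\varepsilon)-\phi_0(z_\varepsilon))\lambda_\varepsilon^{-1}+O(\lambda_\varepsilon^{-2}).
\end{equation*}
The $\phi_Q(z_\varepsilon)\lambda_\varepsilon^{-1}$ piece, the $\lambda_\varepsilon^{-2}$ term and the error terms of \eqref{o324} are all $O(\varepsilon\lambda_\varepsilon^{-1})$ by the bounds above. What remains is $\frac{16(10-2\mu)}{3\pi}\phi_0(z_\varepsilon)\lambda_\varepsilon^{-1}$, which gives the stated expansion.
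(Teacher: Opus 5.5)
Your proposal is correct and follows essentially the same route as the paper, which simply defers to the proof of \cite[Corollary~3.9]{FKK3}. That route is: let $\varepsilon\to0$ in \eqref{o328} to get $\phi_Q(z_0)=0$; combine $\phi_Q\geq0$ with $Q(z_0)<0$ to obtain $\lambda_\varepsilon^{-1}=O(\varepsilon)$ and $\Theta_V(z_0)\leq0$; then substitute $\phi_Q(z_\varepsilon)=O(\varepsilon)$ and $\lambda_\varepsilon^{-1}=O(\varepsilon)$ into Propositions \ref{prop34}, \ref{prop33} and \ref{prop36o}.
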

	\begin{proof}
		Once Proposition \ref{prop38o} is obtained, the result follows in the same way as the proof of \cite[Corollary~3.9]{FKK3}.
	\end{proof}

\subsection{The bound on   $\boldsymbol{\nabla \phi_Q(z_{\varepsilon})}$. } 
In this subsection, we establish the bound on $\nabla\phi_Q(z_{\varepsilon})$.

\begin{Prop}\label{prop311o}
	For every $0<\nu<1$, as $\varepsilon\rightarrow 0$, we have
	\begin{equation}\label{o346}
		|\nabla\phi_Q(z_{\varepsilon})|
		\lesssim\varepsilon^{\nu}.
	\end{equation}
\end{Prop}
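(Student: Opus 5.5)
Following the strategy of \cite[Proposition~3.11]{FKK3}, I will test the equation for $u_\varepsilon$ against the translational modes $\partial_{z_i}\psi_{z_\varepsilon,\lambda_\varepsilon}$, $i=1,2,3$. This is the analogue of the identity \eqref{o330}, with $\partial_\lambda$ replaced by $\partial_{z_i}$. Writing $u_\varepsilon=\alpha_\varepsilon(\psi_{z_\varepsilon,\lambda_\varepsilon}+q_\varepsilon)$ and expanding the nonlocal nonlinearity to second order in $q_\varepsilon$, I split the identity into four kinds of terms: a main term involving only $\psi$, linear terms in $q_\varepsilon$, quadratic terms in $q_\varepsilon$, and a remainder $\mathcal R^{(i)}_\varepsilon$.

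For the main term I use \eqref{o326}, i.e. $(-\Delta+Q)\psi=3U^5-Q(f+g)$, together with \eqref{1esay}. This converts the local part into $3(1-\alpha_\varepsilon^{10-2\mu})\int_\Omega U^5\partial_{z_i}\psi$, plus the difference between the whole-space and the $\Omega$-restricted Hartree integrals. As in Lemma \ref{lemma310}(a), I expand $\psi=U-\lambda^{-1/2}H_Q(z_\varepsilon,\cdot)-f$. The key point is that $\partial_{z_i}\psi$ contains $-\lambda^{-1/2}\partial_{z_i}H_Q(z,\cdot)$, and the symmetric combination of the Hartree terms linear in $H_Q$ should produce, via the analogue of Lemma \ref{B4}, a leading contribution of the form $c\,\lambda_\varepsilon^{-1}\partial_i\phi_Q(z_\varepsilon)$ with an explicit constant $c\neq0$. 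This is the $\nabla$ version of the $2\pi\phi_Q\lambda^{-2}$ term. The $\varepsilon V$ term contributes $O(\varepsilon\lambda^{-1})$, since $\|\psi\|_2\lesssim\lambda^{-1/2}$ and $\|\partial_{z_i}\psi\|_2\lesssim\lambda^{1/2}\cdot\lambda^{-1/2}\ldots$; I will check it is at most $\varepsilon\lambda^{-1}$ after pairing. The $Q(f+g)$ terms should give $O(\lambda^{-2})$ by Lemmas \ref{A2} and \ref{A4}. Odd symmetry of $\partial_{z_i}U$ kills the $\mathbb R^3$ integrals, and boundary leftovers are exponentially small in $d_\varepsilon$, which is bounded below by Proposition \ref{prop25}.

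For the linear terms in $q_\varepsilon$ I differentiate the equation of $U$ in $z_i$ and use the orthogonality $r_\varepsilon\in T^\perp$, exactly as in Lemma \ref{lemma310}(b). This bounds them by $|1-\alpha_\varepsilon^{10-2\mu}|\,\|q_\varepsilon\|_6\|\partial_{z_i}U\|$ plus differences controlled through \eqref{UUUU}. Here $\|\partial_{z_i}U\|_6\sim\lambda$, which costs one power of $\lambda$ relative to the $\partial_\lambda$ case. By Corollary \ref{cor39o}, $\lambda^{-1}=O(\varepsilon)$ and $\|q_\varepsilon\|_6\lesssim\lambda^{-1}$, so all these terms should be $O(\lambda^{-1}\cdot\lambda^{-1}\log\lambda)$ or better. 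The quadratic terms in $s_\varepsilon$ vanish to leading order by oddness in $x-z_\varepsilon$, since $\beta U$ and $\gamma\partial_\lambda U$ are radial. The $\delta_i\lambda^{-3}\partial_{z_i}PU$ part of $s_\varepsilon$ is lower order.

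Collecting the terms, I expect $c\,\lambda_\varepsilon^{-1}|\nabla\phi_Q(z_\varepsilon)|\lesssim\lambda_\varepsilon^{-2}\log\lambda_\varepsilon+\varepsilon\lambda_\varepsilon^{-1}$. Then $\lambda^{-1}=O(\varepsilon)$ gives $|\nabla\phi_Q(z_\varepsilon)|\lesssim\varepsilon|\log\varepsilon|\lesssim\varepsilon^\nu$ for every $\nu<1$. The logarithmic loss, which comes from the $\mu=1$ case of \eqref{UUUU} and from pieces like $\|U^{4-\mu}|x-z|\|$, is exactly why the statement only claims $\nu<1$. Since $\lambda$ could be much larger than $\varepsilon^{-1}$, I will keep all bounds in terms of $\max(\lambda^{-1},\varepsilon)\lesssim\varepsilon$.

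The main obstacle is the leading computation for the nonlocal term: showing that the Hartree cross terms linear in $H_Q$ combine, after symmetrization in $x,y$ and Taylor expansion of $H_Q(z,\cdot)$ near $z$, into precisely a nonzero multiple of $\partial_i\phi_Q(z_\varepsilon)\lambda^{-1}$. One needs $\nabla_x H_Q(z,x)|_{x=z}=\tfrac12\nabla\phi_Q(z)$, by symmetry of $H_Q$. I would first try to reduce this to the local identity via \eqref{1esay}, as in the derivation of $\mathcal K_\varepsilon$, and then handle the remaining genuinely nonlocal error by HLS, requiring it to be $o(\lambda^{-1}\varepsilon^{\nu})$.
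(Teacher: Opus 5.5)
There is a genuine gap in how you handle the terms containing $r_\varepsilon$, and it is structural. Compared with the $\partial_\lambda$-identity \eqref{o330}, your test function is larger by a factor $\lambda_\varepsilon^{2}$: $\|\nabla\partial_{z_i}U_{z_\varepsilon,\lambda_\varepsilon}\|_2\sim\lambda_\varepsilon$, whereas $\|\nabla\partial_\lambda U_{z_\varepsilon,\lambda_\varepsilon}\|_2\sim\lambda_\varepsilon^{-1}$. The signal, however, only improves from $\phi_Q(z_\varepsilon)\lambda_\varepsilon^{-2}$ to $\nabla\phi_Q(z_\varepsilon)\lambda_\varepsilon^{-1}$. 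So every error that Lemma \ref{lemma310} simply absorbs is amplified by $\lambda_\varepsilon^{2}$, and $r_\varepsilon\in T^{\bot}_{z_\varepsilon,\lambda_\varepsilon}$ only kills pairings of $r_\varepsilon$ with $\Delta\partial_{z_i}PU_{z_\varepsilon,\lambda_\varepsilon}$.

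Concretely, write $N(u)=A_{3,\mu}(|x|^{-\mu}*u^{6-\mu})u^{5-\mu}$. The quadratic terms contain the cross term $2s_\varepsilon r_\varepsilon$, with $s_\varepsilon\approx\beta\lambda_\varepsilon^{-1}U+\gamma\partial_\lambda U$. By Proposition \ref{prop33}, $\gamma=-\frac85\beta+O(\lambda_\varepsilon^{-1})\to\frac{128}{15\pi}\phi_0(z_0)\neq0$.
\begin{itemize}
\item The $\beta$-part is harmless: homogeneity gives $D^2N(U)[U,\partial_{z_i}U]=(10-2\mu)N'(U)\partial_{z_i}U$, which orthogonality removes.
\item The $\gamma$-part gives $\gamma\int_\Omega r_\varepsilon\,D^2N(U)[\partial_\lambda U,\partial_{z_i}U]$. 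By scaling, $\|D^2N(U)[\partial_\lambda U,\partial_{z_i}U]\|_{6/5}\sim1$, and no orthogonality condition annihilates this term.
\end{itemize}
The only available bound is $\|r_\varepsilon\|_6\lesssim\varepsilon\lambda_\varepsilon^{-1/2}$ from \eqref{o37}. This is essentially sharp, since it comes from the source $\varepsilon V\psi_{z_\varepsilon,\lambda_\varepsilon}$, which lives at scale one. With it you only get $|\nabla\phi_Q(z_\varepsilon)|\lesssim\varepsilon\lambda_\varepsilon^{1/2}$, which is $\gtrsim\varepsilon^{1/2}$ because $\lambda_\varepsilon^{-1}=O(\varepsilon)$. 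That is exactly the borderline that is useless in Section \ref{sec:proof-main}, where $\nu>\frac12$ is needed to get $\phi_Q(z_\varepsilon)=o(\varepsilon)$.

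The linear terms have the same problem. They contain $\lambda_\varepsilon^{-1/2}\phi_Q(z_\varepsilon)\,D^2N(U)[\partial_{z_i}U,1]$ paired with $r_\varepsilon$, which contributes $\phi_Q(z_\varepsilon)\,\varepsilon\lambda_\varepsilon^{1/2}$ to $|\nabla\phi_Q(z_\varepsilon)|$. Your simplification $\|q_\varepsilon\|_6\lesssim\lambda_\varepsilon^{-1}$ is not available: \eqref{o327} gives only $\lambda_\varepsilon^{-1}+\varepsilon\lambda_\varepsilon^{-1/2}$. Nothing bounds $\lambda_\varepsilon$ above by a power of $\varepsilon^{-1}$; indeed $\varepsilon\lambda_\varepsilon\to\infty$ when $\Theta_V(z_0)=0$. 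Closing these gaps would need pointwise or symmetry information on $r_\varepsilon$ near $z_\varepsilon$ that the $H^1$ bounds do not provide.

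The paper never tests against a bubble mode. As in \cite{FKK3}, it tests the equation against $\nabla u_\varepsilon$ itself, which gives the translation Pohozaev identity \eqref{o29}. The Hartree term then vanishes \emph{identically} by antisymmetry of $(x-y)|x-y|^{-\mu-2}$, so no nonlinear terms in $q_\varepsilon$ appear, and one is left with the purely local identity \eqref{o345}. In it:
\begin{itemize}
\item $\mathcal I[\psi_{z_\varepsilon,\lambda_\varepsilon}]=4\pi\nabla\phi_Q(z_\varepsilon)\lambda_\varepsilon^{-1}+O(\lambda_\varepsilon^{-1-\nu})$ is the local computation of Lemma \ref{lemma312o}. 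This remainder, not a logarithm, is what forces $\nu<1$.
\item $q_\varepsilon$ enters only through $\|q_\varepsilon\|_2$ and $\|\partial_n q_\varepsilon\|_{L^2(\partial\Omega)}$, both $\lesssim\varepsilon\lambda_\varepsilon^{-1/2}$. The boundary bound is Lemma \ref{lemma313o}, proved with a cut-off supported away from $z_\varepsilon$ and \eqref{o212}; this is the only place the nonlocal term has to be handled.
\end{itemize}
All errors are then $O(\varepsilon\lambda_\varepsilon^{-1})$ however large $\lambda_\varepsilon$ is, giving $|\nabla\phi_Q(z_\varepsilon)|\lesssim\varepsilon+\lambda_\varepsilon^{-\nu}\lesssim\varepsilon^\nu$.
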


The proof of Proposition \ref{prop311o} is based on the local Pohozaev identity \eqref{o29}. 
For $v,w\in H^1(\Omega)$, we write
\begin{equation}\label{o344}
	\mathcal{I}[v,w]:=\int_{\partial\Omega}
	\frac{\partial v}{\partial n}
	\frac{\partial w}{\partial n}n
	+\int_{\Omega}(\nabla Q)vw.
\end{equation}
When $v=w$, we simply write $\mathcal{I}[v]$.
Taking $u_{\varepsilon}
=\alpha_{\varepsilon}
(\psi_{z_{\varepsilon},\lambda_{\varepsilon}}+q_{\varepsilon})$
in the Pohozaev identity, we obtain
\begin{equation}\label{o345}
	0=\mathcal{I}[\psi_{z_{\varepsilon},\lambda_{\varepsilon}}]
	+2\mathcal{I}[\psi_{z_{\varepsilon},\lambda_{\varepsilon}},q_{\varepsilon}]
	+\mathcal{I}[q_{\varepsilon}]
	+\varepsilon\int_{\Omega}
	(\nabla V)(\psi_{z_{\varepsilon},\lambda_{\varepsilon}}
	+q_{\varepsilon})^2.
\end{equation}

The following lemma gives the leading contribution from the main term
$\mathcal{I}[\psi_{z_{\varepsilon},\lambda_{\varepsilon}}]$.
\begin{lem}\label{lemma312o}
	For every $0<\nu<1$, as $\varepsilon\rightarrow 0$, we have
	\begin{equation}\label{o347}
		\mathcal{I}[\psi_{z_{\varepsilon},\lambda_{\varepsilon}}]
		=4\pi\nabla\phi_Q(z_{\varepsilon})
		\lambda_{\varepsilon}^{-1}
		+O\left(\lambda_{\varepsilon}^{-1-\nu}\right).
	\end{equation}
\end{lem}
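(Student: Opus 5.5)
The plan is to reduce $\mathcal I[\psi_{z_\varepsilon,\lambda_\varepsilon}]$ to a computation with the Green function $G_Q$, since $\psi$ is essentially $\lambda_\varepsilon^{-1/2}G_Q(z_\varepsilon,\cdot)$ away from $z_\varepsilon$. The identity we aim for is
\begin{equation*}
\int_{\partial\Omega}\Bigl(\frac{\partial G_Q(z,\cdot)}{\partial n}\Bigr)^2 n+\int_\Omega(\nabla Q)G_Q(z,\cdot)^2=4\pi\nabla\phi_Q(z),
\end{equation*}
valid for every $z\in\Omega$. This is the $Q$-analogue of Rey's formula $\int_{\partial\Omega}(\partial_n G_0)^2n=4\pi\nabla\phi_0$. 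It follows from the Pohozaev-type computation used to derive \eqref{o29}. Concretely, one multiplies $(-\Delta+Q)G_Q(z,\cdot)=4\pi\delta_z$ by $\nabla G_Q(z,\cdot)$ on $\Omega\setminus B_r(z)$ and lets $r\to0$. The singular part $1/|x-z|$ produces no net contribution by symmetry, and the cross terms with $H_Q$ on $\partial B_r$ produce $4\pi\nabla_x H_Q(z,x)|_{x=z}$. By symmetry of $H_Q$, this equals $2\pi\nabla\phi_Q(z)$ doubled, i.e.\ $4\pi\nabla\phi_Q(z)$ after accounting for both cross terms. I would check the constant against the $Q\equiv0$ case, Rey's (2.9).

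Next I would write $\psi_{z_\varepsilon,\lambda_\varepsilon}=\lambda_\varepsilon^{-1/2}G_Q(z_\varepsilon,\cdot)-f_{z_\varepsilon,\lambda_\varepsilon}-g_{z_\varepsilon,\lambda_\varepsilon}$, as in the proof of Lemma~\ref{lemma37o}(b). On $\partial\Omega$ we have $\psi=0$. Since $z_\varepsilon$ stays at distance $\gtrsim1$ from $\partial\Omega$ (Proposition~\ref{prop25}), $g_{z_\varepsilon,\lambda_\varepsilon}=O(\lambda_\varepsilon^{-5/2})$ in $C^1$ near the boundary, and $f$ is harmonic with $\|\nabla f\|_{L^\infty(\text{near }\partial\Omega)}=O(\lambda_\varepsilon^{-5/2})$ by Lemma~\ref{A2} and elliptic estimates. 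Hence the boundary term equals $\lambda_\varepsilon^{-1}\int_{\partial\Omega}(\partial_nG_Q)^2n+O(\lambda_\varepsilon^{-3})$.

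For the volume term $\int_\Omega(\nabla Q)\psi^2$, I expand $\psi^2=\lambda_\varepsilon^{-1}G_Q^2-2\lambda_\varepsilon^{-1/2}G_Q(f+g)+(f+g)^2$. The pieces containing $f$ are $O(\lambda_\varepsilon^{-3})$. The main obstacle is $g$. It is concentrated in $B_{C/\lambda}(z_\varepsilon)$ with $|g|\lesssim\min(\lambda^{-1/2}|x-z|^{-1},\lambda^{3/2}\cdot\ldots)$, more precisely $g\lesssim\lambda^{-5/2}|x-z|^{-3}$ for $|x-z|\gg\lambda^{-1}$. This gives
\begin{equation*}
\lambda^{-1/2}\int|G_Q||g|\lesssim\lambda^{-3}\log\lambda\quad\text{(cf.\ Lemma~\ref{A4})},\qquad \int g^2\lesssim\lambda^{-2}.
\end{equation*}
Thus $\int(\nabla Q)g^2=O(\lambda^{-2})$, and this term is only barely good enough.

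This is where the $\nu<1$ loss enters. Since $Q\in C^{0,1}(\overline\Omega)$ only, I would first use the bound $|\nabla Q|\le C$ to get $O(\lambda_\varepsilon^{-2})$, which is $O(\lambda^{-1-\nu})$. If the log terms or the lack of $C^1$ regularity of $\nabla Q$ near the boundary cause trouble, I would split $\Omega$ into $B_\delta(z_\varepsilon)$, where $Q\in C^{2,\sigma}$, and its complement. On the complement $g$ is $O(\lambda^{-5/2})$. In the ball I use $|\nabla Q|\lesssim1$ and interpolation, e.g.\ $\|g\|_{L^p}$ with $p$ slightly larger than needed, to absorb the logarithms as $\lambda^{-2}\log\lambda\lesssim\lambda^{-1-\nu}$. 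Combining with the Green-function identity gives \eqref{o347}.
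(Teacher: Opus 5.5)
Your route works. Since the paper gives no argument of its own here (it only refers to \cite[Lemma~3.12]{FKK3}), your proposal has to stand on its own, and it does. The ingredients are: the splitting $\psi_{z_\varepsilon,\lambda_\varepsilon}=\lambda_\varepsilon^{-1/2}G_Q(z_\varepsilon,\cdot)-f_{z_\varepsilon,\lambda_\varepsilon}-g_{z_\varepsilon,\lambda_\varepsilon}$; bilinearity of $\mathcal I$; the boundary bounds, using $d_\varepsilon^{-1}=O(1)$ and the harmonicity of $f$ with boundary values $-g$; and the identity $\mathcal I[G_Q(z,\cdot)]=4\pi\nabla\phi_Q(z)$, whose constant is correct. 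Two points need fixing.

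\emph{First, the cross term is not $O(\lambda^{-3}\log\lambda)$.} That estimate drops the singularity of $G_Q$ at the pole. On $B_{1/\lambda}(z)$ both $G_Q(z,x)\gtrsim|x-z|^{-1}$ and $g_{z,\lambda}(x)\gtrsim\lambda^{-1/2}|x-z|^{-1}$ hold, and scaling gives exactly
\[
\lambda^{-1/2}\int_{\mathbb{R}^3}\frac{g_{z,\lambda}(x)}{|x-z|}\,dx
=\lambda^{-2}\int_{\mathbb{R}^3}\Bigl(\frac{1}{|y|}-\frac{1}{\sqrt{1+|y|^2}}\Bigr)\frac{dy}{|y|}
=4\pi\lambda^{-2}.
\]
The $H_Q$-part of $G_Q$ contributes only $\lambda^{-1/2}\|g_{z,\lambda}\|_1=O(\lambda^{-3})$. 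So this term is of order $\lambda^{-2}$, the same as $\int(\nabla Q)g^2$. This is harmless, but it means neither $g$-term is the source of a $\nu$-loss. With $|\nabla Q|\le C$ and your two pointwise bounds on $g$, every perturbative term is $O(\lambda_\varepsilon^{-2})$. Your final paragraph (splitting, interpolation) is therefore unnecessary, and your argument gives the lemma even for $\nu=1$. Inserted into the proof of Proposition~\ref{prop311o}, this would yield $|\nabla\phi_Q(z_\varepsilon)|\lesssim\varepsilon$.

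\emph{Second, the pole in the Green-function identity needs more care.} When $Q(z)\neq0$, which holds at $z_\varepsilon$ by Assumption (e), the quantity ``$\nabla_xH_Q(z,x)|_{x=z}$'' does not exist. By Lemma~\ref{B2}, $H_Q(z,\cdot)$ contains the cone $-\frac12Q(z)|x-z|$.
\begin{itemize}
\item What the limit $r\to0$ actually produces is the spherical mean $\int_{S^2}\nabla_xH_Q(z,z+r\omega)\,d\omega\to2\pi\nabla\phi_Q(z)$; the cone contributes $-\frac12Q(z)\int_{S^2}\omega\,d\omega=0$.
\item Justifying this requires $\nabla_xH_Q(z,x)=\frac12\nabla\phi_Q(z)-\frac12Q(z)\frac{x-z}{|x-z|}+o(1)$ as $x\to z$. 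This holds because $H_Q(z,\cdot)+\frac12Q(z)|\cdot-z|$ solves a Poisson equation with bounded right-hand side near $z$, hence is $C^{1,\nu}$ there. It is not, however, among the stated lemmas.
\item Alternatively, average the sphere identity over $r\in(r_0,2r_0)$ and integrate by parts, so that only Lemma~\ref{B2} is used.
\item You should also check that the sphere term $\frac12\int_{\partial B_r(z)}QG_Q^2\,\nu$, coming from $\int QG_Q\nabla G_Q$, vanishes as $r\to0$. It does, because its $O(r^{-2})$ part is odd.
\end{itemize}
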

\begin{proof}
	The proof is the same as that of \cite[Lemma~3.12]{FKK3}.
\end{proof}

The next lemma controls the boundary contribution of the error term involving $q_{\varepsilon}$.
\begin{lem}\label{lemma313o}
	As $\varepsilon\rightarrow 0$, we have
	\begin{equation}\label{o348}
		\left\|
		\frac{\partial q_{\varepsilon}}{\partial n}
		\right\|_{L^2(\partial\Omega)}
		\lesssim\varepsilon\lambda_{\varepsilon}^{-\frac12}.
	\end{equation}
\end{lem}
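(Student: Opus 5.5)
Since $z_\varepsilon\to z_0\in\Omega$, we have $d_\varepsilon\gtrsim 1$ (Proposition \ref{prop21}). The plan is to repeat the strategy of Lemma \ref{lemma26}, now applied to $q_\varepsilon=s_\varepsilon+r_\varepsilon$. We use the boundary inequality \eqref{o212} for $\zeta q_\varepsilon$, where $\zeta(y)=\chi((y-z_\varepsilon)/d_\varepsilon)$ is the same cut-off as before. It vanishes near $z_\varepsilon$ and equals $1$ near $\partial\Omega$, and $\|\nabla\zeta\|_\infty+\|\Delta\zeta\|_\infty=O(1)$. This gives
\begin{equation*}
\left\|\frac{\partial q_\varepsilon}{\partial n}\right\|_{L^2(\partial\Omega)}\lesssim\|\zeta\Delta q_\varepsilon\|_{L^{3/2}(\Omega)}+\|\nabla\zeta\cdot\nabla q_\varepsilon\|_{L^{3/2}(\Omega)}+\|(\Delta\zeta)q_\varepsilon\|_{L^{3/2}(\Omega)}.
\end{equation*}
The two commutator terms live on the annulus $\{d_\varepsilon/2\le|y-z_\varepsilon|\le d_\varepsilon\}$, away from the bubble.

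The equation for $q_\varepsilon$ comes from \eqref{o319} together with the formula for $-\Delta s_\varepsilon$. The cleaner route is to write it as $-\Delta q_\varepsilon=\mathcal G$, where
\begin{equation*}
\mathcal G=A_{3,\mu}\alpha_\varepsilon^{10-2\mu}\Big(\int_\Omega\frac{(\psi+q)^{6-\mu}}{|x-y|^\mu}dy\Big)(\psi+q)^{5-\mu}-3U^5-Q(\psi+q)+Q\big(\lambda^{-1/2}G_Q-\psi\big)\cdots-\varepsilon V(\psi+q).
\end{equation*}
This follows from $u_\varepsilon=\alpha_\varepsilon(\psi_{z_\varepsilon,\lambda_\varepsilon}+q_\varepsilon)$ and \eqref{o326}. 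On the support of $\zeta$ the bubble is small: $U\lesssim\lambda^{-1/2}$ and $\psi=O(\lambda^{-1/2})$. Hence the linear terms satisfy
\begin{equation*}
Q q_\varepsilon+Q(f+g)+\varepsilon V(\psi+q)=O\big(|q_\varepsilon|+\lambda^{-5/2}+\varepsilon\lambda^{-1/2}+\varepsilon|q_\varepsilon|\big).
\end{equation*}
Here $g=O(\lambda^{-5/2}|x-z|^{-3})$ on this region, and the cancellation in \eqref{o326} has already removed the $QG_Q\lambda^{-1/2}$ term. The $L^{3/2}$ norm of these terms is $\lesssim\|q_\varepsilon\|_6+\lambda^{-5/2}+\varepsilon\lambda^{-1/2}$. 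The commutator terms are similarly bounded by
\begin{equation*}
\|\nabla q_\varepsilon\|_{L^2(\Omega\setminus B_{d/2})}+\|q_\varepsilon\|_6.
\end{equation*}

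The key inputs are then \eqref{o311}, which gives $\|\nabla s_\varepsilon\|_{L^2(\Omega\setminus B_{d/2})}=O(\lambda^{-3/2})$, and Corollary \ref{cor39o}, which gives $\|\nabla r_\varepsilon\|_2=O(\varepsilon\lambda^{-1/2})$. From \eqref{o310} we also have $\|s_\varepsilon\|_{L^6(\Omega\setminus B_{d/2})}\lesssim\|s_\varepsilon\|_\infty\cdot$(away part), and $\|s_\varepsilon\|_2=O(\lambda^{-3/2})$. Interpolating with the decay of the $PU$-type functions away from $z_\varepsilon$ gives an $O(\lambda^{-3/2})$ bound on the annulus. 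Since $\lambda_\varepsilon^{-1}=O(\varepsilon)$ by \eqref{o329}, we get $\lambda^{-3/2}\lesssim\varepsilon\lambda^{-1/2}$. For this reason I would handle $s_\varepsilon$ and $r_\varepsilon$ separately on the cut-off region: use only the local-away estimates for $s_\varepsilon$, and the global $H^1$ bound for $r_\varepsilon$.

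The main obstacle is the nonlocal nonlinear term restricted to $\operatorname{supp}\zeta$. Its pieces are:
\begin{itemize}
\item $\big(\int(\psi+q)^{6-\mu}|x-y|^{-\mu}dy\big)\,\zeta\,(\psi+q)^{5-\mu}$: the convolution factor is bounded in $L^\infty$ away from the bubble (it is $O(1)$ there), and $(\psi+q)^{5-\mu}\zeta\lesssim\lambda^{-(5-\mu)/2}+|q|^{5-\mu}\zeta$;
\item $3U^5\zeta$: this is $O(\lambda^{-5/2})$;
\item the pure $|q|^{5-\mu}\zeta$ piece, which needs $\|\,|q_\varepsilon|^{5-\mu}\zeta\|_{L^{3/2}}$ and is supercritical for $H^1$ control.
\end{itemize}
For the last piece I would repeat the Moser-type absorption argument \eqref{o216}–\eqref{o218} from Lemma \ref{lemma26}, testing $-\Delta q_\varepsilon=\mathcal G$ against $\zeta^{2/(4-\mu)}|q_\varepsilon|^{2/(4-\mu)}q_\varepsilon$. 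Small powers of $\|q_\varepsilon\|_6$ absorb the self-interaction, and the remaining terms are at most $(\varepsilon\lambda^{-1/2})^{2}$ once $\lambda^{-1}=O(\varepsilon)$ and $\mu<2$ (so that $\lambda^{-(5-\mu)/2}\lesssim\lambda^{-3/2}$) are used. Collecting all the bounds gives \eqref{o348}.
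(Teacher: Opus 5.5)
Your proposal is essentially the paper's proof. It uses the same cut-off $\zeta$ with \eqref{o212}, the equation $-\Delta q_\varepsilon=\mathcal F$ in which the $\lambda_\varepsilon^{-1/2}QG_Q$ term has cancelled through \eqref{o326}, \eqref{o311} for $s_\varepsilon$ together with Corollary \ref{cor39o} for $r_\varepsilon$, and the Moser-type absorption for $\||q_\varepsilon|^{5-\mu}\zeta\|_{L^{6/(4-\mu)}}$. The paper packages the linear and commutator estimates as $\|q_\varepsilon\|_2+\|\nabla q_\varepsilon\|_{L^2(\Omega\setminus B_{d_\varepsilon/2}(z_\varepsilon))}\lesssim\varepsilon\lambda_\varepsilon^{-1/2}$, using $\|s_\varepsilon\|_2=O(\lambda_\varepsilon^{-3/2})$ and Poincar\'e; this is a bit cleaner than your local $L^6$ bound on $s_\varepsilon$.

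Two small repairs are needed.

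\begin{itemize}
\item Your displayed $\mathcal G$ should read $\cdots-3U^5-Qq_\varepsilon+Q(f+g)-\varepsilon V(\psi+q_\varepsilon)$. As written it retains a $-Q\psi$ term of size $\lambda_\varepsilon^{-1/2}$ on $\operatorname{supp}\zeta$, which would spoil the estimate. Your later text does use the correct linear terms.
\item The Riesz potential $\int_\Omega(\psi+q_\varepsilon)^{6-\mu}|x-y|^{-\mu}dy$ is not known to be $O(1)$ in $L^\infty$ from the $L^6$-level information available here. A pointwise bound would require $q_\varepsilon\in L^p$ with $p>3(6-\mu)/(3-\mu)>6$. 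Instead, estimate it in $L^{6/\mu}$ via Proposition \ref{A5} and pair it by H\"older with $\zeta|q_\varepsilon|^{5-\mu}$ in $L^{6/(4-\mu)}$, as the paper does, both in the final bound and in the absorption step. The same pairing also disposes of the piece $\zeta\psi^{5-\mu}=O(\lambda_\varepsilon^{-(5-\mu)/2})$.
\end{itemize}
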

\begin{proof}
	We argue as in the proof of Lemma \ref{lemma26}.
	Combining the equation \eqref{equation.v} for $v_{\varepsilon}$ with
	\begin{equation*}
		\Delta\left(H_Q(z_{\varepsilon},\cdot)-H_0(z_{\varepsilon},\cdot)\right)
=-QG_Q(z_{\varepsilon},\cdot), 
	\end{equation*}
	 we obtain
	\begin{equation*}
		-\Delta q_{\varepsilon}=\mathcal{F},
	\end{equation*}
	where
	\begin{equation*}
	\begin{split}
		\mathcal{F}:={}&
		-3U_{z_{\varepsilon},\lambda_{\varepsilon}}^{5}
		+A_{3,\mu}\alpha_{\varepsilon}^{10-2\mu}
		\left(\int_{\Omega}
		\frac{(\psi_{z_{\varepsilon},\lambda_{\varepsilon}}
		+q_{\varepsilon})^{6-\mu}(y)}
		{|x-y|^{\mu}}dy\right)
		(\psi_{z_{\varepsilon},\lambda_{\varepsilon}}
		+q_{\varepsilon})^{5-\mu}\\
		&-Qq_{\varepsilon}
		+Q(f_{z_{\varepsilon},\lambda_{\varepsilon}}
		+g_{z_{\varepsilon},\lambda_{\varepsilon}})
		-\varepsilon V(\psi_{z_{\varepsilon},\lambda_{\varepsilon}}
		+q_{\varepsilon}).
	\end{split}
	\end{equation*}

		Let $\zeta$ be the cut-off function used in Lemma \ref{lemma26}.
		Since $d_{\varepsilon}^{-1}=O(1)$, the dependence on $d_{\varepsilon}$ will not be written explicitly in the estimates below.
		Then $\zeta q_{\varepsilon}\in H^2(\Omega)\cap H_0^1(\Omega)$ and
		\begin{equation*}
			-\Delta(\zeta q_{\varepsilon})
			=\zeta\mathcal{F}
			-2\nabla\zeta\cdot\nabla q_{\varepsilon}
			-(\Delta\zeta)q_{\varepsilon}.
		\end{equation*}
			On the region $\Omega\setminus B_{d_{\varepsilon}/2}(z_{\varepsilon})$, we have 
			\[
				U_{z_{\varepsilon},\lambda_{\varepsilon}}
				=O\left(\lambda_{\varepsilon}^{-\frac12}\right),
				\qquad
				g_{z_{\varepsilon},\lambda_{\varepsilon}}
				=O\left(\lambda_{\varepsilon}^{-\frac52}\right).
			\]
			Since Corollary \ref{cor39o} gives
$\lambda_{\varepsilon}^{-1}=O(\varepsilon)$, we also have
$\lambda_{\varepsilon}^{-\frac52}
=O\left(\varepsilon\lambda_{\varepsilon}^{-\frac12}\right)$.
			Moreover, since
			\[
				\psi_{z_{\varepsilon},\lambda_{\varepsilon}}
				=U_{z_{\varepsilon},\lambda_{\varepsilon}}
				-\lambda_{\varepsilon}^{-\frac12}
				H_Q(z_{\varepsilon},\cdot)
				-f_{z_{\varepsilon},\lambda_{\varepsilon}},
			\]
			and the terms involving
			$f_{z_{\varepsilon},\lambda_{\varepsilon}}$ and
			$H_Q(z_{\varepsilon},\cdot)$ are controlled by Lemmas \ref{A2}
			and \ref{B1}, we have $$\psi_{z_{\varepsilon},\lambda_{\varepsilon}}^{5-\mu}=O\left(\lambda_{\varepsilon}^{-\frac{5-\mu}{2}}\right)=O\left(\varepsilon\lambda_{\varepsilon}^{-\frac12}\right)$$ on the support of $\zeta$.  Therefore, we have
		\begin{equation}
		\label{o351}
		\begin{split}
			\zeta \left|  \mathcal{F}  \right|\lesssim \left(\int_{\Omega}
		\frac{(\psi_{z_{\varepsilon},\lambda_{\varepsilon}}
		+q_{\varepsilon})^{6-\mu}(y)}
		{|x-y|^{\mu}}dy\right)
		\zeta\left|  q_{\varepsilon}\right|^{5-\mu} 
			    + \left|  q_{\varepsilon}\right|+\varepsilon \zeta  U_{z_{\varepsilon},\lambda_{\varepsilon}}  + \varepsilon\lambda_{\varepsilon}^{-\frac12}. 
		\end{split}
	\end{equation}
	Combining \eqref{o351} with inequality \eqref{o212},  we obtain
	\begin{equation*}
	\begin{split}
		\left\|
		\frac{\partial q_{\varepsilon}}{\partial n}
		\right\|_{L^2(\partial\Omega)}
		={}&
		\left\|
		\frac{\partial(\zeta q_{\varepsilon})}{\partial n}
		\right\|_{L^2(\partial\Omega)}
		\lesssim
		\left\|\Delta(\zeta q_{\varepsilon})\right\|_{\frac32}\\
		\lesssim{}&
		\left\|\zeta\mathcal{F}\right\|_{\frac32}
		+\left\||\nabla\zeta||\nabla q_{\varepsilon}|\right\|_{\frac32}
		+\left\|(\Delta\zeta)q_{\varepsilon}\right\|_{\frac32}\\
		\lesssim{}&
		\left\|
		\left(\int_{\Omega}
		\frac{(\psi_{z_{\varepsilon},\lambda_{\varepsilon}}
		+q_{\varepsilon})^{6-\mu}(y)}
		{|x-y|^{\mu}}dy\right)
		\zeta |q_{\varepsilon}|^{5-\mu}
		\right\|_{\frac32}\\
		&+\varepsilon
		\left\|\zeta U_{z_{\varepsilon},\lambda_{\varepsilon}}
		\right\|_{\frac32}
		+\left\|q_{\varepsilon}\right\|_{\frac32}
		+\varepsilon\lambda_{\varepsilon}^{-\frac12}+\left\||\nabla\zeta||\nabla q_{\varepsilon}|\right\|_{\frac32}
		+\left\|(\Delta\zeta)q_{\varepsilon}\right\|_{\frac32}.
	\end{split}
	\end{equation*}
	All terms except the first can be controlled immediately. Indeed, by \eqref{o310}, \eqref{o311}, Corollary \ref{cor39o}, and the Poincare inequality,
	\begin{equation} \label{estimateql2}
		\left\|q_{\varepsilon}\right\|_2
		+\left\|\nabla q_{\varepsilon}\right\|_{L^2(\Omega\setminus B_{d_{\varepsilon}/2}(z_{\varepsilon}))}
		\lesssim\varepsilon\lambda_{\varepsilon}^{-\frac12}.
	\end{equation}
	Hence
	\begin{equation*}
		\left\|q_{\varepsilon}\right\|_{\frac32}
		+\left\|(\Delta\zeta)q_{\varepsilon}\right\|_{\frac32}
		+\left\||\nabla\zeta||\nabla q_{\varepsilon}|\right\|_{\frac32}
		\lesssim\varepsilon\lambda_{\varepsilon}^{-\frac12}.
	\end{equation*}
	Moreover, by Lemma \ref{A1},
	\begin{equation*}
		\varepsilon
		\left\|\zeta U_{z_{\varepsilon},\lambda_{\varepsilon}}
		\right\|_{\frac32}
		\lesssim
		\varepsilon
		\left\|U_{z_{\varepsilon},\lambda_{\varepsilon}}
		\right\|_{L^{\frac32}(\Omega\setminus B_{d_{\varepsilon}/2}(z_{\varepsilon}))}
		\lesssim\varepsilon\lambda_{\varepsilon}^{-\frac12}.
	\end{equation*}
				
It  remains to estimate the first term $\left\|
		\left(\int_{\Omega}
		\frac{(\psi_{z_{\varepsilon},\lambda_{\varepsilon}}
		+q_{\varepsilon})^{6-\mu}(y)}
		{|x-y|^{\mu}}dy\right)
		\zeta |q_{\varepsilon}|^{5-\mu}
		\right\|_{\frac32}$. Proceeding as in Lemma \ref{lemma26} and using Proposition \ref{A5}, we obtain
\begin{equation*}
	\begin{split}
		\left\|
		\int_{\Omega}
		\frac{\left(\psi_{z_{\varepsilon},\lambda_{\varepsilon}}
		+q_{\varepsilon}\right)^{6-\mu}(y)}
			{|x-y|^\mu}dy|q_{\varepsilon}|^{5-\mu}\zeta
		\right\|_{\frac{3}{2}}
		&\lesssim
		\left\|
		\int_{\Omega}
		\frac{\left(\psi_{z_{\varepsilon},\lambda_{\varepsilon}}
		+q_{\varepsilon}\right)^{6-\mu}(y)}
			{|x-y|^\mu}dy
		\right\|_{\frac{6}{\mu}}
		\left\||q_{\varepsilon}|^{5-\mu}\zeta
		\right\|_{\frac{6}{4-\mu}}
		\\
		&\lesssim
		\left\|
		\left(\psi_{z_{\varepsilon},\lambda_{\varepsilon}}
		+q_{\varepsilon}\right)^{6-\mu}
		\right\|_{\frac{6}{6-\mu}}
		\left\||q_{\varepsilon}|^{5-\mu}\zeta
		\right\|_{\frac{6}{4-\mu}}\lesssim \left\||q_{\varepsilon}|^{5-\mu}\zeta
		\right\|_{\frac{6}{4-\mu}},  
	\end{split}
\end{equation*}
and
\begin{equation*}
\begin{split}
	\left\||q_{\varepsilon}|^{5-\mu}\zeta\right\|_{\frac{6}{4-\mu}}
	={}&\left(
	\int_{\Omega}
	\left|
	\zeta^{\frac{1}{4-\mu}}
	|q_{\varepsilon}|^{\frac{1}{4-\mu}}q_{\varepsilon}
	\right|^6
	\right)^{\frac{4-\mu}{6}}
	\lesssim
	\left(
	\int_{\Omega}
	\left|
	\nabla\left(
	\zeta^{\frac{1}{4-\mu}}
	|q_{\varepsilon}|^{\frac{1}{4-\mu}}q_{\varepsilon}
	\right)
	\right|^2
	\right)^{\frac{4-\mu}{2}}\\
	\lesssim{}&
	\left(
	\int_{\Omega}
	|q_{\varepsilon}|^{\frac{10-2\mu}{4-\mu}}
	\left|
	\nabla\left(\zeta^{\frac{1}{4-\mu}}\right)
	\right|^2
	\right)^{\frac{4-\mu}{2}}
	+\left(
	\int_{\Omega}
	|\mathcal{F}|\,
	\zeta^{\frac{2}{4-\mu}}
	|q_{\varepsilon}|^{\frac{6-\mu}{4-\mu}}
	\right)^{\frac{4-\mu}{2}}
	\\
	\lesssim{}& \left\|  q_{\varepsilon}  \right\|_6^{5-\mu}+\left(
	\int_{\Omega}
	|\mathcal{F}|\,
	\zeta^{\frac{2}{4-\mu}}
	|q_{\varepsilon}|^{\frac{6-\mu}{4-\mu}}
	\right)^{\frac{4-\mu}{2}}. 
\end{split}
\end{equation*}
By \eqref{o327} and the fact $\lambda_{\varepsilon}^{-1}=O(\varepsilon)$, it is easy to see 
\begin{equation*}
	 \left\|  q_{\varepsilon}  \right\|_6^{5-\mu}\lesssim\varepsilon\lambda_{\varepsilon}^{-\frac12}. 
\end{equation*}
Now we estimate $\left(
	\int_{\Omega}
	|\mathcal{F}|  \,
	\zeta^{\frac{2}{4-\mu}}
	|q_{\varepsilon}|^{\frac{6-\mu}{4-\mu}}
\right)^{\frac{4-\mu}{2}}$.  The  pointwise bound \eqref{o351} is equally valid for $ \zeta^{\frac{2}{4-\mu}} \,
	|\mathcal{F}|$, that is 
	\begin{equation*}
		\begin{split}
			\zeta^{\frac{2}{4-\mu}}|\mathcal{F}|
			\lesssim{}&
			\left(\int_{\Omega}
			\frac{(\psi_{z_{\varepsilon},\lambda_{\varepsilon}}
			+q_{\varepsilon})^{6-\mu}(y)}{|x-y|^{\mu}}dy\right)
			\zeta^{\frac{2}{4-\mu}}|q_{\varepsilon}|^{5-\mu}
			+|q_{\varepsilon}|+\varepsilon\zeta^{\frac{2}{4-\mu}}
			U_{z_{\varepsilon},\lambda_{\varepsilon}}
			+\varepsilon\lambda_{\varepsilon}^{-\frac12}.
		\end{split}
	\end{equation*}
		We only need to consider the contribution of the first term in the
		pointwise estimate for
		$\zeta^{\frac{2}{4-\mu}}|\mathcal{F}|$, since the remaining terms can be
		estimated easily.
		By the HLS inequality,   H\"older's inequality and Proposition \ref{A5}, we have
		\begin{equation*}
			\begin{split}
			&\left(\int_{\Omega}
			\left(\int_{\Omega}
			\frac{(\psi_{z_{\varepsilon},\lambda_{\varepsilon}}
			+q_{\varepsilon})^{6-\mu}(y)}{|x-y|^{\mu}}dy\right)
			\zeta^{\frac{2}{4-\mu}}
			|q_{\varepsilon}|^{5-\mu+\frac{6-\mu}{4-\mu}}dx
			\right)^{\frac{4-\mu}{2}}
			\\
			&\lesssim\left(
			\left\|\int_{\Omega}
			\frac{(\psi_{z_{\varepsilon},\lambda_{\varepsilon}}
			+q_{\varepsilon})^{6-\mu}(y)}{|x-y|^{\mu}}dy
			\right\|_{\frac{6}{\mu}}
			\left\||q_{\varepsilon}|^{4-\mu}\right\|_{\frac{6}{4-\mu}}
			\left\|\zeta^{\frac{2}{4-\mu}}
			|q_{\varepsilon}|^{\frac{10-2\mu}{4-\mu}}
			\right\|_3\right)^{\frac{4-\mu}{2}}
			\\
			&\lesssim
			\left\|\psi_{z_{\varepsilon},\lambda_{\varepsilon}}
			+q_{\varepsilon}\right\|_6^{\frac{(6-\mu)(4-\mu)}{2}}
			\left\|q_{\varepsilon}\right\|_6^{\frac{(4-\mu)^2}{2}}
			\left\||q_{\varepsilon}|^{5-\mu}\zeta
			\right\|_{\frac{6}{4-\mu}}
			\\
			&=o\left(
			\left\||q_{\varepsilon}|^{5-\mu}\zeta
			\right\|_{\frac{6}{4-\mu}}
			\right).
				\end{split}
			\end{equation*}
			Combining the estimates above and absorbing the $o\left(
			\left\||q_{\varepsilon}|^{5-\mu}\zeta
			\right\|_{\frac{6}{4-\mu}}
			\right)$ term into the
			left-hand side, we obtain
			\begin{equation*}
				\left\||q_{\varepsilon}|^{5-\mu}\zeta
				\right\|_{\frac{6}{4-\mu}}
				\lesssim
				\varepsilon\lambda_{\varepsilon}^{-\frac12}.
			\end{equation*}
Collecting all the estimates above, we complete the proof of Lemma \ref{lemma313o}.
\end{proof}

\begin{proof}[Proof of Proposition \ref{prop311o}]
	By Lemmas \ref{A1}--\ref{A3} and Lemma \ref{B1}, we have
	\begin{equation*}
		\left\|\psi_{z_{\varepsilon},\lambda_{\varepsilon}}\right\|_2
		+\left\|
		\frac{\partial\psi_{z_{\varepsilon},\lambda_{\varepsilon}}}{\partial n}
		\right\|_{L^2(\partial\Omega)}
		\lesssim\lambda_{\varepsilon}^{-\frac12}.
	\end{equation*}
	Moreover, by \eqref{estimateql2},  and Lemma \ref{lemma313o},
	\begin{equation*}
		\left\|q_{\varepsilon}\right\|_2
		+\left\|
		\frac{\partial q_{\varepsilon}}{\partial n}
		\right\|_{L^2(\partial\Omega)}
		\lesssim\varepsilon\lambda_{\varepsilon}^{-\frac12}.
	\end{equation*}
	It follows from the definition of $\mathcal I$ that
	\begin{equation*}
	\begin{split}
		\left|\mathcal I[\psi_{z_{\varepsilon},\lambda_{\varepsilon}},
		q_{\varepsilon}]\right|
		\lesssim
		\left\|
		\frac{\partial\psi_{z_{\varepsilon},\lambda_{\varepsilon}}}{\partial n}
		\right\|_{L^2(\partial\Omega)}
		\left\|
		\frac{\partial q_{\varepsilon}}{\partial n}
		\right\|_{L^2(\partial\Omega)}+\left\|\psi_{z_{\varepsilon},\lambda_{\varepsilon}}\right\|_2
		\left\|q_{\varepsilon}\right\|_2
		\lesssim\varepsilon\lambda_{\varepsilon}^{-1},
	\end{split}
	\end{equation*}
	and
	\begin{equation*}
		\left|\mathcal I[q_{\varepsilon}]\right|
		\lesssim
		\left\|
		\frac{\partial q_{\varepsilon}}{\partial n}
		\right\|_{L^2(\partial\Omega)}^2
		+\left\|q_{\varepsilon}\right\|_2^2
		\lesssim\varepsilon^2\lambda_{\varepsilon}^{-1}.
	\end{equation*}
	Furthermore,
	\begin{equation*}
		\varepsilon\left|
		\int_{\Omega}(\nabla V)
		(\psi_{z_{\varepsilon},\lambda_{\varepsilon}}+q_{\varepsilon})^2
		\right|
		\lesssim
		\varepsilon
		\left\|\psi_{z_{\varepsilon},\lambda_{\varepsilon}}
		+q_{\varepsilon}\right\|_2^2
		\lesssim\varepsilon\lambda_{\varepsilon}^{-1}.
	\end{equation*}
	Combining these estimates with \eqref{o345} and Lemma
	\ref{lemma312o}, we obtain, for every $0<\nu<1$,
	\begin{equation*}
		|\nabla\phi_Q(z_{\varepsilon})|
		\lesssim
		\varepsilon+\varepsilon^2+\lambda_{\varepsilon}^{-\nu}.
	\end{equation*}
	Finally, Corollary \ref{cor39o} gives
	$\lambda_{\varepsilon}^{-1}=O(\varepsilon)$, and hence
	\begin{equation*}
		|\nabla\phi_Q(z_{\varepsilon})|
		\lesssim\varepsilon^{\nu}.
	\end{equation*}
	This proves the proposition.
\end{proof}

\subsection{Proof of Proposition \ref{prop31}.}
\label{subsec:proof-prop31}
\begin{proof}
	Proposition \ref{prop34} is the first step in proving \eqref{o37}.
	Indeed, Proposition \ref{prop38o} and Corollary \ref{cor39o} give
	\begin{equation*}
		\phi_Q(z_{\varepsilon})
		=O\left(\lambda_{\varepsilon}^{-1}+\varepsilon\right)
		\quad\text{and}\quad
		\lambda_{\varepsilon}^{-1}=O(\varepsilon).
	\end{equation*}
	Substituting these estimates into \eqref{o318}, we obtain \eqref{o37}. Furthermore, Proposition \ref{prop38o}
	yields \eqref{o37a}, Proposition \ref{prop311o} gives \eqref{o37b},
	and Corollary \ref{cor39o} gives \eqref{o37c} and \eqref{o37d}.
	This completes the proof.
\end{proof}

\section{Proof of Theorem \ref{main}}\label{sec:proof-main}

\begin{proof}
Equation \eqref{o118} follows from Proposition \ref{prop31}, together
with \eqref{o31}, \eqref{o33} and \eqref{o35}. 
Moreover, Corollary \ref{cor39o} gives
$\phi_Q(z_0)=0$ and $\Theta_V(z_0)\leq0$. Hence
$z_0\in\mathcal N_Q$.

Now we  prove \eqref{o120}.
Since $Q$ is critical, $\phi_Q\geq 0$ in $\Omega$. Hence $z_0$ is a minimum point of
$\phi_Q$ and $\nabla\phi_Q(z_0)=0$. Moreover, by
\cite[Lemma~4.1]{FKK3}, $\phi_Q\in C^2(\Omega)$. It follows from
Assumption (d) that $z_0$ is a nondegenerate minimum point of $\phi_Q$.
Therefore, applying \cite[Lemma~4.2]{FKK3} to the function
$z\mapsto\phi_Q(z+z_0)$, we obtain
\begin{equation*}
	\phi_Q(z_{\varepsilon})
	\lesssim
	|\nabla\phi_Q(z_{\varepsilon})|^2.
\end{equation*}
By Proposition \ref{prop311o}, for every $0<\nu<1$,
\begin{equation*}
	|\nabla\phi_Q(z_{\varepsilon})|
	\lesssim\varepsilon^{\nu}.
\end{equation*}
Choosing $\frac12<\nu<1$, we conclude that
\begin{equation*}
	\phi_Q(z_{\varepsilon})
	\lesssim\varepsilon^{2\nu}
	=o(\varepsilon),
\end{equation*}
which proves \eqref{o120}.

We next prove \eqref{o119}. Since $z_0$ is a nondegenerate minimum
point of $\phi_Q$, Taylor's formula gives
\begin{equation*}
	\phi_Q(z)
	=\frac12(z-z_0)\cdot D^2\phi_Q(z_0)(z-z_0)
	+o\left(|z-z_0|^2\right)
\end{equation*}
as $z\to z_0$. Since $D^2\phi_Q(z_0)$ is positive definite, it follows
that
\begin{equation*}
	|z-z_0|^2\lesssim\phi_Q(z)
\end{equation*}
for $z$ sufficiently close to $z_0$. Taking $z=z_{\varepsilon}$ and
using \eqref{o120}, we obtain
\begin{equation*}
	|z_{\varepsilon}-z_0|^2
	\lesssim\phi_Q(z_{\varepsilon})
	=o(\varepsilon),
\end{equation*}
and hence \eqref{o119} follows.

We next derive \eqref{o121}. Combining \eqref{o120} with the expansion
\eqref{o328}, and using \eqref{o329}, we obtain
\begin{equation*}
	\pi Q(z_{\varepsilon})\lambda_{\varepsilon}^{-1}
	-\frac{\varepsilon}{4\pi}\Theta_V(z_{\varepsilon})
	=o(\varepsilon).
\end{equation*}
Dividing by $\varepsilon$, we find
\begin{equation*}
	\frac{\pi Q(z_{\varepsilon})}
	{\varepsilon\lambda_{\varepsilon}}
	-\frac{1}{4\pi}\Theta_V(z_{\varepsilon})
	=o(1).
\end{equation*}
Since $z_{\varepsilon}\to z_0$, it follows that
\begin{equation*}
	\frac{1}{\varepsilon\lambda_{\varepsilon}}
	=\frac{\Theta_V(z_0)}{4\pi^2Q(z_0)}+o(1).
\end{equation*}
Here $Q(z_0)<0$ by Assumption (e). If $\Theta_V(z_0)<0$, this proves
\eqref{o121}. If $\Theta_V(z_0)=0$, the preceding identity gives
$\varepsilon\lambda_{\varepsilon}\to\infty$. 

It remains to prove \eqref{o122} and \eqref{o123}. By \eqref{o37d}
and \eqref{o329}, we have $\alpha_{\varepsilon}\to1$. Applying the
Taylor expansion
\begin{equation*}
	t^{\frac{1}{10-2\mu}}
	=1+\frac{1}{10-2\mu}(t-1)
	+O\left(|t-1|^2\right)
	\qquad\text{as }t\to1
\end{equation*}
with $t=\alpha_{\varepsilon}^{10-2\mu}$, we obtain
\begin{equation*}
\begin{split}
	\alpha_{\varepsilon}
	={}&1+\frac{1}{10-2\mu}
	\left(\alpha_{\varepsilon}^{10-2\mu}-1\right)
	+O\left(\left|
	\alpha_{\varepsilon}^{10-2\mu}-1\right|^2\right)\\
	={}&1+\frac{16}{3\pi}
	\phi_0(z_{\varepsilon})\lambda_{\varepsilon}^{-1}
	+o(\varepsilon).
\end{split}
\end{equation*}
If $\Theta_V(z_0)<0$, \eqref{o121} yields
\begin{equation*}
	\lambda_{\varepsilon}^{-1}
	=\frac{|\Theta_V(z_0)|}{4\pi^2|Q(z_0)|}\varepsilon
	+o(\varepsilon).
\end{equation*}
Since $\phi_0(z_{\varepsilon})\to\phi_0(z_0)$, we obtain \eqref{o122}.
If $\Theta_V(z_0)=0$, the proof of \eqref{o121} gives
$\lambda_{\varepsilon}^{-1}=o(\varepsilon)$, and \eqref{o122} follows
as well.

Finally, \eqref{o37} and \eqref{o329} imply
\begin{equation*}
	\|\nabla r_{\varepsilon}\|_2
	=O\left(\varepsilon\lambda_{\varepsilon}^{-\frac12}\right)
	=O\left(\varepsilon^{\frac32}\right),
\end{equation*}
which proves \eqref{o123} and completes the proof.
\end{proof}

\section{Proof of Theorem \ref{main2}}\label{sec:proof-main2}

\subsection{A bound on $\boldsymbol{\|v_{\varepsilon}\|_{\infty}}$.}
In this subsection we estimate the first-order remainder
$v_{\varepsilon}$ in the decomposition
$u_{\varepsilon}
=\alpha_{\varepsilon}(PU_{z_{\varepsilon},\lambda_{\varepsilon}}
+v_{\varepsilon})$, both for its $L^{\infty}$ norm and for 
$L^p$ norms with $p>6$. These estimates are beyond the range given directly by
the Sobolev embedding, and they will be used later in the proof of
Theorem \ref{main2}.

\begin{Prop}\label{prop43o}
	As $\varepsilon\to0$, it holds that
	\begin{equation}
		\label{o401}
		\|v_{\varepsilon}\|_p
		\lesssim \lambda_{\varepsilon}^{-\frac{3}{p}}
		\qquad\text{for every }p\in(6,\infty).
	\end{equation}
	Moreover, for every $\nu>0$,
	\begin{equation}
		\label{o402}
		\|v_{\varepsilon}\|_{\infty}
		=o\left(\lambda_{\varepsilon}^{\nu}\right).
	\end{equation}
\end{Prop}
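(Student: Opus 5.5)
The plan is a rescaled Moser iteration, following the analogous step in \cite{FKK3}. Set $\tilde v_\varepsilon(x):=\lambda_\varepsilon^{-1/2}v_\varepsilon(z_\varepsilon+x/\lambda_\varepsilon)$ on $\tilde\Omega_\varepsilon:=\lambda_\varepsilon(\Omega-z_\varepsilon)$. Then $\|\tilde v_\varepsilon\|_{L^p(\tilde\Omega_\varepsilon)}=\lambda_\varepsilon^{-\frac12+\frac3p}\|v_\varepsilon\|_p$. Hence \eqref{o401} is equivalent to
\begin{equation*}
\|\tilde v_\varepsilon\|_p\lesssim\lambda_\varepsilon^{-1/2},
\end{equation*}
and \eqref{o402} is equivalent to $\|\tilde v_\varepsilon\|_\infty=o(\lambda_\varepsilon^{\nu-\frac12})$. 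By Proposition \ref{prop21} we already have $\|\tilde v_\varepsilon\|_6\lesssim\|\nabla\tilde v_\varepsilon\|_2=\|\nabla v_\varepsilon\|_2\lesssim\lambda_\varepsilon^{-1/2}$. The task is therefore to push integrability from $L^6$ to every $L^p$ while keeping the factor $\lambda_\varepsilon^{-1/2}$.

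Rescaling \eqref{equation.v} gives
\begin{equation*}
-\Delta\tilde v_\varepsilon=a_\varepsilon\tilde v_\varepsilon+b_\varepsilon,
\end{equation*}
where $a_\varepsilon$ collects the linearized nonlocal potential. Its pieces are $W_\varepsilon(\tilde U+\tilde v)^{4-\mu}$ with $W_\varepsilon=|x|^{-\mu}*(\tilde U+\tilde v)^{6-\mu}$, plus $\lambda_\varepsilon^{-2}(Q+\varepsilon V)$. The source $b_\varepsilon$ contains three kinds of terms. The first is the difference $W_\varepsilon\tilde U^{5-\mu}-|x|^{-\mu}*U_{0,1}^{6-\mu}\,U_{0,1}^{5-\mu}$, a remainder of size $O(\lambda_\varepsilon^{-1/2})$ pointwise times decaying bubble profiles, coming from $\varphi$ and the $\tilde v$ part of $W_\varepsilon$. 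The second is the mixed nonlocal term $(6-\mu)|x|^{-\mu}*(\tilde U^{5-\mu}\tilde v)\,\tilde U^{5-\mu}$. The third is $-\lambda_\varepsilon^{-2}(Q+\varepsilon V)\widetilde{PU}$. Here $\tilde U=U_{0,1}-\lambda_\varepsilon^{-1/2}\tilde\varphi$, and Lemma \ref{A2} gives $\|\tilde\varphi\|_\infty\lesssim 1$.

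The key structural fact is that $W_\varepsilon\in L^\infty$ uniformly. This follows from HLS together with $\tilde U+\tilde v\in L^6$ and the local bound $|x|^{-\mu}\in L^{3/\mu-}_{loc}$, provided $\tilde v\in L^{q}$ for some $q>6$ on small balls; Proposition \ref{A5} should supply exactly this kind of estimate. With $W_\varepsilon$ bounded, $a_\varepsilon\lesssim U_{0,1}^{4-\mu}+|\tilde v|^{4-\mu}$. Since $\mu>0$, the power $4-\mu<4$, so $a_\varepsilon$ lies in $L^{3/2}$ with small norm on the sets where the Moser iteration needs smallness. The term $|\tilde v|^{4-\mu}$ is handled by the standard Brezis--Kato truncation, since $\|\tilde v\|_6\to0$. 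The source $b_\varepsilon$ is bounded by $\lambda_\varepsilon^{-1/2}$ times a function in $L^{3/2}\cap L^\infty$. The mixed term is bounded by $\|\tilde v\|_6\lesssim\lambda_\varepsilon^{-1/2}$ times a bounded decaying factor, again using HLS for the convolution.

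I then test the equation with $|\tilde v|^{\beta-1}\tilde v$, truncated as in Lemma \ref{lemma26}; no cutoff $\zeta$ is needed since $\tilde v=0$ on $\partial\tilde\Omega_\varepsilon$. This yields $\|\tilde v\|_{3(\beta+1)}^{\beta+1}\lesssim$ (terms absorbed by smallness) $+\|b_\varepsilon\|\,\|\tilde v\|^{\beta}$. Iterating $\beta$ up to any finite value gives $\|\tilde v\|_p\lesssim_p\lambda_\varepsilon^{-1/2}$, which is \eqref{o401}.

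For \eqref{o402} I do not expect a uniform-in-$p$ constant, since $\tilde\Omega_\varepsilon$ has volume $\sim\lambda_\varepsilon^3$. Instead I use local elliptic regularity on unit balls: $\|\tilde v\|_{L^\infty(B_1(x))}\lesssim\|\tilde v\|_{L^p(B_2(x))}+\|a\tilde v+b\|_{L^{p/2}(B_2(x))}$ for large $p$. This bounds $\|\tilde v\|_\infty$ by $C_p\lambda_\varepsilon^{-1/2}$. Unscaling gives $\|v_\varepsilon\|_\infty\lesssim\|\tilde v\|_\infty\lambda_\varepsilon^{1/2}$, which is only $O(1)$. To obtain $o(\lambda_\varepsilon^\nu)$ while tolerating the losses, I instead interpolate from \eqref{o401} with a De Giorgi--Nash local bound in the original variables: $\|v\|_{L^\infty(B_r)}\lesssim r^{-3/p}\|v\|_{L^p(B_{2r})}+\ldots$ with $r$ fixed. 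This gives $\lambda_\varepsilon^{-3/p}\cdot$ (source terms), and since $p$ is arbitrary the losses are $\lambda_\varepsilon^{3/p}$-type, small powers.

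The main obstacle I expect is controlling the nonlocal coefficient $W_\varepsilon$ and the mixed convolution term uniformly in $L^\infty$ at each iteration step without circularity. This requires bootstrapping Proposition \ref{A5} alongside the exponents. Near the boundary, the growth of $\|b_\varepsilon\|$ in the unscaled $L^\infty$ bound is what forces the $\lambda_\varepsilon^\nu$ loss rather than an $O(1)$ bound.
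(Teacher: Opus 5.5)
Your rescaling, and the reduction of \eqref{o401} to $\|\tilde v_\varepsilon\|_p\lesssim\lambda_\varepsilon^{-1/2}$, are fine. The gap is that the iteration rests on a uniform bound for $W_\varepsilon=|x|^{-\mu}*(\tilde U+\tilde v_\varepsilon)^{6-\mu}$, and that bound is circular.

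With only $\tilde v_\varepsilon\in L^6$, the piece $|x|^{-\mu}*|\tilde v_\varepsilon|^{6-\mu}$ is only known to lie in $L^{6/\mu}$. A uniform $L^\infty$ bound needs $|\tilde v_\varepsilon|^{6-\mu}\in L^s$ with $s>3/(3-\mu)$, that is, $\tilde v_\varepsilon\in L^q$ with $q>3(6-\mu)/(3-\mu)$. This threshold tends to $12$ as $\mu\to2$, and it is exactly what the iteration is supposed to produce. Proposition~\ref{A5} cannot supply it: it maps $L^r$ into $L^s$ with $s<\infty$, and it gives neither $L^\infty$ bounds nor any gain of integrability for $\tilde v_\varepsilon$. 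The same problem undermines your claim that $b_\varepsilon$ is pointwise $O(\lambda_\varepsilon^{-1/2})$, since $b_\varepsilon$ contains $(|x|^{-\mu}*|\tilde v_\varepsilon|^{6-\mu})\tilde U^{5-\mu}$.

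The missing idea is never to estimate a Riesz potential pointwise. After testing with $|v|^{r-1}v$, treat each nonlocal contribution as a double integral and apply HLS: $\iint\frac{g(y)h(x)|v(x)|^r}{|x-y|^\mu}dxdy\lesssim\|g\|_{6/(6-\mu)}\|h|v|^r\|_{6/(6-\mu)}$. Then only $\|v_\varepsilon\|_6\lesssim\lambda_\varepsilon^{-1/2}$ enters, and every term is either a small multiple of $\|v_\varepsilon\|_{3(r+1)}^{r+1}$ or a source of the right size. This is what the paper does, in the original variables and for each $r$ directly, with no iteration in $p$. It closes with the interpolations $\int|v|^{r+5}\le\|v\|_{3(r+1)}^{r+1}\|v\|_6^4$ and $\int U^4|v|^{r+1}\le(\int U^5|v|^r)^{4/5}(\int|v|^{r+5})^{1/5}$, where $U=U_{z_\varepsilon,\lambda_\varepsilon}$, and reaches $\|v_\varepsilon\|_{3(r+1)}^{r+1}\lesssim\lambda_\varepsilon^{-1}$.

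For \eqref{o402}, you discard the rescaled local bound because it \emph{only} gives $\|v_\varepsilon\|_\infty=O(1)$. But $O(1)=o(\lambda_\varepsilon^{\nu})$ for every $\nu>0$, so that is the argument to keep, and it proves more than \eqref{o402}. Once \eqref{o401} is known for all $p$, the boundedness of $W_\varepsilon$ follows a posteriori, with no circularity. The rescaled right-hand side $\tilde{\mathcal F}$ then satisfies $\|\tilde{\mathcal F}\|_{L^s(B_2(x))}\lesssim\lambda_\varepsilon^{-1/2}$ uniformly in $x$, for some $s>3/2$. Apply the local maximum principle to $\tilde v_\varepsilon^{\pm}$ extended by zero; these are subsolutions in all of $\mathbb{R}^3$, so no boundary geometry is needed. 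This gives $\|\tilde v_\varepsilon\|_\infty\lesssim\lambda_\varepsilon^{-1/2}$.

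The replacement you actually propose is not an argument as written. The De Giorgi--Nash bound is a sum, not a product, of the $L^p$ term and the source term. The loss is not of size $\lambda_\varepsilon^{3/p}$. It also does not come from the boundary, which stays at distance $\gtrsim1$ by Proposition~\ref{prop25}. In an unscaled argument the loss comes from the concentration point. The paper uses the Green representation $\|v_\varepsilon\|_\infty\lesssim\|\mathcal F\|_q$ with $q\in(3/2,3)$ and shows $\|\mathcal F\|_q\lesssim\lambda_\varepsilon^{2-3/q}$. The dominant term is $\|U^4v_\varepsilon\|_q\le\|U\|_{5q}^4\|v_\varepsilon\|_{5q}$, controlled by \eqref{o401}, and one then lets $q$ decrease to $3/2$.
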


\begin{proof}
	We first prove \eqref{o401}. Let $r\in(1,\infty)$. By \eqref{o213},
	we have $-\Delta v_{\varepsilon}=\mathcal{F}$. Multiplying this
	equation by $|v_{\varepsilon}|^{r-1}v_{\varepsilon}$ and integrating
	by parts, we obtain
	\begin{equation*}
		\frac{4r}{(r+1)^2}
		\int_{\Omega}
		\left|\nabla
		\left(|v_{\varepsilon}|^{\frac{r+1}{2}}\right)
		\right|^2
		=
		\int_{\Omega}
		\mathcal{F}|v_{\varepsilon}|^{r-1}v_{\varepsilon}.
	\end{equation*}
	Thus, by the Sobolev inequality applied to
	$|v_{\varepsilon}|^{\frac{r+1}{2}}$, it follows that
	\begin{equation}
		\label{o403}
		\|v_{\varepsilon}\|_{3(r+1)}^{r+1}
		\lesssim
		\int_{\Omega}
		|\mathcal{F}||v_{\varepsilon}|^r.
	\end{equation}
	We next estimate the right-hand side of \eqref{o403}. By writing
	$PU_{z_{\varepsilon},\lambda_{\varepsilon}}
	=U_{z_{\varepsilon},\lambda_{\varepsilon}}
	-\varphi_{z_{\varepsilon},\lambda_{\varepsilon}}$, we obtain the
	pointwise bound
	\begin{equation}
		\label{o404}
		\begin{split}
			|\mathcal{F}|\lesssim{}&
			\left|\alpha_{\varepsilon}^{10-2\mu}-1\right|
			\left(\int_{\mathbb{R}^{3}}
			\frac{U_{z_{\varepsilon},\lambda_{\varepsilon}}^{6-\mu}(y)}
			{|x-y|^{\mu}}dy\right)
			U_{z_{\varepsilon},\lambda_{\varepsilon}}^{5-\mu}\\
			&+\left(\int_{\Omega}
			\frac{U_{z_{\varepsilon},\lambda_{\varepsilon}}^{6-\mu}(y)}
			{|x-y|^{\mu}}dy\right)
			\left(
			U_{z_{\varepsilon},\lambda_{\varepsilon}}^{4-\mu}
			|v_{\varepsilon}|
			+U_{z_{\varepsilon},\lambda_{\varepsilon}}^{4-\mu}
			\varphi_{z_{\varepsilon},\lambda_{\varepsilon}}
			+|v_{\varepsilon}|^{5-\mu}
			+\varphi_{z_{\varepsilon},\lambda_{\varepsilon}}^{5-\mu}
			\right)\\
			&+\left(\int_{\Omega}
			\frac{
			U_{z_{\varepsilon},\lambda_{\varepsilon}}^{5-\mu}(y)
			|v_{\varepsilon}(y)|
			+U_{z_{\varepsilon},\lambda_{\varepsilon}}^{5-\mu}(y)
			\varphi_{z_{\varepsilon},\lambda_{\varepsilon}}(y)
			+|v_{\varepsilon}(y)|^{6-\mu}
			+\varphi_{z_{\varepsilon},\lambda_{\varepsilon}}^{6-\mu}(y)}
			{|x-y|^{\mu}}dy\right)
			U_{z_{\varepsilon},\lambda_{\varepsilon}}^{5-\mu}\\
			&+\left(\int_{\mathbb{R}^{3}\setminus\Omega}
			\frac{U_{z_{\varepsilon},\lambda_{\varepsilon}}^{6-\mu}(y)}
			{|x-y|^{\mu}}dy\right)
			U_{z_{\varepsilon},\lambda_{\varepsilon}}^{5-\mu}
			+U_{z_{\varepsilon},\lambda_{\varepsilon}}
			+|v_{\varepsilon}|
			+\varphi_{z_{\varepsilon},\lambda_{\varepsilon}}.
		\end{split}
	\end{equation}
	We estimate the resulting terms separately. Let $\eta>0$ be fixed.
	By Corollary \ref{cor39o}, Lemma \ref{A1}, Lemma \ref{A2}, and
	Proposition \ref{prop21}, we have
	\begin{equation*}
		\left|\alpha_{\varepsilon}^{10-2\mu}-1\right|
		\lesssim \lambda_{\varepsilon}^{-1},\qquad
		\|v_{\varepsilon}\|_6
		+\|\varphi_{z_{\varepsilon},\lambda_{\varepsilon}}\|_{\infty}
		\lesssim \lambda_{\varepsilon}^{-\frac12}.
	\end{equation*}
	We recall the following form of Young's inequality.  Let
	$p,q>1$ satisfy $p^{-1}+q^{-1}=1$. Then, for every $\eta>0$, there
	exists a constant $C_{\eta}>0$ such that, for all $a,b>0$,
	\begin{equation*}
		ab\leq \eta a^p+C_{\eta}b^q.
	\end{equation*}
	Using the identity \eqref{1esay}, H\"older's inequality, and Young's
	inequality, we obtain
	\begin{equation*}
	\begin{split}
		&\left|\alpha_{\varepsilon}^{10-2\mu}-1\right|
		\int_{\Omega}
		\left(\int_{\mathbb{R}^{3}}
		\frac{U_{z_{\varepsilon},\lambda_{\varepsilon}}^{6-\mu}(y)}
		{|x-y|^{\mu}}dy\right)
		U_{z_{\varepsilon},\lambda_{\varepsilon}}^{5-\mu}
		|v_{\varepsilon}|^r\\
		\lesssim&
		\lambda_{\varepsilon}^{-1}
		\int_{\Omega}
		U_{z_{\varepsilon},\lambda_{\varepsilon}}^{5}
		|v_{\varepsilon}|^r\lesssim
		\lambda_{\varepsilon}^{-1}
		\left\|U_{z_{\varepsilon},\lambda_{\varepsilon}}
		\right\|_{5\cdot\frac{3(r+1)}{2r+3}}^5
		\|v_{\varepsilon}\|_{3(r+1)}^r\\
		\lesssim&
		\lambda_{\varepsilon}^{-\frac{r+3}{2(r+1)}}
		\|v_{\varepsilon}\|_{3(r+1)}^r
		\leq
		\eta\|v_{\varepsilon}\|_{3(r+1)}^{r+1}
		+C_{\eta}\lambda_{\varepsilon}^{-\frac{r+3}{2}}.
	\end{split}
	\end{equation*}
	We first establish an estimate that will be needed later, 
	\begin{equation*}
	\begin{split}
		\int_{\Omega}|v_{\varepsilon}|^{r+5}
		=\int_{\Omega}|v_{\varepsilon}|^{r+1}
		|v_{\varepsilon}|^4
		\leq
		\|v_{\varepsilon}\|_{3(r+1)}^{r+1}
		\|v_{\varepsilon}\|_6^{4}\lesssim
		\lambda_{\varepsilon}^{-2}
		\|v_{\varepsilon}\|_{3(r+1)}^{r+1}. 
	\end{split}
	\end{equation*}
	Thus we have
	\begin{equation*}
	\begin{split}
		&\int_{\Omega}
		\left(\int_{\Omega}
		\frac{U_{z_{\varepsilon},\lambda_{\varepsilon}}^{6-\mu}(y)}
		{|x-y|^{\mu}}dy\right)
		U_{z_{\varepsilon},\lambda_{\varepsilon}}^{4-\mu}
		|v_{\varepsilon}|^{r+1}\\
		\lesssim&
		\int_{\Omega}
		U_{z_{\varepsilon},\lambda_{\varepsilon}}^{4}
		|v_{\varepsilon}|^{r+1} \lesssim
		\left(\int_{\Omega}
		U_{z_{\varepsilon},\lambda_{\varepsilon}}^{5}
		|v_{\varepsilon}|^r\right)^{\frac45}
		\left(\int_{\Omega}|v_{\varepsilon}|^{r+5}\right)^{\frac15}\\
		\lesssim&
		\left(
		\lambda_{\varepsilon}^{\frac{r-1}{2(r+1)}}
		\|v_{\varepsilon}\|_{3(r+1)}^r
		\right)^{\frac45}
		\left(
		\lambda_{\varepsilon}^{-2}
		\|v_{\varepsilon}\|_{3(r+1)}^{r+1}
		\right)^{\frac15}\\
		=&
		\lambda_{\varepsilon}^{-\frac{4}{5(r+1)}}
		\|v_{\varepsilon}\|_{3(r+1)}^{r+\frac15}
		\leq
		\eta\|v_{\varepsilon}\|_{3(r+1)}^{r+1}
		+C_{\eta}\lambda_{\varepsilon}^{-1}.
	\end{split}
	\end{equation*}
	Moreover, it follows that
	\begin{equation*}
	\begin{split}
		&\int_{\Omega}
		\left(\int_{\Omega}
		\frac{U_{z_{\varepsilon},\lambda_{\varepsilon}}^{6-\mu}(y)}
		{|x-y|^{\mu}}dy\right)
		|v_{\varepsilon}|^{r+5-\mu}\\
		\lesssim&
		\int_{\Omega}
		U_{z_{\varepsilon},\lambda_{\varepsilon}}^{\mu}
		|v_{\varepsilon}|^{r+5-\mu}
		\lesssim
		\left\|U_{z_{\varepsilon},\lambda_{\varepsilon}}^{\mu}
		\right\|_{\frac{6}{\mu}}
		\|v_{\varepsilon}\|_{3(r+1)}^{r+1}
		\|v_{\varepsilon}\|_6^{4-\mu}\\
		\lesssim&
		\lambda_{\varepsilon}^{-\frac{4-\mu}{2}}
		\|v_{\varepsilon}\|_{3(r+1)}^{r+1},
	\end{split}
	\end{equation*}
	and
	\begin{equation*}
	\begin{split}
		&\int_{\Omega}
		\left(\int_{\Omega}
		\frac{U_{z_{\varepsilon},\lambda_{\varepsilon}}^{6-\mu}(y)}
		{|x-y|^{\mu}}dy\right)
		\left(
		U_{z_{\varepsilon},\lambda_{\varepsilon}}^{4-\mu}
		\varphi_{z_{\varepsilon},\lambda_{\varepsilon}}
		+\varphi_{z_{\varepsilon},\lambda_{\varepsilon}}^{5-\mu}
		\right)|v_{\varepsilon}|^r\\
		\lesssim&
		\|\varphi_{z_{\varepsilon},\lambda_{\varepsilon}}\|_{\infty}
		\int_{\Omega}
		U_{z_{\varepsilon},\lambda_{\varepsilon}}^{4}
		|v_{\varepsilon}|^r
		+\|\varphi_{z_{\varepsilon},\lambda_{\varepsilon}}\|_{\infty}^{5-\mu}
		\int_{\Omega}
		U_{z_{\varepsilon},\lambda_{\varepsilon}}^{\mu}
		|v_{\varepsilon}|^r\\
		\lesssim&
		\|\varphi_{z_{\varepsilon},\lambda_{\varepsilon}}\|_{\infty}
		\left\|U_{z_{\varepsilon},\lambda_{\varepsilon}}
		\right\|_{\frac{12(r+1)}{2r+3}}^4
		\|v_{\varepsilon}\|_{3(r+1)}^r
		+\|\varphi_{z_{\varepsilon},\lambda_{\varepsilon}}\|_{\infty}^{5-\mu}
		\left\|U_{z_{\varepsilon},\lambda_{\varepsilon}}
		\right\|_{\frac{3\mu(r+1)}{2r+3}}^{\mu}
		\|v_{\varepsilon}\|_{3(r+1)}^r\\
		\lesssim&
		\left(
		\lambda_{\varepsilon}^{-\frac{r+3}{2(r+1)}}
		+\lambda_{\varepsilon}^{-\frac52}
		\right)
		\|v_{\varepsilon}\|_{3(r+1)}^r
		\leq
		\eta\|v_{\varepsilon}\|_{3(r+1)}^{r+1}
		+C_{\eta}\lambda_{\varepsilon}^{-\frac{r+3}{2}}.
	\end{split}
	\end{equation*}
	For later use, we record the following estimate:
	\begin{equation}\label{o405}
	\begin{split}
		\left\|
		U_{z_{\varepsilon},\lambda_{\varepsilon}}^{5-\mu}
		|v_{\varepsilon}|^r
		\right\|_{\frac{6}{6-\mu}}
		&\leq
		\left\|
		U_{z_{\varepsilon},\lambda_{\varepsilon}}^{5-\mu}
		\right\|_{\frac{6(r+1)}{(4-\mu)r+6-\mu}}
		\|v_{\varepsilon}\|_{3(r+1)}^r\lesssim
		\lambda_{\varepsilon}^{\frac{r-1}{2(r+1)}}
		\|v_{\varepsilon}\|_{3(r+1)}^r,
	\end{split}
	\end{equation}
	where we used  Lemma \ref{A1}, noting that
$\frac{6(5-\mu)(r+1)}{(4-\mu)r+6-\mu}>3$. 

	The HLS inequality and \eqref{o405} give
	\begin{equation*}
	\begin{split}
		&\int_{\Omega}
		\left(\int_{\Omega}
		\frac{
		U_{z_{\varepsilon},\lambda_{\varepsilon}}^{5-\mu}(y)
		\varphi_{z_{\varepsilon},\lambda_{\varepsilon}}(y)
		+\varphi_{z_{\varepsilon},\lambda_{\varepsilon}}^{6-\mu}(y)}
		{|x-y|^{\mu}}dy\right)
		U_{z_{\varepsilon},\lambda_{\varepsilon}}^{5-\mu}
		|v_{\varepsilon}|^r\\
		\lesssim&
		\left(
		\left\|
		U_{z_{\varepsilon},\lambda_{\varepsilon}}^{5-\mu}
		\varphi_{z_{\varepsilon},\lambda_{\varepsilon}}
		\right\|_{\frac{6}{6-\mu}}
		+\left\|
		\varphi_{z_{\varepsilon},\lambda_{\varepsilon}}^{6-\mu}
		\right\|_{\frac{6}{6-\mu}}
		\right)
		\left\|
		U_{z_{\varepsilon},\lambda_{\varepsilon}}^{5-\mu}
		|v_{\varepsilon}|^r
		\right\|_{\frac{6}{6-\mu}}\\
		\lesssim&
		\left(
		\|\varphi_{z_{\varepsilon},\lambda_{\varepsilon}}\|_{\infty}
		\left\|U_{z_{\varepsilon},\lambda_{\varepsilon}}
		\right\|_{\frac{6(5-\mu)}{6-\mu}}^{5-\mu}
		+\|\varphi_{z_{\varepsilon},\lambda_{\varepsilon}}\|_6^{6-\mu}
		\right)
		\lambda_{\varepsilon}^{\frac{r-1}{2(r+1)}}
		\|v_{\varepsilon}\|_{3(r+1)}^r\\
		\lesssim&
		\lambda_{\varepsilon}^{-\frac{r+3}{2(r+1)}}
		\|v_{\varepsilon}\|_{3(r+1)}^r
		\leq
		\eta\|v_{\varepsilon}\|_{3(r+1)}^{r+1}
		+C_{\eta}\lambda_{\varepsilon}^{-\frac{r+3}{2}},
	\end{split}
	\end{equation*}
	\begin{equation*}
	\begin{split}
		&\int_{\Omega}\int_{\Omega}
		\frac{
		U_{z_{\varepsilon},\lambda_{\varepsilon}}^{5-\mu}(y)
		|v_{\varepsilon}(y)|
		U_{z_{\varepsilon},\lambda_{\varepsilon}}^{5-\mu}(x)
		|v_{\varepsilon}(x)|^r}
		{|x-y|^{\mu}}dxdy\\
		\lesssim&
		\left\|
		U_{z_{\varepsilon},\lambda_{\varepsilon}}^{5-\mu}
		v_{\varepsilon}\right\|_{\frac{6}{6-\mu}}
		\left\|
		U_{z_{\varepsilon},\lambda_{\varepsilon}}^{5-\mu}
		|v_{\varepsilon}|^r\right\|_{\frac{6}{6-\mu}}
		\lesssim
		\lambda_{\varepsilon}^{-\frac12}
		\left(
		\lambda_{\varepsilon}^{\frac{r-1}{2(r+1)}}
		\|v_{\varepsilon}\|_{3(r+1)}^r
		\right)\\
		=&
		\lambda_{\varepsilon}^{-\frac{1}{r+1}}
		\|v_{\varepsilon}\|_{3(r+1)}^r
		\leq
		\eta\|v_{\varepsilon}\|_{3(r+1)}^{r+1}
		+C_{\eta}\lambda_{\varepsilon}^{-1},
	\end{split}
	\end{equation*}
	and
	\begin{equation*}
	\begin{split}
		&\int_{\Omega}\int_{\Omega}
		\frac{
		|v_{\varepsilon}(y)|^{6-\mu}
		U_{z_{\varepsilon},\lambda_{\varepsilon}}^{5-\mu}(x)
		|v_{\varepsilon}(x)|^r}
		{|x-y|^{\mu}}dxdy\\
		\lesssim&
		\|v_{\varepsilon}\|_6^{6-\mu}
		\left\|
		U_{z_{\varepsilon},\lambda_{\varepsilon}}^{5-\mu}
		|v_{\varepsilon}|^r\right\|_{\frac{6}{6-\mu}}
		\lesssim
		\lambda_{\varepsilon}^{-\frac{6-\mu}{2}
		+\frac{r-1}{2(r+1)}}
		\|v_{\varepsilon}\|_{3(r+1)}^r
		\\=&
		\lambda_{\varepsilon}^{-\frac{(5-\mu)r+7-\mu}{2(r+1)}}
		\|v_{\varepsilon}\|_{3(r+1)}^r
		\leq
		\eta\|v_{\varepsilon}\|_{3(r+1)}^{r+1}
		+C_{\eta}\lambda_{\varepsilon}^{-\frac{(5-\mu)r+7-\mu}{2}}.
	\end{split}
	\end{equation*}
		For the remaining terms, the HLS inequality, \eqref{o405}, and
		Lemma  \ref{A1RN} give
	\begin{equation*}
	\begin{split}
		&\int_{\Omega}
		\left(\int_{\mathbb{R}^{3}\setminus\Omega}
		\frac{U_{z_{\varepsilon},\lambda_{\varepsilon}}^{6-\mu}(y)}
		{|x-y|^{\mu}}dy\right)
		U_{z_{\varepsilon},\lambda_{\varepsilon}}^{5-\mu}
		|v_{\varepsilon}|^r\\
		\lesssim&
		\left\|U_{z_{\varepsilon},\lambda_{\varepsilon}}^{6-\mu}
		\right\|_{L^{\frac{6}{6-\mu}}(\mathbb{R}^{3}\setminus\Omega)}
		\left\|U_{z_{\varepsilon},\lambda_{\varepsilon}}^{5-\mu}
		|v_{\varepsilon}|^r\right\|_{\frac{6}{6-\mu}}
		\lesssim
		\lambda_{\varepsilon}^{-\frac{6-\mu}{2}}
		\lambda_{\varepsilon}^{\frac{r-1}{2(r+1)}}
		\|v_{\varepsilon}\|_{3(r+1)}^r
	\\	= &
		\lambda_{\varepsilon}^{-\frac{(5-\mu)r+7-\mu}{2(r+1)}}
		\|v_{\varepsilon}\|_{3(r+1)}^r
		\leq
		\eta\|v_{\varepsilon}\|_{3(r+1)}^{r+1}
		+C_{\eta}\lambda_{\varepsilon}^{-\frac{(5-\mu)r+7-\mu}{2}},
	\end{split}
	\end{equation*}
	\begin{equation*}
	\begin{split}
		\int_{\Omega}U_{z_{\varepsilon},\lambda_{\varepsilon}}
		|v_{\varepsilon}|^r
		&\leq
		\left\|U_{z_{\varepsilon},\lambda_{\varepsilon}}
		\right\|_{\frac{3(r+1)}{2r+3}}
		\|v_{\varepsilon}\|_{3(r+1)}^r
		\lesssim
		\lambda_{\varepsilon}^{-\frac12}
		\|v_{\varepsilon}\|_{3(r+1)}^r
		\leq
		\eta\|v_{\varepsilon}\|_{3(r+1)}^{r+1}
		+C_{\eta}\lambda_{\varepsilon}^{-\frac{r+1}{2}},\\
		\int_{\Omega}
		\varphi_{z_{\varepsilon},\lambda_{\varepsilon}}
		|v_{\varepsilon}|^r
		&\lesssim
		\|\varphi_{z_{\varepsilon},\lambda_{\varepsilon}}\|_{\infty}
		\|v_{\varepsilon}\|_{3(r+1)}^r
		\lesssim
		\lambda_{\varepsilon}^{-\frac12}
		\|v_{\varepsilon}\|_{3(r+1)}^r
		\leq
		\eta\|v_{\varepsilon}\|_{3(r+1)}^{r+1}
		+C_{\eta}\lambda_{\varepsilon}^{-\frac{r+1}{2}},\\
		\int_{\Omega}|v_{\varepsilon}|^{r+1}
		&\lesssim
		\left(\int_{\Omega}|v_{\varepsilon}|^{r+5}\right)^{\frac{r+1}{r+5}}
		\lesssim
		\lambda_{\varepsilon}^{-\frac{2(r+1)}{r+5}}
		\|v_{\varepsilon}\|_{3(r+1)}^{\frac{(r+1)^2}{r+5}}
		\leq
		\eta\|v_{\varepsilon}\|_{3(r+1)}^{r+1}
		+C_{\eta}\lambda_{\varepsilon}^{-\frac{r+1}{2}}.
	\end{split}
	\end{equation*}
	
	Choosing $\eta$ small enough and then taking $\varepsilon$ sufficiently
	small, we absorb the terms
	$\eta\|v_{\varepsilon}\|_{3(r+1)}^{r+1}$,
	$\lambda_{\varepsilon}^{-2}
	\|v_{\varepsilon}\|_{3(r+1)}^{r+1}$ and
	$\lambda_{\varepsilon}^{-\frac{4-\mu}{2}}
	\|v_{\varepsilon}\|_{3(r+1)}^{r+1}$ into the left-hand side of
	\eqref{o403}. Hence
	\begin{equation*}
		\|v_{\varepsilon}\|_{3(r+1)}^{r+1}
		\lesssim
		\lambda_{\varepsilon}^{-\frac{r+3}{2}}
		+\lambda_{\varepsilon}^{-1}
		+\lambda_{\varepsilon}^{-\frac{(5-\mu)r+7-\mu}{2}}
		+\lambda_{\varepsilon}^{-\frac{r+1}{2}}
		\lesssim \lambda_{\varepsilon}^{-1}.
	\end{equation*}
		Therefore, putting $p=3(r+1)$, we obtain \eqref{o401}.

		We next prove \eqref{o402}. Since
		$-\Delta v_{\varepsilon}=\mathcal F$ in $\Omega$ and
		$v_{\varepsilon}=0$ on $\partial\Omega$, the Green function
		representation gives
		\begin{equation}\label{o406}
			v_{\varepsilon}(x)
			=\frac{1}{4\pi}\int_{\Omega}G_0(x,y)\mathcal F(y)dy.
		\end{equation}
		Fix $q\in(3/2,3)$ and let $q'=q/(q-1)$. Since $1<q'<3$ and
		$0\leq G_0(x,y)\leq |x-y|^{-1}$, H\"older's inequality yields
		\begin{equation}\label{o407}
			\|v_{\varepsilon}\|_{\infty}
			\lesssim
			\sup_{x\in\Omega}\|G_0(x,\cdot)\|_{q'}
			\|\mathcal F\|_q
			\lesssim \|\mathcal F\|_q.
		\end{equation}
		Hence it suffices to estimate $\|\mathcal F\|_q$ with some $q\in(3/2,3)$. 
		By Corollary \ref{cor39o}, \eqref{1esay}, and Lemma \ref{A1},
		\begin{equation*}
		\begin{split}
			\left|\alpha_{\varepsilon}^{10-2\mu}-1\right|
			\left\|
			\left(\int_{\mathbb R^3}
			\frac{U_{z_{\varepsilon},\lambda_{\varepsilon}}^{6-\mu}(y)}
			{|x-y|^\mu}dy\right)
			U_{z_{\varepsilon},\lambda_{\varepsilon}}^{5-\mu}
			\right\|_q\lesssim
			\lambda_{\varepsilon}^{-1}
			\left\|U_{z_{\varepsilon},\lambda_{\varepsilon}}^5\right\|_q
			\lesssim
			\lambda_{\varepsilon}^{\frac32-\frac3q}.
		\end{split}
		\end{equation*}

		For the second group of terms on the right-hand side of \eqref{o404},
		\eqref{1esay} and Young's inequality give
		\begin{equation*}
		\begin{split}
			&\left(\int_{\Omega}
			\frac{U_{z_{\varepsilon},\lambda_{\varepsilon}}^{6-\mu}(y)}
			{|x-y|^\mu}dy\right)
			\left(
			U_{z_{\varepsilon},\lambda_{\varepsilon}}^{4-\mu}|v_{\varepsilon}|
			+U_{z_{\varepsilon},\lambda_{\varepsilon}}^{4-\mu}
			\varphi_{z_{\varepsilon},\lambda_{\varepsilon}}
			+|v_{\varepsilon}|^{5-\mu}
			+\varphi_{z_{\varepsilon},\lambda_{\varepsilon}}^{5-\mu}
			\right)\\
			\lesssim&
			U_{z_{\varepsilon},\lambda_{\varepsilon}}^4|v_{\varepsilon}|
			+U_{z_{\varepsilon},\lambda_{\varepsilon}}^4
			\varphi_{z_{\varepsilon},\lambda_{\varepsilon}}
			+U_{z_{\varepsilon},\lambda_{\varepsilon}}^\mu
			|v_{\varepsilon}|^{5-\mu}
			+U_{z_{\varepsilon},\lambda_{\varepsilon}}^\mu
			\varphi_{z_{\varepsilon},\lambda_{\varepsilon}}^{5-\mu}\\
			\lesssim&
			U_{z_{\varepsilon},\lambda_{\varepsilon}}^4|v_{\varepsilon}|
			+U_{z_{\varepsilon},\lambda_{\varepsilon}}^4
			\varphi_{z_{\varepsilon},\lambda_{\varepsilon}}
			+|v_{\varepsilon}|^5
			+\varphi_{z_{\varepsilon},\lambda_{\varepsilon}}^5.
		\end{split}
		\end{equation*}
		By \eqref{o401}, Lemma \ref{A1} and Lemma \ref{A2},  it follows that 
		\begin{equation*}
		\begin{split}
			&\left\|U_{z_{\varepsilon},\lambda_{\varepsilon}}^4
			|v_{\varepsilon}|\right\|_q
			+\left\|U_{z_{\varepsilon},\lambda_{\varepsilon}}^4
			\varphi_{z_{\varepsilon},\lambda_{\varepsilon}}\right\|_q
			+\||v_{\varepsilon}|^5\|_q
			+\|\varphi_{z_{\varepsilon},\lambda_{\varepsilon}}^5\|_q\\
		\lesssim&
			\left\|U_{z_{\varepsilon},\lambda_{\varepsilon}}\right\|_{5q}^4
			\|v_{\varepsilon}\|_{5q}
			+\|\varphi_{z_{\varepsilon},\lambda_{\varepsilon}}\|_{\infty}
			\left\|U_{z_{\varepsilon},\lambda_{\varepsilon}}\right\|_{4q}^4
			+\|v_{\varepsilon}\|_{5q}^5
			+\|\varphi_{z_{\varepsilon},\lambda_{\varepsilon}}\|_{\infty}^5\\
		\lesssim &
			\lambda_{\varepsilon}^{2-\frac3q}
			+\lambda_{\varepsilon}^{\frac32-\frac3q}
			+\lambda_{\varepsilon}^{-\frac3q}
			+\lambda_{\varepsilon}^{-\frac52}
			\lesssim \lambda_{\varepsilon}^{2-\frac3q}.
		\end{split}
		\end{equation*}

		For the contributions from the third and fourth groups of terms on the
		right-hand side of \eqref{o404},
		H\"older's inequality, Proposition \ref{A5} and Lemma \ref{A1} give
		\begin{equation*}
		\begin{split}
			&\left\|
			\left(\int_{\Omega}
			\frac{U_{z_{\varepsilon},\lambda_{\varepsilon}}^{5-\mu}(y)
			|v_{\varepsilon}(y)|}{|x-y|^\mu}dy\right)
			U_{z_{\varepsilon},\lambda_{\varepsilon}}^{5-\mu}
			\right\|_q\\
			\lesssim{}&
			\left\|
			\int_{\Omega}
			\frac{U_{z_{\varepsilon},\lambda_{\varepsilon}}^{5-\mu}(y)
			|v_{\varepsilon}(y)|}{|x-y|^\mu}dy
			\right\|_{\frac6\mu}
			\left\|U_{z_{\varepsilon},\lambda_{\varepsilon}}^{5-\mu}
			\right\|_{\frac{6q}{6-\mu q}}\\
			\lesssim{}&
			\left\|U_{z_{\varepsilon},\lambda_{\varepsilon}}^{5-\mu}
			v_{\varepsilon}\right\|_{\frac6{6-\mu}}
			\left\|U_{z_{\varepsilon},\lambda_{\varepsilon}}^{5-\mu}
			\right\|_{\frac{6q}{6-\mu q}}\\
			\lesssim{}&
			\left\|U_{z_{\varepsilon},\lambda_{\varepsilon}}^{5-\mu}
			\right\|_{\frac6{5-\mu}}
			\|v_{\varepsilon}\|_6
			\lambda_{\varepsilon}^{\frac52-\frac3q}
			\lesssim \lambda_{\varepsilon}^{2-\frac3q}.
		\end{split}
		\end{equation*}
		Similarly, we have
		\begin{equation*}
		\begin{split}
			&\left\|
			\left(\int_{\Omega}
			\frac{U_{z_{\varepsilon},\lambda_{\varepsilon}}^{5-\mu}(y)
			\varphi_{z_{\varepsilon},\lambda_{\varepsilon}}(y)}
			{|x-y|^\mu}dy\right)
			U_{z_{\varepsilon},\lambda_{\varepsilon}}^{5-\mu}
			\right\|_q\\
			\lesssim{}&
			\left\|U_{z_{\varepsilon},\lambda_{\varepsilon}}^{5-\mu}
			\varphi_{z_{\varepsilon},\lambda_{\varepsilon}}
			\right\|_{\frac6{6-\mu}}
			\left\|U_{z_{\varepsilon},\lambda_{\varepsilon}}^{5-\mu}
			\right\|_{\frac{6q}{6-\mu q}}
			\lesssim \lambda_{\varepsilon}^{\frac32-\frac3q},\\
			&\left\|
			\left(\int_{\Omega}
			\frac{|v_{\varepsilon}(y)|^{6-\mu}
			+\varphi_{z_{\varepsilon},\lambda_{\varepsilon}}^{6-\mu}(y)}
			{|x-y|^\mu}dy\right)
			U_{z_{\varepsilon},\lambda_{\varepsilon}}^{5-\mu}
			\right\|_q\\
			\lesssim{}&
			\left(
			\|v_{\varepsilon}\|_6^{6-\mu}
			+\|\varphi_{z_{\varepsilon},\lambda_{\varepsilon}}\|_6^{6-\mu}
			\right)
			\left\|U_{z_{\varepsilon},\lambda_{\varepsilon}}^{5-\mu}
			\right\|_{\frac{6q}{6-\mu q}}
			\lesssim
			\lambda_{\varepsilon}^{\frac{\mu-1}{2}-\frac3q},
		\end{split}
		\end{equation*}
and
		\begin{equation*}
		\begin{split}
			&\left\|
			\left(\int_{\mathbb R^3\setminus\Omega}
			\frac{U_{z_{\varepsilon},\lambda_{\varepsilon}}^{6-\mu}(y)}
			{|x-y|^\mu}dy\right)
			U_{z_{\varepsilon},\lambda_{\varepsilon}}^{5-\mu}
			\right\|_q\\
			\lesssim&
			\left\|U_{z_{\varepsilon},\lambda_{\varepsilon}}^{6-\mu}
			\right\|_{L^{\frac6{6-\mu}}(\mathbb R^3\setminus\Omega)}
			\left\|U_{z_{\varepsilon},\lambda_{\varepsilon}}^{5-\mu}
			\right\|_{\frac{6q}{6-\mu q}}
			\lesssim
			\lambda_{\varepsilon}^{\frac{\mu-1}{2}-\frac3q}.
		\end{split}
		\end{equation*}
		Finally, since $q<3$, Lemma \ref{A1}, Lemma \ref{A2}, and
		Proposition \ref{prop21} imply
		\begin{equation*}
			\left\|U_{z_{\varepsilon},\lambda_{\varepsilon}}
			+\varphi_{z_{\varepsilon},\lambda_{\varepsilon}}
			+|v_{\varepsilon}|\right\|_q
			\leq \|U_{z_{\varepsilon},\lambda_{\varepsilon}}\|_q
			+\|\varphi_{z_{\varepsilon},\lambda_{\varepsilon}}\|_{\infty}
			+\|\nabla v_{\varepsilon}\|_2
			\lesssim \lambda_{\varepsilon}^{-\frac12}.
		\end{equation*}
		 Combining the preceding bounds into \eqref{o404},  we obtain
		\begin{equation}\label{o410}
			\|\mathcal F\|_q
			\lesssim \lambda_{\varepsilon}^{2-\frac3q}
			\qquad\text{for every }q\in(3/2,3).
		\end{equation}
		Given $\nu>0$, we choose $q>3/2$ sufficiently close to $3/2$ such that
		$2-3/q<\nu$. It follows from \eqref{o407} and \eqref{o410} that
		\begin{equation*}
			\|v_{\varepsilon}\|_{\infty}
			\lesssim \lambda_{\varepsilon}^{2-\frac3q}
			=o\left(\lambda_{\varepsilon}^{\nu}\right),
		\end{equation*}
		which proves \eqref{o402}.
	\end{proof}

	\subsection{Proof of Theorem \ref{main2}(a)}
	By Proposition \ref{prop21},
	$u_{\varepsilon}=\alpha_{\varepsilon}
	(PU_{z_{\varepsilon},\lambda_{\varepsilon}}+v_{\varepsilon})$ with
	$\alpha_{\varepsilon}=1+o(1)$. Moreover, Proposition \ref{prop43o},
	applied with $\nu=1/2$, gives
	$\|v_{\varepsilon}\|_{\infty}
	=o(\lambda_{\varepsilon}^{1/2})$. Since
	$z_{\varepsilon}\to z_0\in\Omega$, Lemma \ref{A2} yields
	\begin{equation*}
		\|PU_{z_{\varepsilon},\lambda_{\varepsilon}}\|_{\infty}
		=\|U_{z_{\varepsilon},\lambda_{\varepsilon}}\|_{\infty}
		+O\left(\|\varphi_{z_{\varepsilon},\lambda_{\varepsilon}}\|_{\infty}\right)
		=\lambda_{\varepsilon}^{\frac12}
		+O\left(\lambda_{\varepsilon}^{-\frac12}\right).
	\end{equation*}
	At the same time,
	$U_{z_{\varepsilon},\lambda_{\varepsilon}}(z_{\varepsilon})
	=\lambda_{\varepsilon}^{1/2}$, and Lemma \ref{A2} gives
	$PU_{z_{\varepsilon},\lambda_{\varepsilon}}(z_{\varepsilon})
	=\lambda_{\varepsilon}^{1/2}+O(\lambda_{\varepsilon}^{-1/2})$.
	Consequently, by \eqref{o121},
	\begin{equation*}
	\begin{split}
		\varepsilon\|u_{\varepsilon}\|_{\infty}^{2}
		&=\varepsilon\lambda_{\varepsilon}(1+o(1))
		=\frac{4\pi^{2}|Q(z_0)|}{|\Theta_V(z_0)|}(1+o(1)),\\
		\varepsilon|u_{\varepsilon}(z_{\varepsilon})|^{2}
		&=\varepsilon\lambda_{\varepsilon}(1+o(1))
		=\frac{4\pi^{2}|Q(z_0)|}{|\Theta_V(z_0)|}(1+o(1)).
		\end{split}
		\end{equation*}
		Therefore, part (a) follows.

	\subsection{Proof of Theorem \ref{main2}(b)}
	The Green representation formula associated with $-\Delta+Q$ gives
	\begin{equation}
	\label{o414}
	\begin{split}
		u_{\varepsilon}(z)
		={}&\frac{A_{3,\mu}}{4\pi}
		\int_{\Omega}G_Q(z,x)
		\left(
		\int_{\Omega}
		\frac{u_{\varepsilon}^{6-\mu}(y)}{|x-y|^{\mu}}dy
		\right)
		u_{\varepsilon}^{5-\mu}(x)dx-\frac{\varepsilon}{4\pi}
		\int_{\Omega}G_Q(z,x)V(x)u_{\varepsilon}(x)dx.
	\end{split}
	\end{equation}
	Choose a sequence $\delta_{\varepsilon}>0$ such that
	$\delta_{\varepsilon}=o(1)$ and
	$\lambda_{\varepsilon}^{-1}=o(\delta_{\varepsilon})$.
	Using Proposition \ref{prop21}, Lemmas \ref{A1}, \ref{A2} and \eqref{1esay}, we obtain
	\begin{equation*}
		\frac{A_{3,\mu}}{4\pi}
		\int_{B_{\delta_{\varepsilon}}(z_{\varepsilon})}
		\left(
		\int_{\Omega}
		\frac{u_{\varepsilon}^{6-\mu}(y)}{|x-y|^{\mu}}dy
		\right)
		u_{\varepsilon}^{5-\mu}(x)dx
		=\lambda_{\varepsilon}^{-\frac12}
		+o\left(\lambda_{\varepsilon}^{-\frac12}\right).
	\end{equation*}
	Uniformly for  $z$  away from $z_0$, we have
	\begin{equation*}
	\begin{split}
		&\frac{A_{3,\mu}}{4\pi}
		\int_{B_{\delta_{\varepsilon}}(z_{\varepsilon})}
		G_Q(z,x)
		\left(
		\int_{\Omega}
		\frac{u_{\varepsilon}^{6-\mu}(y)}{|x-y|^{\mu}}dy
		\right)
		u_{\varepsilon}^{5-\mu}(x)dx\\
		={}&\frac{A_{3,\mu}}{4\pi}
		\int_{B_{\delta_{\varepsilon}}(z_{\varepsilon})}
		\left(G_Q(z,z_0)+o(1)\right)
		\left(
		\int_{\Omega}
		\frac{u_{\varepsilon}^{6-\mu}(y)}{|x-y|^{\mu}}dy
		\right)
		u_{\varepsilon}^{5-\mu}(x)dx\\
		={}&\lambda_{\varepsilon}^{-\frac12}G_Q(z,z_0)
		+o\left(\lambda_{\varepsilon}^{-\frac12}\right).
	\end{split}
	\end{equation*}
	On $\Omega\setminus B_{\delta_{\varepsilon}}(z_{\varepsilon})$,
	H\"older's inequality, Proposition \ref{A5}, Lemma \ref{A1}, and
	Proposition \ref{prop43o} give
	\begin{equation*}
	\begin{split}
		&\left|
		\frac{A_{3,\mu}}{4\pi}
		\int_{\Omega\setminus B_{\delta_{\varepsilon}}(z_{\varepsilon})}
		G_Q(z,x)
		\left(
		\int_{\Omega}
		\frac{u_{\varepsilon}^{6-\mu}(y)}{|x-y|^{\mu}}dy
		\right)
		u_{\varepsilon}^{5-\mu}(x)dx
		\right|\\
		\lesssim{}&\|G_Q(z,\cdot)\|_2
		\left\|
		\int_{\Omega}
		\frac{u_{\varepsilon}^{6-\mu}(y)}{|\cdot-y|^{\mu}}dy
		\right\|_{\frac6\mu}
		\left\|u_{\varepsilon}^{5-\mu}\right\|_{L^{\frac6{3-\mu}}
		(\Omega\setminus B_{\delta_{\varepsilon}}(z_{\varepsilon}))}\\
		\lesssim{}&\|G_Q(z,\cdot)\|_2
		\|u_{\varepsilon}\|_6^{6-\mu}
		\left(
		\|U_{z_{\varepsilon},\lambda_{\varepsilon}}\|_{L^{\frac{6(5-\mu)}{3-\mu}}
		(\Omega\setminus B_{\delta_{\varepsilon}}(z_{\varepsilon}))}^{5-\mu}
		+\|v_{\varepsilon}\|_{\frac{6(5-\mu)}{3-\mu}}^{5-\mu}
		\right)\\
		\lesssim{}&
		\lambda_{\varepsilon}^{-\frac{5-\mu}{2}}
		\delta_{\varepsilon}^{-\frac{7-\mu}{2}}
		+\lambda_{\varepsilon}^{-\frac{3-\mu}{2}}.
	\end{split}
	\end{equation*}
	Choosing $\delta_{\varepsilon}=\lambda_{\varepsilon}^{-2/7}$, the
	right-hand side is
	\begin{equation*}
		\lambda_{\varepsilon}^{-\frac{21-5\mu}{14}}
		+\lambda_{\varepsilon}^{-\frac{3-\mu}{2}}
		=o\left(\lambda_{\varepsilon}^{-\frac12}\right),
	\end{equation*}
	where we used $0<\mu<2$.
	The second term on the right-hand side of \eqref{o414} is bounded by
	\begin{equation*}
		\varepsilon\left|
		\int_{\Omega}G_Q(z,x)V(x)u_{\varepsilon}(x)dx
		\right|
		\lesssim \varepsilon\|G_Q(z,\cdot)\|_2
		\left(
		\|U_{z_{\varepsilon},\lambda_{\varepsilon}}\|_2
		+\|v_{\varepsilon}\|_2
		\right)
		\lesssim \varepsilon\lambda_{\varepsilon}^{-\frac12}
		=o\left(\lambda_{\varepsilon}^{-\frac12}\right),
	\end{equation*}
	where we used Proposition \ref{prop21} and Lemma \ref{A1}.
	Combining the preceding estimates, part (b) follows.

\begin{appendix}
\section{Basic estimates}

In this appendix, we first recall that the Aubin--Talenti functions introduced in \eqref{atb}
satisfy the critical Lane--Emden--Fowler equation
\begin{equation*}
	-\Delta U_{z,\lambda}=3U_{z,\lambda}^{5}
	\qquad\text{in }\mathbb{R}^{3}.
\end{equation*}
The function $U_{z,\lambda}$ also solves the whole-space critical Hartree
equation \eqref{nonlocal critical equation}. Comparing the two equations gives
the pointwise convolution identity
\begin{equation}\label{1esay}
	\int_{\mathbb{R}^{3}}
	\frac{U_{z,\lambda}^{6-\mu}(y)}{|x-y|^{\mu}}dy
	=\frac{3}{A_{3,\mu}}U_{z,\lambda}^{\mu}(x)
	\qquad\text{in }\mathbb{R}^{3},
\end{equation}
which has been used repeatedly in the preceding computations.

The following equivalent form of the Hardy--Littlewood--Sobolev inequality  for Riesz potentials plays a important role in the
analysis of the nonlocal terms.
\begin{Prop} (See \cite{LL}.) \label{A5}
Let $1 < r<s<\infty$ and $0<\mu<3$ satisfy
\begin{equation*}
	\frac{1}{r}-\frac{1}{s}=\frac{3-\mu}{3}.
\end{equation*}
Then there exists a constant $K(\mu,r)>0$ such that, for every
$f\in L^r(\mathbb{R}^3)$,
\begin{equation*}
	\left\|\frac{1}{|x|^\mu}*f\right\|_{L^s(\mathbb{R}^3)}
	\leq K(\mu,r)\|f\|_{L^r(\mathbb{R}^3)}.
\end{equation*}
\end{Prop}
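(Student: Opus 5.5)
This is the Hardy--Littlewood--Sobolev inequality for Riesz potentials. The plan is to derive it from the bilinear form in Proposition \ref{HLS} by duality.

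Fix $f\in L^r(\mathbb{R}^3)$ and set $I_\mu f:=|x|^{-\mu}*f$. By density it suffices to treat $f\ge0$ bounded with compact support; the general case then follows by splitting $f=f^+-f^-$ and using $|I_\mu f|\le I_\mu|f|$. To bound $\|I_\mu f\|_s$, test against an arbitrary $h\ge 0$ in $L^{s'}(\mathbb{R}^3)$, where $s'=s/(s-1)$, with $\|h\|_{s'}=1$. By Fubini,
\begin{equation*}
\int_{\mathbb{R}^3} I_\mu f(x)\,h(x)\,dx=\int_{\mathbb{R}^3}\int_{\mathbb{R}^3}\frac{f(y)h(x)}{|x-y|^\mu}\,dx\,dy .
\end{equation*}
Apply Proposition \ref{HLS} with $t=r$ and the second exponent $s'$. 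The hypothesis needed there is
\begin{equation*}
\frac1r+\frac{\mu}{3}+\frac1{s'}=2,
\end{equation*}
which is equivalent to $\frac1r-\frac1s=1-\frac{\mu}{3}=\frac{3-\mu}{3}$, exactly our assumption. We also need $r,s'>1$: $r>1$ is given, and $s<\infty$ gives $s'>1$. Proposition \ref{HLS} then yields
\begin{equation*}
\int_{\mathbb{R}^3} I_\mu f\,h\le C(r,3,\mu,s')\,\|f\|_r .
\end{equation*}
Taking the supremum over such $h$ and using the duality $L^s=(L^{s'})^*$ (valid since $1<s<\infty$) gives the claim with $K(\mu,r)=C(r,3,\mu,s')$. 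This constant depends only on $\mu$ and $r$, because $s$ is determined by them.

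The one point requiring care is knowing a priori that $I_\mu f$ is a measurable function finite almost everywhere, so that the duality characterization of the norm applies. With $f\ge0$ bounded and compactly supported, $I_\mu f$ is finite and locally bounded because $|x|^{-\mu}$ is locally integrable ($\mu<3$). The supremum over nonnegative $h$ then equals $\|I_\mu f\|_s$, by monotone convergence applied to truncations of $I_\mu f$.

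The constraint $r<s$ is automatic from the exponent relation. If one wanted the result without invoking Proposition \ref{HLS}, the alternative route is Hedberg's argument: split the kernel at radius $R$, bound the near part by $R^{3-\mu}Mf$ and the far part by $R^{3-\mu-3/r}\|f\|_r$, optimize in $R$, and use the maximal function bound. This gives weak type estimates, and Marcinkiewicz interpolation upgrades them to the strong bound. The duality proof above is shorter, so that is the one I would write.
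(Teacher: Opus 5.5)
Your duality argument is correct and is essentially what the paper intends. The paper gives no proof: it cites \cite{LL} and presents the statement as the equivalent Riesz-potential form of the bilinear Hardy--Littlewood--Sobolev inequality, which is exactly the duality step you carry out by testing against $h\in L^{s'}$ with the exponent check $\frac1r+\frac{\mu}{3}+\frac1{s'}=2$. The only loose spot is the phrase ``by density''. That reduction is cleaner via monotone convergence, or can be skipped entirely, since Tonelli and the truncation form of $L^s$--$L^{s'}$ duality already apply to any nonnegative $f\in L^r$.
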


We next collect several estimates used in the preceding sections.
They are taken from the appendix of \cite{FKK3}.
\begin{lem}  \label{A1}
	Let $z \in \Omega$ and let $1 \leq q < \infty$. As $\lambda \to \infty$, we have

$$
\|U_{z,\lambda}\|_{L^q(\Omega)} \lesssim 
\begin{cases} 
\lambda^{-1/2}, & 1 \leq q < 3, \\[4pt]
\lambda^{-1/2}(\log \lambda)^{1/3}, & q = 3, \\[4pt]
\lambda^{1/2-3/q}, & q > 3.
\end{cases}
$$

Moreover, we have
$$
\partial_{z_i} U_{z,\lambda}(y) = \lambda^{5/2} \frac{y_i - z_i}{(1 + \lambda^2 |z - y|^2)^{3/2}},
$$
with
$$
\| \partial_{z_i} U_{z,\lambda} \|_{L^q(\Omega)} \lesssim 
\begin{cases} 
\lambda^{-1/2}, & 1 \leq q < \frac{3}{2}, \\[4pt]
\lambda^{-1/2}(\log \lambda)^{2/3}, & q = \frac{3}{2}, \\[4pt]
\lambda^{3/2-3/q}, & q > \frac{3}{2}, 
\end{cases}
$$
and
$$
\partial_{\lambda} U_{z,\lambda}(y) = \frac{1}{2} \lambda^{-1/2} \frac{1 - \lambda^2 |z - y|^2}{(1 + \lambda^2 |z - y|^2)^{3/2}},
$$
with
$$
\| \partial_{\lambda} U \|_{L^q} \leq \lambda^{-1} \| U \|_{L^q} \quad \text{for any } 1 \leq q \leq \infty.
$$
Moreover, for any $\rho = \rho_\lambda$ with $\rho\lambda \to \infty$,

$$
\|U_{z,\lambda}\|_{L^q(\Omega \setminus B_\rho(z))} \lesssim 
\begin{cases} 
\lambda^{-1/2}, & 1 \leq q < 3, \\[4pt]
\lambda^{-1/2}(\log \lambda)^{1/3}, & q = 3, \\[4pt]
\lambda^{-1/2}\rho^{(3-q)/q}, & q > 3,
\end{cases}
$$
and
$$
\|\partial_\lambda U_{z,\lambda}\|_{L^q(\Omega \setminus B_\rho(z))} \lesssim 
\begin{cases} 
\lambda^{-3/2}, & 1 \leq q < 3, \\[4pt]
\lambda^{-3/2}(\log \lambda)^{1/3}, & q = 3, \\[4pt]
\lambda^{-3/2}\rho^{(3-q)/q}, & q > 3,
\end{cases}
$$
and
$$
\|\partial_{z_i} U_{z,\lambda}\|_{L^q(\Omega \setminus B_\rho(z))} \lesssim 
\begin{cases} 
\lambda^{-1/2}, & 1 \leq q < \frac{3}{2}, \\[4pt]
\lambda^{-1/2}(\log \lambda)^{2/3}, & q = \frac{3}{2}, \\[4pt]
\lambda^{-1/2}\rho^{(3-2q)/q}, & q > \frac{3}{2}.
\end{cases}
$$
\end{lem}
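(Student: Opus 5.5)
The statement to prove is Lemma \ref{A1}, which collects elementary bounds on the explicit function $U_{z,\lambda}(y)=\lambda^{1/2}(1+\lambda^2|y-z|^2)^{-1/2}$ and its derivatives. The plan is to reduce every estimate to a radial integral by the change of variables $x=\lambda(y-z)$. Since $\Omega$ is bounded, say $\Omega\subset B_R(z)$ with $R$ uniform, we have $\|U_{z,\lambda}\|_{L^q(\Omega)}\le \|U_{z,\lambda}\|_{L^q(B_R(z))}$. Scaling then gives
\begin{equation*}
\|U_{z,\lambda}\|_{L^q(B_R(z))}^q=\lambda^{q/2-3}\int_{B_{\lambda R}(0)}(1+|x|^2)^{-q/2}dx
\simeq \lambda^{q/2-3}\int_0^{\lambda R}\frac{t^2}{(1+t^2)^{q/2}}dt .
\end{equation*}
The integrand behaves like $t^{2-q}$ at infinity, which splits the analysis into three cases. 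For $q>3$ the integral converges, giving $\lambda^{1/2-3/q}$. For $q=3$ it equals $\log\lambda+O(1)$, giving $\lambda^{-1/2}(\log\lambda)^{1/3}$. For $q<3$ it is comparable to $(\lambda R)^{3-q}$, giving $\lambda^{q/2-3+3-q}=\lambda^{-q/2}$ and hence the bound $\lambda^{-1/2}$.

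The derivative formulas follow by direct differentiation. From $\partial_{z_i}(1+\lambda^2|y-z|^2)^{-1/2}=\lambda^2(y_i-z_i)(1+\lambda^2|y-z|^2)^{-3/2}$ we get the stated expression for $\partial_{z_i}U$. Differentiating $\lambda^{1/2}(1+\lambda^2r^2)^{-1/2}$ in $\lambda$ gives $\tfrac12\lambda^{-1/2}(1-\lambda^2r^2)(1+\lambda^2r^2)^{-3/2}$.

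For $\partial_\lambda U$, note that $|1-\lambda^2r^2|\le 1+\lambda^2r^2$. Hence $|\partial_\lambda U|\le \tfrac12\lambda^{-1}U$ pointwise, which yields the $L^q$ bound for every $q$, including $q=\infty$.

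For $\partial_{z_i}U$, we have $|\partial_{z_i}U|\le \lambda^{3/2}|x|(1+|x|^2)^{-3/2}$, which decays like $\lambda^{3/2}|x|^{-2}$. The same scaling gives $\lambda^{3q/2-3}\int_0^{\lambda R}t^{2+q}(1+t^2)^{-3q/2}dt$. The integrand behaves like $t^{2-2q}$, so the threshold moves to $q=3/2$:
\begin{itemize}
\item for $q>3/2$ the integral converges, giving $\lambda^{3/2-3/q}$;
\item for $q=3/2$ it is logarithmic, giving the factor $(\log\lambda)^{2/3}$;
\item for $q<3/2$ it grows like $(\lambda R)^{3-2q}$, giving $\lambda^{-q/2}$ and hence $\lambda^{-1/2}$.
\end{itemize}

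For the estimates on $\Omega\setminus B_\rho(z)$, the scaled radial integral runs over $t\in(\lambda\rho,\lambda R)$. Since $\lambda\rho\to\infty$, the integrand is comparable to its tail power there. This gives:
\begin{itemize}
\item for $U$ with $q>3$: $\lambda^{q/2-3}(\lambda\rho)^{3-q}=\lambda^{-q/2}\rho^{3-q}$, i.e. $\lambda^{-1/2}\rho^{(3-q)/q}$;
\item for $U$ with $q\le 3$: the upper cutoff dominates, so the whole-domain bounds persist;
\item for $\partial_\lambda U$: the pointwise bound above gives the same estimates with an extra factor $\lambda^{-1}$;
\item for $\partial_{z_i}U$: the tail $t^{2-2q}$ gives $\lambda^{3q/2-3}(\lambda\rho)^{3-2q}=\lambda^{-q/2}\rho^{3-2q}$ for $q>3/2$, and the full-domain bounds otherwise.
\end{itemize}

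No step is a real obstacle. The only care needed is in the critical exponents $q=3$ and $q=3/2$, where the logarithm must be tracked, and in keeping the constants uniform in $z$ by using the fixed outer radius $R=\operatorname{diam}\Omega$. All of this matches the appendix of \cite{FKK3}, which one may cite directly.
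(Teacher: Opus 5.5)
Your proof is correct: the rescaling $x=\lambda(y-z)$ with the uniform outer radius $R=\operatorname{diam}\Omega$ reduces every bound to a radial integral whose tail behaves like $t^{2-q}$ (for $U_{z,\lambda}$) or $t^{2-2q}$ (for $\partial_{z_i}U_{z,\lambda}$), and the pointwise inequality $|\partial_\lambda U_{z,\lambda}|\le \frac12\lambda^{-1}U_{z,\lambda}$ handles the $\lambda$-derivative. The paper gives no argument of its own and simply quotes these estimates from the appendix of \cite{FKK3}; your elementary computation is the standard argument behind such estimates and establishes them directly.
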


\begin{lem} \label{A1RN}
	Let $z\in\Omega$. As $\lambda\to\infty$, we have
	\begin{equation*}
		\|U_{z,\lambda}\|_{L^q(\mathbb{R}^3)}
		\lesssim\lambda^{\frac12-\frac3q},
		\qquad 3<q<\infty,
	\end{equation*}
	\begin{equation*}
		\|\partial_{\lambda}U_{z,\lambda}\|_{L^q(\mathbb{R}^3)}
		\lesssim\lambda^{-\frac12-\frac3q},
		\qquad 3<q<\infty,
	\end{equation*}
	and
	\begin{equation*}
		\|\partial_{z_i}U_{z,\lambda}\|_{L^q(\mathbb{R}^3)}
		\lesssim\lambda^{\frac32-\frac3q},
		\qquad \frac32<q<\infty.
	\end{equation*}
	Moreover, for any $\rho=\rho_{\lambda}>0$ satisfying
	$\rho\lambda\to\infty$, we have
	\begin{equation*}
		\|U_{z,\lambda}\|_{L^q(\mathbb{R}^3\setminus B_\rho(z))}
		\lesssim\lambda^{-\frac12}\rho^{\frac{3-q}{q}},
		\qquad 3<q<\infty,
	\end{equation*}
	\begin{equation*}
		\|\partial_{\lambda}U_{z,\lambda}\|_{L^q(\mathbb{R}^3\setminus B_\rho(z))}
		\lesssim\lambda^{-\frac32}\rho^{\frac{3-q}{q}},
		\qquad 3<q<\infty,
	\end{equation*}
	and
	\begin{equation*}
		\|\partial_{z_i}U_{z,\lambda}\|_{L^q(\mathbb{R}^3\setminus B_\rho(z))}
		\lesssim\lambda^{-\frac12}\rho^{\frac{3-2q}{q}},
		\qquad \frac32<q<\infty.
	\end{equation*}
\end{lem}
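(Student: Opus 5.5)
These are explicit integral estimates for the Aubin--Talenti bubble $U_{z,\lambda}(x)=\lambda^{1/2}(1+\lambda^2|x-z|^2)^{-1/2}$ on all of $\mathbb{R}^3$. I would prove them by scaling, then checking integrability of the resulting profile at infinity.

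\textbf{Whole-space bounds.} The substitution $x=z+\lambda^{-1}y$ gives
\[
\|U_{z,\lambda}\|_{L^q(\mathbb{R}^3)}=\lambda^{\frac12-\frac3q}\|U_{0,1}\|_{L^q(\mathbb{R}^3)}.
\]
Since $U_{0,1}(y)\sim|y|^{-1}$ at infinity and is bounded, the norm on the right is finite exactly when $q>3$.

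For the $\lambda$-derivative, I use the formula in Lemma \ref{A1}. It shows $|\partial_\lambda U_{z,\lambda}|\le\frac12\lambda^{-1}U_{z,\lambda}$ pointwise, because $|1-t^2|/(1+t^2)\le 1$. Hence
\[
\|\partial_\lambda U_{z,\lambda}\|_{L^q(\mathbb{R}^3)}\lesssim\lambda^{-\frac12-\frac3q}.
\]
For the $z$-derivative,
\[
|\partial_{z_i}U_{z,\lambda}|\le\lambda^{3/2}\,\frac{\lambda|x-z|}{(1+\lambda^2|x-z|^2)^{3/2}}.
\]
After the same scaling this equals $\lambda^{\frac32-\frac3q}$ times the $L^q$ norm of $|y|(1+|y|^2)^{-3/2}\sim|y|^{-2}$, which is finite iff $2q>3$.

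\textbf{Exterior bounds on $\mathbb{R}^3\setminus B_\rho(z)$.} Here $\lambda|x-z|\ge\lambda\rho\to\infty$, so there are pointwise bounds $U_{z,\lambda}\le\lambda^{-1/2}|x-z|^{-1}$, $|\partial_\lambda U_{z,\lambda}|\lesssim\lambda^{-3/2}|x-z|^{-1}$ and $|\partial_{z_i}U_{z,\lambda}|\le\lambda^{-1/2}|x-z|^{-2}$. In polar coordinates,
\[
\int_{|x-z|>\rho}|x-z|^{-q}\,dx\simeq\rho^{3-q}\quad(q>3),\qquad \int_{|x-z|>\rho}|x-z|^{-2q}\,dx\simeq\rho^{3-2q}\quad(q>3/2).
\]
Taking $q$-th roots gives the factors $\rho^{(3-q)/q}$ and $\rho^{(3-2q)/q}$.

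\textbf{Main obstacle.} There is essentially none. The only care needed is the pointwise bound for $\partial_\lambda U_{z,\lambda}$, i.e.\ $|1-t^2|\le 1+t^2$ with $t=\lambda|x-z|$, and noting that the exterior estimates need no restriction $z\in\Omega$.
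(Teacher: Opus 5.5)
Your argument is correct and is essentially the direct scaling-plus-polar-coordinates computation the paper has in mind when it omits the proof and refers to \cite[Lemma A.1]{FKK3}. One small refinement: the pointwise bounds $U_{z,\lambda}\le\lambda^{-1/2}|x-z|^{-1}$, $|\partial_\lambda U_{z,\lambda}|\le\frac12\lambda^{-3/2}|x-z|^{-1}$ and $|\partial_{z_i}U_{z,\lambda}|\le\lambda^{-1/2}|x-z|^{-2}$ hold for every $x\neq z$ and every $\lambda>0$, so neither $\rho\lambda\to\infty$ nor $z\in\Omega$ is needed for these upper bounds.
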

\begin{proof}
	The proof is similar to that of \cite[Lemma A.1]{FKK3} and is therefore omitted.
\end{proof}

\begin{lem}\label{A2}
	Let $z\in\Omega$, $\lambda>0$ and $d=\operatorname{dist}(z,\partial\Omega)$. For the function $f_{z,\lambda}$ defined as in \eqref{o316}, we have
	\begin{equation*}
		\|f_{z,\lambda}\|_{\infty}\lesssim\lambda^{-5/2}d^{-3},\qquad
		\|\partial_\lambda f_{z,\lambda}\|_{\infty}\lesssim\lambda^{-7/2}d^{-3},\qquad
		\|\partial_{z_i}f_{z,\lambda}\|_{\infty}\lesssim\lambda^{-5/2}d^{-4}.
	\end{equation*}
	The function
	\begin{equation*}
		\varphi_{z,\lambda}:=\lambda^{-1/2}H_0(z,\cdot)+f_{z,\lambda}
		=U_{z,\lambda}-PU_{z,\lambda}
	\end{equation*}
	satisfies $0\leq\varphi_{z,\lambda}\leq U_{z,\lambda}$ and
	\begin{equation*}
		\|\varphi_{z,\lambda}\|_6\lesssim\lambda^{-1/2}d^{-1/2},\qquad
		\|\varphi_{z,\lambda}\|_\infty\lesssim\lambda^{-1/2}d^{-1}.
	\end{equation*}
	Moreover,
	\begin{equation*}
		\|\partial_\lambda\varphi_{z,\lambda}\|_6\lesssim\lambda^{-3/2}d^{-1/2},\qquad
		\|\partial_\lambda\varphi_{z,\lambda}\|_\infty\lesssim\lambda^{-3/2}d^{-1},
	\end{equation*}
	and
	\begin{equation*}
		\|\partial_{z_i}\varphi_{z,\lambda}\|_6\lesssim\lambda^{-1/2}d^{-1/2},\qquad
		\|\partial_{z_i}\varphi_{z,\lambda}\|_\infty\lesssim\lambda^{-1/2}d^{-2}.
	\end{equation*}
\end{lem}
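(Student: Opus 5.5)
The plan is to write $\varphi_{z,\lambda}=U_{z,\lambda}-PU_{z,\lambda}$ and $f_{z,\lambda}=\varphi_{z,\lambda}-\lambda^{-1/2}H_0(z,\cdot)$, and to read off every estimate from the harmonic-function structure of these differences via the maximum principle. By \eqref{a12}, $\varphi_{z,\lambda}$ is harmonic in $\Omega$ with boundary values $U_{z,\lambda}|_{\partial\Omega}>0$. The maximum principle then gives $0\le \varphi_{z,\lambda}\le \sup_{\partial\Omega}U_{z,\lambda}$. Comparing with $U_{z,\lambda}$, which is superharmonic because $-\Delta U_{z,\lambda}=3U_{z,\lambda}^5\ge0$ and which has the same boundary values, gives $\varphi_{z,\lambda}\le U_{z,\lambda}$. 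Since $|y-z|\ge d$ on $\partial\Omega$, we get $\sup_{\partial\Omega}U_{z,\lambda}\le \lambda^{-1/2}d^{-1}$, which is the $L^\infty$ bound for $\varphi_{z,\lambda}$.

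For $f_{z,\lambda}$, note that $H_0(z,\cdot)$ is harmonic with boundary values $|z-\cdot|^{-1}$ (Rey's normalization \eqref{a51} with $Q=0$). Hence $f_{z,\lambda}$ is harmonic with boundary values
\[
U_{z,\lambda}(y)-\frac{\lambda^{-1/2}}{|y-z|}=-g_{z,\lambda}(y),\qquad y\in\partial\Omega .
\]
Expanding $(1+\lambda^2r^2)^{-1/2}=\lambda^{-1}r^{-1}\bigl(1+\lambda^{-2}r^{-2}\bigr)^{-1/2}$ shows that on $\partial\Omega$ this is $O(\lambda^{-5/2}|y-z|^{-3})=O(\lambda^{-5/2}d^{-3})$. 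The maximum principle then yields $\|f_{z,\lambda}\|_\infty\lesssim\lambda^{-5/2}d^{-3}$. Differentiating in $\lambda$ commutes with harmonic extension, so $\partial_\lambda f_{z,\lambda}$ is harmonic with boundary data $-\partial_\lambda g_{z,\lambda}=O(\lambda^{-7/2}d^{-3})$. Likewise $\partial_\lambda\varphi_{z,\lambda}$ has boundary data $\partial_\lambda U_{z,\lambda}=O(\lambda^{-3/2}d^{-1})$, using the explicit formula in Lemma \ref{A1}. For $\partial_{z_i}$, the boundary data of $\partial_{z_i}f_{z,\lambda}$ is $-\partial_{z_i}g_{z,\lambda}=O(\lambda^{-5/2}|y-z|^{-4})$, and that of $\partial_{z_i}\varphi_{z,\lambda}$ is $\partial_{z_i}U_{z,\lambda}=O(\lambda^{-1/2}|y-z|^{-2})$. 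Each of these gives the stated $L^\infty$ bound through the maximum principle.

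The $L^6$ bounds do not follow from the maximum principle, since the gain comes from integrating the decay near the boundary. This is the main step. I will use the representation of $\varphi_{z,\lambda}$ as the harmonic extension of boundary data $h$ with $|h(y)|\lesssim \lambda^{-1/2}|y-z|^{-1}$. I will compare it with $\lambda^{-1/2}H_0(z,\cdot)$ (plus the $f$ correction), which satisfies $0\le H_0(z,x)\lesssim |x-\bar z|^{-1}$, where $\bar z$ is the reflection of $z$ across $\partial\Omega$ for $d$ small. This pointwise bound on $H_0$ is what needs care for a $C^2$ boundary; I would take it from Rey \cite{Re}, or prove it by comparison with the reflected Newtonian potential in a tubular neighbourhood. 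Granting it, $\int_\Omega |x-\bar z|^{-6}\,dx\lesssim d^{-3}$, so $\|\lambda^{-1/2}H_0\|_6\lesssim\lambda^{-1/2}d^{-1/2}$. Adding $|\Omega|^{1/6}\|f_{z,\lambda}\|_\infty$ is harmless when $\lambda d\gtrsim1$. In the opposite regime, the bound $\varphi_{z,\lambda}\le U_{z,\lambda}$ gives $\|\varphi_{z,\lambda}\|_6\le\|U_{z,\lambda}\|_{L^6(\mathbb R^3)}\lesssim1\le \lambda^{-1/2}d^{-1/2}$. The same argument applies to $\partial_\lambda\varphi_{z,\lambda}=-\tfrac12\lambda^{-3/2}H_0(z,\cdot)+\partial_\lambda f_{z,\lambda}$. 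For $\partial_{z_i}\varphi_{z,\lambda}$, it suffices to combine the bound $|\nabla_z H_0(z,x)|\lesssim|x-\bar z|^{-2}$, whose $L^6$ norm is of order $d^{-3/2}$, with the comparison $\|\partial_{z_i}\varphi_{z,\lambda}\|_6\lesssim\|\partial_{z_i}U_{z,\lambda}\|_{L^6(\mathbb R^3\setminus\Omega)}$ coming from the $H^1$ projection. If that comparison proves awkward, I would fall back on the statement as quoted from \cite{FKK3} and cite it directly.
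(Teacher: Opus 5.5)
The paper gives no proof of this lemma; it is quoted from the appendix of \cite{FKK3}, where the estimates go back to Rey \cite{Re}. Your maximum-principle arguments are correct and standard: they give $0\le\varphi_{z,\lambda}\le U_{z,\lambda}$, all the $L^\infty$ bounds, and the three bounds on $f_{z,\lambda}$. The $L^6$ part, however, has two genuine problems.

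First, consider $\|\varphi_{z,\lambda}\|_6$ and $\|\partial_\lambda\varphi_{z,\lambda}\|_6$. Your claim that adding $|\Omega|^{1/6}\|f_{z,\lambda}\|_\infty$ is harmless when $\lambda d\gtrsim1$ is false. Indeed $\lambda^{-5/2}d^{-3}/(\lambda^{-1/2}d^{-1/2})=(\lambda d)^{-2}d^{-1/2}$, which is unbounded when $\lambda d\to\infty$ but $\lambda d\ll d^{-1/4}$. Your other case, $\varphi\le U$, only covers bounded $\lambda d$. The uncovered regime is exactly that of Section 2, where $\lambda_\varepsilon d_\varepsilon\to\infty$ while $d_\varepsilon$ may still tend to $0$ at an unknown rate.

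The repair is to dominate pointwise instead of using $\sup|f|$. Since $G_0\ge0$, the maximum principle gives $0\le H_0(z,x)\le\min\{|x-z|^{-1},d^{-1}\}\le 2(|x-z|+d)^{-1}$, with no reflection or $C^2$ argument needed. On $\partial\Omega$ we have $0\le U_{z,\lambda}\le\lambda^{-1/2}|y-z|^{-1}$ and $|\partial_\lambda U_{z,\lambda}|\le\frac12\lambda^{-3/2}|y-z|^{-1}$. The maximum principle then gives $0\le\varphi_{z,\lambda}\le\lambda^{-1/2}H_0(z,\cdot)$, so in fact $f_{z,\lambda}\le0$, and $|\partial_\lambda\varphi_{z,\lambda}|\le\frac12\lambda^{-3/2}H_0(z,\cdot)$. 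Since $\int_{\mathbb{R}^3}(|w|+d)^{-6}\,dw\lesssim d^{-3}$, both $L^6$ bounds follow for all $\lambda,d$.

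Second, for $\partial_{z_i}\varphi_{z,\lambda}$ the comparison $\|\partial_{z_i}\varphi_{z,\lambda}\|_6\lesssim\|\partial_{z_i}U_{z,\lambda}\|_{L^6(\mathbb{R}^3\setminus\Omega)}$ does not follow from the $H_0^1$ projection. The projection only says that $\varphi_{z,\lambda}$ minimizes Dirichlet energy among functions with trace $U_{z,\lambda}$. Moreover, that right-hand side is itself of order $\lambda^{-1/2}d^{-3/2}$ when $\lambda d$ is large.

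Your computation $\|\nabla_zH_0(z,\cdot)\|_6\sim d^{-3/2}$ reflects the truth, and the printed bound $\lambda^{-1/2}d^{-1/2}$ cannot be proved, by any route or by citation. Take $\Omega\subset\{x_3>0\}$ with a flat boundary piece near $0$, and $z=(0,0,d)$. Then $\partial_{z_3}H_0(z,x)=-(x_3+d)|x-\bar z|^{-3}+O(1)$ has size $\gtrsim d^{-2}$ on $\{|x'|<d,\ 0<x_3<d\}$. Meanwhile $\|\partial_{z_3}f_{z,\lambda}\|_\infty\lesssim\lambda^{-5/2}d^{-4}$ is smaller by a factor $(\lambda d)^{-2}$. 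Hence $\|\partial_{z_3}\varphi_{z,\lambda}\|_6\gtrsim\lambda^{-1/2}d^{-3/2}$ as $\lambda d\to\infty$.

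The correct statement follows from the same domination. On $\partial\Omega$, $|\partial_{z_i}U_{z,\lambda}(y)|\le\lambda^{-1/2}|y-z|^{-2}\le\lambda^{-1/2}d^{-1}|y-z|^{-1}$, so $|\partial_{z_i}\varphi_{z,\lambda}|\le\lambda^{-1/2}d^{-1}H_0(z,\cdot)$. This gives $\|\partial_{z_i}\varphi_{z,\lambda}\|_6\lesssim\lambda^{-1/2}d^{-3/2}$, and recovers the $L^\infty$ bound. The discrepancy is harmless for the paper, which uses $z$-derivatives of $\varphi_{z,\lambda}$ only after $d_\varepsilon^{-1}=O(1)$ has been established.
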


\begin{lem}\label{A3}
	Let $z\in\Omega$, $\lambda>0$ and $d=\operatorname{dist}(z,\partial\Omega)$. As $\lambda d\to\infty$, we have
	\begin{itemize}
		\item[(a)] \quad $\displaystyle
			\int_{\partial\Omega}n\left(\frac{\partial PU_{z,\lambda}}{\partial n}\right)^2
			=C\lambda^{-1}\nabla\phi_0(z)+o(\lambda^{-1}d^{-2})
		$ for some constant $C>0$;
		\item[(b)] \quad $\displaystyle
			\int_{\partial\Omega}(x\cdot n)\left(\frac{\partial PU_{z,\lambda}}{\partial n}\right)^2
			=O(\lambda^{-1}d^{-2});$
		\item[(c)] \quad $\displaystyle
			\int_{\partial\Omega}\left(\frac{\partial PU_{z,\lambda}}{\partial n}\right)^2
			=O(\lambda^{-1}d^{-2}).$
	\end{itemize}
\end{lem}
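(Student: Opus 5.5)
The statement is Lemma A3, about boundary integrals of the normal derivative of $PU_{z,\lambda}$. The plan is to reduce to the classical Rey expansion. The starting point is the decomposition
\[
PU_{z,\lambda}=U_{z,\lambda}-\varphi_{z,\lambda},\qquad \varphi_{z,\lambda}=\lambda^{-1/2}H_0(z,\cdot)+f_{z,\lambda}.
\]
On $\partial\Omega$ we have $PU_{z,\lambda}=0$. Moreover, $\lambda^{-1/2}G_0(z,\cdot)=\lambda^{-1/2}|\cdot-z|^{-1}-\lambda^{-1/2}H_0(z,\cdot)$ vanishes there. Therefore
\[
PU_{z,\lambda}-\lambda^{-1/2}G_0(z,\cdot)=-\bigl(\lambda^{-1/2}|\cdot-z|^{-1}-U_{z,\lambda}\bigr)-f_{z,\lambda}=-g_{z,\lambda}-f_{z,\lambda}.
\]
This function is harmonic up to a small source near $\partial\Omega$ and vanishes on $\partial\Omega$.

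The first step is to show that
\[
\Bigl\|\partial_n PU_{z,\lambda}-\lambda^{-1/2}\partial_n G_0(z,\cdot)\Bigr\|_{L^2(\partial\Omega)}=o(\lambda^{-1/2}d^{-1})
\]
as $\lambda d\to\infty$. For this I apply the inequality \eqref{o212} to $\zeta(g_{z,\lambda}+f_{z,\lambda})$, with $\zeta$ the cut-off away from $B_{d/2}(z)$. The Laplacian of $g_{z,\lambda}$ outside $B_{d/2}(z)$ is $3U^5_{z,\lambda}=O(\lambda^{-5/2}d^{-5})$, and $f_{z,\lambda}$ satisfies the same equation with zero boundary-type source, together with the bounds of Lemma \ref{A2}. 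The cut-off terms are controlled by $\|g_{z,\lambda}\|_{L^\infty(\Omega\setminus B_{d/2})}\lesssim\lambda^{-5/2}d^{-3}$ and a corresponding gradient bound. Altogether this gives an $O(\lambda^{-5/2}d^{-4})$-type bound (up to powers of $d$ bounded by the geometry). That is $o(\lambda^{-1/2}d^{-1})$ since $\lambda d\to\infty$.

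The second step is to evaluate the model integrals. I use Rey's identities \cite{Re} for the Green function of the Laplacian (with the paper's normalization $-\Delta G_0=4\pi\delta$):
\[
\int_{\partial\Omega}n\,(\partial_n G_0(z,\cdot))^2=c_1\nabla\phi_0(z),\qquad \int_{\partial\Omega}(x\cdot n)(\partial_n G_0)^2=c_2\phi_0(z)\ \text{(up to translation)} .
\]
Both follow from the Pohozaev identity applied to $G_0(z,\cdot)$ on $\Omega\setminus B_r(z)$, letting $r\to0$. The first gives (a) with $C=c_1>0$. For (b) and (c), I use the bound $\|\partial_nG_0(z,\cdot)\|^2_{L^2(\partial\Omega)}\lesssim d^{-2}$. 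This follows from $|\nabla_x G_0(z,x)|\lesssim |x-z|^{-2}$ on $\partial\Omega$ together with $C^2$ boundary geometry: integrating $|x-z|^{-4}$ over $\partial\Omega$ gives $d^{-2}$.

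The final step combines the two. I expand the squares as $(a+b)^2=a^2+2ab+b^2$ and use Cauchy--Schwarz on the cross term. This gives an error of size
\[
\lambda^{-1/2}d^{-1}\cdot o(\lambda^{-1/2}d^{-1})=o(\lambda^{-1}d^{-2}).
\]
Hence (a), (b) and (c) follow, with the leading terms $\lambda^{-1}c_1\nabla\phi_0(z)$ and $O(\lambda^{-1}d^{-2})$. Since $\nabla\phi_0$ is itself of order $d^{-2}$ near the boundary, the remainder $o(\lambda^{-1}d^{-2})$ is the natural scale.

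The main obstacle is the first step: obtaining the boundary estimate for $g_{z,\lambda}+f_{z,\lambda}$ uniformly in $d$, so that it beats $\lambda^{-1/2}d^{-1}$. If \eqref{o212} with cut-offs proves lossy in $d$, I would instead use the explicit bound $|\nabla g_{z,\lambda}|\lesssim\lambda^{-5/2}|x-z|^{-4}$ directly on $\partial\Omega$. For $f_{z,\lambda}$, which is harmonic in $\Omega$ and bounded by Lemma \ref{A2}, I would use a boundary gradient estimate in a $d$-neighborhood.
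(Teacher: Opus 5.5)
The paper gives no proof of this lemma; it quotes it from the appendix of \cite{FKK3}, which goes back to \cite{Re}. Your argument therefore has to stand on its own. The architecture is sound, but Step 1 does not close as written.

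You claim $\|\partial_n PU_{z,\lambda}-\lambda^{-1/2}\partial_nG_0(z,\cdot)\|_{L^2(\partial\Omega)}=O(\lambda^{-5/2}d^{-4})$ and infer that this is $o(\lambda^{-1/2}d^{-1})$ because $\lambda d\to\infty$. The ratio is $\lambda^{-2}d^{-3}=(\lambda d)^{-2}d^{-1}$, which diverges, e.g., for $d=\lambda^{-9/10}$. The regime $d\to0$ is exactly where the lemma is used (Proposition \ref{prop25}). The exponent is off by one; a dimension count already shows it must be $d^{-3}$. You took the pointwise size $\lambda^{-5/2}d^{-4}$ of the gradient on $A=B_d(z)\setminus B_{d/2}(z)$ but dropped the factor $|\nabla\zeta|\sim d^{-1}$ and the volume factor $|A|^{2/3}\sim d^{2}$ in the $L^{3/2}$ norm.

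There is a second issue. Lemma \ref{A2} bounds $\partial_{z_i}f_{z,\lambda}$ and $\partial_\lambda f_{z,\lambda}$, not $\nabla_x f_{z,\lambda}$. So the gradient bound on $A$, which may touch $\partial\Omega$, has to be supplied.

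Both points are easy to repair.
\begin{itemize}
\item Set $w:=g_{z,\lambda}+f_{z,\lambda}$. It vanishes on $\partial\Omega$ and satisfies $-\Delta w=-3U_{z,\lambda}^5$ in $\Omega\setminus\{z\}$.
\item On $\Omega\setminus B_{d/4}(z)$ you have $|w|\lesssim\lambda^{-5/2}d^{-3}$, from $0\le g_{z,\lambda}\le\frac12\lambda^{-5/2}|x-z|^{-3}$ and Lemma \ref{A2}.
\item Testing with $\eta^2 w\in H^1_0(\Omega)$, where $\eta$ is a cut-off equal to $1$ on $A$ and vanishing near $z$, gives the Caccioppoli bound $\|\nabla w\|_{L^2(A)}\lesssim\lambda^{-5/2}d^{-5/2}$.
\item Then each of $\|\zeta\Delta w\|_{3/2}$, $\|w\Delta\zeta\|_{3/2}$ and $\|\nabla\zeta\cdot\nabla w\|_{3/2}$ is $O(\lambda^{-5/2}d^{-3})$.
\item By \eqref{o212}, $\|\partial_n w\|_{L^2(\partial\Omega)}\lesssim(\lambda d)^{-2}\lambda^{-1/2}d^{-1}$, which is what you need.
\end{itemize}
Combine this with $\|\partial_nG_0(z,\cdot)\|_{L^2(\partial\Omega)}\lesssim d^{-1}$ and the exact Pohozaev identity $\int_{\partial\Omega}n(\partial_nG_0(z,\cdot))^2=4\pi\nabla\phi_0(z)$. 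This gives (a) with $C=4\pi$ and remainder $O(\lambda^{-3}d^{-4})=o(\lambda^{-1}d^{-2})$, and it gives (c). Part (b) follows from (c) directly, since $|x\cdot n|$ is bounded on $\partial\Omega$; the identity involving $\phi_0$ is not needed.

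For comparison, the usual proof of (a) is interior. Since $PU_{z,\lambda}=0$ on $\partial\Omega$,
$\int_{\partial\Omega}n(\partial_nPU_{z,\lambda})^2=2\int_\Omega\Delta PU_{z,\lambda}\,\nabla PU_{z,\lambda}=-6\int_\Omega U_{z,\lambda}^5\nabla PU_{z,\lambda}$.
This is the same integration by parts that produces \eqref{o29}. One then expands $PU_{z,\lambda}=U_{z,\lambda}-\lambda^{-1/2}H_0(z,\cdot)-f_{z,\lambda}$, using $\nabla_xH_0(z,x)|_{x=z}=\frac12\nabla\phi_0(z)$ and $\int U_{z,\lambda}^5\approx\frac{4\pi}{3}\lambda^{-1/2}$. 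That route needs only interior expansions but gives no control of (c) by itself.

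Your route trades this for the trace inequality \eqref{o212}, a gradient bound for $w$ up to $\partial\Omega$, and the uniform bound on $\partial_nG_0$. The last can come from Green-function gradient estimates for $C^2$ domains, or from \eqref{o212} applied to $\zeta G_0(z,\cdot)$. In return it handles (a)--(c) at once and yields a sharper remainder.
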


\begin{lem}\label{A4}
	Let $g_{z,\lambda}$ be defined as in \eqref{o317}. As $\lambda\to\infty$, for every $1\leq p<3$, we have
	\begin{equation*}
		\|g_{z,\lambda}\|_{L^p(\mathbb{R}^3)}
		\lesssim\lambda^{1/2-3/p},\qquad
		\|\partial_\lambda g_{z,\lambda}\|_{L^p(\mathbb{R}^3)}
		\lesssim\lambda^{-1/2-3/p}.
	\end{equation*}
	Moreover, $\nabla g_{z,\lambda}\in L^p(\mathbb{R}^3)$ for every $1\leq p<3/2$.
\end{lem}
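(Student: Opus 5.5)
The statement to prove is Lemma \ref{A4}. I would work directly from the explicit formula
\[
g_{z,\lambda}(x)=\lambda^{-1/2}|x-z|^{-1}-\lambda^{1/2}\bigl(1+\lambda^2|x-z|^2\bigr)^{-1/2},
\]
which is defined on all of $\mathbb{R}^3$. The first step is to remove the parameters by scaling. Setting $x=z+y/\lambda$ gives
\[
g_{z,\lambda}(x)=\lambda^{1/2}g_{0,1}(\lambda(x-z)),\qquad g_{0,1}(y)=|y|^{-1}-(1+|y|^2)^{-1/2}.
\]
A change of variables then yields
\[
\|g_{z,\lambda}\|_{L^p(\mathbb{R}^3)}=\lambda^{1/2-3/p}\|g_{0,1}\|_{L^p(\mathbb{R}^3)}.
\]
So the first estimate reduces to showing $g_{0,1}\in L^p$ for $1\le p<3$.

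For this I would check the behavior at the origin and at infinity. Near $0$, $g_{0,1}\sim|y|^{-1}$, which is in $L^p_{\mathrm{loc}}$ exactly when $p<3$. At infinity, expand $(1+|y|^2)^{-1/2}=|y|^{-1}\bigl(1-\tfrac12|y|^{-2}+O(|y|^{-4})\bigr)$. This gives $g_{0,1}(y)=\tfrac12|y|^{-3}+O(|y|^{-5})$, which is in $L^p$ near infinity for every $p>1$. For $p=1$ the tail $|y|^{-3}$ is only logarithmically divergent, so I need to recheck: $\int_{|y|>1}|y|^{-3}\,dy=\infty$. Hence $p=1$ genuinely fails for $g_{0,1}$ on all of $\mathbb{R}^3$.

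The endpoint $p=1$ is therefore the main obstacle. I would handle it in one of two ways: either interpret the lemma on $\Omega$, where boundedness of $\Omega$ cuts off the tail and gives the bound with a harmless $\log\lambda$ at worst, or restrict the claim to $1<p<3$, which is all that is used in the body (the paper uses $p=6/5$). I would state the proof for $1<p<3$ and remark on $p=1$.

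For the derivative, I would differentiate the scaling identity in $\lambda$:
\[
\partial_\lambda g_{z,\lambda}(x)=\tfrac12\lambda^{-1/2}g_{0,1}(\lambda(x-z))+\lambda^{1/2}(x-z)\cdot\nabla g_{0,1}(\lambda(x-z)).
\]
Writing $\lambda^{1/2}(x-z)=\lambda^{-1/2}w$ with $w=\lambda(x-z)$, this becomes $\lambda^{-1/2}h(\lambda(x-z))$, where $h=\tfrac12 g_{0,1}+w\cdot\nabla g_{0,1}$. Then $\|\partial_\lambda g_{z,\lambda}\|_p=\lambda^{-1/2-3/p}\|h\|_p$. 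To bound $\|h\|_p$, note that $w\cdot\nabla g_{0,1}=-|w|^{-1}+|w|^2(1+|w|^2)^{-3/2}$, which is $O(|w|^{-1})$ near $0$ and $O(|w|^{-3})$ at infinity, the same profile as $g_{0,1}$. So $h\in L^p$ on the same range.

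Finally, $\nabla g_{z,\lambda}=\lambda^{3/2}(\nabla g_{0,1})(\lambda(x-z))$. Here $|\nabla g_{0,1}(y)|\lesssim|y|^{-2}$ near $0$ and $\lesssim|y|^{-4}$ at infinity. This is integrable to the power $p$ near $0$ iff $p<3/2$, and at infinity for all $p\ge1$, since $4p>3$. That gives $\nabla g_{z,\lambda}\in L^p$ for $1\le p<3/2$.
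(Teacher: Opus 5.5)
Your scaling proof is correct, and it is the natural argument; the paper itself gives no proof of this lemma and simply quotes it from the appendix of \cite{FKK3}.

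Your endpoint remark is also right. At infinity, $g_{0,1}(y)=\tfrac12|y|^{-3}+O(|y|^{-5})$ and $h(w)=-\tfrac54|w|^{-3}+O(|w|^{-5})$, both with nonzero leading coefficients. Hence $\|g_{z,\lambda}\|_{L^1(\mathbb{R}^3)}=\|\partial_\lambda g_{z,\lambda}\|_{L^1(\mathbb{R}^3)}=\infty$. Even on a bounded $\Omega$ these norms are of order $\lambda^{-5/2}\log\lambda$ and $\lambda^{-7/2}\log\lambda$, not the stated powers. So the first two bounds fail at $p=1$ and should be restricted to $1<p<3$.

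This restriction costs nothing, since the paper only uses these bounds with $p=6/5$. The gradient statement holds on the full range $1\le p<3/2$, exactly as you show.
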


\section{Properties of the functions $H_b(z,x)$}
We collect some properties of the regular part of the Green function needed in the preceding arguments. They are stated for an auxiliary potential $b$ satisfying
\begin{equation*}
	b\in C(\overline{\Omega})\cap C_{\mathrm{loc}}^{2,\sigma}(\Omega)
	\qquad\text{for some }0<\sigma<1.
\end{equation*}
Throughout this appendix, the operator $-\Delta+b$ is assumed to be coercive in $\Omega$ under the Dirichlet boundary condition. In particular, the choice $b=0$ is allowed.

The following three lemmas, Lemmas \ref{B1}--\ref{B3}, are recalled from \cite[Appendix B]{FKK3}.
\begin{lem}\label{B1}
For every $z\in\Omega$,
\begin{equation*}
	\|H_b(z,\cdot)\|_{\infty}\lesssim d(z)^{-1}.
\end{equation*}
Moreover, let $z,x\in\Omega$ with $z\neq x$. Then
$\nabla_zH_b(z,x)$ and $\nabla_xH_b(z,x)$ exist and satisfy
\begin{align} 
	\sup_{x\in\Omega\setminus\{z\}}
	|\nabla_zH_b(z,x)|&\leq C, \label{oB2}\\
	\sup_{x\in\Omega\setminus\{z\}}
	|\nabla_xH_b(z,x)|&\leq C, \label{oB3}
\end{align}
where $C$ can be chosen uniformly for $z$ in compact subsets of $\Omega$.
\end{lem}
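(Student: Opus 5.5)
The plan is to prove the three assertions of Lemma \ref{B1} separately, each from the representation
\[
H_b(z,x)=\frac{1}{|z-x|}-G_b(z,x),
\]
combined with the maximum principle and elliptic regularity for the coercive operator $-\Delta+b$. Throughout, $z$ is frozen as a parameter.

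\textbf{The $L^\infty$ bound.} First note that $x\mapsto H_b(z,x)$ solves
\[
(-\Delta_x+b)H_b(z,\cdot)=b(x)\,|x-z|^{-1}\quad\text{in }\Omega,\qquad H_b(z,\cdot)=|\cdot-z|^{-1}\quad\text{on }\partial\Omega.
\]
This follows because $-\Delta_x|x-z|^{-1}=4\pi\delta_z$ cancels the delta in \eqref{a41}.

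Next split $H_b(z,\cdot)=H_0(z,\cdot)+(H_b-H_0)(z,\cdot)$.

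For $H_0$, the function is harmonic with boundary values $|x-z|^{-1}\le d(z)^{-1}$. The maximum principle therefore gives $0\le H_0(z,\cdot)\le d(z)^{-1}$.

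The difference $w:=H_b(z,\cdot)-H_0(z,\cdot)$ vanishes on $\partial\Omega$ and satisfies
\[
(-\Delta+b)w=b\,G_0(z,\cdot).
\]
The right-hand side is $O(|x-z|^{-1})$, which lies in $L^p$ for every $p<3$. Coercivity together with $W^{2,p}$ estimates (choosing $p>3/2$) gives $\|w\|_\infty\lesssim\|b\,G_0(z,\cdot)\|_p\lesssim 1$. Since $d(z)\le \operatorname{diam}\Omega$, we have $1\lesssim d(z)^{-1}$, and the claim follows.

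\textbf{The gradient bounds \eqref{oB2} and \eqref{oB3}.} Fix a compact set $K\subset\Omega$ and take $z\in K$.

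For \eqref{oB3}, use the same splitting.
\begin{itemize}
\item $\nabla_xH_0(z,x)$ is bounded uniformly for $z\in K$, $x\in\Omega$. Interior bounds come from harmonicity. Near $\partial\Omega$, use $C^2$ boundary regularity: the boundary data $|x-z|^{-1}$ is smooth there, with norms uniform since $|x-z|\ge \operatorname{dist}(K,\partial\Omega)$.
\item For $w$, the right-hand side $b\,G_0(z,\cdot)$ lies in $L^p$ for all $p<3$. Hence $w\in W^{2,p}$ for all such $p$, and for $p>3/2$ Sobolev embedding gives $W^{2,p}\hookrightarrow C^{0,\alpha}$ only. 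To reach a bounded gradient one needs more. The plan is to use instead the explicit corrector
\[
\tfrac12\, b(z)\,|x-z|
\]
near $z$: its Laplacian is $b(z)|x-z|^{-1}$, matching the singular part of $b\,G_0$. After subtracting it, the remaining right-hand side is $O(1)$, which gives $C^{1,\alpha}$ bounds. Since $|x-z|$ itself has bounded gradient, this yields the Lipschitz bound.
\end{itemize}

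For \eqref{oB2}, use the symmetry $G_b(z,x)=G_b(x,z)$, which follows from self-adjointness. This gives $H_b(z,x)=H_b(x,z)$, so $\nabla_zH_b(z,x)=\nabla_2H_b(x,z)$. The bound then reduces to the previous estimate with the roles of the variables exchanged.

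The one caveat is that $x$ is not confined to $K$. The boundary-uniformity step must therefore be handled for $x$ near $\partial\Omega$ with $z\in K$. Differentiating in the parameter $z$ directly is the cleanest way to do this: $\partial_{z_i}H_b(z,\cdot)$ solves an analogous problem whose data are smooth near $\partial\Omega$, with a right-hand side of order $O(|x-z|^{-2})$ near $z$.

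\textbf{Main obstacle.} The delicate step is the uniform Lipschitz control near the singularity $x=z$. There the right-hand side $b\,G$ is only $O(|x-z|^{-1})$, which sits exactly at the borderline for gradient bounds.

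The plan for this step is to remove the singularity with the explicit corrector $\tfrac12 b(z)|x-z|$. The leftover term $(b(x)-b(z))|x-z|^{-1}$ is then bounded, because $b$ is Lipschitz, or $C^{2,\sigma}_{\mathrm{loc}}$ near $K$. Standard $C^{1,\alpha}$ Schauder/$L^p$ theory then applies, with constants uniform in $z\in K$.
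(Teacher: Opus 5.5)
The paper contains no proof of Lemma~\ref{B1}. It simply imports the statement from \cite{FKK3} (Appendix~B), so your proposal is a self-contained substitute rather than a reconstruction, and it is sound. The three steps each work: the maximum principle for $H_0(z,\cdot)$, combined with coercivity and $W^{2,p}$ theory ($3/2<p<3$) for $w=H_b(z,\cdot)-H_0(z,\cdot)$, gives $\|H_b(z,\cdot)\|_\infty\le d(z)^{-1}+O(1)$ uniformly in $z\in\Omega$. The explicit corrector turns the borderline right-hand side into a bounded one, which yields \eqref{oB3}. Symmetry of $G_b$ then transfers this to $\nabla_z$. What your route buys is transparency. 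It isolates exactly the non-smooth term $-\tfrac12 b(z)|x-z|$ that reappears in the expansion of Lemma~\ref{B2}. It also shows that only $b\in L^\infty(\Omega)$ plus a Lipschitz bound for $b$ near $K$ is needed.

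Two small points should be tightened.

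First, watch the sign of the corrector. Your equation is equivalent to $-\Delta_x w=b\,G_b(z,\cdot)$, whose singular part near $z$ is $+b(z)|x-z|^{-1}$, while $-\Delta\bigl(\tfrac12 b(z)|x-z|\bigr)=-b(z)|x-z|^{-1}$. So the corrector must be \emph{added} to $w$. Set $\tilde w:=w+\tfrac12 b(z)\chi(x-z)|x-z|$, with a cutoff $\chi$ so that $\tilde w\in H_0^1(\Omega)$. Then $-\Delta\tilde w\in L^\infty(\Omega)$ uniformly for $z\in K$; near $z$ it equals $(b(x)-b(z))|x-z|^{-1}-b\,H_b(z,\cdot)$. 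Hence $\tilde w\in C^{1,\alpha}(\overline{\Omega})$, and $H_b(z,x)=H_0(z,x)-\tfrac12 b(z)|x-z|+\tilde w(x)$ near $z$. Subtracting the corrector instead would double the singularity rather than cancel it.

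Second, the caveat in \eqref{oB2} does not require differentiating in the parameter $z$. That step would need a difference-quotient justification, and its right-hand side $O(|x-z|^{-2})$ lies only in $L^p$ for $p<3/2$, so it controls $\partial_{z_i}H_b(z,\cdot)$ only away from $z$, through local boundary estimates. A shorter argument covers both regions:
\begin{itemize}
\item For $x$ in a compact neighbourhood $K'$ of $K$, use symmetry together with \eqref{oB3} applied on $K'$.
\item For $x\notin K'$, the function $y\mapsto G_b(x,y)$ solves $(-\Delta+b)u=0$ near $K$. It is bounded there by $|x-y|^{-1}+\|H_b(y,\cdot)\|_\infty\lesssim 1$, using symmetry and the first part of the lemma with $d(y)\gtrsim 1$. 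Interior gradient estimates then bound $\nabla_z G_b(z,x)$, and hence $\nabla_z H_b(z,x)$, uniformly for $z\in K$.
\end{itemize}
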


\begin{lem}\label{B2}
Let $0<\nu<1$. As $x\to z$, uniformly for $z$ in compact subsets of $\Omega$,
\begin{equation}\label{oB5}
	H_b(z,x)=\phi_b(z)+\frac{1}{2}\nabla\phi_b(z)\cdot(x-z)
	-\frac{1}{2}b(z)|x-z|+O\left(|x-z|^{1+\nu}\right). 
\end{equation}
\end{lem}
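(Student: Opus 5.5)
The plan is to write $H_b(z,\cdot)$ as the solution of a Dirichlet problem and then expand it near $z$ by splitting off the explicitly computable singular correction. Set $h(x):=H_b(z,x)=|x-z|^{-1}-G_b(x,z)$. By \eqref{a41} and $\Delta_x|x-z|^{-1}=-4\pi\delta_z$, the function $h$ satisfies
\[
-\Delta_x h+b\,h=\frac{b(x)}{|x-z|}\quad\text{in }\Omega,\qquad h=\frac{1}{|x-z|}\ \text{on }\partial\Omega.
\]
The right-hand side is only in $L^p$ for $p<3$, so $h$ is $C^{1,\alpha}$ at best near $z$. Its failure of smoothness should be captured by the term $-\tfrac12 b(z)|x-z|$, because $\Delta|x-z|=2/|x-z|$ in $\mathbb R^3$. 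I would therefore define
\[
k(x):=h(x)+\tfrac12 b(z)|x-z|,
\]
so that
\[
-\Delta k+b\,k=\frac{b(x)-b(z)}{|x-z|}+\tfrac12 b(x)b(z)|x-z|=:F_z(x).
\]
Since $b\in C^{2,\sigma}_{\rm loc}$, we have $b(x)-b(z)=O(|x-z|)$ with a Lipschitz-in-direction structure. Hence $F_z$ is bounded near $z$, and in fact $F_z\in L^\infty$ uniformly for $z$ in a compact set $K\subset\Omega$.

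The first key step is interior regularity for $k$. Since $F_z\in L^\infty(B_r(z))$ and $k$ is bounded (from Lemma \ref{B1}, $\|H_b(z,\cdot)\|_\infty\lesssim d(z)^{-1}$), $W^{2,p}$ estimates for all $p<\infty$ and Sobolev embedding give $k\in C^{1,\nu}(B_{r/2}(z))$ for every $\nu<1$. The norm is bounded by $\|F_z\|_\infty+\|k\|_{L^\infty}$, and this is uniform over $z\in K$ if $r$ is chosen as, say, $\frac12\operatorname{dist}(K,\partial\Omega)$. Taylor's formula then yields
\[
k(x)=k(z)+\nabla k(z)\cdot(x-z)+O(|x-z|^{1+\nu}).
\]
Here $k(z)=h(z)=\phi_b(z)$, which gives the constant term and the $-\frac12 b(z)|x-z|$ term of \eqref{oB5}.

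The main obstacle is identifying $\nabla k(z)=\tfrac12\nabla\phi_b(z)$. For this I would use the symmetry $H_b(z,x)=H_b(x,z)$, inherited from the symmetry of $G_b$. We have $\phi_b(z)=k_z(z)$, with $k_z$ depending on $z$ in both slots. Differentiating $\phi_b(y)=H_b(y,y)$ along a direction $e$ at $y=z$ gives formally
\[
\partial_e\phi_b(z)=\partial_e^{(1)}H_b(z,z)+\partial_e^{(2)}H_b(z,z)=2\,\partial_e^{(2)}H_b(z,z)=2\,\partial_e k_z(z),
\]
where the last equality holds because the correction $-\frac12 b|x-z|$ contributes no directional derivative at $x=z$ once it is symmetrized; I would check that its one-sided contributions from the two slots cancel. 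To make this rigorous despite the non-smoothness at the diagonal, I would compute the difference quotient $\phi_b(z+te)-\phi_b(z)=[H_b(z+te,z+te)-H_b(z,z+te)]+[H_b(z,z+te)-H_b(z,z)]$. The second bracket is $k_z(z+te)-\frac12 b(z)|t|-k_z(z)$ by the expansion above. For the first bracket, symmetry gives $H_b(z+te,z+te)-H_b(z+te,z)$, which is the same expansion centred at $z+te$ with increment $-te$. That bracket equals $-t\,\nabla k_{z+te}(z+te)\cdot e+\frac12 b(z+te)|t|+O(|t|^{1+\nu})$.

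Adding the two brackets, the $|t|$ terms cancel up to $O(|t|^2)$ by continuity of $b$. This leaves $t\big(\nabla k_z(z)-\nabla k_{z+te}(z+te)\big)\cdot e$ plus $2t\nabla k_z(z)\cdot e$. To close the argument, I need $z\mapsto\nabla k_z(z)$ to be continuous, which follows from continuous dependence of the $C^{1,\nu}$ solution on $z$ through $F_z$ and the boundary data. With that, dividing by $t$ gives $\nabla\phi_b(z)=2\nabla k_z(z)$, and substituting into the Taylor expansion yields \eqref{oB5} with uniform constants on compact sets.
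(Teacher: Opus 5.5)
The paper does not prove this lemma. It quotes it, together with Lemmas~\ref{B1} and~\ref{B3}, from \cite[Appendix B]{FKK3}. Your argument is a valid self-contained proof. The corrector $k_z=H_b(z,\cdot)+\tfrac12 b(z)|\cdot-z|$ solves $(-\Delta+b)k_z=F_z$ with $F_z\in L^\infty(\Omega)$, uniformly for $z$ in compacts. Interior $W^{2,p}$ estimates for all $p<\infty$ then give the uniform $C^{1,\nu}$ expansion, and the symmetry of $G_b$ identifies the linear term.

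There is, however, a sign slip in the identification step that you must fix. Using symmetry, the first bracket equals $k_{z+te}(z+te)-k_{z+te}(z)+\tfrac12 b(z+te)|t|$. Expanding $k_{z+te}$ about $z+te$ gives $k_{z+te}(z+te)-k_{z+te}(z)=+t\,\nabla k_{z+te}(z+te)\cdot e+O(|t|^{1+\nu})$, not $-t\,\nabla k_{z+te}(z+te)\cdot e$. With your sign, the two brackets add up to $t(\nabla k_z(z)-\nabla k_{z+te}(z+te))\cdot e+o(|t|)$, which would force $\nabla\phi_b(z)=0$. The extra term $2t\nabla k_z(z)\cdot e$ in your final sentence does not follow from what precedes it. The correct sum is
\[
\phi_b(z+te)-\phi_b(z)=t\bigl(\nabla k_{z+te}(z+te)-\nabla k_z(z)\bigr)\cdot e+2t\,\nabla k_z(z)\cdot e+\tfrac12\bigl(b(z+te)-b(z)\bigr)|t|+O\bigl(|t|^{1+\nu}\bigr),
\]
and with this your conclusion $\nabla\phi_b(z)=2\nabla k_z(z)$ goes through.

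For the continuity of $y\mapsto\nabla k_y(y)$, note that $F_y$ does not converge to $F_{y'}$ in $L^\infty$. Near $x=y$ it behaves like $\nabla b(y)\cdot\frac{x-y}{|x-y|}$, whose discontinuity moves with $y$. So the continuous-dependence argument has to be run in $L^p$ with $p>3$:
\begin{itemize}
\item $F_y\to F_{y'}$ in $L^p$ by dominated convergence.
\item $k_y\to k_{y'}$ uniformly, for instance from \eqref{oB2}.
\item Interior $W^{2,p}$ estimates and Morrey's embedding then give $\nabla k_y\to\nabla k_{y'}$ uniformly near $y'$.
\end{itemize}
Combined with the continuity of $\nabla k_{y'}$, this yields $\nabla k_y(y)\to\nabla k_{y'}(y')$.

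Compared with the bare citation in the paper, your route makes the mechanism explicit. The term $-\tfrac12 b(z)|x-z|$ is exactly the corrector that makes the right-hand side bounded, which is why the remainder is $O(|x-z|^{1+\nu})$ for every $\nu<1$. The factor $\tfrac12$ in front of $\nabla\phi_b$ comes solely from the symmetry $H_b(z,x)=H_b(x,z)$. You also get $\phi_b\in C^1(\Omega)$ as a by-product, using only local Lipschitz continuity of $b$ and the $L^\infty$ bound of Lemma~\ref{B1}.
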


\begin{lem}\label{B3}
For every $z\in\Omega$, as $\lambda\to\infty$,
\begin{align}
	\int_{\Omega}U_{z,\lambda}^{5}H_b(z,\cdot)
	&=\frac{4\pi}{3}\phi_b(z)\lambda^{-1/2}
	-\frac{4\pi}{3}b(z)\lambda^{-3/2}
	+o\left(\lambda^{-3/2}\right), \label{oB10}\\
	\int_{\Omega}U_{z,\lambda}^{4}\partial_{\lambda}U_{z,\lambda}H_b(z,\cdot)
	&=-\frac{2\pi}{15}\phi_b(z)\lambda^{-3/2}
	+\frac{2\pi}{5}b(z)\lambda^{-5/2}
	+o\left(\lambda^{-5/2}\right), \label{oB11}\\
	\int_{\Omega}U_{z,\lambda}^{4}\partial_{z_i}U_{z,\lambda}H_b(z,\cdot)
	&=\frac{2\pi}{15}\partial_{z_i}\phi_b(z)\lambda^{-1/2}
	+o\left(\lambda^{-1/2}\right), \label{oB12}\\
	\int_{\Omega}U_{z,\lambda}^{4}H_b(z,\cdot)^2
	&=\pi^2\phi_b(z)^2\lambda^{-1}
	+o\left(\lambda^{-1}\right), \label{oB13}\\
	\int_{\Omega}U_{z,\lambda}^{3}\partial_{\lambda}U_{z,\lambda}H_b(z,\cdot)^2
	&=-\frac{\pi^2}{4}\phi_b(z)^2\lambda^{-2}
	+o\left(\lambda^{-2}\right). \label{oB14}
\end{align}
The implied constants can be chosen uniformly for $z$ in compact subsets of $\Omega$.
\end{lem}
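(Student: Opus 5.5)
The plan is to prove all five expansions by the same mechanism: localize near $z$, insert the Taylor-type expansion \eqref{oB5} of Lemma \ref{B2} for $H_b(z,\cdot)$, rescale $x=z+y/\lambda$, and evaluate explicit radial integrals over $\mathbb{R}^3$. Away from $z$ everything is controlled by the uniform bound $\|H_b(z,\cdot)\|_\infty\lesssim d(z)^{-1}$ from Lemma \ref{B1} together with the decay of the bubble. Concretely, I fix a small $\rho>0$ with $B_{2\rho}(z)\subset\Omega$, uniformly for $z$ in a compact set $K\subset\Omega$, and split $\Omega=B_\rho(z)\cup(\Omega\setminus B_\rho(z))$.

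\textbf{Outer region.} On $\Omega\setminus B_\rho(z)$ we have $U_{z,\lambda}\lesssim\lambda^{-1/2}\rho^{-1}$ and $|\partial_\lambda U_{z,\lambda}|\lesssim\lambda^{-3/2}\rho^{-1}$, while $H_b$ is bounded by Lemma \ref{B1}. Hence every integrand there carries at least five factors of $U$ or its derivatives. This makes the outer contributions $O(\lambda^{-5/2})$ for \eqref{oB10}, \eqref{oB12}, \eqref{oB13}, and $O(\lambda^{-7/2})$ for \eqref{oB11}, \eqref{oB14}. All of these are below the stated error terms.

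\textbf{Inner region.} On $B_\rho(z)$ I write
\[
H_b(z,x)=\phi_b(z)+\tfrac12\nabla\phi_b(z)\cdot(x-z)-\tfrac12 b(z)|x-z|+O(|x-z|^{1+\nu})
\]
and rescale. For \eqref{oB10} the constant term gives $\phi_b(z)\lambda^{-1/2}\int_{\mathbb{R}^3}(1+|y|^2)^{-5/2}dy=\tfrac{4\pi}{3}\phi_b(z)\lambda^{-1/2}$, up to a tail of order $\lambda^{-5/2}$. The linear term vanishes by oddness. The $|x-z|$ term gives $-\tfrac12 b(z)\lambda^{-3/2}\int|y|(1+|y|^2)^{-5/2}dy=-\tfrac{4\pi}{3}b(z)\lambda^{-3/2}$. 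The remainder is $O(\lambda^{-3/2-\nu}\int|y|^{1+\nu}(1+|y|^2)^{-5/2})$, which converges precisely because $\nu<1$. The same scheme handles the other expansions, using the explicit formulas for $\partial_\lambda U_{z,\lambda}$ and $\partial_{z_i}U_{z,\lambda}$ from Lemma \ref{A1}:
\begin{itemize}
\item For \eqref{oB11}, the scaled profile $\tfrac12\lambda^{-1/2}(1-|y|^2)(1+|y|^2)^{-3/2}$ is radial, so only the constant and $|x-z|$ terms survive, producing $-\tfrac{2\pi}{15}$ and $\tfrac{2\pi}{5}$ after computing two Beta-type integrals.
\item For \eqref{oB12}, $\partial_{z_i}U_{z,\lambda}$ is odd in $y_i$. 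Thus the constant and $|x-z|$ terms cancel, and only $\tfrac12\nabla\phi_b(z)\cdot(x-z)$ contributes, giving $\tfrac{2\pi}{15}\partial_{z_i}\phi_b(z)\lambda^{-1/2}$.
\item For \eqref{oB13} and \eqref{oB14}, I use $H_b^2=\phi_b(z)^2+O(|x-z|)$ and the values $\int(1+|y|^2)^{-2}dy=\pi^2$ and its $\partial_\lambda$-analogue, giving $-\tfrac{\pi^2}{4}$.
\end{itemize}

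\textbf{Main obstacle.} The delicate point is the bookkeeping of remainders, namely checking that each error integral converges on $\mathbb{R}^3$ and yields a strictly smaller power of $\lambda$. This matters most in \eqref{oB12}: there $\partial_{z_i}U$ carries an extra factor $\lambda$, and the remainder $O(|x-z|^{1+\nu})$ must still be $o(\lambda^{-1/2})$. I would verify the required integrability with $\nu$ close to $1$. Uniformity in $z\in K$ follows because the constants in Lemmas \ref{B1} and \ref{B2} are uniform on compact sets.
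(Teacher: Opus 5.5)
Your proposal is correct and is the standard argument behind this lemma, which the paper does not prove itself but quotes from \cite[Appendix B]{FKK3}: localize to $B_\rho(z)$, insert the expansion \eqref{oB5}, rescale, and evaluate explicit radial (Beta-type) integrals. All your constants check out. There are two bookkeeping slips, neither of which matters. First, in \eqref{oB13} and \eqref{oB14} the integrands $U^4$ and $U^3\partial_\lambda U$ carry only four bubble factors, so the outer contributions are $O(\lambda^{-2})$ and $O(\lambda^{-3})$, not the orders you stated. Second, the $O(|x-z|)$ remainder in those two expansions leads to logarithmically divergent integrals such as $\int_{|y|<\rho\lambda}|y|(1+|y|^2)^{-2}\,dy\sim\log\lambda$, giving errors $O(\lambda^{-2}\log\lambda)$ and $O(\lambda^{-3}\log\lambda)$ rather than a convergent integral on $\mathbb{R}^3$. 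All of these are still within the claimed $o(\lambda^{-1})$ and $o(\lambda^{-2})$.
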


\begin{lem}\label{B4}
For every $z\in\Omega$, as $\lambda\to\infty$, the following asymptotic expansions hold:
\begin{equation}\label{B4-1}\small
\begin{split}
	\int_{\Omega}\int_{\Omega}
	\frac{U_{z,\lambda}^{5-\mu}(y)H_b(z,y)
	U_{z,\lambda}^{5-\mu}(x)\partial_{\lambda}U_{z,\lambda}(x)}
	{|x-y|^{\mu}}dxdy
	=-\frac{2\pi\mu}{5A_{3,\mu}(6-\mu)}
	\phi_b(z)\lambda^{-3/2}
	+\frac{6\pi\mu}{5A_{3,\mu}(6-\mu)}
	b(z)\lambda^{-5/2}
	+o\left(\lambda^{-5/2}\right),
\end{split}
\end{equation}
\begin{equation}\label{B4-2}\small
\begin{split}
	\int_{\Omega}\int_{\Omega}
	\frac{U_{z,\lambda}^{6-\mu}(y)
	U_{z,\lambda}^{4-\mu}(x)H_b(z,x)
	\partial_{\lambda}U_{z,\lambda}(x)}
	{|x-y|^{\mu}}dxdy
	=-\frac{2\pi}{5A_{3,\mu}}
	\phi_b(z)\lambda^{-3/2}
	+\frac{6\pi}{5A_{3,\mu}}
	b(z)\lambda^{-5/2}
	+o\left(\lambda^{-5/2}\right),
\end{split}
\end{equation}
\begin{equation}\label{B4-3}
	\int_{\Omega}\int_{\Omega}
	\frac{U_{z,\lambda}^{4-\mu}(y)H_b(z,y)^2
	U_{z,\lambda}^{5-\mu}(x)\partial_{\lambda}U_{z,\lambda}(x)}
	{|x-y|^{\mu}}dxdy
	=-\frac{3\pi^2\mu}{4A_{3,\mu}(6-\mu)}
	\phi_b(z)^2\lambda^{-2}
	+o\left(\lambda^{-2}\right),
\end{equation}
{\small
\begin{equation}\label{B4-4}
	\int_{\Omega}\int_{\Omega}
	\frac{U_{z,\lambda}^{5-\mu}(y)H_b(z,y)
	U_{z,\lambda}^{4-\mu}(x)H_b(z,x)
	\partial_{\lambda}U_{z,\lambda}(x)}
	{|x-y|^{\mu}}dxdy
	=-\frac{d_4}{2A_{3,\mu}(5-\mu)}
	\phi_b(z)^2\lambda^{-2}
	+o\left(\lambda^{-2}\right),
\end{equation}
}
\begin{equation}\label{B4-5}
	\int_{\Omega}\int_{\Omega}
	\frac{U_{z,\lambda}^{6-\mu}(y)
	U_{z,\lambda}^{3-\mu}(x)H_b(z,x)^2
	\partial_{\lambda}U_{z,\lambda}(x)}
	{|x-y|^{\mu}}dxdy
	=-\frac{3\pi^2}{4A_{3,\mu}}
	\phi_b(z)^2\lambda^{-2}
	+o\left(\lambda^{-2}\right),
\end{equation}
\begin{equation}\label{B4-6}
	\int_{\Omega}\int_{\Omega}
	\frac{U_{z,\lambda}^{5-\mu}(y)H_b(z,y)
	U_{z,\lambda}^{5-\mu}(x)H_b(z,x)}
	{|x-y|^{\mu}}dxdy
	=\frac{d_4}{A_{3,\mu}}
	\phi_b(z)^2\lambda^{-1}
	+o\left(\lambda^{-1}\right),
\end{equation}
\begin{equation}\label{B4-7}
	\int_{\Omega}\int_{\Omega}
	\frac{U_{z,\lambda}^{6-\mu}(y)
	U_{z,\lambda}^{4-\mu}(x)H_b(z,x)^2}
	{|x-y|^{\mu}}dxdy
	=\frac{3\pi^2}{A_{3,\mu}}
	\phi_b(z)^2\lambda^{-1}
	+o\left(\lambda^{-1}\right),
\end{equation}
where $$d_4=A_{3,\mu}\int_{\mathbb{R}^3}\int_{\mathbb{R}^3}
	\frac{U_{0,1}^{5-\mu}(y)U_{0,1}^{5-\mu}(x)}
	{|x-y|^{\mu}}dxdy$$  and the implied constants can be chosen uniformly for $z$ in compact subsets of $\Omega$.
\end{lem}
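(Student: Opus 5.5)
The seven expansions in Lemma \ref{B4} reduce to the one-body integrals of Lemma \ref{B3}, plus a scaling computation for the two terms that admit no closed convolution identity. The first tool is the convolution identity \eqref{1esay}, $\int_{\mathbb R^3}U_{z,\lambda}^{6-\mu}(y)|x-y|^{-\mu}dy=\frac{3}{A_{3,\mu}}U_{z,\lambda}^{\mu}(x)$. The second is its $\lambda$-derivative: since $U^{5-\mu}\partial_\lambda U=\frac{1}{6-\mu}\partial_\lambda U^{6-\mu}$, we get
\begin{equation*}
\int_{\mathbb R^3}\frac{U_{z,\lambda}^{5-\mu}(x)\partial_\lambda U_{z,\lambda}(x)}{|x-y|^{\mu}}dx=\frac{3\mu}{A_{3,\mu}(6-\mu)}U_{z,\lambda}^{\mu-1}(y)\partial_\lambda U_{z,\lambda}(y).
\end{equation*}
In each double integral I first extend the "free" variable (the one not carrying $H_b$) from $\Omega$ to $\mathbb R^3$. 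Then I apply one of these identities and read off the result from Lemma \ref{B3}.

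For \eqref{B4-2}, \eqref{B4-5} and \eqref{B4-7}, the $y$-integration gives $\frac{3}{A_{3,\mu}}U^\mu(x)$. The integrals therefore become $\frac{3}{A_{3,\mu}}\int_\Omega U^4\partial_\lambda U\,H_b$, $\frac{3}{A_{3,\mu}}\int_\Omega U^3\partial_\lambda U\,H_b^2$ and $\frac{3}{A_{3,\mu}}\int_\Omega U^4H_b^2$. These are exactly \eqref{oB11}, \eqref{oB14} and \eqref{oB13}, and multiplying out reproduces the stated constants (for instance $\frac{3}{A_{3,\mu}}\cdot(-\frac{2\pi}{15})=-\frac{2\pi}{5A_{3,\mu}}$). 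For \eqref{B4-1} and \eqref{B4-3}, the $x$-integration uses the differentiated identity and gives $\frac{3\mu}{A_{3,\mu}(6-\mu)}\int_\Omega U^4\partial_\lambda U\,H_b$ and $\frac{3\mu}{A_{3,\mu}(6-\mu)}\int_\Omega U^3\partial_\lambda U\,H_b^2$, respectively. Again \eqref{oB11} and \eqref{oB14} yield the claimed coefficients. In all five cases the error from adding $\mathbb R^3\setminus\Omega$ is controlled by HLS, Lemma \ref{A1RN} and $\|H_b(z,\cdot)\|_\infty\lesssim d(z)^{-1}$ (Lemma \ref{B1}). It is of order $\lambda^{-(6-\mu)/2}$ times $\|U^{4-\mu}\partial_\lambda U\,H_b\|_{6/(6-\mu)}$ or a similar norm, which is $o(\lambda^{-5/2})$, respectively $o(\lambda^{-2})$, because $0<\mu<2$.

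For \eqref{B4-6}, no identity is available, so I use Lemma \ref{B2} directly: $H_b(z,x)=\phi_b(z)+O(|x-z|)$ on a fixed ball, with $H_b$ bounded outside it. The main term $\phi_b(z)^2\int\!\!\int U^{5-\mu}(x)U^{5-\mu}(y)|x-y|^{-\mu}$ becomes $\frac{d_4}{A_{3,\mu}}\phi_b(z)^2\lambda^{-1}$ after the scaling $x=z+x'/\lambda$, since this double integral scales like $\lambda^{-1}$. The cross term carrying one factor $|x-z|$ is bounded via HLS by $\|U^{5-\mu}\|_{6/(6-\mu)}\|U^{5-\mu}|\cdot-z|\|_{6/(6-\mu)}\lesssim\lambda^{-1/2}\cdot\lambda^{-3/2}$. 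This is the estimate already used in Lemma \ref{lemma35} with $\mu<2$, and the cross term is therefore $o(\lambda^{-1})$. For \eqref{B4-4}, the same replacement of $H_b$ by $\phi_b(z)$ gives
\begin{equation*}
\phi_b(z)^2\int\!\!\int\frac{U^{5-\mu}(y)U^{4-\mu}(x)\partial_\lambda U(x)}{|x-y|^{\mu}}=\frac{\phi_b(z)^2}{2(5-\mu)}\partial_\lambda\int\!\!\int\frac{U^{5-\mu}(x)U^{5-\mu}(y)}{|x-y|^{\mu}},
\end{equation*}
using the symmetry of the kernel. The right-hand side equals $\frac{\phi_b(z)^2}{2(5-\mu)}\partial_\lambda\big(\frac{d_4}{A_{3,\mu}}\lambda^{-1}\big)$, which is the stated value.

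I expect the main obstacle to be the remainder bookkeeping in \eqref{B4-4} and \eqref{B4-6}. There the error terms involve $U^{4-\mu}|x-z|\,|\partial_\lambda U|$ and $U^{3-\mu}$-type weights, whose $L^{6/(6-\mu)}$ or $L^{6/(5-\mu)}$ norms sit near the borderline exponent $3$ of Lemma \ref{A1}. The logarithmic case $\mu=1$ and the range $1<\mu<2$ must be checked separately, in the spirit of \eqref{UUUU}. I would first use $|\partial_\lambda U|\le\lambda^{-1}U$ to reduce everything to norms of powers of $U$ times $|x-z|$. I would then verify case by case that each error is $o(\lambda^{-2})$, respectively $o(\lambda^{-1})$, using that the weight $|x-z|$ gains a factor $\lambda^{-1}$ whenever $\mu<2$. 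Uniformity for $z$ in compact sets follows from the uniform constants in Lemmas \ref{B1}--\ref{B3}.
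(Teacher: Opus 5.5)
Your proposal is correct and follows the paper's proof essentially step for step. You extend the variable not carrying $H_b$ to $\mathbb{R}^3$ and apply \eqref{1esay} or its $\lambda$-derivative \eqref{B-convolution}, which reduces \eqref{B4-1}, \eqref{B4-2}, \eqref{B4-3}, \eqref{B4-5}, \eqref{B4-7} to Lemma \ref{B3}; for \eqref{B4-4} and \eqref{B4-6} you replace $H_b$ by $\phi_b(z)$ via Lemma \ref{B2} and differentiate the scaling identity defining $d_4$.

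The case analysis you flag as the main obstacle is not needed. Since $|\partial_\lambda U_{z,\lambda}|\lesssim\lambda^{-1}U_{z,\lambda}$, the two bounds $\|U_{z,\lambda}^{5-\mu}\|_{6/(6-\mu)}\lesssim\lambda^{-1/2}$ and $\|U_{z,\lambda}^{5-\mu}|\cdot-z|\|_{6/(6-\mu)}\lesssim\lambda^{-3/2}$ suffice. They make every remainder in \eqref{B4-4} and \eqref{B4-6} $O(\lambda^{-3})$ and $O(\lambda^{-2})$ respectively, with no borderline exponents and no separate treatment of $\mu=1$ or $1<\mu<2$.
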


\begin{proof}
We first prove \eqref{B4-1}. By extending the domain of integration with respect to $x$ to $\mathbb{R}^3$, we have
\begin{equation*}
\begin{split}
	&\int_{\Omega}\int_{\Omega}
	\frac{U_{z,\lambda}^{5-\mu}(y)H_b(z,y)
	U_{z,\lambda}^{5-\mu}(x)\partial_{\lambda}U_{z,\lambda}(x)}
	{|x-y|^{\mu}}dxdy\\
	&
	=\int_{\Omega}\int_{\mathbb{R}^3}
	\frac{U_{z,\lambda}^{5-\mu}(y)H_b(z,y)
	U_{z,\lambda}^{5-\mu}(x)\partial_{\lambda}U_{z,\lambda}(x)}
	{|x-y|^{\mu}}dxdy-\int_{\Omega}\int_{\mathbb{R}^3\setminus\Omega}
	\frac{U_{z,\lambda}^{5-\mu}(y)H_b(z,y)
	U_{z,\lambda}^{5-\mu}(x)\partial_{\lambda}U_{z,\lambda}(x)}
	{|x-y|^{\mu}}dxdy.
\end{split}
\end{equation*}
Differentiating \eqref{1esay} with respect to $\lambda$, we obtain
\begin{equation}\label{B-convolution}
	\int_{\mathbb{R}^3}
	\frac{U_{z,\lambda}^{5-\mu}(x)\partial_{\lambda}U_{z,\lambda}(x)}
	{|x-y|^{\mu}}dx
	=\frac{3\mu}{A_{3,\mu}(6-\mu)}
	U_{z,\lambda}^{\mu-1}(y)\partial_{\lambda}U_{z,\lambda}(y).
\end{equation}
Therefore, by Lemma \ref{B3},
\begin{align*}
	&\int_{\Omega}\int_{\mathbb{R}^3}
	\frac{U_{z,\lambda}^{5-\mu}(y)H_b(z,y)
	U_{z,\lambda}^{5-\mu}(x)\partial_{\lambda}U_{z,\lambda}(x)}
	{|x-y|^{\mu}}dxdy\\
	={}&\frac{3\mu}{A_{3,\mu}(6-\mu)}
	\int_{\Omega}U_{z,\lambda}^{4}
	\partial_{\lambda}U_{z,\lambda}H_b(z,\cdot)\\
	={}&-\frac{2\pi\mu}{5A_{3,\mu}(6-\mu)}
	\phi_b(z)\lambda^{-3/2}
	+\frac{6\pi\mu}{5A_{3,\mu}(6-\mu)}
	b(z)\lambda^{-5/2}
	+o\left(\lambda^{-5/2}\right).
\end{align*}
On the other hand, combining the HLS inequality with Lemmas \ref{A1},
\ref{A1RN}, and \ref{B1}, we obtain
\begin{align*}
	&\left|
	\int_{\Omega}\int_{\mathbb{R}^3\setminus\Omega}
	\frac{U_{z,\lambda}^{5-\mu}(y)H_b(z,y)
	U_{z,\lambda}^{5-\mu}(x)\partial_{\lambda}U_{z,\lambda}(x)}
	{|x-y|^{\mu}}dxdy
	\right|\\
	&\lesssim
	\left\|U_{z,\lambda}^{5-\mu}H_b(z,\cdot)\right\|_{\frac{6}{6-\mu}}
	\left\|U_{z,\lambda}^{5-\mu}\partial_{\lambda}U_{z,\lambda}
	\right\|_{L^{\frac{6}{6-\mu}}(\mathbb{R}^3\setminus\Omega)}\\
	&\lesssim
	\lambda^{-1/2}\lambda^{-1-\frac{6-\mu}{2}}
	=\lambda^{-\frac{9-\mu}{2}}
	=o\left(\lambda^{-5/2}\right).
\end{align*}
It then follows that \eqref{B4-1} holds.

Similarly, for \eqref{B4-2}, we have
\begin{equation*}
\begin{split}
	&\int_{\Omega}\int_{\Omega}
	\frac{U_{z,\lambda}^{6-\mu}(y)
	U_{z,\lambda}^{4-\mu}(x)H_b(z,x)
	\partial_{\lambda}U_{z,\lambda}(x)}
	{|x-y|^{\mu}}dxdy\\
	&=\int_{\mathbb{R}^3}\int_{\Omega}
	\frac{U_{z,\lambda}^{6-\mu}(y)
	U_{z,\lambda}^{4-\mu}(x)H_b(z,x)
	\partial_{\lambda}U_{z,\lambda}(x)}
	{|x-y|^{\mu}}dxdy-\int_{\mathbb{R}^3\setminus\Omega}\int_{\Omega}
	\frac{U_{z,\lambda}^{6-\mu}(y)
	U_{z,\lambda}^{4-\mu}(x)H_b(z,x)
	\partial_{\lambda}U_{z,\lambda}(x)}
	{|x-y|^{\mu}}dxdy.
\end{split}
\end{equation*}
By \eqref{1esay} and Lemma \ref{B3}, we obtain
\begin{align*}
	&\int_{\mathbb{R}^3}\int_{\Omega}
	\frac{U_{z,\lambda}^{6-\mu}(y)
	U_{z,\lambda}^{4-\mu}(x)H_b(z,x)
	\partial_{\lambda}U_{z,\lambda}(x)}
	{|x-y|^{\mu}}dxdy\\
	={}&\frac{3}{A_{3,\mu}}
	\int_{\Omega}U_{z,\lambda}^{4}
	\partial_{\lambda}U_{z,\lambda}H_b(z,\cdot)\\
	={}&-\frac{2\pi}{5A_{3,\mu}}
	\phi_b(z)\lambda^{-3/2}
	+\frac{6\pi}{5A_{3,\mu}}
	b(z)\lambda^{-5/2}
	+o\left(\lambda^{-5/2}\right).
\end{align*}
For the remaining term, we have
\begin{align*}
	&\left|
	\int_{\mathbb{R}^3\setminus\Omega}\int_{\Omega}
	\frac{U_{z,\lambda}^{6-\mu}(y)
	U_{z,\lambda}^{4-\mu}(x)H_b(z,x)
	\partial_{\lambda}U_{z,\lambda}(x)}
	{|x-y|^{\mu}}dxdy
	\right|\\
	&\lesssim
	\left\|U_{z,\lambda}^{6-\mu}\right\|_
	{L^{\frac{6}{6-\mu}}(\mathbb{R}^3\setminus\Omega)}
	\left\|U_{z,\lambda}^{4-\mu}H_b(z,\cdot)
	\partial_{\lambda}U_{z,\lambda}\right\|_{\frac{6}{6-\mu}}\\
	&\lesssim
	\lambda^{-\frac{6-\mu}{2}}\lambda^{-3/2}
	=\lambda^{-\frac{9-\mu}{2}}
	=o\left(\lambda^{-5/2}\right).
\end{align*}
It then follows that \eqref{B4-2} holds.

For \eqref{B4-3}, using \eqref{B-convolution} and Lemma \ref{B3}, we have
\begin{align*}
	&\int_{\Omega}\int_{\mathbb{R}^3}
	\frac{U_{z,\lambda}^{4-\mu}(y)H_b(z,y)^2
	U_{z,\lambda}^{5-\mu}(x)\partial_{\lambda}U_{z,\lambda}(x)}
	{|x-y|^{\mu}}dxdy\\
	&=\frac{3\mu}{A_{3,\mu}(6-\mu)}
	\int_{\Omega}U_{z,\lambda}^{3}
	\partial_{\lambda}U_{z,\lambda}H_b(z,\cdot)^2\\
	&=-\frac{3\pi^2\mu}{4A_{3,\mu}(6-\mu)}
	\phi_b(z)^2\lambda^{-2}
	+o\left(\lambda^{-2}\right).
\end{align*}
Moreover,
\begin{align*}
	&\left|
	\int_{\Omega}\int_{\mathbb{R}^3\setminus\Omega}
	\frac{U_{z,\lambda}^{4-\mu}(y)H_b(z,y)^2
	U_{z,\lambda}^{5-\mu}(x)\partial_{\lambda}U_{z,\lambda}(x)}
	{|x-y|^{\mu}}dxdy
	\right|\\
	&\lesssim
	\left\|U_{z,\lambda}^{4-\mu}H_b(z,\cdot)^2\right\|_{\frac{6}{6-\mu}}
	\left\|U_{z,\lambda}^{5-\mu}\partial_{\lambda}U_{z,\lambda}
	\right\|_{L^{\frac{6}{6-\mu}}(\mathbb{R}^3\setminus\Omega)}\\
	&\lesssim
	\lambda^{-1}\lambda^{-1-\frac{6-\mu}{2}}
	=o\left(\lambda^{-2}\right).
\end{align*}
This proves \eqref{B4-3}.

For \eqref{B4-4}, let $0<\rho<d(z)$. Splitting the domain of integration and using the HLS inequality together with Lemmas \ref{A1} and \ref{B1}, we obtain
{\small
\begin{equation*}
\begin{split}
	&\int_{\Omega}\int_{\Omega}
	\frac{U_{z,\lambda}^{5-\mu}(y)H_b(z,y)
	U_{z,\lambda}^{4-\mu}(x)H_b(z,x)
	\partial_{\lambda}U_{z,\lambda}(x)}
	{|x-y|^{\mu}}dxdy\\
	&=\int_{B_\rho(z)}\int_{B_\rho(z)}
	\frac{U_{z,\lambda}^{5-\mu}(y)H_b(z,y)
	U_{z,\lambda}^{4-\mu}(x)H_b(z,x)
	\partial_{\lambda}U_{z,\lambda}(x)}
	{|x-y|^{\mu}}dxdy+o\left(\lambda^{-2}\right).
\end{split}
\end{equation*}
}
Moreover, by the HLS inequality and Lemma \ref{B2}, we have
{\small
\begin{equation*}
\begin{split}
	&\int_{B_\rho(z)}\int_{B_\rho(z)}
	\frac{U_{z,\lambda}^{5-\mu}(y)H_b(z,y)
	U_{z,\lambda}^{4-\mu}(x)H_b(z,x)
	\partial_{\lambda}U_{z,\lambda}(x)}
	{|x-y|^{\mu}}dxdy\\
	={}&\phi_b(z)^2\int_{B_\rho(z)}\int_{B_\rho(z)}
	\frac{U_{z,\lambda}^{5-\mu}(y)U_{z,\lambda}^{4-\mu}(x)
	\partial_{\lambda}U_{z,\lambda}(x)}
	{|x-y|^{\mu}}dxdy\\
	&+O\Bigg(\int_{B_\rho(z)}\int_{B_\rho(z)}
	\frac{U_{z,\lambda}^{5-\mu}(y)U_{z,\lambda}^{4-\mu}(x)
	|\partial_{\lambda}U_{z,\lambda}(x)|
	\left(|x-z|+|y-z|\right)}
	{|x-y|^{\mu}}dxdy\Bigg)\\
	&+O\Bigg(\int_{B_\rho(z)}\int_{B_\rho(z)}
	\frac{U_{z,\lambda}^{5-\mu}(y)U_{z,\lambda}^{4-\mu}(x)
	|\partial_{\lambda}U_{z,\lambda}(x)|
	|x-z||y-z|}
	{|x-y|^{\mu}}dxdy\Bigg).
\end{split}
\end{equation*}
}
By Lemma \ref{A1} and a direct computation, we have
\begin{equation*}
	\left\|U_{z,\lambda}^{5-\mu}\right\|_{\frac{6}{6-\mu}}
	\lesssim\lambda^{-1/2},
	\qquad
	\left\|U_{z,\lambda}^{5-\mu}|\cdot-z|\right\|_{\frac{6}{6-\mu}}
	\lesssim\lambda^{-3/2}.
\end{equation*}
Since
\begin{equation*}
	\left|\partial_{\lambda}U_{z,\lambda}\right|
	\lesssim\lambda^{-1}U_{z,\lambda},
\end{equation*}
the last two terms are bounded by $O\left(\lambda^{-3}\right)
	=o\left(\lambda^{-2}\right)$. 
It follows that
\begin{equation*}
\begin{split}
	&\int_{\Omega}\int_{\Omega}
	\frac{U_{z,\lambda}^{5-\mu}(y)H_b(z,y)
	U_{z,\lambda}^{4-\mu}(x)H_b(z,x)
	\partial_{\lambda}U_{z,\lambda}(x)}
	{|x-y|^{\mu}}dxdy\\
	&=\phi_b(z)^2\int_{B_\rho(z)}\int_{B_\rho(z)}
	\frac{U_{z,\lambda}^{5-\mu}(y)U_{z,\lambda}^{4-\mu}(x)
	\partial_{\lambda}U_{z,\lambda}(x)}
	{|x-y|^{\mu}}dxdy+o\left(\lambda^{-2}\right).
\end{split}
\end{equation*}
Applying the HLS inequality and Lemma \ref{A1RN} gives
\begin{equation*}
\begin{split}
	&\int_{B_\rho(z)}\int_{B_\rho(z)}
	\frac{U_{z,\lambda}^{5-\mu}(y)U_{z,\lambda}^{4-\mu}(x)
	\partial_{\lambda}U_{z,\lambda}(x)}
	{|x-y|^{\mu}}dxdy=\int_{\mathbb{R}^3}\int_{\mathbb{R}^3}
	\frac{U_{z,\lambda}^{5-\mu}(y)U_{z,\lambda}^{4-\mu}(x)
	\partial_{\lambda}U_{z,\lambda}(x)}
	{|x-y|^{\mu}}dxdy+o\left(\lambda^{-2}\right).
\end{split}
\end{equation*}
Set
\begin{equation}\label{B-d4}
	d_4:=A_{3,\mu}\int_{\mathbb{R}^3}\int_{\mathbb{R}^3}
	\frac{U_{0,1}^{5-\mu}(y)U_{0,1}^{5-\mu}(x)}
	{|x-y|^{\mu}}dxdy.
\end{equation}
Then
\begin{equation*}
	\int_{\mathbb{R}^3}\int_{\mathbb{R}^3}
	\frac{U_{z,\lambda}^{5-\mu}(y)U_{z,\lambda}^{5-\mu}(x)}
	{|x-y|^{\mu}}dxdy
	=\frac{d_4}{A_{3,\mu}}\lambda^{-1}.
\end{equation*}
Differentiating the last identity with respect to $\lambda$ and using symmetry, we obtain
\begin{equation*}
\begin{split}
	&\int_{\mathbb{R}^3}\int_{\mathbb{R}^3}
	\frac{U_{z,\lambda}^{5-\mu}(y)U_{z,\lambda}^{4-\mu}(x)
	\partial_{\lambda}U_{z,\lambda}(x)}
	{|x-y|^{\mu}}dxdy=-\frac{d_4}{2A_{3,\mu}(5-\mu)}\lambda^{-2}.
\end{split}
\end{equation*}
Combining the estimates above, we obtain \eqref{B4-4}.

For \eqref{B4-5}, proceeding as in the proof of \eqref{B4-2} and using \eqref{1esay}, we obtain
\begin{equation*}
\begin{split}
	&\int_{\Omega}\int_{\Omega}
	\frac{U_{z,\lambda}^{6-\mu}(y)U_{z,\lambda}^{3-\mu}(x)
	H_b(z,x)^2\partial_{\lambda}U_{z,\lambda}(x)}
	{|x-y|^{\mu}}dxdy
	=\frac{3}{A_{3,\mu}}
	\int_{\Omega}U_{z,\lambda}^{3}
	H_b(z,\cdot)^2\partial_{\lambda}U_{z,\lambda}
	+o\left(\lambda^{-2}\right).
\end{split}
\end{equation*}
Hence, by \eqref{oB14}, we obtain \eqref{B4-5}.

For \eqref{B4-6}, proceeding as in the proof of \eqref{B4-4}, we obtain
\begin{equation*}
\begin{split}
	&\int_{\Omega}\int_{\Omega}
	\frac{U_{z,\lambda}^{5-\mu}(y)H_b(z,y)
	U_{z,\lambda}^{5-\mu}(x)H_b(z,x)}
	{|x-y|^{\mu}}dxdy=\phi_b(z)^2
	\int_{\mathbb{R}^3}\int_{\mathbb{R}^3}
	\frac{U_{z,\lambda}^{5-\mu}(y)U_{z,\lambda}^{5-\mu}(x)}
	{|x-y|^{\mu}}dxdy
	+o\left(\lambda^{-1}\right).
\end{split}
\end{equation*}
By the definition of $d_4$,
\begin{equation*}
	\int_{\mathbb{R}^3}\int_{\mathbb{R}^3}
	\frac{U_{z,\lambda}^{5-\mu}(y)U_{z,\lambda}^{5-\mu}(x)}
	{|x-y|^{\mu}}dxdy
	=\frac{d_4}{A_{3,\mu}}\lambda^{-1}.
\end{equation*}
Consequently,
\begin{equation*}
\begin{split}
	&\int_{\Omega}\int_{\Omega}
	\frac{U_{z,\lambda}^{5-\mu}(y)H_b(z,y)
	U_{z,\lambda}^{5-\mu}(x)H_b(z,x)}
	{|x-y|^{\mu}}dxdy=\frac{d_4}{A_{3,\mu}}
	\phi_b(z)^2\lambda^{-1}
	+o\left(\lambda^{-1}\right),
\end{split}
\end{equation*}
which proves \eqref{B4-6}.

For \eqref{B4-7}, proceeding as in the proof of \eqref{B4-2} and using \eqref{1esay}, we obtain
\begin{equation*}
	\int_{\Omega}\int_{\Omega}
	\frac{U_{z,\lambda}^{6-\mu}(y)U_{z,\lambda}^{4-\mu}(x)H_b(z,x)^2}
	{|x-y|^{\mu}}dxdy
	=\frac{3}{A_{3,\mu}}\int_{\Omega}U_{z,\lambda}^{4}H_b(z,\cdot)^2
	+o\left(\lambda^{-1}\right).
\end{equation*}
Hence, by \eqref{oB13}, we obtain \eqref{B4-7}. This completes the proof.
\end{proof}

\end{appendix}

\vspace{0.5cm}
\noindent \textbf{Data Availability.} Data sharing is not applicable to this article as no datasets were generated or analyzed during the current study.

\vspace{0.5cm}
\noindent \textbf{Conflict of interest.} The authors have no competing interests to declare that are relevant to the content of this article.

\end{document}